\newcommand{\bs}[1]{\boldsymbol{#1}}
\definecolor{jcgreen}{rgb}{0.09, 0.65, 0.35}
\DeclareMathOperator*{\argmax}{arg\,max}
\DeclareMathOperator*{\argmin}{arg\,min}
\newtheorem{conj}{Conjecture}[section]
\newtheorem{thm}[conj]{\bf Theorem}
\newtheorem{defi}[conj]{\bf Definition}
\newtheorem{prop}[conj]{\bf Proposition}
\newtheorem{lemma}[conj]{\bf Lemma}
\providecommand{\customgenericname}{}
\newcommand{\newcustomtheorem}[2]{%
  \newenvironment{#1}[1]
  {%
   \renewcommand\customgenericname{#2}%
   \renewcommand\theinnercustomgeneric{##1}%
   \innercustomgeneric
  }
  {\endinnercustomgeneric}
}
\newcommand{\seven}{\bgroup
\sbox0{7}\usebox0\llap{\rule[.5\ht0]{.4\wd0}{.05\ht0}\rule{.24\wd0}{0pt}}
\egroup}
\newcommand{\PROB}{\mathbf{P}}
\newcommand{\Var}{\operatorname{Var}}
\newcommand{\sgn}{\operatorname{sgn}}
\def\bar{\overline}
\def\to{\rightarrow}
\def\id{\operatorname{id}}
\def\Var{\operatorname{Var}}
\def\supp{\operatorname{supp}}
\newcommand{\cro}[1]{\left[{#1}\right]}
\def\EE{ {\rm I} \kern-.15em {\rm E} }
\def\PP{ {\rm I} \kern-.15em {\rm P} }
\def\Var{{\mathbb{V}\text{ar}}}
\newcommand{\bp}{\mathbf{p}}
\newcommand{\bx}{\mathbf{x}}
\newcommand{\bX}{\mathbf{X}}
\newcommand{\by}{\mathbf{y}}
\newcommand{\mA}{\mathcal{A}}
\newcommand{\ssymbol}[1]{^{\@fnsymbol{#1}}}
\def\1{\mathbbm{1}}
\def\wh{\widehat}
\def\ol{\overline}
\title{A novel statistical approach to analyze image classification}
\author{Juntong Chen$\ssymbol{1}$, Sophie Langer$\ssymbol{3}$
and Johannes Schmidt-Hieber$\ssymbol{1}$ \\[0.15cm] 
\em {$\ssymbol{1}$Department of Applied Mathematics, University of Twente}\\ 
\em {$\ssymbol{3}$Faculty of Mathematics, Ruhr University Bochum}}
\date{}
\begin{document}

\listoffixmes
\clearpage

\maketitle

\begin{abstract}
The recent statistical theory of neural networks focuses on nonparametric denoising problems that treat randomness as additive noise. Variability in image classification datasets does, however, not originate from additive noise but from variation of the shape and other characteristics of the same object across different images. To address this problem, we introduce a tractable model for supervised image classification. While from the function estimation point of view, every pixel in an image is a variable, and large images lead to high-dimensional function recovery tasks suffering from the curse of dimensionality, increasing the number of pixels in the proposed image deformation model enhances the image resolution and makes the object classification problem easier. We introduce and theoretically analyze three approaches. Two methods combine image alignment with a one-nearest neighbor classifier. Under a separation condition, it is shown that perfect classification is possible. The third method fits a convolutional neural network (CNN) to the data. We derive a rate for the misclassification error that depends on the sample size and the complexity of the deformation class. An empirical study corroborates the theoretical findings.
\end{abstract}

\section{Introduction}
From a machine learning perspective, object recognition is typically framed as a high-dimensional classification problem, where each pixel is treated as an independent variable. The objective of the classification rule is to learn the functional relation between the pixel values of an input image and the corresponding conditional class probabilities or the labels.  However, for high-resolution images with many pixels, the domain of the function is a high-dimensional space, leading to slow convergence rates due to the curse of dimensionality. To align the strong empirical performance of convolutional neural networks (CNNs) with theoretical guarantees, one common approach is to assume that the true functional relationship between inputs and outputs has a latent low-dimensional structure, see, e.g., \cite{KKW20}. This assumption allows the convergence rate to depend only on this low-dimensional structure, potentially circumventing the curse of dimensionality. 

A functional data perspective is to treat images as highly structured objects that can be represented by a bivariate function, where each pixel value corresponds to a local average of the function over its location. From this viewpoint, variations of the same object in different images are interpreted as deformations of a template image, which introduces additional complexity for classification compared to the pixel-wise approach. This concept has been explored in foundational work on pattern recognition. Grenander and Mumford (\cite{G70, mumford_1996}) distinguish between \textit{pure images} and \textit{deformed images}, with the latter being generated from pure images through specific deformations. Since then, several generative models for object deformation on images have been proposed. For instance, \cite{5995635,M12,MR2957703} study a rich class of local deformations, while \cite{Mumford2000, MR2723182} extend these models to address more complex deformations such as noise, blur, multi-scale superposition and domain warping. Generative models are becoming increasingly important in fields such as medical image registration or computer vision \cite{6522524,ASHBURNER200795, perlay,pernet}. While existing work focuses on algorithms that can effectively handle image deformations, statistical modeling and theoretical generalization guarantees are, however, underexplored.

This paper aims to bridge this gap by introducing a tractable image deformation model that addresses a fundamental yet rich class of geometric transformations, including common variations in object positioning, scaling, brightness, and rotation. We focus on a binary image classification setting, where datasets consist of $n$ labeled images of two objects, such as the digits $0$ and $4$, with each image representing a random deformation of one of these objects. In the case of digits, these deformations can capture natural variations found in, for instance, individual handwriting. 

Our statistical analysis differs significantly from the wide range of well-understood classification problems that rely on local smoothing. In these settings, the source of randomness arises because the covariates (or inputs) do not fully determine the class label, requiring classifiers to aggregate training data with similar covariate values to effectively denoise. The resulting convergence rates for standard smoothness classes typically align with those seen in nonparametric regression and suffer from the curse of dimensionality for high-resolution images \cite{MR4406243}.

In the proposed statistical setting, the randomness occurs due to the different deformations that can arise on images within one class. The objective of the classification rule, therefore, is to remain invariant to these uninformative variations.

We approach this non-standard classification problem by first constructing classifiers exploiting the specific structure of the random object deformation model. These classifiers interpolate the data and can be interpreted as one-nearest neighbor classifiers in a transformed space. At low image resolutions, however, distinguishing between highly similar objects becomes impossible. We prove that if the two objects satisfy a separation condition, that depends on the image resolution, then, the classifiers can perfectly discriminate between the two classes on test data. Interestingly, the sample size $n$ plays a minor role in the analysis; it suffices to observe one training sample for each class. The imposed separation condition is also necessary in the sense that any smaller separation would result in non-identifiability of the classes, making accurate discrimination impossible (see Theorem \ref{thm27}).

A key contribution of this work are the misclassification error rates for CNN classifiers, showing that CNNs can adapt to various geometric deformations. As a first result, we prove that for a suitably chosen network architecture, specific parameter assignments in a CNN can effectively discriminate between the two classes. This shows that among all classifiers that are representable by a given CNN architecture, there exist classifiers that are (nearly) invariant with respect to the possible deformations of an object in an image. Based on this and statistical learning techniques, we derive misclassification bounds for CNN classifiers in Theorems \ref{thm4} and \ref{CE-bound}. For specific deformation classes, the obtained rates depend on the sample size and the number of pixels and the dependence on the input dimension is much more favorable than the curse of dimensionality observed in standard nonparametric convergence rates. The proposed setting has the potential to provide a more refined understanding of  phenomena such as overparametrization or the improved performance through data augmentation.

The article is structured as follows. In Section \ref{deformation-setting}, we introduce the image deformation model. As theoretical benchmarks, we introduce two classifiers for this deformation model in  Section~\ref{sec.image_process}. Section \ref{sec.CNN} analyzes a CNN-based classifier. The simulation study in Section~\ref{sec.num_sim} compares the three classifiers. A literature overview is provided in Section~\ref{defor-review}. We conclude in Section~\ref{s6} with a discussion of potential extensions and future research directions.

\textit{Notation:} For a real number $x$, $\lfloor x\rfloor$ represents the largest integer that is less than or equal to $x$, whereas $\lceil x\rceil$ represents the smallest integer that is greater than or equal to $x$. We denote vectors and matrices by bold letters, e.g., $ \mathbf{v}:=(v_1, \dots, v_d)$ and $ \mathbf{W} = (W_{i,j})_{i=1, \dots,m; j=1, \dots, n}$. As usual, $|\mathbf{v}|_{p} := (\sum_{i=1}^d |v_i|^p)^{1/p}$ and $| \mathbf{v}|_{\infty} := \max_i |v_i|.$ For a matrix $ \mathbf{W} = (W_{i,j})_{i=1, \dots,m; j=1, \dots, n},$ we define the maximum entry norm as $|\mathbf{W}|_{\infty} = \max_{i=1, \dots, m; j=1, \dots, n} |W_{i,j}|$. For two sequences $(a_n)_n$ and $(b_n)_n$, we write $a_n \lesssim b_n$ if there exists a constant $C$ such that $a_n \leq Cb_n$ for all $n$. For $m \geq 2$ and $a_1, a_2, \ldots, a_m$ we define $a_1 \vee a_2 \vee \dots \vee a_m = \max\{a_1,a_2, \dots, a_m\}$ and $a_1 \wedge a_2 \wedge \dots \wedge a_m = \min\{a_1, a_2, \dots, a_m\}$. For functions, $\|\cdot\|_{L^p(D)}$ denotes the $L^p$-norm on the domain $D.$ When $D=[0,1]^2,$ we also write $\|\cdot\|_p.$ 
For a function $A=(a_{1},a_{2}):\mathbb{R}^2\rightarrow\mathbb{R}^2$, we define $\|A\|_{L^{\infty}(D)}:=\max_{i=1,2}\sup_{ \mathbf{z}\in D}|a_i(\mathbf{z})|$ and set $\|A\|_{\infty}:=\|A\|_{L^{\infty}([0,1]^2)}$. For $B$ a set, the indicator function is denoted by $\1(x \in B)$. It takes the value $1$ if $x \in B$ and $0$ otherwise. Since we frequently work with bivariate functions, we write $f(\cdot,\cdot)$ for a function $(x,y)\mapsto f(x,y)$.

\section{Image deformation models}\label{deformation-setting}
We first discuss a specific case and then introduce the full image deformation model. For any integers $j,\ell\in\mathbb{Z}$, define \begin{equation*}
I_{j,\ell}=\Big[\frac{j-1}d,\frac jd\Big)\times\Big[\frac{\ell-1}d,\frac \ell d\Big),
\end{equation*}
representing a square with side length $1/d.$ A $d\times d$ image with $d^2$ pixels, as illustrated in Figure \ref{fig:foobar-1}, can be expressed as a bivariate function $$f:\mathbb{R}^2\to [0,\infty),$$ where the grayscale value of the $(j,\ell)$-th pixel is given by
\begin{equation}\label{ave-intensity}
\bar{f}_{j,\ell}=d^{2}\int_{I_{j,\ell}}f\left(u,v\right)dudv, \quad j,\ell\in\{1,\ldots,d\},
\end{equation}
representing the average intensity of $f$ on $I_{j,\ell}.$ The pixel value decodes the grayscale with smaller function values corresponding to darker pixels. To deal with image deformations, it is more convenient to define $f$ on $\mathbb{R}^2$ instead of $[0,1]^2.$

\begin{wrapfigure}{r}{.5\textwidth}
\centering
\vspace{-10pt}\includegraphics[width=0.4\textwidth]{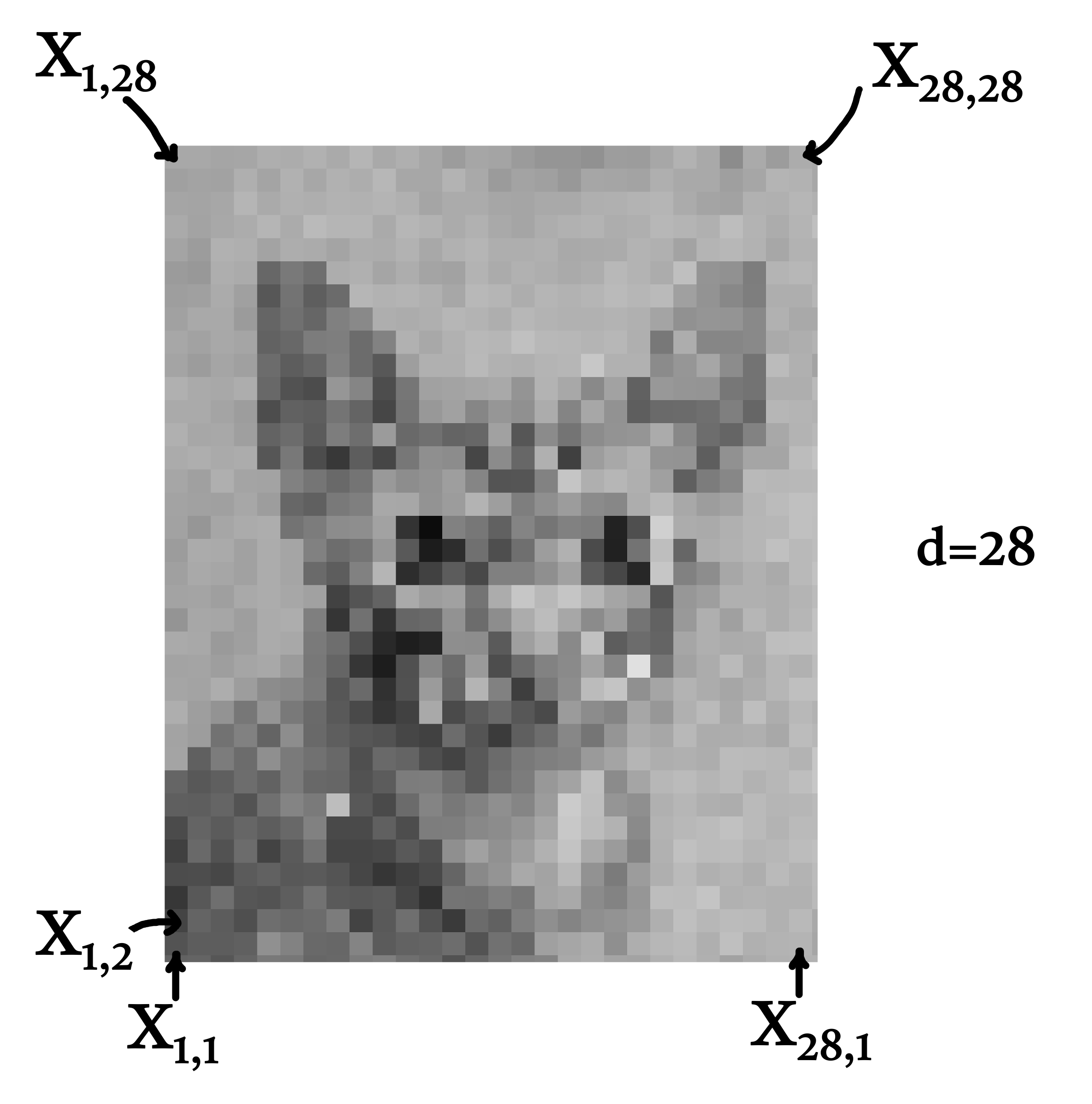}
\caption{Image represented by pixels}
\label{fig:foobar-1}
\end{wrapfigure}

The support of a function $g$ is defined as the set of all $x$ for which the function value $g(x)$ is non-zero. For a function $f$ representing an image, we refer to $f$ restricted to its support as the \textit{object}. The \textit{background} is defined as the complement of the support, that is, the set of $x$ with $f(x)=0$. Assuming that the images have zero background, all positive pixels are considered as part of the object itself. 

Next, we explore how simple transformations such as scaling, shifting, and brightness affect the function $f$, and consequently, the image. Multiplying the function values by a factor $\eta >1$ brightens the image,  making the pixel values appear whiter, while multiplying by $0<\eta<1$ darkens the image. Shifting the object within the image, either horizontally or vertically, corresponds to translating the function $f$ by a vector $(\tau,\tau'),$ changing the function values to $f(x-\tau,y-\tau').$ Stretching or shrinking the object along the $x-$ or $y-$axis transforms the function value to $f(\xi x, \xi' y)$, where $\xi<1$ or $\xi'<1$ stretches the object along the $x-$ or $y-$axis and $\xi, \xi'>1$ shrinks it. Combining these transformations, the function becomes $(x,y)\mapsto \eta f(\xi x-\tau,\xi' y-\tau')$, capturing the effects of brightness adjustment, translation, and scaling on the image. See Figure~\ref{fig:foobar-2} for an example of a deformed image of a cat under different scaling, shifting, and brightness adjustments.

\begin{wrapfigure}{r}{.5\textwidth}
\centering
\vspace{-10pt}\includegraphics[width=0.4\textwidth]{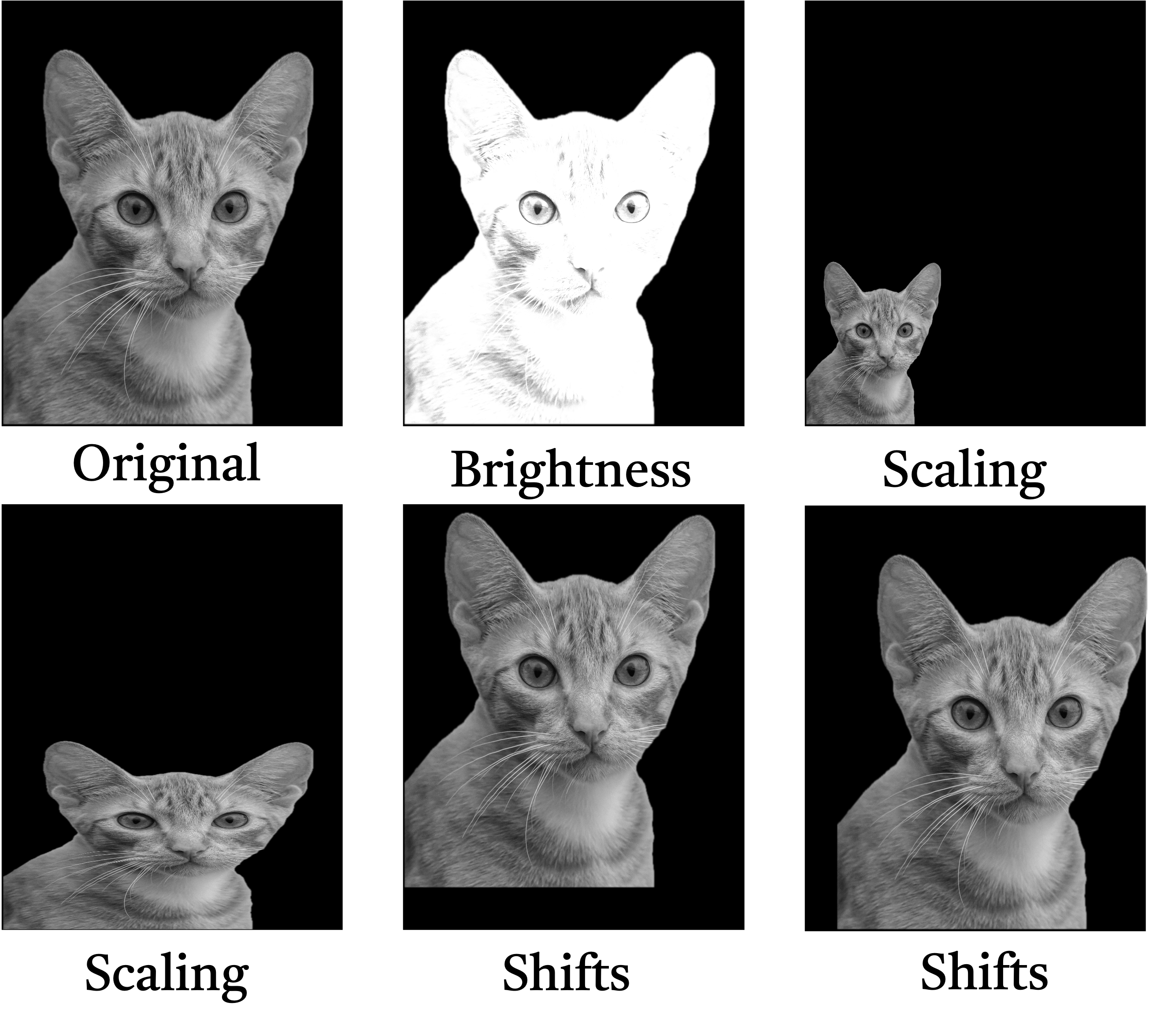}
\caption{Different deformations of a cat image.}
\label{fig:foobar-2}
\end{wrapfigure}

To distinguish between images of different object classes, the underlying idea of the data generating model is to assume that images from different classes correspond to different {\it template functions} $f$. By drawing the parameters $(\eta,\tau,\tau',\xi,\xi')$ randomly, each observed image in the dataset is then a random transformation of its corresponding template function. This means we observe $n$ independently generated pairs, each consisting of a $d\times d$ image and its corresponding class label. In the case of a supervised binary classification problem, these pairs are denoted by $(\bX_i,k_i)\in [0,\infty)^{d\times d} \times \{0,1\}.$ Here $k_i \in \{0,1\}$ is the $i$-th label and the $i$-th image is represented by a $d\times d$ matrix $\bX_i=(X^{(i)}_{j,\ell})_{j,\ell=1,\ldots,d}$ with entries
\begin{align}
    X^{(i)}_{j,\ell}= d^2\eta_i\int_{I_{j,\ell}}f_{k_i}\Big(\xi_i u - \tau_i, \xi^{\prime}_i v-\tau^{\prime}_i \Big) \, dudv,
    \label{eq.mod}
\end{align}
where $f_0,f_1$ are the two unknown template functions and $\eta_i,\xi_i, \xi_i^{\prime},\tau_i, \tau_i^{\prime}$ are unobserved independent random variables. Each image consists of $d^2$ pixels. The brightness factor $\eta_i$ is assumed to be positive. Throughout the article, we assume that the template functions $f_0,f_1$ are non-negative. This implies that all pixel values $X^{(i)}_{j,\ell}$ are also non-negative. 

In summary, model \eqref{eq.mod} generates images of the two objects using template functions $f_0, f_1$, where the shifts $\tau$, $\tau^{\prime}$, scaling factors $\xi$, $\xi^{\prime}$ and brightness $\eta$ are all random variables.

To extend model \eqref{eq.mod}, we introduce a more general framework in which the random transformations are deformations $A=(a_{1},a_{2}):\mathbb{R}^2\to \mathbb{R}^2$, belonging to a class of mappings $\mathcal{A}$.  In this generalized model, the deformed template function is expressed as
\begin{equation*}
f\circ A \mkern1mu (u,v) :=f\big(A(u,v)\big)=f\big(a_{1}(u,v),a_{2}(u,v)\big),
\end{equation*}
with $A \in \mathcal{A}$. Let $A_i$ denote the deformation applied to the $i$-th image in the sample. The image can then be represented by a $d\times d$ matrix $\bX_i=(X^{(i)}_{j,\ell})_{j,\ell=1,\ldots,d}$ with entries 
\begin{align}
X^{(i)}_{j,\ell}=d^{2}\eta_i\int_{I_{j,\ell}} f_{k_i} \circ A_i \mkern1mu (u,v) \, dudv.
\label{eq.mod-general}
\end{align}
Since each image can be viewed as an observation of a randomly deformed function, the proposed framework can be interpreted as a functional data analysis model adapted to image classification. This connection, along with its distinctions, will be further discussed in Section~\ref{defor-review}. For more on classification for functional data, see \cite{MR2168993, FDA_Review, ROSSI2006730, JacquesPreda, MR2899863}.

We now present some examples of specific deformation models. While these examples are parametric, the framework also allows for non-parametric models, see Section~\ref{sec.image_process}.

{\bf Affine transformations.} Affine transformations have been widely discussed in the fields of image processing and computer vision; see, e.g., \cite{fastaafine,reviewcnn}. The deformation is of the form
\begin{equation}
\label{affinetran}
    A(u,v)
    =\begin{pmatrix}
        b_1 & b_2\\
        b_3 & b_4
    \end{pmatrix}
    \begin{pmatrix}
        u\\
        v
    \end{pmatrix}
    -
    \begin{pmatrix}
        \tau\\
       \tau'
    \end{pmatrix},
\end{equation}
with real parameters $b_{1},\ldots,b_{4},\tau,\tau'.$ We recover model \eqref{eq.mod} as a special case by choosing deformations $b_1=\xi$, $b_4=\xi'$, and $b_2=b_3=0.$ Moreover, rotated, scaled and translated images $\bX_i=(X^{(i)}_{j,\ell})_{j,\ell=1,\ldots,d}$ with brightness adjustment can be described by composing a scaling in $x$- and $y$-direction with a rotation by an angle $\gamma\in[0,2\pi)$, that is, choosing
\begin{align}
    \begin{pmatrix}
    b_{1} & b_{2}\\
    b_{3} & b_{4}
    \end{pmatrix}=\begin{pmatrix}
    \xi\cos\gamma & -\xi'\sin\gamma\\
    \xi\sin\gamma & \xi'\cos\gamma
    \end{pmatrix}=\begin{pmatrix}
    \cos\gamma & -\sin\gamma\\
    \sin\gamma & \cos\gamma
    \end{pmatrix}\begin{pmatrix}
    \xi & 0\\
    0 & \xi'
    \end{pmatrix}.
    \label{eq.87etgb}
\end{align}

{\bf Nonlinear deformations.} Choosing $a_{1}(u,v) = u-\tau(u,v)$ and $a_{2}(u,v) = v-\tau'(u,v)$ for bivariate Lipschitz-continuous functions $\tau(u,v)$ and $\tau'(u,v)$ generates a class of nonlinear and local deformations \cite{5995635,M12,MR2957703}. Of particular interest among nonlinear deformations are wave-like deformations 
\begin{align*}
    a_1(u,v) = u + \alpha \sin(2\pi v / \lambda), \quad \text{and} \ \ a_2(u,v) = v,    
\end{align*}
which are used to model periodic textures, noise patterns, image warping, and spatial distortions such as those caused by lens aberrations \cite{processing,MR2723182,comvis}. The parameter $\alpha$ describes the amplitude of the wave-like deformation and $\lambda\not=0$ controls the wavelength.

\section{Classification via inverse mapping and image alignment}\label{sec.image_process}
We construct and analyze classifiers that are specifically tailored to the proposed image deformation models \eqref{eq.mod} and \eqref{eq.mod-general}.

\subsection{Classification via inverse mapping}\label{ima-inverse-sec}
Under the general deformation model \eqref{eq.mod-general}, each image $\bX=(X_{j,\ell})_{j, \ell=1, \dots, d}$ is generated by  
\begin{align*}   X_{j,\ell}=d^{2}\eta\int_{I_{j,\ell}}f \circ A \mkern1mu (u,v) \, dudv, 
\end{align*}
with $f$ the template function and $A$ the transformation modeling the deformation. Throughout this section, we assume $A$ is invertible. 

To define the classifier, we also interpret an image $\bX$ as a bivariate function on $\mathbb{R}^2$, via
\begin{equation}\label{image-as-function}
\bX(u,v):=\sum_{j,\ell\in\mathbb{Z}}X_{j,\ell}\1\big((u,v)\in I_{j,\ell}\big)
\end{equation}
for all $(u,v) \in \mathbb{R}^2$,
and setting $X_{j,\ell}:=0$ if $j,\ell\notin\{1,\ldots,d\}$. This means that $\bX$, viewed as a function, assigns to any point within the pixel its corresponding pixel value. As the random deformations $A\in\mathcal{A}$ do not contain information about the class label, a classifier should not depend on these deformations. To achieve this, we consider the set of inverse transformations $\mathcal{A}^{-1} = \{A^{-1} : A \in \mathcal{A}\}$. For computational feasibility, instead of using $\mathcal{A}^{-1}$ directly, we approximate it by a discretized subset $\mathcal{A}_d^{-1}$, which covers $\mathcal{A}^{-1}$ with balls of radius $1/d$ on a given domain $D_\mathcal{A}$, meaning that for any $A^{-1}\in\mathcal{A}^{-1}$, there exists a transformation $B\in\mathcal{A}_d^{-1}$ such that
\begin{equation}\label{gen-conver}
||A^{-1}-B||_{L^{\infty}(D_{\mathcal{A}})}\leq\frac{1}{d}.
\end{equation} 
To obtain theoretical guarantees, one needs to choose the domain $D_{\mathcal{A}}$ sufficiently large and depending on the regularity conditions imposed on the deformation class.

To construct the classifier, we apply each transformation $B\in \mathcal{A}^{-1}_{d}$ to the input of the bivariate image function and obtain
\begin{equation}\label{discre-inverse-deform}
\bX \circ B(u,v):=\mathbf{X}(B(u,v)) = \sum_{j,\ell \in \mathbb{Z}} X_{j,\ell}\1\big(B(u,v) \in I_{j,\ell}\big),\quad\mbox{for all}\  (u,v)\in\mathbb{R}^2.
\end{equation}
Given that any possible deformation $A$ is invertible, there exists an approximate inverse $B \in \mathcal{A}_d^{-1}$, such that $B\approx A^{-1}.$ For such a mapping $B,$ $\bX \circ B$ should generate a nearly deformation-free representation of the image. To account for the effects of the image brightness factor $\eta$,
we normalize the pixel values and obtain
\begin{align}
    T_{\bX\circ B}:=\frac{\bX \circ B}{\|\bX \circ B\|_{L^2(\mathbb{R}^2)}}.
    \label{eq.T_def-general}
\end{align}

Combining these steps, the proposed classifier $\wh k$ assigns a label to the new image $\bX$ by first applying all possible transformations from $\mathcal{A}_d^{-1}$ to both, the new image and each training image $\bX_i$. It then finds the training image whose transformed version $T_{\bX_i \circ B_i}$ is in Euclidean distance the closest fit to the transformed version $T_{\bX \circ B}$ of the new image. The label of this closest matching training image is assigned to the new image. This defines the  \textit{inverse mapping classifier}
\begin{align}
    \wh k:=k_{\hat{i}}, \quad \text{with} \ \ \wh i \in \argmin_{i\in\{1,\dots,n\}} \ \min_{B_{i},B\in\mathcal{A}^{-1}_{d}} \, \big\|T_{\bX_{i}\circ{B_{i}}}-T_{\bX\circ B}\big\|_{L^2(\mathbb{R}^2)}\label{gen-label}.
\end{align}
It can be interpreted as an one-nearest neighbor estimator in a transformed space.

We now state the assumptions for the statistical analysis. To make the theory tractable, we impose a Lipschitz condition on the template function.  

\begin{customthm}{1}\label{ass-1}
The supports of the two template functions $f_0, f_1$ are contained in a rectangle $[\beta_{\text{left}},\beta_{\text{right}}]\times[\beta_{\text{down}},\beta_{\text{up}}]\subseteq\cro{0,1}^2$. Additionally, $f_0, f_1$ are Lipschitz continuous, in the sense that there exists a positive constant $C_L$ such that for any real numbers $u,v,u',v',$
\begin{align}
    |f_k(u,v)-f_k(u',v')|\leq C_L \|f_k\|_1 \big(|u-u'|+|v-v'|\big), \quad k=0,1.
    \label{eq.Lip_cond-gen}
\end{align}
\end{customthm}

We also impose conditions to prevent the deformation from moving (part of) the object outside the image. Assumption~\ref{ass-1} ensures that the support of $f$ is contained in $[\beta_{\text{left}}, \beta_{\text{right}}] \times [\beta_{\text{down}},\beta_{\text{up}}]\subseteq\cro{0,1}^2$. The deformed object remains fully visible, if the support of the deformed function $f\circ A$ also lies in $[0,1]^2.$ This is the case if $[\beta_{\text{left}}, \beta_{\text{right}}] \times [\beta_{\text{down}},\beta_{\text{up}}]\subseteq A([0,1]^2).$

\begin{customthm}{2}\label{ass1}
(i). The class $\mathcal{A}$ contains the identity. For any $A=(a_{1},a_{2})\in \mathcal{A}$, $[\beta_{\text{left}}, \beta_{\text{right}}] \times [\beta_{\text{down}},\beta_{\text{up}}]\subseteq A([0,1]^2),$ and the functions $a_1,a_2$ have continuous partial derivatives on $\mathbb{R}^2$, bounded in the supremum norm by a constant $C_{\mathcal{A}}$. (ii). Assume $C_{\mathcal{J}}^2:=\inf_{A\in \mathcal{A}} \, \inf_{(u,v)\in\mathbb{R}^2}\big|\det(J_{A}(u,v))\big|>0$ with $J_A$ the Jacobian matrix of $A$.
\end{customthm}

To ensure correct classification, we also need to guarantee that two images from different object classes cannot be represented as transformations of the template function corresponding to the opposite class, as otherwise, distinguishing between the two classes becomes impossible. 

To formalize this separation between the two object classes with template function $f_0, f_1$, we introduce the separation quantity 
\begin{align}
D := D(f_0,f_1) \vee D(f_1,f_0),\  \mbox{with}\  D(f,g):=\frac{\inf_{a\in \mathbb{R},\,\{A_i\}_{i=1}^{4}\subseteq\mathcal{A}} \|a f\circ A_1\circ A_2^{-1}-g\circ A_3\circ A_{4}^{-1}\|_{L^2(\mathbb{R}^2)}}{\|g\|_{L^2(\mathbb{R}^2)}}.
\label{eq.D_part2_gen}
\end{align}
The quantity $D(f,g)$ measures the normalized minimal $L^2$-distance between all possible deformations of $g$ and $f$ under transformations from $\mathcal{A}\circ\mathcal{A}^{-1}$. When $\mathcal{A}$ forms a group, this expression simplifies to
$D(f,g)=\inf_{a\in \mathbb{R},\,A,A'\in \mathcal{A}} \|a f\circ A-g\circ A'\|_{L^2(\mathbb{R}^2)}/\|g\|_{L^2(\mathbb{R}^2)},$ measuring the normalized minimal $L^2$-distance between all possible deformations of $g$ and $f$ under transformations from $\mathcal{A}$. 

\begin{thm}
\label{thm.main_part1_general}
Let $(\bX, k), (\bX_1, k_1), \dots, (\bX_n, k_n)$ be defined as in \eqref{eq.mod-general}. Suppose that the labels $0$ and $1$ occur at least once in the training data, that is, $\{i:k_i=0\}\neq \varnothing$ and $\{i:k_i=1\}\neq \varnothing$. Assume moreover that $f_0$ and $f_1$ satisfy Assumption \ref{ass-1} with Lipschitz constant $C_L$ and that Assumption~\ref{ass1} holds with constants $C_{\mathcal{A}}, C_{\mathcal{J}}$. If $D_{\mathcal{A}}=[-2C_{\mathcal{A}} - 1, 2C_{\mathcal{A}} + 1]^2$ in \eqref{gen-conver} and 
\begin{align}
\label{condition}
D>C(C_L,C_\mathcal{A},C_{\mathcal{J}})/d,
\end{align}
 where $D$ is as defined in \eqref{eq.D_part2_gen}, and $C(C_L,C_\mathcal{A},C_{\mathcal{J}})$ is a sufficiently large constant only depending on $C_{L},C_\mathcal{A},C_{\mathcal{J}}$, then the classifier $\wh{k}$ defined in \eqref{gen-label} will recover the correct label, that is,
\begin{align*}
    \wh{k}=k.
\end{align*}
\end{thm}

The proof of Theorem~\ref{thm.main_part1_general} is deferred to Section~\ref{app.proofs_image_alignm_gen}. The result shows that classifier \eqref{gen-label} guarantees perfect classification of any given image if the separation quantity satisfies $D>C/d$ for a sufficiently large constant $C,$ and the dataset contains at least one image from each class. As $d\to \infty,$ $C/d\to 0$ and the condition $D>C/d$  holds for all sufficiently large $d$. This aligns with the intuition that a minimal image resolution is necessary for classification. The following result provides a simple tool to check condition \eqref{condition} if $\mathcal{A}$ is a group (see also Figure~\ref{fig:SA_visualization}). The proof is given in Section~\ref{app.proofs_image_alignm_gen}.
\begin{lemma}\label{example-lemma}
Let $f_0$, $f_1$ be two template functions and let the deformation set $\mathcal{A}$ be a group satisfying Assumption~\ref{ass1}-(i), with constant $C_{\mathcal{A}}$. Then, condition \eqref{condition} is satisfied, whenever $$d>\sqrt{2}C\big(C_L,C_\mathcal{A},C_{\mathcal{J}}\big)C_{\mathcal{A}} \sup_{A\in \mathcal{A}} \, \frac{\|f_1\|_{L^2(\mathbb{R}^2)}}{\|f_1\|_{L^2((\supp(f_0\circ A))^c)}}.$$
\end{lemma}
\begin{figure}[t]
    \centering
\includegraphics[width=0.8\textwidth,height=4cm]{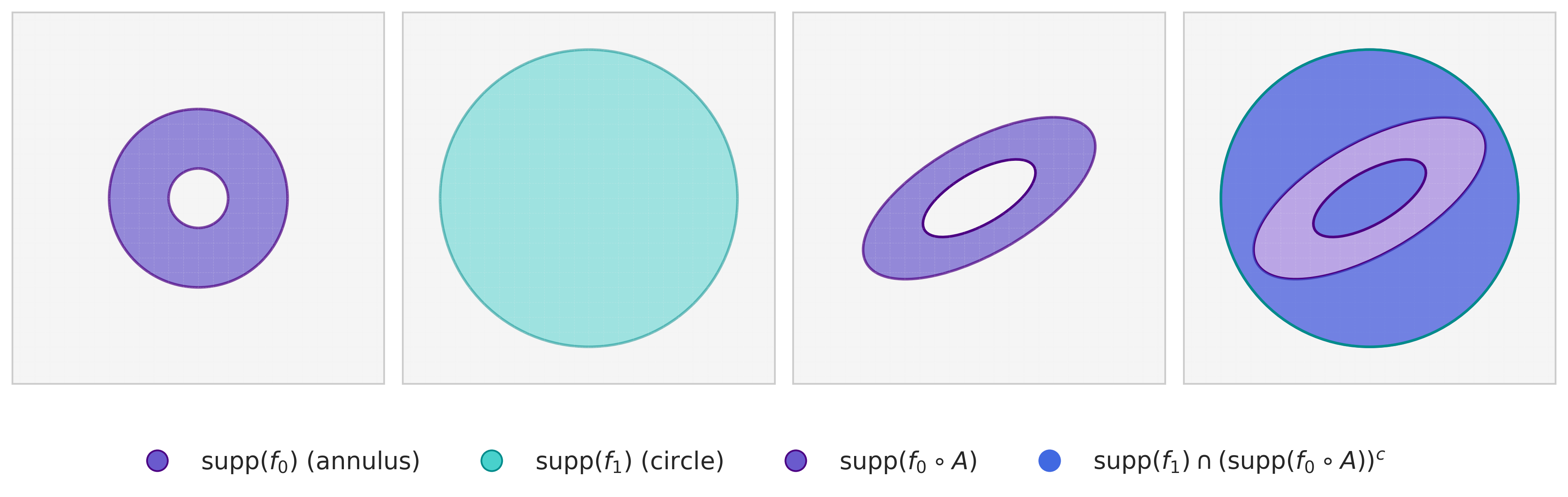}
\caption{Illustration of why the separation distance can be bounded from below as in Lemma~\ref{example-lemma} if deformations of the template functions cannot map the support on each other.}
\label{fig:SA_visualization}
\end{figure}

Separation under distance $D$ is different from the Euclidean distance-based criterion that is typically employed in the 1-nearest neighbor method. The Euclidean distance is significantly more rigid than the proposed distance metric $D$, making it inherently less suitable for classification in deformation-sensitive contexts. If, for example, the deformation set $\mathcal{A}$ includes random shifts, then two shifted functions $f_i \circ A$ and $f_i \circ A'$ based on the same template function $f_i$ will have a small $D$-distance, but their Euclidean distance may be large if $A$ and $A'$ shift in different directions. 

The classifier can account for a wide range of image deformations, but discretization of the entire set of inverse mappings can be computationally demanding. Therefore, we consider this classifier more as a theoretical benchmark for the general image deformation model \eqref{eq.mod-general} rather than an effective method for practical applications. When dealing with specific deformation models, the computational demands can be significantly reduced. For instance, if the deformation class $\mathcal{A}$ is a group, then $\mathcal{A}^{-1}=\mathcal{A}$ and we can instead consider the classifier
\begin{align*}
    \wh k:=k_{\hat{i}}, \quad \text{with} \ \ \wh i \in \argmin_{i\in\{1,\dots,n\}} \ \min_{A\in\mathcal{A}_{d}} \, \big\|T_{\bX_{i}}-T_{\bX\circ A}\big\|_{L^2(\mathbb{R}^2)},
\end{align*}
where $\mathcal{A}_{d}$ denotes an $1/d$-covering of $\mathcal{A}$.

\begin{wrapfigure}{r}{.5\textwidth}
\centering\vspace{-2pt}
\includegraphics[width=0.3\textwidth,height=2.8cm]{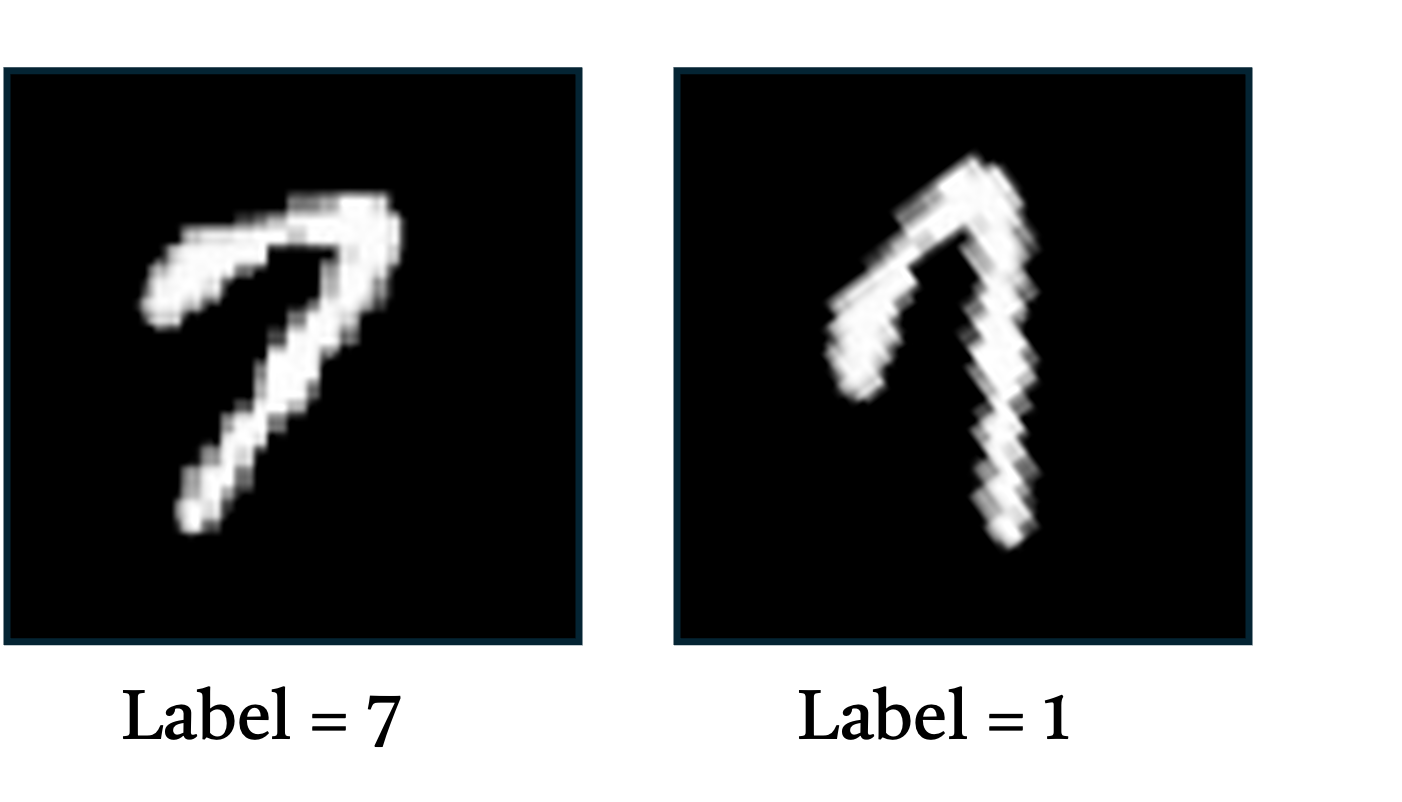}
\caption{Deformed MNIST images of digits $7$ and $1.$}
\label{fig:separation}
\end{wrapfigure}

This classifier resembles traditional registration techniques in medical imaging and computer vision. Given a (moving) source image $\mathrm{S}$ and a target image $\mathrm{T}$, the goal of image registration is to find a map $\phi: \mathbb{R}^2 \rightarrow \mathbb{R}^2$ such that $\mathrm{S} \circ \phi(\mathbf{x}) \approx \mathrm{T}(\mathbf{x})$ \cite{6522524, 2025103385}. This is typically formulated as an optimization problem
\begin{equation}\label{registr-obj}
\wh\phi=\argmin_{\phi} \, L(\mathrm{S} \circ \phi,\mathrm{T})+R(\phi),  \end{equation}
where $L$ is a loss function measuring similarity, and $R(\cdot)$ is a regularizer. The regularizer can be omitted for low-dimensional transformation models, such as rigid or affine transformations \cite{YANG2017378}. 

In Theorem~\ref{thm27}, we show that if the set $\mathcal{A}$ is sufficiently rich, the considered separation criterion $D\gtrsim 1/d$, as defined in \eqref{eq.D_part2_gen},
is optimal. This means that any smaller bound could result in deformed versions of one template function being representable by a template function of the opposite class, thereby making it impossible to distinguish between the two classes; see Figure \ref{fig:separation} for an illustration of two deformed MNIST images with labels $7$ and $1$ that can be transformed into each other by a rotation, making classification ambiguous.

We now verify the imposed assumptions for the specific deformation models introduced in Section~\ref{deformation-setting}, with proofs of the following lemmas provided in Section~\ref{app.proofs_image_alignm_gen}. Let $y_{+}:=\max\{y,0\}.$
\begin{lemma}\label{ex_scale}
Let $[\beta_{\text{left}}, \beta_{\text{right}}] \times [\beta_{\text{down}}, \beta_{\text{up}}]=[1/4,3/4]\times[1/4,3/4]$. The class of deformations in \eqref{eq.mod} with $1/2\leq |\xi|, |\xi'|\leq C_{\mathcal{A}},$ $|\tau|,|\tau'|\leq\ell_{s},$
\begin{align}
-(-\xi)_{+}-\frac{1}{4}\leq\tau\leq\xi_{+}-\frac{3}{4}\quad \text{and} \quad -(-\xi')_{+}-\frac{1}{4}\leq\tau'\leq\xi'_{+}-\frac{3}{4},
\label{eq.8e7gf}
\end{align}
satisfies Assumption \ref{ass1} with constants $C_{\mathcal{A}}$ and $C_{\mathcal{J}}=1/2$. Moreover, there exists a $1/d$-covering of $\mathcal{A}^{-1}$ with cardinality $|\mathcal{A}_d^{-1}|\asymp d^{4}$. 
\end{lemma}

\begin{lemma}
\label{lem.rot}
The deformation model described in \eqref{affinetran} and \eqref{eq.87etgb} is
\begin{align*}
A(u,v) =\begin{pmatrix}
    \cos\gamma & -\sin\gamma\\
    \sin\gamma & \cos\gamma
    \end{pmatrix}\begin{pmatrix}
    \xi & 0\\
    0 & \xi'
    \end{pmatrix}
    \begin{pmatrix}
        u\\
        v
    \end{pmatrix}
    -
    \begin{pmatrix}
        \tau\\
       \tau'
    \end{pmatrix}.
\end{align*}
Let $[\beta_{\text{left}}, \beta_{\text{right}}] \times [\beta_{\text{down}}, \beta_{\text{up}}]=[1/4,3/4]\times[1/4,3/4]$. The class of deformations with 
$1/2\leq |\xi|, |\xi'|\leq C_\mathcal{A},$ $|\tau|,|\tau'|\leq\ell_{s},$ $\gamma\in[0,\pi/2)$, and 
$$-(-\xi)_{+}-\frac{1}{4}(\cos\gamma+\sin\gamma)\leq\tau\cos\gamma+\tau'\sin\gamma\leq\xi_{+}-\frac{3}{4}(\cos\gamma+\sin\gamma),$$
$$-(-\xi')_{+}-\frac{1}{4}\cos\gamma+\frac{3}{4}\sin\gamma\leq\tau'\cos\gamma-\tau\sin\gamma\leq\xi'_{+}-\frac{3}{4}\cos\gamma+\frac{1}{4}\sin\gamma,$$ satisfies Assumption~\ref{ass1} with constants $C_\mathcal{A}$, $C_{\mathcal{J}}=1/2$ and there exists a $1/d$-covering of $\mathcal{A}^{-1}$ with $|\mathcal{A}_d^{-1}|\asymp d^5$.
\end{lemma}

Consider the non-linear deformations $A = (a_1, a_2) \in \mathcal{A}$, 
\begin{equation}\label{gen-non-linear-mod}
a_1(u, v) = h_1(u,v)\quad\mbox{and}\quad a_2(u, v) = h_2(v),    
\end{equation}
where $h_2(v)$ is strictly monotone with respect to $v$, and for any fixed $v \in \mathbb{R}$, $h_1(u, v)$ is strictly monotone with respect to $u$. In this case, the transformation is invertible because one can always retrieve $v$ from $h_2(v)$ and then $u$ from $h_1(u, v).$ A specific example is the wave-like deformation model discussed in Section~\ref{deformation-setting}. The following describes the construction of $\mathcal{A}_d^{-1}$.
\begin{lemma}\label{wave-verify}
Let $[\beta_{\text{left}}, \beta_{\text{right}}] \times [\beta_{\text{down}}, \beta_{\text{up}}]=[1/4,3/4]\times[1/4,3/4]$. For the class of deformations
$$A(u,v)=\big(a_1(u,v),a_2(u,v)\big)=\big(u+ \alpha \sin(2\pi v / \lambda),v\big),$$ with $|\lambda|\geq C_{\text{lower}}>0,$ and $|\alpha|\leq1/4,$ Assumption~\ref{ass1} holds with $C_{\mathcal{A}}=\max\{\pi/(2C_{\text{lower}}),1\}$ and $C_{\mathcal{J}}=1.$ Moreover, there exists a $1/d$-covering $\mathcal{A}_{d}^{-1}$ of $\mathcal{A}^{-1}$ with cardinality $|\mathcal{A}_{d}^{-1}|\asymp d^2$.
\end{lemma}

For the general nonlinear model \eqref{gen-non-linear-mod}, the set $\mathcal{A}_d^{-1}$ can be constructed analogously to Lemma~\ref{wave-verify}. Specifically, if $a_1 \in \mathcal{F}_1$, $a_2 \in \mathcal{F}_2$, with $\mathcal{F}_1,\mathcal{F}_2$ being classes of functions satisfying \eqref{gen-non-linear-mod}, and $\mathcal{F}_1^{\delta} \subseteq \mathcal{F}_1$ and $\mathcal{F}_2^{\delta} \subseteq \mathcal{F}_2$ denoting $\delta$-coverings with respect to the sup-norm, then $$\mathcal{A}_{\delta}:=\big\{(a_1,a_2):\;a_1\in\mathcal{F}_1^{\delta},\ a_2\in\mathcal{F}_2^{\delta}\big\}$$ forms a $\delta$-covering of $\mathcal{A}=\{(a_1,a_2):\ a_1\in\mathcal{F}_1,\ a_2\in\mathcal{F}_2\}$. If for all $A=(a_1, a_2) \in \mathcal{A}$, we have $|\partial_u a_1(u,v)|, |\partial_v a_2(u,v)| \geq \mathcal{K} > 0$, then $\mathcal{A}^{-1}$ admits a $\delta' = \delta / \mathcal{K}$-covering, formed by inverting the elements of $\mathcal{A}_{\delta}$.

The next lemma considers compositions of deformation classes, with the proof provided in Section \ref{app.proofs_image_alignm_gen}.  This allows to verify Assumption~\ref{ass1} for more involved deformation classes.

\begin{lemma}\label{composite-lemma}
Let $\mathcal{A}_{1}$ and $\mathcal{A}_{2}$ satisfy Assumption~\ref{ass1} with respective constants $C_{\mathcal{A}_1}$, $C_{\mathcal{J}_1}$ and $C_{\mathcal{A}_2},$ $C_{\mathcal{J}_2}$. If for any $A_{1}\in\mathcal{A}_{1}$, $\cro{0,1}^2\subseteq A_1(\cro{0,1}^2)$, then, the composite deformation class $\mathcal{A}_{2}\circ\mathcal{A}_{1}:=\{A_{2}\circ A_{1},\;A_{1}\in\mathcal{A}_{1},A_{2}\in\mathcal{A}_{2}\}$ satisfies Assumption~\ref{ass1} with constants $C_{\mathcal{A}}=2C_{\mathcal{A}_1}C_{\mathcal{A}_2}$ and $C_{\mathcal{J}}=C_{\mathcal{J}_1}C_{\mathcal{J}_2}$. 
\end{lemma}
\subsection{Classification via image alignment}\label{sec.image_alignment}
We now focus on the specific image deformation model \eqref{eq.mod} that incorporates random scaling, shifts, and brightness adjustment. An image 
$\bX=(X_{j,\ell})_{j, \ell =1,\ldots,d}$ is then generated by
\begin{align}
X_{j,\ell}=d^2\eta\int_{I_{j,\ell}}f\big(\xi u - \tau, \xi^{\prime} v-\tau^{\prime} \big) \, dudv,\label{eq.X_f_model}
\end{align}
with ($\eta, \xi,\xi',\tau,\tau')$ the random deformation parameters. In this setting, one can find a transformation that aligns the images, in the sense that the transformed images are nearly independent of the deformation parameters. We propose a one-nearest-neighbor classifier based on the aligned training and test images. This approach is similar to curve registration in functional data analysis, see for instance \cite{MR3432837}. The classifier can be efficiently computed but relies on this specific deformation model. 

The first step of the construction is to approximately detect the object within the image by identifying the smallest axis-aligned rectangle that contains all non-zero pixel values; see the left image in Figure \ref{fig:Z} for an illustration. We refer to this as the \textit{rectangular support}. To determine the rectangular support, we denote the smallest and largest indices corresponding to the non-zero pixels in the image by
\begin{align}
    j_{-}:=\argmin \big\{j: X_{j,\ell}>0\big\}, \quad 
    j_{+}:=\argmax \big\{j: X_{j,\ell}>0\big\}
    \label{eq.j_pm_def}
\end{align}
and
\begin{align}
    \ell_{-}:=\argmin \big\{\ell: X_{j,\ell}>0\big\}, \quad 
    \ell_+:=\argmax \big\{\ell: X_{j,\ell}>0\big\}.
    \label{eq.l_pm_def}
\end{align}
The rectangular support of the image is then given by the rectangle $[j_-/d,j_+/d]\times [\ell_-/d,\ell_+/d]$. Similarly, we define the rectangular support of a function as the smallest rectangle containing the support. From the definition of the model \eqref{eq.mod}, it follows that the rectangular support of the image $\bX$ should be close to the rectangular support of the underlying deformed function $f(\xi\cdot-\tau,\xi'\cdot-\tau')$.

We now rescale the rectangular support of the image to the unit square $[0,1]^2.$ The line $[0,1] \ni t\mapsto j_{-}+t(j_+-j_-)$ starts at $j_-$ and ends for $t=1$ at $j_+$. We define the rescaled pixel values as
\begin{align}
    Z_{\bX}(t,t^{\prime}):=X_{\lfloor j_{-}+t(j_+-j_-)\rfloor,\lfloor \ell_{-}+t^{\prime}(\ell_+-\ell_-)\rfloor},
    \label{eq.Z_def}
\end{align}
with $\lfloor \cdot \rfloor$ the floor function. The function $(t,t^\prime) \mapsto Z_{\bX}(t,t^{\prime})$ runs through the pixel values on the rectangular support, now rescaled to the unit square $[0,1]^2;$ see the middle image of Figure \ref{fig:Z} for an illustration. This rescaling makes $Z_{\bX}(t,t^{\prime})$ approximately invariant to random shifts and scalings of the image, up to smaller-order effects.

\begin{wrapfigure}{r}{.5\textwidth}
\centering\vspace{-8pt}
\includegraphics[width=0.45\textwidth]{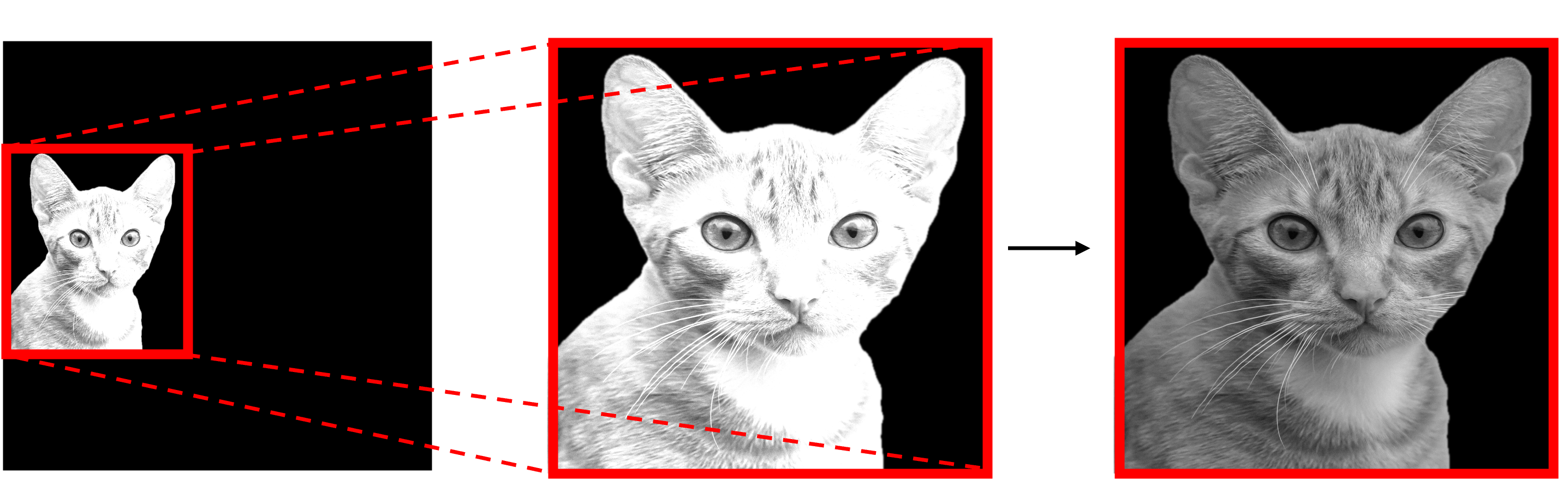}
\caption{$\bX,$ $Z_\bX,$ and $T_\bX$.}
\label{fig:Z}
\end{wrapfigure}
To find a quantity that is independent of the brightness adjustment $\eta,$ we normalize the pixel values by $Z_{\bX}/\|Z_{\bX}\|_2.$ The image alignment transformation is then given by
\begin{align}
T_\bX:=\frac{Z_{\bX}}{\|Z_{\bX}\|_2};
\label{eq.T_def}
\end{align}
see again Figure \ref{fig:Z} for an illustration. Based on these aligned and normalized images, we study a one-nearest-neighbor classifier $\wh k$ that assigns the label of $\bX_i$ from the training set to $\bX$, where $T_{\bX_i}$ is the closest to $T_\bX$. The \textit{image alignment classifier} is defined as 
\begin{align}
\wh k:=k_{\hat{i}}, \quad \text{with} \ \ \wh i \in \argmin_{i\in\{1,\dots,n\}} \, \big\|T_{\bX}-T_{\bX_i}\big\|_2.
    \label{eq.k_def}
\end{align}
This is an interpolating classifier, in the sense that if the new $\bX$ coincides with one of the images in the training set $\bX_i,$ then, $T_{\bX}=T_{\bX_i}$, $\wh i=i,$ and $\wh k=k_i.$

To study this model, we assume that $\xi, \xi' \geq 1/2$. Applying Lemma \ref{ex_scale} leads to the following assumption on the parameters to ensure full visibility of the objects on the deformed images.
\begin{customthm}{2'}\label{ass1-prime}
The supports of the two template functions $f_0, f_1$ are contained in $[1/4,3/4]^2$, and the random parameters $(\tau,\tau',\xi,\xi')$ satisfy $\xi,\xi^{\prime}\geq 1/2,$
$$-\frac{1}{4}\leq\tau\leq\xi-\frac{3}{4},\quad\mbox{and}\quad -\frac{1}{4}\leq\tau'\leq\xi'-\frac{3}{4}.$$
\end{customthm}

Assumption \ref{ass1-prime} indicates that the range of possible shifts $\tau, \tau'$ increases as $\xi, \xi' $ become larger. This is reasonable, as larger values of $\xi,\xi'$ shrink the object. Consequently, larger shifts $\tau, \tau'$ can be applied without moving parts of the object out of the image. 

Under the deformation model \eqref{eq.mod}, we have $f \circ A \mkern1mu (u,v)=f(\xi u-\tau,\xi'v-\tau')$. Based on this, we consider the separation quantity $D=D(f_0,f_1) \vee D(f_1,f_0)$ with
\begin{align}
D=D(f_0,f_1) \vee D(f_1,f_0),\quad\mbox{with}\quad D(f,g):=\frac{\inf_{a,b,c,b',c' \in \mathbb{R}} \|a f\big(b\cdot-c, b' \cdot -c'\big) -g\|_{L^2(\mathbb{R}^2)}}{\|g\|_{L^2(\mathbb{R}^2)}}.
\label{eq.D_part2}
\end{align}
It measures the normalized minimal $L^2$-distance between the function $g$ and any potential deformation of the function $f$ due to rescaling, shifting, and change in brightness.

\begin{thm}
\label{thm.main_part1}
Let $(\bX, k), (\bX_1, k_1), \dots, (\bX_n, k_n)$ be defined as in \eqref{eq.mod}. Suppose that the labels $0$ and $1$ occur at least once in the training data, that is, $\{i:k_i=0\}\neq \varnothing$ and $\{i:k_i=1\}\neq \varnothing$. Assume moreover that $f_0$ and $f_1$ satisfy Assumption \ref{ass-1} with Lipschitz constant $C_L$ and that Assumption~\ref{ass1-prime} holds. Set $\Xi_n := \max\{1,\xi,\xi^{\prime},\xi_1,\xi^{\prime}_1,\ldots,\xi_n,\xi^{\prime}_n\}$. If $D>4K (C_L\vee C_L^{2})\Xi_n^2/d$, with $D$ as defined in \eqref{eq.D_part2}, and $K$ the universal constant in Lemma~\ref{le1}, then the classifier $\wh{k}$ as defined in \eqref{eq.k_def} will recover the correct label, that is,
\begin{align*}
    \wh{k}=k.
\end{align*}
\end{thm}
The proof of Theorem~\ref{thm.main_part1} is postponed to Section~\ref{proofs-image-align}. The result indicates that, under proper conditions, the classifier accurately identifies the label when the template functions $f_0$ and $f_1$ are separated by $\gtrsim 1/d$ in $L^2$-norm, consistently across all conceivable image deformations. This finding aligns with the theoretical performance outlined in Theorem~\ref{thm.main_part1_general} for the general classification approach. The advantage of implementing the image alignment approach is that it eliminates the need to discretize the set of inverse mappings, thereby substantially improving computational efficiency.

We further prove a corresponding lower bound, showing that a $1/d$-rate in the separation criterion is necessary. Without this condition, the same image could be represented by deformations of both template functions, making classification impossible.

\begin{thm}
\label{thm27}
For any $\tau,\tau',\xi,\xi'$ satisfying Assumption~\ref{ass1-prime}, there exist non-negative Lipschitz continuous functions $f_{0}$, $f_{1}$ with Lipschitz constants $C_{f_{0}}$ and $C_{f_{1}}=C_{f_{1}}(\xi,\xi')$ respectively, such that for any $d\geq32(\xi\vee\xi')$,
\begin{align*}
\frac{\big\|f_1- f_0\big\|_{L^2(\mathbb{R}^2)}}{\|f_0\|_{L^2(\mathbb{R}^2)}}\geq \frac{1}{28 d},
\end{align*}
and the data generating model \eqref{eq.X_f_model} can be written as
\begin{align*}
X_{j,\ell}=d^2\eta\int_{I_{j,\ell}} f_{1}(\xi u-\tau,\xi'v-\tau') \, dudv=d^2\eta\int_{I_{j,\ell}} f_{0}(\xi u-\tau,\xi'v-\tau') \, dudv.
\end{align*}
Consequently, the same pixel values are generated under both classes.
\end{thm}
The proof of Theorem~\ref{thm27} is deferred to Section~\ref{proofs-image-align}, which shows that the separation rate $1/d$ arises from the Lipschitz continuity of $f_0$ and $f_1.$ If, instead, we assume H\"older regularity with index $\beta \leq 1,$ we expect the lower bound to be of order $d^{-\beta},$ which we also conjecture to be the optimal separation rate in this case. 

Since the deformation model \eqref{eq.mod} is a specific case of the general model \eqref{eq.mod-general}, the lower bound derived in Theorem~\ref{thm27} also applies to the general deformation model \eqref{eq.mod-general}. This indicates that the rate $1/d$ is indeed necessary to distinguish between the two classes.

In the presence of background noise, finding the rectangular support of the object is hard, as non-zero pixel values in the image may belong to the background. As an alternative one could instead rely on $t$-level sets $\{ \mathbf{x}: g( \mathbf{x}) > t\}$. Define the $t$-rectangular support as the smallest rectangular containing the $t$-level set. For non-negative $g$, the previously introduced rectangular support corresponds to $t=0.$ To construct a classifier, we can first normalize the pixel values to eliminate the brightness factor $\eta$, then follow a similar strategy as in the zero-background case by determining the $t$-rectangular support for each image in the dataset. While increasing $t$ enhances robustness to background noise, it also reduces the $t$-rectangular support and causes larger constants in the separation condition between the two classes. 

If an image contains multiple non-overlapping objects, we suggest to first apply an image segmentation method (see, e.g., \cite{HARALICK1985100, 9356353}) to isolate each object. The image alignment classifier can then be applied to each segment separately.

The analysis of both classifiers generalizes to the multi-class case with $K$ classes, provided that the label of each class appears at least once in the training data, and the separation quantity between each pair of class templates, defined as
$D_{i,j} := D(f_i, f_j) \vee D(f_j, f_i)$, satisfies $D_{i,j} \gtrsim 1/d$ for all $i, j \in \{1, \ldots, K\}$. Under the conditions outlined above and Assumptions~\ref{ass-1} (adapted to the multi-class setting) and \ref{ass1}, the classifier will correctly recover the class labels.

The image alignment step in the construction of the classifier leads to a representation of the image that is, up to discretization effects, independent to rescaling and shifting of the object; see Figure \ref{fig:Z}. While the proposed image alignment transformation is natural and mathematically tractable for this specific deformation model, other transformations could be employed instead, such as Fourier transform, Radon transform, and scattering transform \cite{5995635,M12,MR2957703}.

\section{Classification with convolutional neural networks}
\label{sec.CNN}
Convolutional neural networks (CNNs) have achieved remarkable practical success, particularly in the context of image recognition \cite{LBH15,KSG17,Sch15,RZ17}. In this section, we analyze the performance of CNN-based classifiers within the framework of the general deformation model~\eqref{eq.mod-general}, introduced in Section~\ref{deformation-setting}. We begin by introducing the mathematical notation to formalize the structure of a CNN. Here we focus on a particular CNN structure and refer to \cite{Yamashita_2018, RZ17} for a broader introduction.

\subsection{Convolutional neural networks}\label{cnn-ope}
We analyse a CNN with a rectified linear unit (ReLU) activation function and a softmax output layer. Generally, a CNN consists of three fundamental components: Convolutional, pooling and fully connected layers. The input to a CNN is a $d\times d$ matrix representing the pixel values of an image. In the convolutional layer, so-called filters (that is, weight matrices of pre-defined size) slide across the image, performing convolutions at each spatial location. Finally, an element-wise nonlinear activation function $\sigma: \mathbb{R} \to \mathbb{R}$, in our case the ReLU function, is applied to the outcome of the convolutions, producing the output matrices known as feature maps. 

In this work, we consider CNNs with a single convolutional layer followed by one pooling layer. For mathematical simplicity, we introduce a compact notation tailored to our setting and refer to \cite{KKW20, KL2025} for a general mathematical definition. Recall that the input to the network is an image represented by a $d \times d$ matrix $\bX$. For a $d \times d$ matrix $ \mathbf{W}$, we define its \textit{quadratic support} $[ \mathbf{W}]$ as the smallest square sub-matrix of $ \mathbf{W}$ that contains all its non-zero entries. For instance, 
\begin{align*}
    [ \mathbf{W}] = \begin{pmatrix}
    1 & 1\\
    0 & 0
    \end{pmatrix}
\end{align*}
is the quadratic support of the matrix
\begin{align*}
     \mathbf{W} = \begin{pmatrix}
0 & 1 & 1 \\
0 & 0 & 0 \\
0 & 0 & 0 
\end{pmatrix}.
\end{align*}
In this context, $[ \mathbf{W}]$ represents the network filter. To describe the action of the filter on the image, denoted as $[ \mathbf{W}] \star \bX$, assume that $[ \mathbf{W}]$ is a filter of size $\ell \in \{1, \dots, d\}$.  We extend the matrix $\bX$ by padding it with zero matrices on all sides. Specifically, we define the enlarged matrix as 
\begin{align*}
    \bX':= \begin{bmatrix}
     0_{\ell \times \ell} & 0_{\ell \times d} & 0_{\ell \times \ell}\\
     0_{d \times \ell} & \bX & 0_{d \times \ell}\\
     0_{\ell \times \ell} & 0_{\ell \times d} & 0_{\ell \times \ell}
    \end{bmatrix},
\end{align*}
where $0_{j \times k}$ denotes a $j\times k$ zero matrix. The $(i,j)$-th patch is defined as the $\ell \times \ell$ submatrix 
$\bX'_{i,j}:=(X'_{i+a, j+b})_{a,b = 0, \dots, \ell-1}.$ We further define $([ \mathbf{W}] \star \bX)_{i,j}$ as the entry-wise sum of the Hadamard product of $[ \mathbf{W}]$ and  $\bX'_{i,j}$. Thus, the matrix $[ \mathbf{W}] \star \bX$ contains all entry-wise sums of the Hadamard product of $[ \mathbf{W}]$ with all patches. Finally the ReLU activation function $\sigma(x)=\max\{x,0\}$ is applied element-wise. A feature map can then be expressed as
\begin{align*}
    \sigma([ \mathbf{W}] \star \bX).
\end{align*}
This extension of the matrix $\bX$ to $\bX'$ is a form of zero padding, which ensures that the in-plane dimension of the input remains equal after convolution \cite{H19}. A pooling layer is typically applied to the feature map. While max-pooling extracts the maximum value from each patch of the feature map, average-pooling computes the average over each patch. In this work we consider CNNs with \textit{global} max-pooling in the sense that the max-pooling extracts from every feature map $\sigma([ \mathbf{W}] \star \bX)$ the largest absolute value. The feature map after global max-pooling is then given by 
\begin{align*}
     \mathbf{O}(\bX) = |\sigma([ \mathbf{W}] \star \bX)|_{\infty}.
\end{align*}
For $k$ filters described by the matrices $ \mathbf{W}_1,\ldots,  \mathbf{W}_k,$ we obtain the $k$ values
\begin{align}
\label{fmap}
\mathbf{O}_s(\bX) = |\sigma([ \mathbf{W}_s] \star \bX)|_{\infty}, \quad s=1,\ldots,k.
\end{align}

For $\alpha\in (0,1),$ define $\mathcal{K}(i) :=\{ \lfloor d^{1-\alpha} + 1 \rfloor (i - 1) + 1,\; \ldots,\; \big( \lfloor d^{1-\alpha} + 1 \rfloor\, i \big) \wedge d \}.$ We say that a $d\times d$ filter matrix $\mathbf{W}=(W_{j,\ell})_{j,\ell=1,\ldots,d}$ has an $(\alpha,d)$-block structure if $W_{j,\ell}=W_{j',\ell'}$ whenever $j,j'\in \mathcal{K}(i)$ and $\ell,\ell'\in \mathcal{K}(i').$ By convention, any $d \times d$ matrix $\mathbf{W}$ is said to have a $(1, d)$-block structure. An illustration of the $(\alpha, d)$-block structure is provided in Figure~\ref{superpixel} in Section~\ref{sec.proofs_CNNs}. For $0< \alpha \leq 1,$ we denote by 
\begin{equation}\label{cnn-layer-def}
\mathcal{F}^C(\alpha, k)
\end{equation}
the class of all CNN layers computing $k$ outputs of the form \eqref{fmap}, where each filter matrix $\mathbf{W}_s$ has an $(\alpha,d)$-block structure and all parameters take values in the interval $[-1,1]$. The output of the last convolutional layer is flattened, this means, it is transformed into a vector before several fully connected layers with ReLU activation function are applied.

For any vector $ \mathbf{v} = (v_1, \dots, v_r)^{\top}$, $ \mathbf{y}=(y_1, \dots, y_r)^{\top} \in \mathbb{R}^r$, we define $\sigma_{ \mathbf{v}} \mathbf{y} = (\sigma(y_1-v_1), \dots, \sigma(y_r-v_r))^{\top}$. In the context of binary classification, the last layer of the network should extract a two-dimensional probability vector. To achieve this, the softmax function
\begin{align}\label{eq.def_beta_SM}
    \Phi(x_1,x_2) = \left(\frac{e^{ x_1}}{e^{ x_1}+e^{ x_2}}, \frac{e^{ x_2}}{e^{ x_1}+e^{ x_2}}\right)
\end{align}
is typically applied. A feedforward neural network with $L$ fully connected hidden layers and width vector $ \mathbf{m}=(m_0, \dots, m_{L+1}) \in \mathbb{N}^{L+2}$, where $m_i$ denotes the number of hidden neurons in the $i$-th hidden layer, can then be described by a function $f: \mathbb{R}^{m_0} \to \mathbb{R}^{m_{L+1}}$ with
\begin{align*}
     \mathbf{x} \mapsto f( \mathbf{x})=\psi\mkern1mu \sigma_{ \mathbf{v}_{L+1}}  \mathbf{W}_L\sigma_{ \mathbf{v}_L} \mathbf{W}_{L-1}\sigma_{ \mathbf{v}_{L-1}}\cdots  \mathbf{W}_1\sigma_{ \mathbf{v}_1} \mathbf{W}_0 \mathbf{x},
\end{align*}
where $ \mathbf{W}_j$ is a $m_j \times m_{j+1}$ weight matrix, $ \mathbf{v}_j$ is the bias vector in layer $j$ and $\psi$ is either the identity function $\psi=id$ or the softmax function $\psi=\Phi.$ We consider the class of fully connected neural networks in which all entries of the weight matrices and bias vectors are bounded in absolute value by 1, and denote this class by
\begin{equation}\label{fully-layers-def}
\mathcal{F}_{\psi}(L,  \mathbf{m}).
\end{equation}

We will construct CNN classifiers based on a CNN architecture of the form
\begin{align}
\label{CNN}
\begin{split}
\mathcal{G}(\alpha,m) := \Big\{f \circ g: 
 f \in \mathcal{F}_{\Phi}\big(1+2\lceil \log_2 m\rceil, (2m,4m,\ldots,4m,2)\big), g \in \mathcal{F}^C(\alpha,2m)\Big\},
\end{split}
\end{align}
with $m$ a positive integer and $0<\alpha\leq1$. Given that we only consider one convolutional and one pooling layer, the number of feature maps equals the input dimension of the fully connected subnetwork.

\begin{figure}
    \centering
\includegraphics[width=0.8\textwidth]{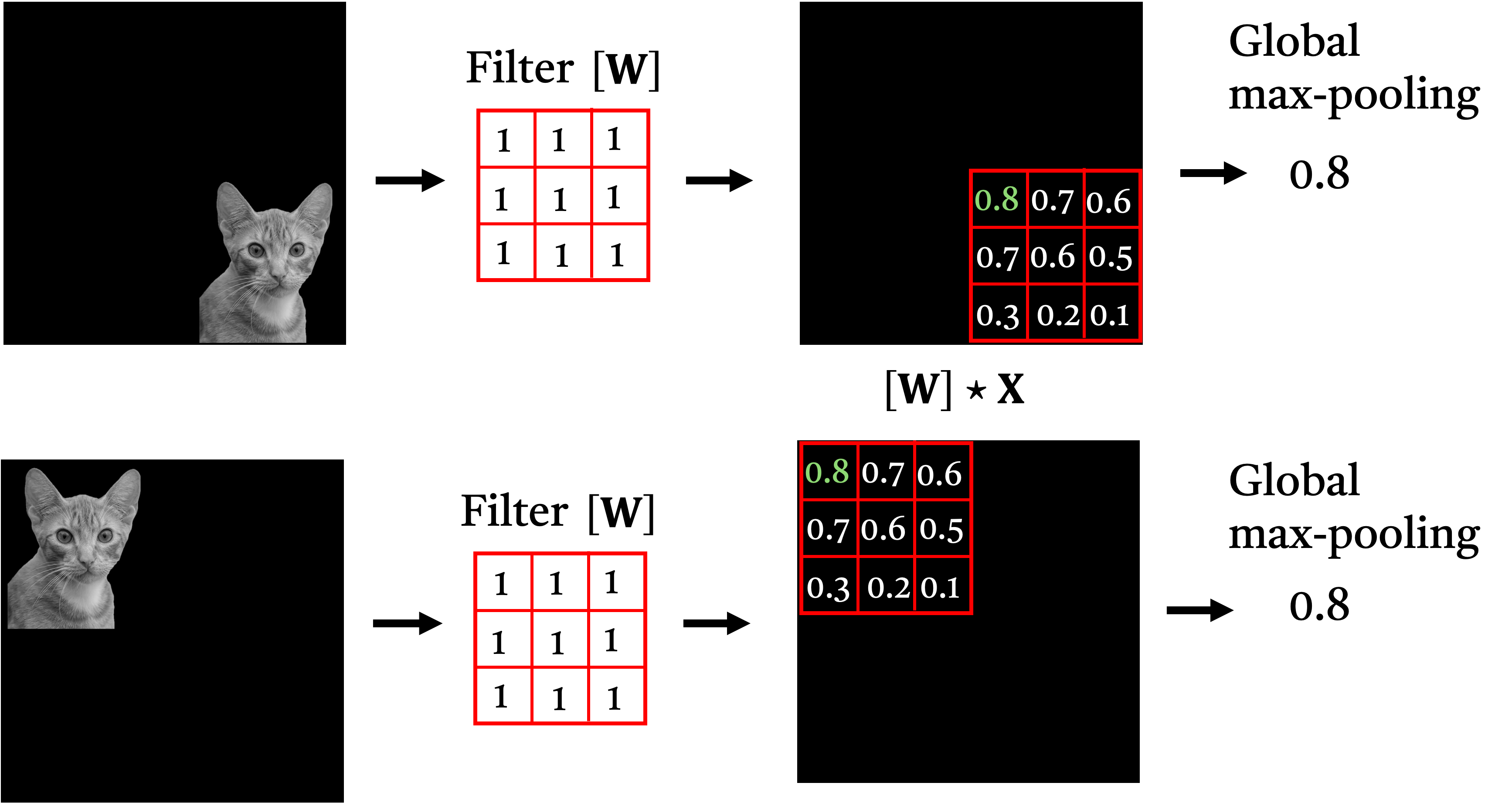}
    \caption{Shift invariance of CNNs}
\label{fig:translation}
\end{figure}

As the filters are applied to all patches of the image, CNNs are translation-invariant, meaning that up to boundary and discretization effects, the CNN classifier does not depend on the values of the shifting parameters $\tau$ and $\tau^{\prime}$ in the deformation model. For instance, if a cat in an image is moved from the upper left corner to the lower right corner, the convolutional filter will produce, up to discretization effects, the same feature values at potentially different locations within the feature map; see Figure \ref{fig:translation} for an illustration. A shift of the image pixels causes therefore a permutation of the values in the feature map. Since the global max-pooling layer is invariant to permutations, the CNN output is thus invariant under translations.

More challenging for CNNs are varying object sizes and rotation angles. \cite{NP17, PRF10, G06} argue that scale-invariance is undesirable in image classification as classifiers can benefit from scale information of the object and that architectures can assign different filters to capture different scales. Similarly, \cite{7899932} employs separate filters for different rotation angles, thereby achieving rotation invariance for texture classification. Data augmentation enhances the learning of CNNs in the presence of rotation and scale deformations at the expense of additional computational cost. Before training, data augmentation applies simple deformations such as rotations and different scaling to the training images and learns a CNN on the augmented dataset consisting of the original and the transformed training samples (see, e.g., \cite{1227801}). Group equivariant convolutional networks \cite{cohen2016group} extend CNNs to handle invariances induced by arbitrary groups. An essential part of the derived theory in the next section shows that for rich classes of deformations, CNNs are expressive enough to separate deformed images. 

\subsection{Misclassification bounds for CNN-based classifiers}
We suppose that the training data consists of $n$ i.i.d.\ data points, generated as follows: For $\pi \in [0,1]$ and each $i,$ we draw a label $k_i\in \{0,1\}$ from the Bernoulli distribution with success probability $\pi$. Let $Q_{\mathcal{A}}$ be a distribution over the deformation class $\mathcal{A}$, and let $Q_{\eta}$ be a distribution on $(0,\infty)$ for the random brightness factor. Here, we assume that $\eta$ and $A$ are independent. The $i$-th sample is  $(\bX_i, k_i),$ where $\bX_i$ is an independent draw from the general model \eqref{eq.mod-general} with template function $f_{k_i}$, deformation $A_i\in\mathcal{A}$ generated from $Q_{\mathcal{A}}$, and brightness factor $\eta_i$ generated from $Q_{\eta}$. The full dataset is denoted by
\begin{align}
\mathcal{D}_n=\big((\bX_1, k_1), \dots, (\bX_n, k_n)\big).\label{eq.class_data_mod}
\end{align}
In expectation, the dataset consists of $n(1-\pi)$ samples from class $0$ and $n\pi$ samples from class $1.$

The parameters of a CNN are then fitted to the normalized images
\begin{align}
\ol{ \mathbf{X}}_i=(\ol X_{j,\ell}^{(i)})_{j, \ell =1, \dots, d}, \quad \ \text{with} \ \  \ol X_{j,\ell}^{(i)}:= \frac{X_{j,\ell}^{(i)}}{\sqrt{\sum_{j,\ell=1}^d ( X_{j,\ell}^{(i)})^2}}. \label{eq.97bc}
\end{align}
This normalization can be viewed as pre-processing step. It ensures that the images are invariant to variations of the brightness $\eta$ and all pixel values lie between $0$ and $1$. 

We fit a CNN by minimizing the empirical error under the 0-1 loss and the cross-entropy loss. As the most natural choice, empirical risk minimization based on the 0-1 loss has been extensively studied in classification theory within the standard statistical learning framework \cite{Arlot_2011,B05,Vapnik1998}, but optimization with respect to the 0-1 loss is considered to be computationally intractable due to the loss function's non-convexity and discontinuity. The cross-entropy loss serves as a tractable surrogate for the 0-1 loss and is widely adopted in practice due to its favorable optimization properties \cite{zhang2004,B06}. 

For the 0-1 loss, the empirical risk minimizer over the CNN class~\eqref{CNN} is defined as
\begin{align}
\wh{ \mathbf{p}} =\big(\wh p_1,\wh p_2\big) \,  \in \  \argmin_{ \mathbf{q}=(q_{1},q_{2}) \in \mathcal{G}(\alpha,m)} \, \frac{1}{n}\sum_{i=1}^{n}\1\left(\1\Big(q_2(\ol{\bX}_i)>\frac 12 \Big)\not=k_i\right),\label{eq.p_LS}
\end{align}
with $\1(\wh p_2(\ol{\bX}_i)>1/2)$ the predicted label of the $i$-th sample based on the network $\wh{\mathbf{p}}=(\wh p_{1},\wh p_{2}) \in \mathcal{G}(\alpha,m)$. The learned network $\wh{ \mathbf{p}}$ outputs estimates for the two conditional class probabilities $p_1(\bx)= \mathbf{P}(k=0|\bX=\bx)$ and $p_2(\bx)=\mathbf{P}(k=1|\bX=\bx).$ These probabilities sum to one and optimizing over $q_2$ suffices.

For a new test image $\ol \bX$ that has been normalized according to \eqref{eq.97bc}, the classifier $\wh k(\bX):=\1(\wh p_2(\ol{\bX})>1/2)$ assigns the label $1$ if the estimated probability belonging to class $1$ exceeds $1/2$ and assigns class label $0$ otherwise.

For the theory, we consider the general deformation model \eqref{eq.mod-general} and impose the following assumption.
\begin{customthm}{3}[Covering of deformation class]\label{ass-cover}
For any $\alpha\in(0,1]$, the deformation class $\mathcal{A}$ contains a finite subset $\mathcal{A}_{d_{\alpha}}$ such that for any $A \in \mathcal{A}$, there exists an $A' \in \mathcal{A}_{d_{\alpha}}$ and indices $j,\ell\in\{1,\ldots,d\}$ such that $A'(\cdot + j/d, \cdot + \ell/d)$ satisfies Assumption~\ref{ass1}-(i) and
\begin{equation*}
\left\|A'\left(\cdot+\frac{j}{d},\cdot+\frac{\ell}{d}\right)-A\right\|_{\infty}\leq d^{-\alpha}.
\end{equation*}
\end{customthm}
Similarly to the derivation of $\mathcal{A}_d^{-1}$ in Section~\ref{sec.image_process}, the subset $\mA_{d_{\alpha}}$ can be obtained through suitable discretization of the deformation class $\mathcal{A}$. The cardinality of the discretized class $\mathcal{A}_{d_{\alpha}}$ is typically of the order $d^{\alpha r}$ with $r$ the number of free parameters that are not related to the shifts. For instance, two out of the four parameters in the deformation model \eqref{eq.mod} control the shift such that $|\mathcal{A}_{d_\alpha}|\asymp d^{2\alpha}.$ Adding one parameter controlling the rotation, as in \eqref{affinetran} and \eqref{eq.87etgb} yields $|\mathcal{A}_{d_{\alpha}}|\asymp d^{3\alpha}$.

For the CNN-based method under the deformation model \eqref{eq.mod-general}, we consider the separation quantity $D=D(f_0,f_1) \vee D(f_1,f_0)$ with
\begin{align}
D(f,g):=\frac{\inf_{a,s,s'\in \mathbb{R},A,A'\in\mathcal{A}} \|a f\circ A(\cdot+s,\cdot+s') -g\circ A'\|_{L^2(\mathbb{R}^2)}}{\|g\|_{L^2(\mathbb{R}^2)}}.
\label{eq.D_part-cnn}
\end{align}
It measures the normalized minimal $L^2$-distance between any deformed versions of $f$ and $g$ (up to spatial shifts) under transformations from the class $\mathcal{A}$. 

The next result states the misclassification bound for the CNN-based classifier with 0-1 loss.

\begin{thm}
\label{thm4}
Consider the general deformation model \eqref{eq.mod-general} and suppose Assumptions \ref{ass-1}, \ref{ass1}-(i), \ref{ass-cover} hold. Let $\wh{\mathbf{p}} =\big(\wh p_1,\wh p_2\big)$ be the estimator in \eqref{eq.p_LS}, based on the CNN class $\mathcal{G}(\alpha,|\mathcal{A}_{d_{\alpha}}|)$ defined in \eqref{CNN}, with $|\mathcal{A}_{d_\alpha}|\geq 2$. Suppose a new data point $(\bX,k)$ is independently drawn from the same distribution as the data in \eqref{eq.class_data_mod}. If the separation quantity in \eqref{eq.D_part-cnn} satisfies $D\geq\sqrt{\kappa/d^{\alpha}}$, where $\alpha\in(0,1]$ and $\kappa$ is a constant depending only on $C_{L}$ and $C_{\mathcal{A}}$, then for the classifier $\wh k(\bX)=\1(\wh p_2(\ol\bX)>1/2)$, there exists a universal constant $C>0$ such that
\begin{align}
\label{thmeq}
&\PROB\big(\, \wh{k}(\bX)\not=k\big)\leq C\frac{|\mathcal{A}_{d_\alpha}|(d^{2\alpha}+|\mathcal{A}_{d_\alpha}|)}{n} \log^3(d|\mathcal{A}_{d_\alpha}|)\log n.
\end{align}
\end{thm}

The proof of Theorem~\ref{thm4} is postponed to Section~\ref{sec.proofs_CNNs}. Here, $\PROB$ denotes the distribution over all randomness in the data and the new sample $\bX.$ For fixed $\alpha$, since the cardinality $|\mathcal{A}_{d_{\alpha}}|$ depends on $d$, the upper bound \eqref{thmeq} grows in the image resolution $d.$ At first glance, this might seem counterintuitive as a high image resolution typically provides more information about the object and should lead to improved misclassification bounds. However, as $d$ increases, the template functions can become closer in their separation distance \eqref{eq.D_part-cnn} making the two objects more similar and thus harder to separate. 

The CNN architecture employs $2|\mathcal{A}_{d_{\alpha}}|$ filters and contains $\lesssim( d^{2\alpha}|\mathcal{A}_{d_{\alpha}}|+|\mathcal{A}_{d_{\alpha}}|^2)\log(|\mathcal{A}_{d_{\alpha}}|)$ parameters (see Lemma~\ref{le6}). Up to logarithmic factors, \eqref{thmeq} means that we obtain a consistent classifier if the sample size is of larger order than the number of network parameters. More generally, Theorem~\ref{thm4} implies that for sufficiently large sample sizes, the misclassification error of the proposed CNN-based classifier can become arbitrary small. Compared to the image classifiers discussed in Section~\ref{sec.image_process}, the CNN-based classifier only requires knowledge of the deformation class for the choice of the architecture. Additionally, for Theorem~\ref{thm4} and all subsequent results in this section, Assumption~\ref{ass1}-(i) on the deformation set $\mathcal{A}$ (necessary in Section~\ref{sec.image_process}) can be relaxed to Lipschitz continuity. This indicates that the CNN-based approach does not require invertible deformations. To guarantee that a sufficient amount of ``information'' is still preserved under the deformations, one instead needs to assume the existence of a universal constant $c_{\mathcal{A}} > 0$ such that for any $A \in \mathcal{A}$, $\|f_i\|_1 \leq c_{\mathcal{A}} \|f_i \circ A\|_1$ with $i\in\{0,1\}$.

That the CNN misclassification error can become arbitrarily small is in line
with the nearly perfect classification results of deep learning for a number of image classification tasks in practice.
Interestingly, most of the previous statistical analysis for neural networks considers settings with asymptotically non-vanishing
prediction error. Those are statistical models where every new image contains randomness that is independent
of the training data and can therefore not be predicted by the classifier. To illustrate this, consider the nonparametric regression model $Y_i=f(\bX_i)+\sigma \varepsilon_i,$ $i=1,\ldots,n$ with fixed noise variance $\sigma^2$. The squared prediction error of the predictor $\wh Y=\wh f_n(\bX)$ for $Y$ is $ \mathbf{E}(\wh Y-Y)^2=\sigma^2+ \mathbf{E}\big[(\wh f_n(\bX)-f(\bX))^2\big].$ This implies that even if we can perfectly learn the function $f$ from the data, the prediction error is still at least $\sigma^2.$ Taking a highly suboptimal but consistent estimator $\wh f_n$ for $f,$ yields a prediction error $\sigma^2+o(1)$, thereby achieving the lower bound $\sigma^2$ up to a vanishing term. For instance, the one-nearest neighbor classifier does not employ any smoothing and results in suboptimal rates for conditional class probabilities but is optimal for the misclassification error up to a factor of 2, \cite{1053964}. This shows that optimal estimation of $f$ only affects the second order term of the prediction error. As classification and regression are closely related, the same phenomenon also occurs in classification, whenever the conditional class probabilities lie strictly between $0$ and $1$. The only possibility to achieve small misclassification error requires that the conditional class probabilities are consistently close to either zero or one. This means that the covariates $\bX$ contain (nearly) all information about the label $k,$ \cite{MR4406243}. The main source of randomness lies then in the sampling of the covariates $\bX.$ An example are the random deformations considered in this work that only affect the covariates but not the labels. This highlights the main difference from existing standard classification settings.

Theorem~\ref{thm4} differs from the generalization bounds in \cite{cnngen2020} and related works such as \cite{peter2017, neyshabur2018}, which focus on generalization performance of non-learned functions (this means the functions are not allowed to depend on the data)-a distinct perspective that can be traced back to \cite{peter1998}. For example, as can be seen in Theorem~2.1 of \cite{cnngen2020}, their bound involves the term 
$\sum_{i=1}^n \ell(f(\mathbf{X}_i), k_i)$, with $\ell$ the loss function used. 
Since $f$ does not depend on the data, this term cannot be further specified, and consequently, 
no statistical convergence rates for estimators can be deduced. In contrast, the derived misclassification error $\mathbf{P}(\widehat{k}(\mathbf{X}) \neq k)$ accounts for all sources of randomness, including both the training data and the test sample. Furthermore, the derivation of $\widehat{k}$ links the generalization analysis to approximation theory, enabling us to explicitly construct a suitable CNN model and thus avoid any implicit terms in the bound. 

To prove Theorem~\ref{thm4}, one can decompose the misclassification error into an approximation error and a stochastic error term (see Lemma \ref{le5}). The stochastic error can be bounded via statistical learning tools such as the Vapnik-Chervonenkis (VC) dimension of the CNN class $\mathcal{G}(\alpha,|\mathcal{A}_{d_{\alpha}}|)$; see Lemma~\ref{le6}. The approximation error vanishes as CNNs from the class $\mathcal{G}(\alpha,|\mathcal{A}_{d_{\alpha}}|)$ with suitably chosen parameters can achieve perfect classification. This result is stated in the following theorem and is proved in Section~\ref{sec.proofs_CNNs}. 

\begin{thm}
\label{th2-gen}
If Assumptions \ref{ass-1}, \ref{ass1}-(i), \ref{ass-cover} hold and the separation quantity $D$ in \eqref{eq.D_part-cnn} satisfies $D\geq\sqrt{\kappa/d^\alpha}$, with $\alpha\in(0,1]$ and  $\kappa$ a constant depending only on the constants $C_{L}$ and $C_{\mathcal{A}}$, then, for any $(\bX,k)$ generated from the same distribution as the data in \eqref{eq.class_data_mod}, there exists a network $\mathbf{p} =\big(p_1,p_2\big) \in \mathcal{G}(\alpha,|\mathcal{A}_{d_{\alpha}}|)$, such that the corresponding classifier $\widetilde{k}(\bX)=\1(p_2(\ol\bX)>1/2)$ satisfies
\begin{align*}
\widetilde{k}(\bX)=k, \ \text{almost surely.}
\end{align*}
\end{thm}
To ensure the existence of the interpolating classifier in the previous result, the best achievable order of the separation quantity $D$ with respect to the resolution level $d$ is 
\begin{align*}
D\gtrsim \frac1{\sqrt{d}},
\end{align*}
which is more restrictive if compared to the lower bound $D \gtrsim 1/d$ 
imposed on the image classification methods discussed in Section~\ref{sec.image_process}. This discrepancy arises from the construction of the proposed CNN architecture. To handle different deformations, the CNN construction uses $2|\mathcal{A}_{d_{\alpha}}|$ separate filters to test whether the image was approximately generated by applying one of the deformations in the discretized set $\mathcal{A}_{d_{\alpha}}$ to either of the two possible template functions. More specifically, consider a given input image that has been generated by deforming the template function of class $k\in \{0,1\}$ by $A.$ In Proposition~\ref{prop.cnns}, we show that the convolutional filter corresponding to the correct class $k$ and the deformation in $\mathcal{A}_{d_{\alpha}}$ that is closest to the true deformation $A$ produces the highest activation, that is, the highest output value after convolution and global max-pooling; see Figure \ref{fig:filters} for an illustration. This requires $D\gtrsim 1/{\sqrt{d}}$ when $\alpha=1.$

The $1/d$ separation rate can be obtained, under additional conditions, if we instead take $2|\mathcal{A}_{d_1}|^2$ many convolutional filters, resulting in a CNN architecture with $\lesssim(|\mathcal{A}_{d_1}|^2d^2+|\mathcal{A}_{d_1}|^4)\log(|\mathcal{A}_{d_1}|)$ network parameters. To achieve this, the high level idea is to test for all possible differences of the two template functions $f_0, f_1$, deformed by $A_0, A_1\in \mathcal{A}_{d_1}.$ The key step in Theorem~\ref{th2-gen} is to show that for input image generated by $f \circ A$, we can find a filter $\phi$ with $\|\phi\|_2=1$ such that the output of the feature map after applying the global max-pooling layer is
\begin{align*}
    \max_{s,t} \, \frac{\int_{[0,1]^2} \phi(u-s,v-t) \, f \circ A \mkern1mu (u,v) \, du dv}{\|f \circ A\|_2} +O\Big(\frac 1d\Big).
\end{align*} 
Choosing now $\phi_{A_0,A_1}:=(r_1 f_1 \circ A_1-r_0 f_0\circ A_0)/\|r_1 f_1 \circ A_1-r_0 f_0\circ A_0\|_2$ with $r_k:=\|f_k\circ A_k\|_2^{-1},$ and ignoring the maximum over $s,t$  by just considering at the moment $s=t=0,$ we find
\begin{align*}
&\frac{\int_{[0,1]^2} \phi_{A_0,A_1}(u,v) \, f_k \circ A_k \mkern1mu (u,v) \, du dv}{\|f_k\circ A_k\|_2} +O\Big(\frac 1d\Big)\\
=&\frac{\int_{[0,1]^2}
    \cro{r_1 f_1 \circ A_1(u,v)-r_0 f_0\circ A_0(u,v)} \, r_k f_k\circ A_k \mkern1mu (u,v) \, du dv}{\|r_1 f_1 \circ A_1-r_0 f_0\circ A_0\|_2} +O\Big(\frac 1d\Big)\\
    =&\frac{(-1)^{k+1}}{2}\big\|r_1 f_1 \circ A_1-r_0 f_0\circ A_0\big\|_2+O\Big(\frac 1d\Big).
\end{align*}
This indicates that one can discriminate between the two classes under the separation condition $\|r_1 f_1 \circ A_1-r_0 f_0\circ A_0\|_2 \geq c/d,$ where $c$ has to be chosen large enough, such that the $O(1/d)$ term does not cause overlap between the signals from the two classes in the previous display. However, since the rate $1/\sqrt{d}$ already covers most practical application scenarios, and the goal is to analyze the performance of commonly used CNN architectures with as few filters as possible, we will not comment further on this direction.

\begin{figure}[t]
    \centering
\includegraphics[width=0.7\textwidth]{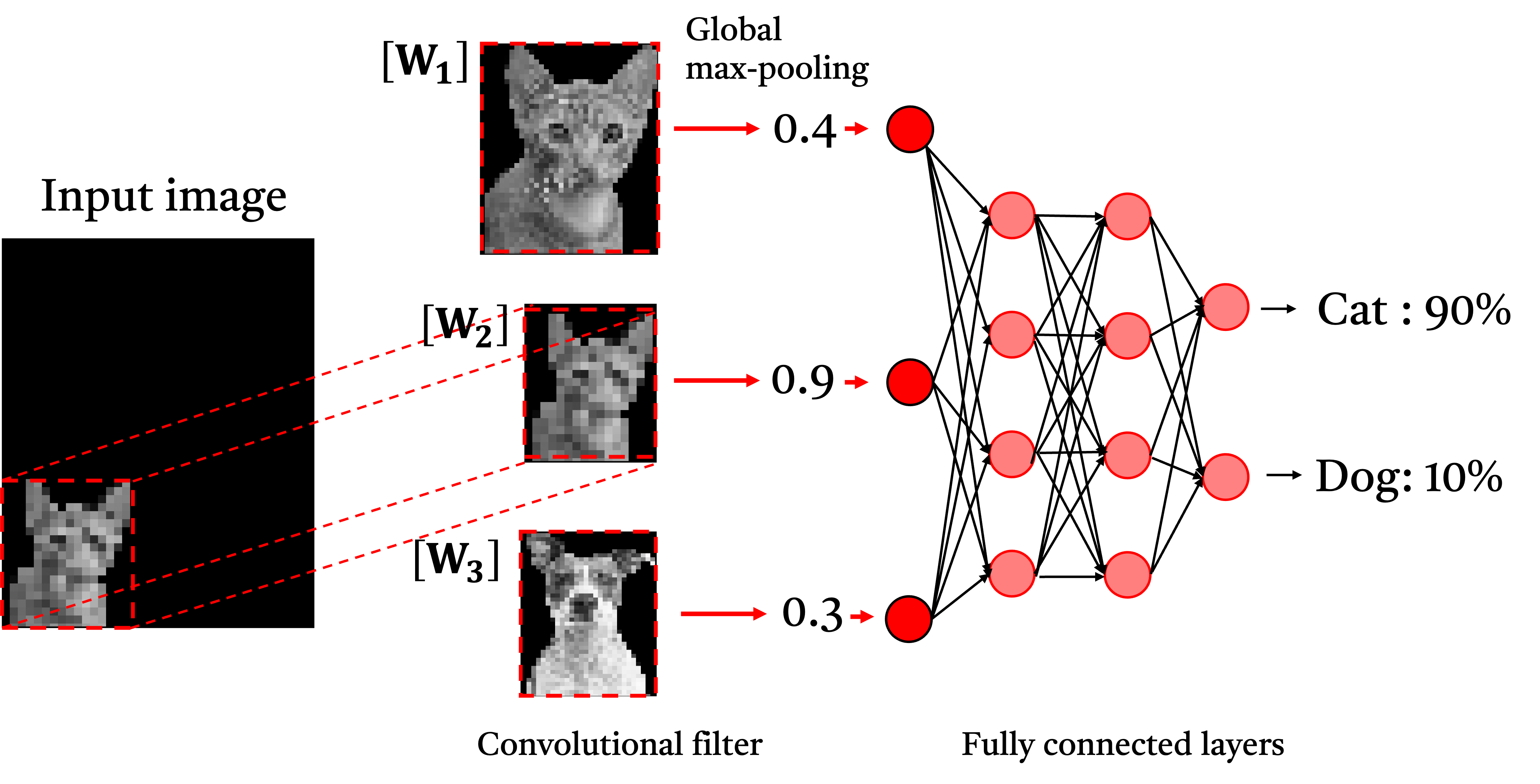}
\caption{Effect of different filters on the same image. The global max-pooling layer will generate the largest values for filters that are most similar to the object.}
\label{fig:filters}
\end{figure}

A consequence of the approximation result is that, under the conditions of Theorem~\ref{th2-gen}, the label can be retrieved from the image.

\begin{lemma}
\label{lem.perfect_recovery-gen}
Let Assumptions \ref{ass-1}, \ref{ass1}-(i), \ref{ass-cover} hold. If the separation quantity $D$ in \eqref{eq.D_part-cnn} satisfies $D\geq\sqrt{\kappa/d^{\alpha}}$, where $\kappa$ is a constant depending only on the constants $C_L$ and $C_{\mathcal{A}}$, then, for any $(\bX,k)$ generated from the same distribution as the data in \eqref{eq.class_data_mod}, its label $k$ can be written as a deterministic function evaluated at $\bX$ and we have
\begin{align*}
    p(\bX)=k(\bX),
\end{align*}
where $p(\bx)=\mathbf{P}(k=1|\bX=\bx)$.
\end{lemma}
A consequence is that under the imposed conditions 
\begin{align*}
    \min_{q:[0,1]^2\to \{0,1\}}  \mathbf{P}(q(\bX)\neq k(\bX))=0.
\end{align*}

Next, we consider CNN-based classifiers trained using the commonly employed cross-entropy loss. For a pair $(\mathbf{X},k)$ with image $\mathbf{X}$ and its label $k \in \{0,1\}$, drawn from the data distribution, and $\overline{\mathbf{X}}$ the pre-processed image, the objective is to minimize the population risk
$$-\,\mathbf{E}\left[k\log\big(f(\overline{\mathbf{X}})\big) + (1-k)\log\big(1 - f(\overline{\mathbf{X}})\big)\right]$$
over a suitable class of CNNs. Since the true distribution of $(\mathbf{X}, k)$ is unknown, one instead minimizes the empirical version of this risk.

To avoid degeneracy in the cross-entropy loss, we consider the CNN class $\mathcal{G}(\alpha,m)$ with a slight modification that prevents the outputs to be too close to the boundaries $0$ and $1.$ Specifically, we define
\begin{equation}\label{g-beta-B}
\widetilde{\mathcal{G}}(\alpha,m) := \big\{(1 - \tilde{g}, \tilde{g}) :\; \tilde{g} = \big(g \vee \rho(1)\big) \wedge \rho(-1),\; (1 - g, g) \in \mathcal{G}(\alpha,m) \big\},
\end{equation}
where $\rho(z) := 1/(1 + e^z)$. The specific values $\rho(1), \rho(-1)$ are convenient but not essential. The empirical risk minimizer over all CNNs of the form \eqref{g-beta-B} is 
\begin{equation}
\wh{ \mathbf{p}}^{\text{CE}} =\big(\wh p_1^{\text{CE}},\wh p_2^{\text{CE}}\big)\in\argmin_{ \mathbf{q}=(q_{1},q_{2}) \in \widetilde{\mathcal{G}}(\alpha,m)}\hspace{-2pt}-\frac{1}{n}\sum_{i=1}^{n}\cro{k_{i}\log\big(q_2(\ol{\bX}_i)\big)+(1-k_i)\log\big(1-q_2(\ol{\bX}_i)\big)},
    \label{eq.p_CE}  
\end{equation}
where $\ol{\bX}_i$ denotes the normalized image as defined in \eqref{eq.97bc}. The misclassification bound for this CNN based classifier 
$$\wh k^{\text{CE}}(\bX)=\1(\wh p_2^{\text{CE}}(\ol\bX)>1/2)$$
is given below.

\begin{thm}\label{CE-bound}
Consider the general deformation model \eqref{eq.mod-general} and suppose Assumptions \ref{ass-1}, \ref{ass1}-(i), \ref{ass-cover} hold. Let $0<\alpha \leq 1$ and $\wh{ \mathbf{p}}^{\text{CE}}$ be the estimator in \eqref{eq.p_CE}, based on the CNN class $\widetilde{\mathcal{G}}(\alpha,|\mathcal{A}_{d_{\alpha}}|)$ defined in \eqref{g-beta-B}. Suppose a new data point $(\bX,k)$ is independently drawn from the same distribution as the data in \eqref{eq.class_data_mod}. If $|\mathcal{A}_{d_{\alpha}}|\geq (d^{\alpha}\vee2)$ and the separation quantity in \eqref{eq.D_part-cnn} satisfies $D\geq\sqrt{\kappa/ d^{\alpha}}$ for a constant $\kappa$ only depending on $(C_{L},C_{\mathcal{A}})$, then, there exists a universal constant $C>0$ such that for any $\gamma>0$ and any sufficiently large $n$, 
\begin{align}
\label{thmeq-CE}
    &\PROB\big(\, \wh{k}^{\text{CE}}(\bX)\not=k\big)\leq  
C\frac{|\mathcal{A}_{d_{\alpha}}|(d^{2\alpha}+|\mathcal{A}_{d_{\alpha}}|)}{n} \log(|\mathcal{A}_{d_{\alpha}}|)\log^{1+\gamma} n,
\end{align}
where $\PROB$ denotes the distribution over all randomness in the data and the new sample $\bX.$
\end{thm}
The proof of Theorem~\ref{CE-bound} is postponed to Section~\ref{sec.proofs_CNNs}. Up to logarithmic terms, CNN-based classifiers trained with the cross-entropy loss exhibit similar generalization performance to those trained with the 0-1 loss, and the misclassification error of both classifiers converges to zero at the rate $V/n$, where $V$ denotes the VC dimension of the underlying CNN architecture. The rate $V/n$ is minimax optimal, see Theorem~4 in \cite{risk2006}.

The CNN-based approach discussed in this section can be readily extended to the multi-class classification setting. Generalizing from $2$ to $K> 2$ classes, the number of filters and the width of all layers in the fully-connected network have to be multiplied by $K/2.$ The softmax function is then given by $\Phi(x_1,\ldots,x_{K})=(e^{x_1}/\sum_{i=1}^K e^{x_i}, \ldots, e^{x_K}/\sum_{i=1}^K e^{x_i}).$ 
Regarding the approximation theory, if Assumptions \ref{ass-1}, \ref{ass1}-(i), \ref{ass-cover} hold and the separation quantity satisfies the pairwise lower bound $D_{i,j} \gtrsim d^{-\alpha/2}$, for all $i,j=1,\ldots,K,$ with $i\neq j,$ then, there exists a network $\mathbf{p} = (p_1, \ldots, p_K)$ with the architecture described above such that the corresponding classifier $\widetilde{k}(\mathbf{X}) = \arg\max_{\ell\in\{1,\ldots,K\}}p_{\ell}(\ol\bX)$ satisfies $\widetilde{k}(\mathbf{X}) = k$, almost surely. Following the proof of Lemma~\ref{entropy-sup}, the covering number of the model class can be shown to be $K$ times that of the binary case (up to a logarithmic factor). With both the approximation and covering number results, one can apply existing error decomposition results, such as Theorem~3.5 (for the cross-entropy loss) in \cite{MR4406243} or Lemma~A.9 (for the hinge loss) in \cite{KOK19}, to bound the final misclassification error in the multi-class setting. Compared to the binary case, the resulting misclassification error bound scales with the number of classes $K.$ 

On the optimization side, current deep learning theory still cannot incorporate the algorithm (with the common initialization schemes and without excessive number of restarting) into the learning process for general nonparametric estimation problems. The NTK regime, studied, e.g., in \cite{jacot2018ntk}, allows to study gradient descent, but in an infinite-width limit with NTK scaling where the training is essentially \textit{lazy}, i.e., the network's feature remain fixed and the method converges to kernel regression.  Another approach is the so-called mean-field regime \cite{mei2018meanfield} with a different infinite-width scaling in which parameters move significantly, features evolve during training, and the dynamics are captured by deterministic PDEs over the parameter distribution. Beyond these asymptotic regimes, several works analyze how gradient-based methods can learn concrete function classes in finite-width settings, including multivariate polynomials \cite{damian2022neural} or the Barron class \cite{braun2024convergence}. However, these results use algorithmic adaptations such as only training separate layers or choosing initialization such that the weights change only slightly during training. In turn, most of the statistical results exclude the training routine assuming ideal optimization and analyze the empirical risk minimizer (ERM) instead. The discrepancy might be less severe given the claims in the literature that gradient based methods find local minima with training loss close to the global minimum. For large networks, it can be rigorously shown that gradient descent can reach zero training loss and thus gradient descent eventually converges to an ERM \cite{liangli2018,Du2018,NEURIPS2019_62dad6e2,aZhu2019b,DBLP:conf/icml/DuLL0Z19,zou2020}.

\section{Numerical results}
\label{sec.num_sim}
We empirically compare the three proposed methods: the inverse mapping classifier \eqref{gen-label}, the image alignment classifier \eqref{eq.k_def}, and CNN-based classifiers with three different architectures.
\subsection{Image inverse mapping and alignment}\label{ima-pro-com}
Theorems~\ref{thm.main_part1_general} and~\ref{thm.main_part1} show that both the inverse mapping classifier and the image alignment classifier can recover the correct label, provided each class appears at least once in the training data. We computed the empirical performance for both methods using a balanced design with $n =\{2, 4, 8, 16, 32\}$ labeled samples, meaning that each of the two classes contributes $n/2$ images.

We consider the deformation model \eqref{eq.mod} and generate the template functions from the FashionMNIST and CIFAR-100 datasets. FashionMNIST contains $28 \times 28$ grayscale images from different fashion categories, while CIFAR-100 includes $32 \times 32$ color images across 100 classes. In our setup, this corresponds to $d = 28$ and $d = 32$, respectively. In the experiments using FashionMNIST, we select as template functions one image with label `pullover' and one image with label `coat'. For CIFAR-100, the template functions are generated from the classes `fox' and `flatfish' and preprocessed to grayscale images with black backgrounds. Training samples are then generated by applying random shifts ($\tau, \tau'$), scalings ($\xi, \xi'$), and brightness changes ($\eta$) using the Keras ImageDataGenerator. For FashionMNIST, 
examples of generated deformed images are displayed in Figure~\ref{ima-pro-example}. 
\begin{figure}
\centering
\includegraphics[width=0.9\textwidth,height=5.5cm]{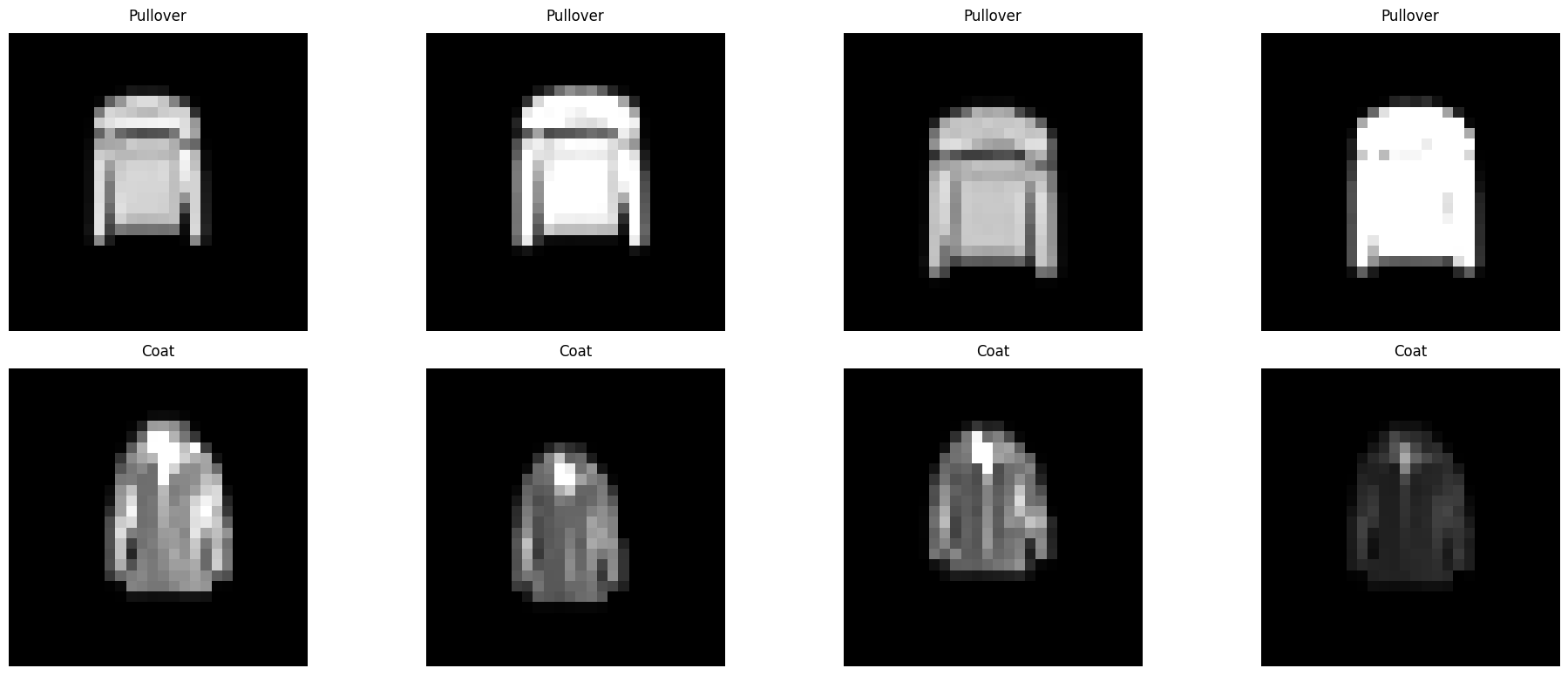}
\caption{Augmented images with label `pullover' (top row) and `coat' (bottom row) from the FashionMNIST dataset.}
\label{ima-pro-example}
\end{figure}

As explored in Section~\ref{ima-inverse-sec}, the inverse mapping classifier \eqref{gen-label} is based on a discretization of the deformation class. For correct classification, the grid size of this discretization should be $\leq$ `small constant' $\times$ `separation distance between the template functions'. The theory focuses on bounds that also apply to the worst-case separation distance $D \asymp 1/d.$ Thus, the grid size becomes of the order $1/d.$ In practice, the separation distance could be much larger which means that a less fine discretization of the deformation class is sufficient. We empirically compare the image alignment classifier with the inverse mapping classifier based on different grid sizes. The performance of each classifier $\wh k$ is evaluated by the empirical misclassification risk (test error)
\begin{align}
R_N = \frac{1}{N} \sum_{i=1}^N \1\big(\, \wh k( \mathbf{X}_{n+i}) \neq k_{n+i}\big),\label{mis-cla-err}
\end{align}
based on test data $(\bX_{n+1},k_{n+1}),\ldots,(\bX_N,k_N),$ that are independently generated from the same distribution as the training data. For $N=50,$ Figure \ref{fig:align-vs-inverse} reports the median of the test error based on $10$ repetitions in each setting.
\begin{figure}[t]
 \centering
 \subfigure[Pullover vs. Coat (FashionMNIST)]{\includegraphics[width=0.49\textwidth]{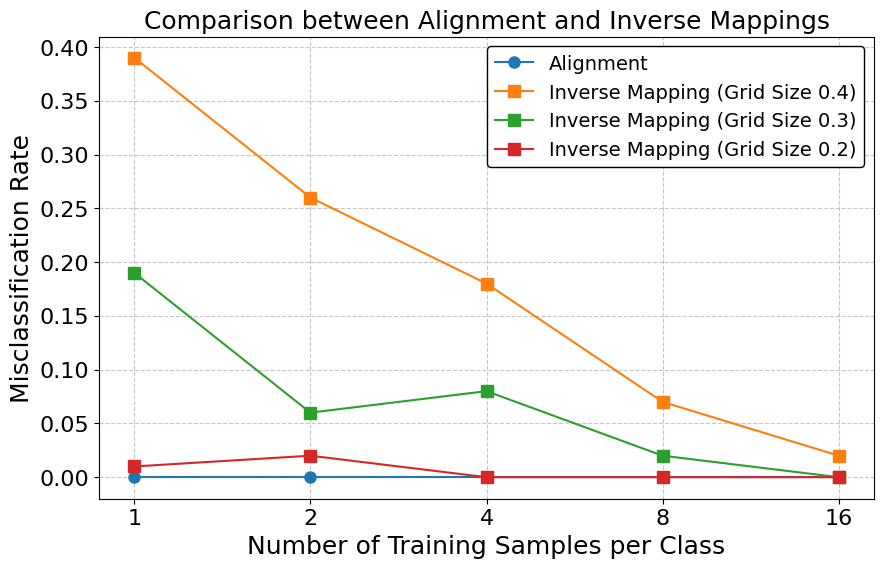}}
  \subfigure[Fox vs. Flatfish (CIFAR-100)]{\includegraphics[width=0.49\textwidth]{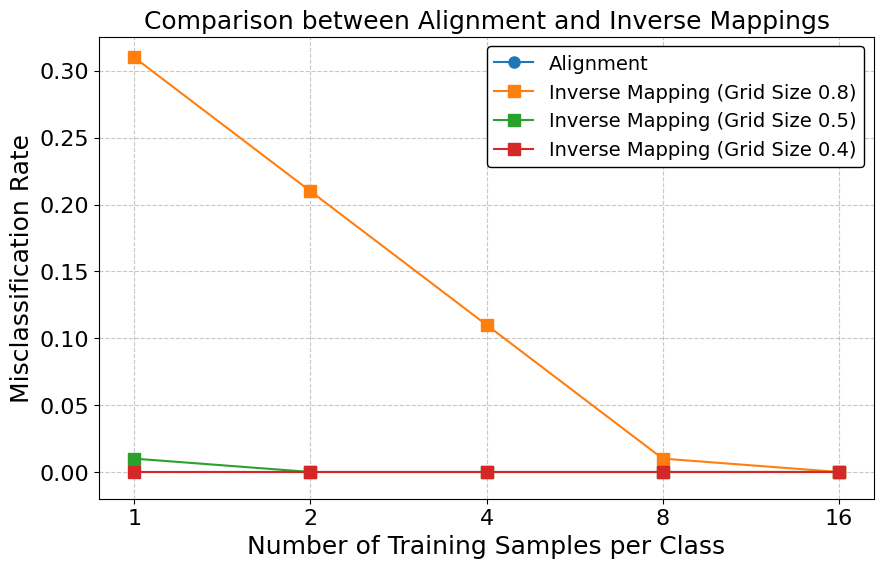}}
  \caption{Comparison of the image alignment classifier (blue) and inverse mapping classifiers with different discretization grid sizes (orange, green, red). The plots show median test errors over 10 repetitions for varying training sample sizes. In (b), both the alignment classifier and the inverse mapping classifier with grid size 0.4 achieve zero misclassification, resulting in the blue and red lines overlapping.}
  \label{fig:align-vs-inverse}
\end{figure}

The results shown in Figure~\ref{fig:align-vs-inverse} align with the theory presented in Section~\ref{sec.image_process}. Specifically, for the image alignment method, the misclassification error is zero as long as each class appears at least once in the training data. For the inverse mapping classifier, the misclassification error decreases as the discretization becomes finer, eventually also reaching zero misclassification error. Moreover, the misclassification error of all inverse mapping classifiers decreases as the training sample size increases. The reason is that more samples yield a denser set of discretized inverse mappings to compare to, thereby improving prediction accuracy.

\subsection{Classifiers via CNN-based approach}\label{sim-cnn-cla}
This section investigates the learning performance of three different CNN architectures with template functions generated from MNIST, FashionMNIST, CIFAR-100, and ImageNet. The MNIST dataset consists of $28\times28$ pixel grayscale images of handwritten digits $(0-9)$. ImageNet consists of labeled high-resolution (typically $224\times224$) color images. We select the template functions by drawing images with label `0' vs.\ `4' (MNIST), `T-shirt' vs.\ `dress' (FashionMNIST), `fox' vs.\ `flatfish' (CIFAR-100), and `cat' vs. `dog' (ImageNet). 

The training and test samples are generated using the same procedure as in Section~\ref{ima-pro-com}, but with a more complex deformation model described by \eqref{affinetran} and \eqref{eq.87etgb}, which also incorporates random rotations. To accelerate training, we rescale the ImageNet images from $224 \times 224$ to $64 \times 64$. Examples of template and deformed images from ImageNet are shown in Figure~\ref{cnn-example}, and examples from the other datasets are provided in Section~\ref{sim-add}.
\begin{figure}
\centering
{\includegraphics[width=\textwidth]{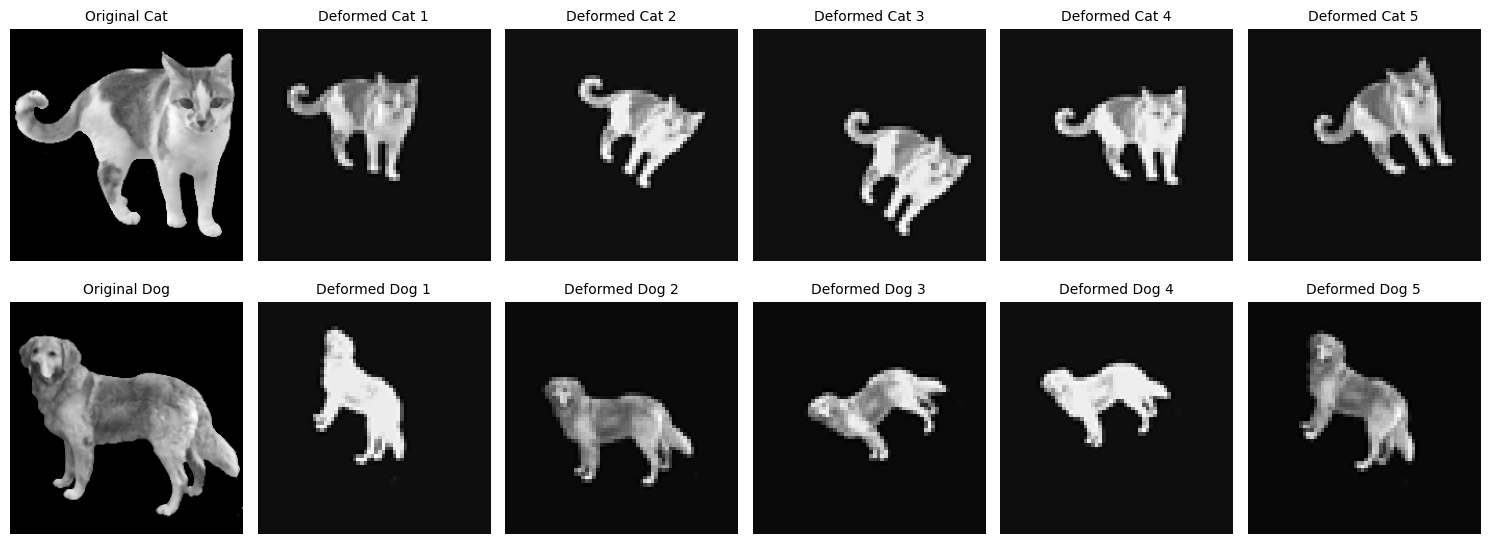}}
\caption{The template cat and dog images ($224\times224$) and their corresponding deformed samples ($64\times64$).}
\label{cnn-example}
\end{figure}

We consider three different CNN architectures, each with one convolutional layer as in Section~\ref{sec.CNN}, sharing the same filter size but differing in the number of filters and the width of the feed-forward layers, as summarized in Table~\ref{tab:cnn-architectures}.
\begin{table}[htbp]
\centering
\begin{tabular}{|c|c|c|c|}
\hline
\textbf{} & \textbf{Filter size}& \textbf{Number of filters} & \textbf{Width of feed-forward layers} \\
\hline
CNN1 & $10\times10$ & 32&128\\
\hline
CNN2 & $10\times10$ & 64 & 256\\
\hline
CNN3 & $10\times10$ & 128 & 512\\
\hline
\end{tabular}
\caption{Details of three CNN architectures.}
\label{tab:cnn-architectures}
\end{table}
All CNNs are trained using cross-entropy loss and the Adam optimizer in Keras (with the TensorFlow backend), employing the default learning rate of $0.001$. For each task, training is conducted on balanced samples with $n/2 = \{2^6, 2^7, 2^8, 2^9, 2^{10}, 2^{11}\}$ samples per class. The performance of each classifier $\widehat{k}$ is evaluated using the misclassification error in \eqref{mis-cla-err}, computed on i.i.d. test samples. For $N = 200$, Figure~\ref{cnn-results} reports the median test errors over 10 repetitions for each CNN classifier across various classification tasks.
\begin{figure}[t]
   \centering
    \subfigure[0 vs. 4 (MNIST)]{\includegraphics[width=0.42\textwidth,height=4.2cm]{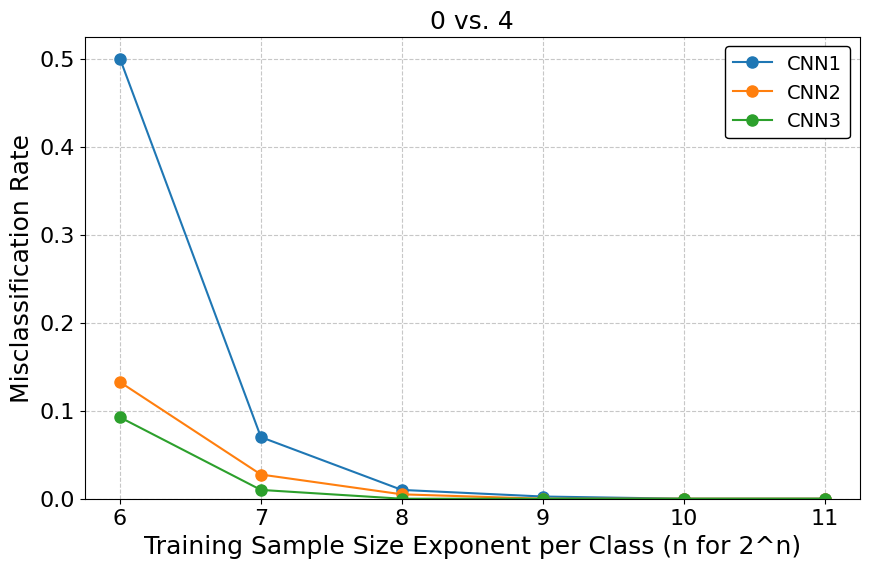}}\hspace{12pt}
    \subfigure[T-shirt vs. Dress (FashionMNIST)]{\includegraphics[width=0.42\textwidth,height=4.2cm]{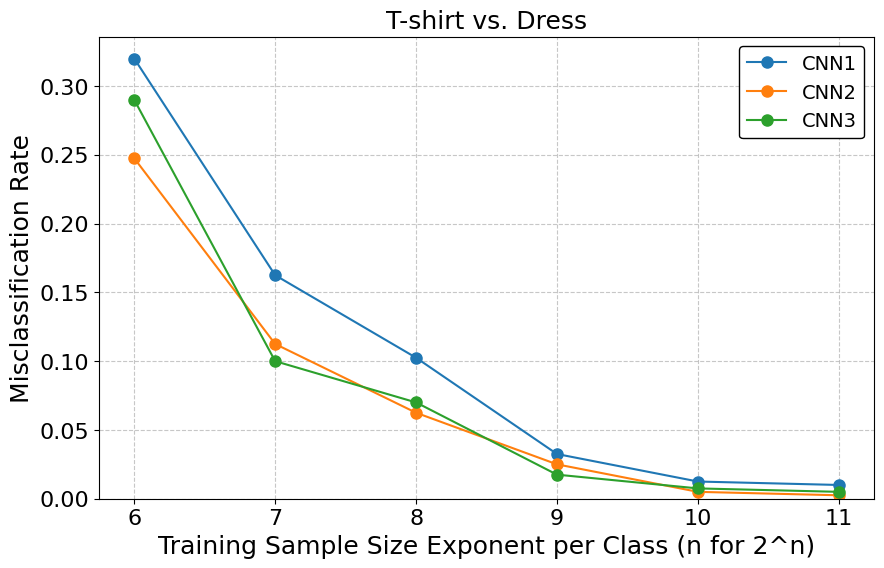}}
\vspace{0.3cm}
    \subfigure[Fox vs. Flatfish (CIFAR-100)]{\includegraphics[width=0.42\textwidth,height=4.2cm]{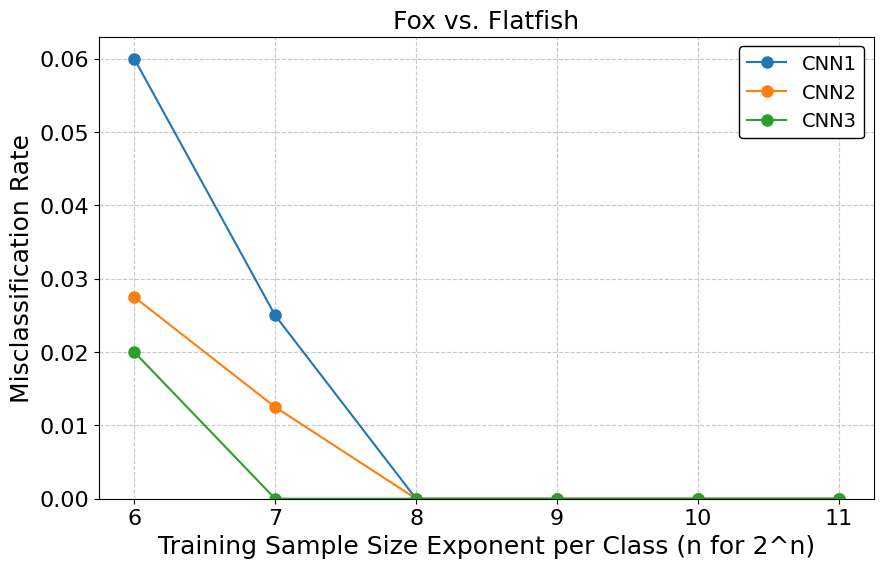}}\hspace{12pt}
    \subfigure[Cat vs. Dog (ImageNet)]{\includegraphics[width=0.42\textwidth,height=4.2cm]{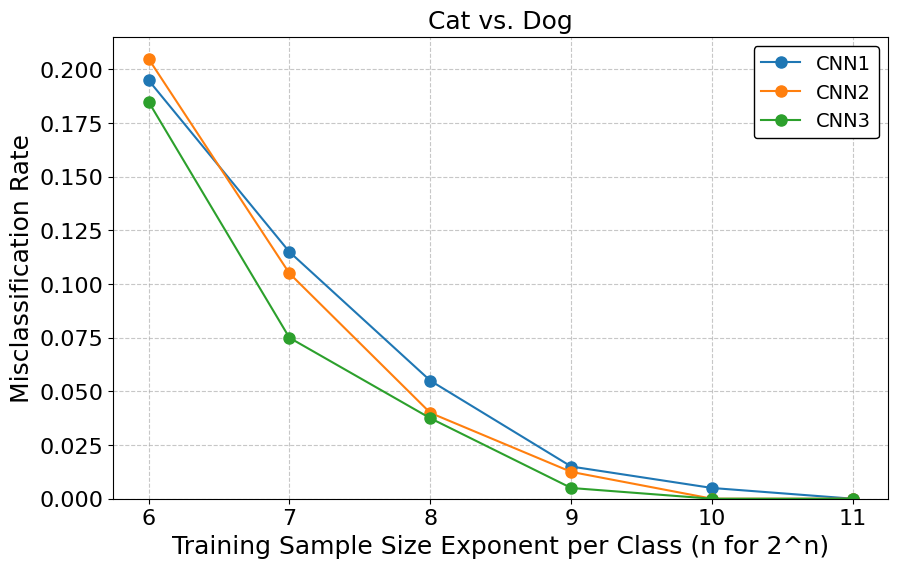}}
\caption{Comparison of three trained CNNs. Reported is the median of the test error with $N=200$ over $10$ repetitions.}
\label{cnn-results}
\end{figure}

For all four classification tasks and all three CNN-based classifiers, the misclassification error decreases and eventually converges to zero as the training sample size increases. The rate of this decay varies across tasks. Among the four tasks, classifying the labels `0' vs.\ `4' and `fox' vs.\ `flatfish' is relatively easy, achieving near-zero error with fewer samples. In contrast, classifying `T-shirt' vs.\ `dress' and `cat' vs.\ `dog' is more challenging, requiring larger sample sizes to reach similarly low error rates. In the comparison among \text{CNN1}, \text{CNN2}, and \text{CNN3}, increasing the number of filters and the width of the fully connected layers enhances performance by enabling a finer approximation of the deformation set $\mathcal{A}$ (Assumption~\ref{ass-cover}). These numerical results are consistent with the theoretical findings presented in Section~\ref{sec.CNN}.

\section{Overview of deformation-based analysis}\label{defor-review}
We already discussed the similarity with the optimization problem \eqref{registr-obj} in image registration.

In functional data analysis (FDA), curves typically exhibit amplitude (vertical) and phase (horizontal) variation. Let $y_1,\ldots,y_n:\mathbb{R}_{+} \rightarrow \mathbb{R}$ be observed functions with both types, and $x_1,\ldots,x_n$ the amplitude-only functions. They can be related by unknown time-warping functions $h_i\in\mathcal{H}: \mathbb{R}_{+} \rightarrow \mathbb{R}_{+}$ such that $y_i(t) = x_i(h_i(t))$ \cite{Liu01092004,kneip2008}. Depending on the application, $h_i$ may take various forms, such as uniform scaling, shifts, or more generally affine transformations. More involved statistical analysis often requires conditions such as smoothness and orientation preservation (i.e., $h_i(0) = 0$, $h_i(1) = 1$, and strict monotonicity); for example, \cite{srivastava2011} considers orientation-preserving diffeomorphisms. Additionally, some works impose the unbiasedness condition $\mathbf{E}[h_i(t)] = t$ \cite{10.1093/biomet/asn047, paz2016}.

When phase variation is treated as a nuisance \cite{MR3432837, srivastava2011}, the goal is to remove it to improve feature estimation, such as the mean curve. This is achieved through curve registration (also referred to as curve alignment in biology), encompassing a wide range of techniques, including earlier heuristics like dynamic time warping (DTW) and landmark registration. These alignment techniques use a template function $x_0$ as the target, seeking a time-warping function $g_i$ such that $y_i\circ g_i \approx x_0$, where closeness is measured by a chosen metric. For simplicity of illustration, let us assume that $\mathcal{H}$ admits a group structure with identity, inversion, and closure under composition. Most methods optimize objective functions of the form:
\begin{equation}\label{ob-regis}
\mathcal{L}_{\lambda,i}[\mu]=\inf_{h_i\in\mathcal{H}}\big(\|y_i\circ h_i-\mu\|^{2}+\lambda\mathcal{R}(h_i)\big),    
\end{equation}
with $\mathcal{R}$ a regularizer. In \cite{10.1093/biomet/asn047}, the authors take $\mu = y_j$, compute pairwise warping functions $h_{ij}$ for each $j$ via \eqref{ob-regis}, and average them to obtain $g_i$. They establish an upper bound on the sup-norm distance between estimated and true warping functions. In contrast, \cite{kneip2008} adopt a template-based method, assigning each $i$ a distinct $\mu_i = \sum_{j=1}^p \beta_{ij} \xi_j$, where the $\xi_j$ are data-driven basis elements closely related to the principal components. Besides implementing optimization as in \eqref{ob-regis}, another approach models amplitude and phase variations through equivalence classes, drawing inspiration from shape analysis \cite{dryden2016} and pattern theory \cite{grenander1993}. This absorbs non-shape attributes like translation, rotation, and scaling into equivalence classes \cite{younes2008,5601739}, motivating the use of a metric $d$ that satisfies the invariance for all $h\in\mathcal{H}$,
\begin{equation}\label{invariance-rep}
d(x_1,x_2) = d(x_1 \circ h, x_2 \circ h).  
\end{equation}
In \cite{srivastava2011,TUCKER201350,Lu02102017}, the authors use the Fisher-Rao Riemannian metric and represent each function $x(t)$ by its square-root velocity function (SRVF) $q(t) := x'(t)/\sqrt{|x'(t)|}$. This links the Fisher-Rao metric between functions $x_1, x_2$ to the Euclidean distance between their SRVFs $q_1, q_2$, making distance computation feasible. The strict monotonicity of $h$ is essential for the derivation. Under certain conditions, \cite{srivastava2011} prove that their alignment algorithm, based on computing the Fr\'echet mean, yields a consistent estimator of the original warping-free function. From the same perspective, \cite{Lu02102017} developed a Bayesian model for estimating warping functions, supported by numerical experiments. Compared with the above literature, our work may be regarded as an adaptation of the concept of phase variation to the context of 2D image classification, where such variability is modeled as geometric deformations. This adaptation is non-trivial, as conditions commonly required in FDA, such as orientation preservation, are no longer appropriate in the context of 2D image classification. Additionally, while the concept of equivalence classes is intuitive for classification tasks, pursuing strict invariance often goes beyond what is necessary or beneficial in this setting. Indeed, if each class with deformations is regarded as an orbit, the goal is not to have two parallel orbits as in \eqref{invariance-rep}, but rather to ensure that the distance between orbits is sufficiently large to offset errors introduced by pixel-based image representation.

Another related line of research is Mallat's series of works on scattering transforms \cite{MR2957703,5995635,M12} in pattern recognition, which offers an alternative representation approach to achieving invariance. Specifically, given a signal $f$ on $\mathbb{R}^d$, they focus on deformations modeled as small diffeomorphisms close to translations, expressed by $L_{\tau}f(x) = f(x - \tau(x))$, where $\tau(x) \in \mathbb{R}^d$ is a displacement field. The scattering representation $\Phi$ is constructed using a wavelet scattering network, which cascades wavelet transforms with nonlinear modulus and averaging operators \cite{bruna9}. As demonstrated in \cite{MR2957703}, this representation is translation-invariant (Theorem 2.10) and Lipschitz continuous with respect to the action of diffeomorphisms on compactly supported functions (Corollary 2.15), satisfying
$$\|\Phi(L_{\tau}f)-\Phi(f)\|\lesssim\|f\|_{2}\left(\sup_{x}|\nabla\tau(x)|+\sup_{x}|H\tau(x)|\right),$$ 
where $\nabla \tau$ denotes the deformation gradient tensor and $H \tau$ the Hessian tensor. With additional constructions, scattering networks can also be made rotation-invariant (Section 5 of \cite{MR2957703}). A key difference from CNNs, which we study in this paper, is that scattering networks are not learned from data but explicitly designed based on prior requirements to ensure invariance or stability to specific signal deformations. This inherently limits their flexibility to pre-designed invariances. These works contribute to the broader field of invariant representation learning. Related research includes \cite{10.1093/imaiai/iaw009,5459468soatto}, where \cite{10.1093/imaiai/iaw009} considers deformations forming a compact group and shows that representations defined by nonlinear group averages are invariant under the group actions.

\section{Conclusion and extensions}
\label{s6}
This paper introduces a novel statistical framework for image classification. Instead of treating each pixel as a variable and analyzing a nonparametric denoising problem where randomness occurs as additive noise, the proposed deformation framework models the variability of objects within one class as geometric deformations of template functions. The abstract framework encompasses a wide range of linear and nonlinear deformations, including commonly considered transformations such as rotations, shifts, and rescaling.

In real world images, deformations can be highly complex, but the analytical approach discussed here may still offer insights. Extensions to background noise and multiple objects have been briefly discussed in Section \ref{sec.image_alignment}. The CNN approach in Section~\ref{sec.CNN} seems quite robust to independent background noise in the image as it relies on inner products, where, by the CLT, the noise should be rather small relative to the object signal. However, analyzing complex, structured backgrounds is more challenging and requires proper statistical modeling. Modifying the approach in Sections~\ref{sec.image_process} and~\ref{sec.CNN}, misclassification error bounds can be derived for the case where fixed images from two different classes are randomly occluded. Closely related are extensions to partially visible objects. In this scenario relying on global characteristics, such as the full support of the object, seems unreasonable. However, it might still be possible to construct classifiers, that provide similar theoretical guarantees by focusing on local properties instead. The presence of sharp edges corresponds to a locally large (or even infinite) Lipschitz constant $C_L$ in the template functions and requires a more refined analysis.

A further potential extension is to incorporate perspective transformations from the computer vision literature \cite{perlay,pernet}. The underlying idea is that images captured from different perspectives can be modeled as
\begin{align*}
    \begin{pmatrix}
        \widetilde a_1(u,v)\\
        \widetilde a_2(u,v)\\
        w(u,v)
    \end{pmatrix}
    =
    \begin{pmatrix}
        h_{11} & h_{12} & h_{13}\\
        h_{21} & h_{22} & h_{23}\\
        h_{31} & h_{32} & h_{33}
    \end{pmatrix}
    \begin{pmatrix}
        u\\
        v\\
        1
    \end{pmatrix},
\end{align*} 
 where $h_{ij}$ are the parameters of the non-singular homography matrix and $w$ is the so-called scaling factor. In this framework, $a_1(u,v)$ and $a_2(u,v)$ are obtained by normalizing the output by $w$, namely $a_1(u,v)=\widetilde a_1(u,v)/w(u,v)$ and $a_2(u,v)=\widetilde a_2(u,v)/w(u,v)$. Affine transformations can be recovered as a special case by choosing $h_{31}=h_{32}=0$ and $h_{33}=1$. To include perspective transformations one needs to relax the partial differentiability imposed in Assumption \ref{ass1}. 

The work in \cite{Mumford2000} and Section 5 of \cite{MR2723182} discuss further deformation classes, including those that account for noise and blur, multi-scale superposition, domain warping and interruptions.

Additionally, various sophisticated image deformation models have been proposed for medical image registration. As mentioned in \eqref{registr-obj}, image registration seeks an optimal transformation mapping the target image to the source image. To study and compare image registration methods, it is essential to construct realistic image deformation models describing the generation of the deformed image from the template. The survey article \cite{6522524} classifies these deformation models into several categories, such as ODE/PDE based models, interpolation-based models and knowledge-based models. For instance, a simple ODE based random image deformation model takes $\bX$ as template/source image and generates a random vector field $u.$ This can be achieved by selecting a basis and generating independent random coefficients according to a fixed distribution. Given the vector field $u,$ a continuous image deformation $\bX(t)$ is generated by solving the differential equation $\partial_t \bX(t) = u(\bX(t))$ with $\bX(0)=\bX.$ The randomly deformed image is then $\bX(1)$. The DARTEL algorithm \cite{ASHBURNER200795} is a widely recognized approach for image registration within this deformation model. A statistical analysis of these methods is still lacking.

\section*{Acknowledgement}
A first version of the paper was written by S.L. and J.S-H. The current version is a generalisation that has been written as a joint effort including J.C. The research of J. S.-H. and J.C. was supported by the NWO Vidi grant VI.Vidi.192.021. S.L. was supported in part by the NWO Veni grant VI.Veni.232.033. The authors sincerely thank Yanis Bosch and Mark Podolskij for pointing out a mistake in the previous version of the manuscript.
\appendix

\section{Proofs for Section \ref{sec.image_process}}
\label{app.proofs_image_alignm}
Throughout this section, assume that $f$ is one of the template functions $f_0, f_1.$
\subsection{Proofs for general deformation model}\label{app.proofs_image_alignm_gen}
\begin{lemma}\label{Lip-composite}
If the function $f$ satisfies Assumption~\ref{ass-1} and $A\in\mathcal{A}$ satisfies Assumption~\ref{ass1}-(i), then, $f\circ A$ is Lipschitz continuous in the sense that for any $(u,v),(u',v')\in\mathbb{R}^2$,
\[
|f\circ A(u,v)-f\circ A(u',v')|\leq2C_{\mathcal{A}}C_L\|f\|_1(|u-u'|+|v-v'|).
\]
\end{lemma}

\begin{proof}
Recall that $A=(a_1,a_2)$. Note that Assumption~\ref{ass1}-(i) implies that for any $(u,v),(u',v')\in\mathbb{R}^2$ and $k=1,2$,
\begin{align*}
      |a_{k}(u,v)-a_{k}(u',v')|\leq C_\mathcal{A} \big(|u-u'|+|v-v'|\big).
\end{align*}
Together with the Lipschitz continuity of $f$, this implies that for any $(u,v),(u',v')\in\mathbb{R}^2$,
\begin{align*}
|f\circ A(u,v)-f\circ A(u',v')|&=|f(a_1(u,v),a_2(u,v))-f(a_1(u',v'),a_2(u',v'))|\\
&\leq C_L\|f\|_1\left(\left|a_1(u,v)-a_1(u',v')\right|+\left|a_2(u,v)-a_2(u',v')\right|\right)\\
&\leq C_L\|f\|_1 2C_{\mathcal{A}}(|u-u'|+|v-v'|)\\
&=2C_{\mathcal{A}}C_L\|f\|_1(|u-u'|+|v-v'|).
\end{align*}
\end{proof}

\begin{lemma}\label{support-range}
Suppose $[\beta_{\text{left}}, \beta_{\text{right}}] \times [\beta_{\text{down}}, \beta_{\text{up}}] \subseteq [0,1]^2$ and Assumption~\ref{ass1}-(i) holds. For all $A \in \mathcal{A}$, we have $$A\big([0,1]^2\big) \subseteq D_{\mathcal{A}}=[-2C_{\mathcal{A}} - 1, 2C_{\mathcal{A}} + 1]^2.$$
\end{lemma}
\begin{proof}
Recall that $A = (a_1, a_2)$. Under Assumption~\ref{ass1}-(i), for any $(u,v), (u',v') \in [0,1]^2$ and $k = 1, 2$,
\begin{equation}\label{length-b}
|a_k(u,v) - a_k(u',v')| \leq C_{\mathcal{A}}\big(|u - u'| + |v - v'|\big)\leq2C_{\mathcal{A}}.
\end{equation}

Take a point $(u_0,v_0)\in[\beta_{\text{left}},\beta_{\text{right}}]\times[\beta_{\text{down}},\beta_{\text{up}}]\subseteq\cro{0,1}^2$. Since, under Assumption~\ref{ass1}-(i), we have $$[\beta_{\text{left}},\beta_{\text{right}}]\times[\beta_{\text{down}},\beta_{\text{up}}]\subseteq A([0,1]^2),$$ there exists a point $(u',v')\in[0,1]^2$ such that $(u_0,v_0)=A(u',v')=(a_1(u',v'),a_2(u',v'))$. By \eqref{length-b}, for any $(u,v) \in [0,1]^2$ and $k=1,2$,
$$|a_{k}(u,v)|\leq2C_{\mathcal{A}}+|a_{k}(u',v')|\leq2C_{\mathcal{A}}+1.$$
This completes the proof.
\end{proof}

\begin{lemma}\label{inverse-l2-connect}
Let $f$ satisfy Assumption~\ref{ass-1}, and let $\mathcal{A}$ satisfy Assumption~\ref{ass1}. Then, for any $A_1,A_2\in\mathcal{A}$,
$$\frac{C_{\mathcal{J}}}{\sqrt{2}C_{\mathcal{A}}}\|f\circ A_1\circ A_{2}^{-1}\|_{L^2(\mathbb{R}^2)}\leq\|f\|_{2}\leq\frac{\sqrt{2}C_{\mathcal{A}}}{C_{\mathcal{J}}}\|f\circ A_1\circ A_{2}^{-1}\|_{L^2(\mathbb{R}^2)}.$$
\end{lemma}
\begin{proof}
Under Assumption~\ref{ass1}, $\mathcal{A}$ contains the identity, which implies that $C_{\mathcal{A}}\geq1$ and $C_{\mathcal{J}}\leq1$, ensuring that the inequalities are well-defined. 

We prove only the second inequality, as the first can be shown using a similar argument. Recall that $J_{A}(x,y)$ represents the Jacobian matrix of $A$ at $(x,y)$. Since, under Assumptions~\ref{ass-1} and \ref{ass1}-(i), the supports of $f$ and $f\circ A_1$ are both contained in $[0,1]^2$, and the partial derivatives of $a_1$ and $a_2$ are bounded in the supremum norm by $C_{\mathcal{A}}$, we obtain jointly with the change of variables theorem
\begin{align}
\|f\|^{2}_{2}&=\int_{[0,1]^2}f^2(u,v)dudv\nonumber\\
&=\int_{\mathbb{R}^2}f^2(u,v)dudv\nonumber\\
&=\int_{\mathbb{R}^2}\cro{f(A_1(x,y))}^2\cdot|\det(J_{A_1}(x,y))|dxdy\nonumber\\
&\leq 2C_{\mathcal{A}}^2\|f\circ A_1\|^2_{2}.\label{connect-1}
\end{align}
Moreover, under Assumption~\ref{ass1}-(ii), 
\begin{align}
\|f\circ A_1\circ A_{2}^{-1}\|_{L^2(\mathbb{R}^2)}^2&=\int_{\mathbb{R}^2}\cro{f(A_1\circ A_2^{-1}(u,v))}^2dudv\nonumber\\
&=\int_{\mathbb{R}^2}\cro{f(A_1(x,y))}^2\cdot|\det({J_{A_{2}}(x,y)})|dxdy\nonumber\\
&\geq C_{\mathcal{J}}^2\|f\circ A_1\|_{L^2(\mathbb{R}^2)}^2\nonumber\\
&=C_{\mathcal{J}}^2\|f\circ A_1\|_{2}^2.\label{connect-2}
\end{align}
Combining \eqref{connect-1} and \eqref{connect-2} yields
$$\|f\|_2^2\leq\frac{2C_{\mathcal{A}}^2}{C_{\mathcal{J}}^2}\|f\circ A_1\circ A_{2}^{-1}\|_{L^2(\mathbb{R}^2)}^2,$$ which implies the conclusion.
\end{proof}

\begin{prop}\label{dis-A-B}
Given Assumptions \ref{ass-1} and \ref{ass1}-(i), let $\mathcal{A}_{d}^{-1
}$ be a covering of $\mathcal{A}^{-1}$ with balls of radius $1/d$ satisfying \eqref{gen-conver} with $D_{\mathcal{A}}=[-2C_{\mathcal{A}} - 1, 2C_{\mathcal{A}} + 1]^2$. Then, for any $A_{*}^{-1}\in\mathcal{A}^{-1}$ and any $B_{*}\in\mathcal{A}^{-1}_{d}$ such that $\|A^{-1}_{*}-B_{*}\|_{L^{\infty}(D_{\mathcal{A}})}\leq1/d$, it holds that
\begin{align*}
\left|\bX\big(A_{*}^{-1}(u,v)\big)-\bX\big(B_{*}(u,v)\big)\right|\leq 8\eta C_\mathcal{A}C_L\|f\|_1\frac{1}{d},\quad\mbox{for all}\ (u,v)\in D_{\mathcal{A}}.
\end{align*}
\end{prop}

\begin{proof}
Write $A_{*}^{-1}=(b_{1},b_{2})$ and $B_{*}=(b_{1}^{*},b_{2}^{*})$. For any $(u,v)\in D_{\mathcal{A}}\subseteq\mathbb{R}^2$, there exist integers $j,\ell$ such that $A_{*}^{-1}(u,v)\in I_{j,\ell}$ and integers $j',\ell'$ such that $B_{*}(u,v)\in I_{j',\ell'}$. The $I_{j,\ell}$ and $I_{j',\ell'}$ represent the pixel locations in the original image $\mathbf{X}$ before applying the mappings $A_{*}^{-1}$ and $B_{*}$, respectively. 

We first deal with the case where both $I_{j,\ell}$ and $I_{j',\ell'}$ are contained in $\cro{0,1}^2$. By the definition of $\mathcal{A}_{d}^{-1}$ and the fact that $\|A^{-1}_{*}-B_{*}\|_{L^{\infty}(D_{\mathcal{A}})}\leq1/d$, we know for any $(u,v)\in D_{\mathcal{A}}$,
\begin{equation*}
\left|b_{1}(u,v)-b_{1}^{*}(u,v)\right|\leq\frac{1}{d}\quad\mbox{and}\quad\left|b_{2}(u,v)-b_{2}^{*}(u,v)\right|\leq\frac{1}{d},
\end{equation*}
which implies that 
\begin{equation}
\left|j-j'\right|\leq 1\quad\mbox{and}\quad\left|\ell-\ell'\right|\leq 1.\label{lip-A-dis}
\end{equation}
As a consequence of \eqref{lip-A-dis}, for any $(x,y)\in I_{j,\ell}$ and any $(x',y')\in I_{j',\ell'}$,
\begin{equation}\label{x-y-dis}
    |x-x'|\leq\frac{|j-j'|+1}{d}\leq\frac{2}{d},\quad|y-y'|\leq\frac{|\ell-\ell'|+1}{d}\leq\frac{2}{d}.
\end{equation}
Recall that the image $\bX=(X_{j,\ell})_{j,\ell=1,\ldots,d}$ is generated by $X_{j,\ell}=d^2\eta\int_{I_{j,\ell}}f\circ A(u,v)dudv$. Under Assumptions~\ref{ass-1} and \ref{ass1}-(i), we know from Lemma~\ref{Lip-composite} that $f\circ A$ is Lipschitz continuous. With Lemma~\ref{Lip-composite} and \eqref{x-y-dis}, we can derive that for any $(x,y)\in I_{j,\ell}$ and any $(x',y')\in I_{j',\ell'}$,
\begin{align}
|f(A(x,y))-f(A(x',y'))|&\leq2C_{\mathcal{A}}C_{L}\|f\|_{1}(|x-x'|+|y-y'|)\nonumber\\
&\leq8C_{\mathcal{A}}C_{L}\|f\|_{1}\frac{1}{d}.\label{lip-difference}
\end{align}
Therefore, under Assumptions~\ref{ass-1} and \ref{ass1}-(i), using \eqref{lip-difference}, we have
\begin{align*}
\left|\bX\big(A_{*}^{-1}(u,v)\big)-\bX\big(B_{*}(u,v)\big)\right| =&\eta\left|d^{2}\int_{I_{j,\ell}}f(A(x,y))dxdy-d^{2}\int_{I_{j',\ell'}}f(A(x,y))dxdy\right|\\
\leq&8\eta C_\mathcal{A}C_L\|f\|_1\frac{1}{d}.
\end{align*}
This proves the result in this case.

Next, we handle the case where neither $I_{j,\ell}$ nor $I_{j',\ell'}$ is contained in $\cro{0,1}^2$. According to the definition of $\bX$ in \eqref{image-as-function}, we have 
\begin{align*}
\left|\bX\big(A_{*}^{-1}(u,v)\big)-\bX\big(B_{*}(u,v)\big)\right|=0,
\end{align*}
which satisfies the conclusion.

Finally, we consider the case where $I_{j,\ell}\subseteq\cro{0,1}^2$ but $I_{j',\ell'}$ is not contained in $\cro{0,1}^2$. If $I_{j',\ell'}\subseteq\cro{0,1}^2$ but $I_{j,\ell}$ is not contained in $\cro{0,1}^2$, the proof is the same and therefore omitted. Under Assumption~\ref{ass1}-(i), the image is fully visible hence $\int_{I_{j',\ell'}}f\big(A(x,y)\big)dxdy=0$, if $I_{j',\ell'}$ is not contained in $\cro{0,1}^2$. As Assumptions~\ref{ass-1},~\ref{ass1}-(i) hold, we similarly derive using Lemma~\ref{Lip-composite} and $\|A^{-1}_{*}-B_{*}\|_{L^{\infty}(D_{\mathcal{A}})}\leq1/d$ that
\begin{align*}
\left|\bX\big(A_{*}^{-1}(u,v)\big)-\bX\big(B_{*}(u,v)\big)\right|&=\eta\left|d^{2}\int_{I_{j,\ell}}f\big(A(x,y)\big)dxdy-0\right|\\
&=\eta\left|d^{2}\int_{I_{j,\ell}}f\big(A(x,y)\big)dxdy-d^{2}\int_{I_{j',\ell'}}f\big(A(x,y)\big)dxdy\right|\\
&\leq8\eta C_\mathcal{A}C_L\|f\|_1\frac{1}{d},
\end{align*}
proving the claim also in this case. 
\end{proof}

\begin{lemma}
\label{general-error}
Consider an image $\bX=(X_{j,\ell})_{j,\ell=1,\ldots,d}$ generated by $X_{j,\ell}=d^2\eta\int_{I_{j,\ell}}f\circ A(u,v)dudv$, and assume that Assumptions \ref{ass-1} and \ref{ass1} hold. Then, for any $A_{*}\in\mathcal{A}$, there exists a universal constant $K>0$ such that, for any $B_{*}\in\mathcal{A}^{-1}_{d}$ satisfying $\|A^{-1}_{*}-B_{*}\|_{L^{\infty}(D_{\mathcal{A}})}\leq1/d$ with $D_{\mathcal{A}}=[-2C_{\mathcal{A}} - 1, 2C_{\mathcal{A}} + 1]^2$,
\begin{align*}
\left\|T_{\bX\circ B_{*}}- \frac{f\circ A\circ A_{*}^{-1}}{\|f\circ A\circ A_{*}^{-1}\|_{L^2(\mathbb{R}^2)}}\right\|_{L^2(\mathbb{R}^2)} \leq K\max\{C_L^2C_{\mathcal{J}}^{-2}(C_{\mathcal{A}}+1)^{6},C_LC_{\mathcal{J}}^{-1}(C_{\mathcal{A}}+1)^3\}\frac{1}{d}.
\end{align*}
\end{lemma}

\begin{proof}
As a first step of the proof, we fix $A_{*}\in\mathcal{A}$ and show that for any $B_{*}\in\mathcal{A}^{-1}_{d}$ satisfying $\|A^{-1}_{*}-B_{*}\|_{L^{\infty}(D_{\mathcal{A}})}\leq1/d$, 
\begin{align}
\label{second-summ-d}
\Big|\bX\big(B_{*}(u,v)\big) - \eta f\circ A\circ A_{*}^{-1}(u,v)\Big| \leq 12\eta C_\mathcal{A}C_L\|f\|_1\frac{1}{d}, \quad \text{for all} \ (u,v)\in D_{\mathcal{A}}\subseteq\mathbb{R}^2,
\end{align}
where $\bX\big(B_{*}(u,v)\big)$ is as defined in \eqref{discre-inverse-deform}.

For any $(u,v)\in D_{\mathcal{A}}\subseteq\mathbb{R}^2$, set $(u_0,v_0)=A_{*}^{-1}(u,v)$. We first consider the case where $(u_0,v_0)\in[0,1)^2$, which implies that there exist $j,\ell\in\{1,\ldots,d\}$ such that $(u_0,v_0)\in I_{j,\ell}$. With the definition of $\bX$ in \eqref{image-as-function}, we then compute that 
\begin{align}
\left|\bX\big(A_{*}^{-1}(u,v)\big) - \eta f\circ A\circ A_{*}^{-1}(u,v)\right|&=\left|\bX(u_0,v_0) - \eta f\circ A\circ A_{*}^{-1}(u,v)\right|\nonumber\\
&=\eta\left|d^{2}\int_{I_{j,\ell}}f\big(A(x,y)\big)dxdy-f\big(A(u_0,v_0)\big)\right|.\label{inverse-1}
\end{align}
For any $(x,y)\in I_{j,\ell}$, due to the Lipschitz continuity of $f$ and $A$, under Assumptions~\ref{ass-1} and \ref{ass1}-(i), we obtain by applying Lemma~\ref{Lip-composite} that
\begin{align}
\left|f\big(A(x,y)\big)-f\big(A(u_0,v_0)\big)\right|
&\leq 2C_\mathcal{A}C_L \|f\|_1\big(|x-u_0|+|y-v_0|\big)\nonumber\\
&\leq4C_\mathcal{A}C_L \|f\|_1\frac{1}{d}.\label{lip-bound-all}
\end{align}
With \eqref{lip-bound-all}, we deduce from \eqref{inverse-1} that 
\begin{align}
\left|\bX\big(A_{*}^{-1}(u,v)\big) - \eta f\circ A\circ A_{*}^{-1}(u,v)\right|&=\eta\left|d^{2}\int_{I_{j,\ell}}f\big(A(x,y)\big)dxdy-f\big(A(u_0,v_0)\big)\right|\nonumber\\
&\leq d^{2}\eta\int_{I_{j,\ell}}\left|f\big(A(x,y)\big)-f\big(A(u_0,v_0)\big)\right|dxdy\nonumber\\
&\leq4\eta C_\mathcal{A}C_L \|f\|_1\frac{1}{d}.\label{perfect-inverse-1}
\end{align}

Then, we consider all $(u,v)\in D_{\mathcal{A}}\subseteq\mathbb{R}^2$ such that $(u_0,v_0)=A_{*}^{-1}(u,v)\notin[0,1)^2$. In this case, by definition of $\bX$ in \eqref{image-as-function}, $\bX(A_{*}^{-1}(u,v))=0$. Moreover, the value of $f\big(A(u_0,v_0)\big)$ must be zero; otherwise, this contradicts Assumption~\ref{ass1}-(i), which ensures that the image is fully visible, as $f\circ A$ is Lipschitz continuous according to Lemma~\ref{Lip-composite}. Therefore, 
\begin{equation}\label{boundary-case}
\left|\bX\big(A_{*}^{-1}(u,v)\big) - \eta f\circ A\circ A_{*}^{-1}(u,v)\right|=\left|\bX(u_0,v_0) - \eta f\big(A(u_0,v_0)\big)\right|=0.
\end{equation}

Combining \eqref{perfect-inverse-1} and \eqref{boundary-case}, we obtain that for any $(u,v)\in D_{\mathcal{A}}\subseteq\mathbb{R}^2$,
\begin{equation}\label{support-bound-image}
\left|\bX\big(A_{*}^{-1}(u,v)\big) - \eta f\circ A\circ A_{*}^{-1}(u,v)\right|\leq4\eta C_\mathcal{A}C_L \|f\|_1\frac{1}{d}.
\end{equation}

Under the condition $\|A_{*}^{-1}-B_{*}\|_{L^{\infty}(D_{\mathcal{A}})}\leq1/d$, we deduce from \eqref{support-bound-image} and Proposition~\ref{dis-A-B} that, for any $(u,v)\in D_{\mathcal{A}}$,
\begin{align*}
\left|\bX\big(B_{*}(u,v)\big) - \eta f\circ A\circ A_{*}^{-1}(u,v)\right|&\leq\left|\bX\big(B_{*}(u,v)\big)-\bX\big(A_{*}^{-1}(u,v)\big)\right|+\left|\bX\big(A_{*}^{-1}(u,v)\big) - \eta f\circ A\circ A_{*}^{-1}(u,v)\right|\nonumber\\
&\leq\left|\bX\big(B_{*}(u,v)\big)-\bX\big(A_{*}^{-1}(u,v)\big)\right|+4\eta C_\mathcal{A}C_L \|f\|_1\frac{1}{d}\nonumber\\
&\leq12\eta C_\mathcal{A}C_L\|f\|_1\frac{1}{d}.
\end{align*}

In the next step, we show that 
\begin{align}
\label{inverse-2}
    \left|\|\bX\circ B_{*}\|_{L^2(\mathbb{R}^2)} - \eta\|f\circ A\circ A_{*}^{-1}\|_{L^2(\mathbb{R}^2)}\right| \leq 3355\eta C_{L}(C_{\mathcal{A}}+1)^2\max\{C_LC_{\mathcal{J}}^{-1}C_{\mathcal{A}}^3,1\}\|f\|_{1}\frac{1}{d}.
\end{align}
Using that for real numbers $a,b\not=0$ $a-b=(a^2-b^2)/(a+b),$ we rewrite
\begin{align}
&\left|\|\bX\circ B_{*}\|_{L^2(\mathbb{R}^2)} - \eta\|f\circ A\circ A_{*}^{-1}\|_{L^2(\mathbb{R}^2)}\right|\nonumber\\ &= \left|\|\bX\circ B_{*}\|_{L^2(\mathbb{R}^2)}^2- \eta^2\|f\circ A\circ A_{*}^{-1}\|_{L^2(\mathbb{R}^2)}^2\right| \frac{1}{\|\bX\circ B_{*}\|_{L^2(\mathbb{R}^2)}+\eta\|f\circ A\circ A_{*}^{-1}\|_{L^2(\mathbb{R}^2)}}\nonumber\\ 
&\leq \left|\|\bX\circ B_{*}\|_{L^2(\mathbb{R}^2)}^2- \eta^2\|f\circ A\circ A_{*}^{-1}\|_{L^2(\mathbb{R}^2)}^2\right|\frac{1}{\eta\|f\circ A\circ A_{*}^{-1}\|_{L^2(\mathbb{R}^2)}}.\label{inverse-3}
\end{align}
We bound the first term in \eqref{inverse-3} by 
\begin{align}
&\left|\|\bX\circ B_{*}\|_{L^2(\mathbb{R}^2)}^2- \eta^2\|f\circ A\circ A_{*}^{-1}\|_{L^2(\mathbb{R}^2)}^2\right| \nonumber\\
&= \left|\int_{\mathbb{R}^2} \left(\bX\big(B_{*}(u,v)\big) - \eta f\circ A\circ A_{*}^{-1}(u, v)+ \eta f\circ A\circ A_{*}^{-1}(u, v)\right)^2dudv
- \eta^2\|f\circ A\circ A_{*}^{-1}\|_{L^2(\mathbb{R}^2)}^2\right|\nonumber\\
& = \left| \int_{\mathbb{R}^2} \left(\bX\big(B_{*}(u,v)\big) - \eta f\circ A\circ A_{*}^{-1}(u, v)\right)^2 dudv\right.\nonumber\\
& \left. \quad 
    + 2 \eta\int_{\mathbb{R}^2}\left(\bX\big(B_{*}(u,v)\big) - \eta f\circ A\circ A_{*}^{-1}(u, v)\right)\cdot\cro{f\circ A\circ A_{*}^{-1}(u, v) }dudv\right.
    \nonumber\\
    & \left. \quad 
    +
    \int_{\mathbb{R}^2} \eta^2\cro{f\circ A\circ A_{*}^{-1}(u,v)}^2dudv -\eta^2\|f\circ A\circ A_{*}^{-1}\|_{L^2(\mathbb{R}^2)}^2\right|\nonumber\\
    & \leq \int_{\mathbb{R}^2} \left|\bX\big(B_{*}(u,v)\big)- \eta f\circ A\circ A_{*}^{-1}(u, v)\right|^2 dudv\nonumber\\
    &\quad+2\eta \int_{\mathbb{R}^2} \left|\bX\big(B_{*}(u,v)\big) - \eta f\circ A\circ A_{*}^{-1}(u, v) \right|\cdot\left|f\circ A\circ A_{*}^{-1}(u, v)\right|dudv.\label{R-square-int}
\end{align}
Since the support of $f\circ A$ and the support of $\bX$ are both contained within $[0,1]^2$, applying Lemma~\ref{support-range} shows that the support of $f\circ A\circ A_{*}^{-1}$ and the support of $\bX\circ A_{*}^{-1}$ are both contained within $D_{\mathcal{A}}=[-2C_{\mathcal{A}}-1,2C_{\mathcal{A}}+1]^2$. Moreover, since $\mathcal{A}_d^{-1}$ is a subset of $\mathcal{A}^{-1}$, and thus $B_*^{-1}\in\mathcal{A}$, applying Lemma~\ref{support-range} again implies that the support of $\bX\circ B_*$ is also contained within $D_{\mathcal{A}}$. Using \eqref{second-summ-d}, we can derive from \eqref{R-square-int} that
\begin{align*}
&\left|\|\bX\circ B_{*}\|_{L^2(\mathbb{R}^2)}^2- \eta^2\|f\circ A\circ A_{*}^{-1}\|_{L^2(\mathbb{R}^2)}^2\right| \\
& \leq \int_{D_{\mathcal{A}}}  
    12^2\eta^2 C^2_\mathcal{A}C^2_L\|f\|_1^2\frac{1}{d^2} \, dudv+ 24\eta^2C_\mathcal{A}C_L\|f\|_1\frac{1}{d} \int_{\mathbb{R}^2} f\circ A\circ A_{*}^{-1}(u,v) \,  dudv\\
    &\leq12^{2}\eta^2 C^2_\mathcal{A}C^2_L\|f\|_1^2\frac{1}{d^2}\cdot(4C_\mathcal{A}+2)^2+ 24\eta^2 C_\mathcal{A}C_L\|f\|_1\frac{1}{d}\cdot\|f\circ A\circ A_{*}^{-1}\|_{L^1(\mathbb{R}^2)}.
\end{align*}

By the Cauchy-Schwarz inequality, $\|f\|_1\leq \|f\|_2$ and 
\begin{align*}
\|f\circ A\circ A_{*}^{-1}\|_{L^1(\mathbb{R}^2)}&=\int_{D_{\mathcal{A}}}f\circ A\circ A_{*}^{-1}(u,v) \,  dudv\\
&\leq(4C_{\mathcal{A}}+2)\sqrt{\int_{D_{\mathcal{A}}}\cro{f\circ A\circ A_{*}^{-1}(u,v)}^2dudv}\\
&=(4C_{\mathcal{A}}+2)\|f\circ A\circ A_{*}^{-1}\|_{L^2(\mathbb{R}^2)}.   
\end{align*}
Summarizing and applying Lemma~\ref{inverse-l2-connect}, \eqref{inverse-3} is bounded by
\begin{align*}
&\left|\|\bX\circ B_{*}\|_{L^2(\mathbb{R}^2)} - \eta\|f\circ A\circ A_{*}^{-1}\|_{L^2(\mathbb{R}^2)}\right|\\
&\leq 2304\eta C^2_LC^2_\mathcal{A}(C_\mathcal{A}+1)^2\frac{1}{d}\|f\|_1\frac{\|f\|_1}{\|f\circ A\circ A_{*}^{-1}\|_{L^2(\mathbb{R}^2)}}\\
&\quad+24\eta C_\mathcal{A}C_L\frac{1}{d}\|f\|_1\frac{\|f\circ A\circ A_{*}^{-1}\|_{L^1(\mathbb{R}^2)}}{\|f\circ A\circ A_{*}^{-1}\|_{L^2(\mathbb{R}^2)}}\\
&\leq 2304\eta C^2_LC^2_\mathcal{A}(C_\mathcal{A}+1)^2\frac{1}{d}\|f\|_1\frac{\|f\|_2}{\|f\circ A\circ A_{*}^{-1}\|_{L^2(\mathbb{R}^2)}}
+24\eta C_\mathcal{A}C_L\frac{1}{d}\|f\|_1\cdot(4C_{\mathcal{A}}+2)\\
&\leq 2304\eta C^2_LC^2_\mathcal{A}(C_\mathcal{A}+1)^2\frac{1}{d}\|f\|_1\cdot\frac{\sqrt{2}C_{\mathcal{A}}}{C_{\mathcal{J}}}
+96\eta C_\mathcal{A}(C_{\mathcal{A}}+1)C_L\frac{1}{d}\|f\|_1\\
&\leq3355\eta C_{L}(C_{\mathcal{A}}+1)^2\max\{C_LC_{\mathcal{J}}^{-1}C^3_{\mathcal{A}},1\}\|f\|_{1}\frac{1}{d},
\end{align*}
proving \eqref{inverse-2}.

We now finish the proof. Using $T_{\bX\circ B_{*}}=\bX\circ B_{*}/\|\bX\circ B_{*}\|_{L^2(\mathbb{R}^2)}$ and $(a+b)^2 \leq 2 a^2 + 2b^2$ for arbitrary real numbers $a,b$, we bound
\begin{align*}
&\left\|T_{\bX\circ B_{*}} - \frac{f\circ A\circ A_{*}^{-1}}{\|f\circ A\circ A_{*}^{-1}\|_{L^2(\mathbb{R}^2)}}\right\|_{L^2(\mathbb{R}^2)}^2 \\
&\leq 2\left\|\frac{\bX\circ B_{*}}{\|\bX\circ B_{*}\|_{L^2(\mathbb{R}^2)}} - \frac{\bX\circ B_{*}}{\eta\|f\circ A\circ A_{*}^{-1}\|_{L^2(\mathbb{R}^2)}}\right\|_{L^2(\mathbb{R}^2)}^2 \\
&\quad+ 2 \left\|\frac{\bX\circ B_{*}}{\eta\|f\circ A\circ A_{*}^{-1}\|_{L^2(\mathbb{R}^2)}} - \frac{\eta f\circ A\circ A_{*}^{-1}}{\eta\|f\circ A\circ A_{*}^{-1}\|_{L^2(\mathbb{R}^2)}}\right\|_{L^2(\mathbb{R}^2)}^2\\
&\leq2\left(\frac{1}{\|\bX\circ B_{*}\|_{L^2(\mathbb{R}^2)}}-\frac{1}{\eta\|f\circ A\circ A_{*}^{-1}\|_{L^2(\mathbb{R}^2)}}\right)^2\int_{\mathbb{R}^2}\big(\bX\circ B_{*}(u,v)\big)^2dudv\\
&\quad+\frac{2}{\eta^2\|f\circ A\circ A_{*}^{-1}\|_{L^2(\mathbb{R}^2)}^2} \left\|\bX\circ B_{*} - \eta f\circ A\circ A_{*}^{-1}\right\|_{L^2(\mathbb{R}^2)}^2\\
&\leq\frac{2}{\eta^2\|f\circ A\circ A_{*}^{-1}\|_{L^2(\mathbb{R}^2)}^2} \left|\eta\|f\circ A\circ A_{*}^{-1}\|_{L^2(\mathbb{R}^2)}- \|\bX\circ B_{*}\|_{L^2(\mathbb{R}^2)} \right|^2\\
&\quad+\frac{2}{\eta^2\|f\circ A\circ A_{*}^{-1}\|_{L^2(\mathbb{R}^2)}^2} \left\|\bX\circ B_{*} - \eta f\circ A\circ A_{*}^{-1}\right\|_{L^2(\mathbb{R}^2)}^2.
\end{align*}
Applying \eqref{inverse-2} to the first and \eqref{second-summ-d} to the second term and using again $\|f\|_1\leq \|f\|_2$ and Lemma~\ref{inverse-l2-connect}, it follows
\begin{align*}
&\left\|T_{\bX\circ B_{*}} - \frac{f\circ A\circ A_{*}^{-1}}{\|f\circ A\circ A_{*}^{-1}\|_{L^2(\mathbb{R}^2)}}\right\|_{L^2(\mathbb{R}^2)}^2\\
&\leq \frac{2\|f\|^2_{1}}{\|f\circ A\circ A_{*}^{-1}\|_{L^2(\mathbb{R}^2)}^2}3355^2C_{L}^2(C_{\mathcal{A}}+1)^4\max\{C_L^2C_{\mathcal{J}}^{-2}C_{\mathcal{A}}^6,1\}\frac{1}{d^2}\\
   & \quad + \frac{2}{\|f\circ A\circ A_{*}^{-1}\|_{L^2(\mathbb{R}^2)}^2}\int_{D_{\mathcal{A}}}12^2 C^2_\mathcal{A}C^2_L\|f\|_1^2\frac{1}{d^{2}} \, dudv\\
   &\leq\frac{2\|f\|^2_{2}}{\|f\circ A\circ A_{*}^{-1}\|_{L^2(\mathbb{R}^2)}^2}3355^2C_{L}^2(C_{\mathcal{A}}+1)^4\max\{C_L^2C_{\mathcal{J}}^{-2}C_{\mathcal{A}}^6,1\}\frac{1}{d^2}\\
   & \quad + \frac{32\|f\|_{2}^2}{\|f\circ A\circ A_{*}^{-1}\|_{L^2(\mathbb{R}^2)}^2}12^2 C^2_\mathcal{A}(C_\mathcal{A}+1)^2C^2_L\frac{1}{d^{2}}\\
   & \leq K^2\max\{C_L^4C_{\mathcal{J}}^{-4}(C_{\mathcal{A}}+1)^{12},C_L^2C_{\mathcal{J}}^{-2}(C_{\mathcal{A}}+1)^6\}\frac{1}{d^2},
\end{align*}
for a universal constant $K>0.$ Since $A_*$ was chosen arbitrarily, the assertion follows.
\end{proof}

\begin{proof}[Proof of Theorem \ref{thm.main_part1_general}]
Without loss of generality, for a test image $\mathbf{X}$, we assume its label $k$ is $0$. The analysis for $k = 1$ follows analogously due to the symmetry of $f_0$ and $f_1$.

When $k = 0$, the test image $\mathbf{X}$ is generated by the deformed template $f_0 \circ A$. Recall that each training image $\mathbf{X}_i$ is generated by $f_{k_i} \circ A_i$ with $A_i \in \mathcal{A}$ and $k_i\in\{0,1\}$.

If $k_i=0$, it follows from the triangle inequality that
\begin{align*}
\left\|T_{\bX_i\circ B_{i}^{*}} - T_{\bX\circ B_{*}}\right\|_{L^2(\mathbb{R}^2)}
&=\left\|T_{\bX_i\circ B_{i}^{*}}-\frac{f_{0}}{\|f_{0}\|_{2}}-T_{\bX\circ B_{*}}+\frac{f_{0}}{\|f_{0}\|_{2}}\right\|_{L^2(\mathbb{R}^2)}\\
&\leq\left\|T_{\bX_i\circ B_{i}^{*}}-\frac{f_{0}}{\|f_{0}\|_{2}}\right\|_{L^2(\mathbb{R}^2)}+\left\|T_{\bX\circ B_{*}}-\frac{f_{0}}{\|f_{0}\|_{2}}\right\|_{L^2(\mathbb{R}^2)}.
\end{align*}
Applying Lemma \ref{general-error} with $A_{*}=A_i$ to the first summand and $A_{*}=A$ to the second, there exist $B_{i}^{*},B_{*}\in\mathcal{A}^{-1}_{d}$ such that
\begin{align}
\left\|T_{\bX_i\circ B_{i}^{*}} - T_{\bX\circ B_{*}}\right\|_{L^2(\mathbb{R}^2)}
\leq 2K\max\{C_L^2C_{\mathcal{J}}^{-2}(C_{\mathcal{A}}+1)^{6},C_LC_{\mathcal{J}}^{-1}(C_{\mathcal{A}}+1)^3\}\frac{1}{d}.\label{eq.TX_ub_general}
\end{align}
By the definition of the separation constant $D$ in \eqref{eq.D_part2_gen}, for any $a,b>0$ and any $A_{\alpha},A_{\beta},A_{\gamma},A_{\delta}\in\mathcal{A}$,
\begin{align*}
\left\|af_{1}\circ A_{\alpha}\circ A_{\beta}^{-1}-bf_{0}\circ A_{\gamma}\circ A_{\delta}^{-1}\right\|_{L^2(\mathbb{R}^2)}&=b\left\|\frac{a}{b}f_{1}\circ A_{\alpha}\circ A_{\beta}^{-1}-f_{0}\circ A_{\gamma}\circ A_{\delta}^{-1}\right\|_{L^2(\mathbb{R}^2)}\\
&\geq b\|f_{0}\|_{L^2(\mathbb{R}^2)}D(f_{1},f_{0})\\
&=b\|f_{0}\|_{2}D(f_{1},f_{0}),
\end{align*}
and thus, by applying Lemma~\ref{inverse-l2-connect}, we obtain that for any $A,A_i,A_{*},A_{i,*}\in\mathcal{A},$
\begin{align}
&\left\|\frac{f_{1}\circ A_i\circ{A}_{i,*}^{-1}}{\|f_1\circ A_i\circ{A}_{i,*}^{-1}\|_{L^2(\mathbb{R}^2)}} - \frac{f_{0}\circ A\circ{A}_{*}^{-1}}{\|f_0\circ A\circ{A}_{*}^{-1}\|_{L^2(\mathbb{R}^2)}}\right\|_{L^2(\mathbb{R}^2)}\nonumber\\
&\geq\frac{\|f_0\|_{2}\cdot D(f_{1},f_{0})}{\|f_0\circ A\circ{A}_{*}^{-1}\|_{L^2(\mathbb{R}^2)}}\vee \frac{\|f_1\|_{2}\cdot D(f_{0},f_{1})}{\|f_1\circ A_i\circ{A}_{i,*}^{-1}\|_{L^2(\mathbb{R}^2)}}\nonumber\\
&\geq \frac{C_{\mathcal{J}}}{\sqrt{2}C_{\mathcal{A}}}\cro{D(f_{1},f_{0})\vee D(f_{0},f_{1})}\nonumber\\
&>4K\max\{C_L^2C_{\mathcal{J}}^{-2}(C_{\mathcal{A}}+1)^{6},C_LC_{\mathcal{J}}^{-1}(C_{\mathcal{A}}+1)^3\}\frac{1}{d},\label{lower-gen-con}
\end{align}
where we used the assumption $D>4\sqrt{2}K\max\{C_L^2C_{\mathcal{J}}^{-3}(C_{\mathcal{A}}+1)^{7},C_LC_{\mathcal{J}}^{-2}(C_{\mathcal{A}}+1)^4\}/d$ for the last step. For any index $i$ such that $k_i=1,$ we use the reverse triangle inequality
\begin{align*}
|a^{\prime} - b^{\prime}| \geq |a-b| - |a^{\prime} - a| - |b^{\prime} - b|, \quad \text{for all} \ a,b,a^{\prime}, b^{\prime} \in \mathbb{R}.
\end{align*}
For any $B_{i},B\in\mathcal{A}_{d}^{-1}\subseteq\mathcal{A}^{-1}$, we have $B_{i}^{-1},B^{-1}\in\mathcal{A}$ and thus
\begin{align*}
&\left\|T_{\bX_{i}\circ B_{i}} - T_{\bX\circ B}\right\|_{L^2(\mathbb{R}^2)} \\
&\geq\left\|\frac{f_{1}\circ A_i\circ{B}_{i}}{\|f_1\circ A_i\circ{B}_{i}\|_{L^2(\mathbb{R}^2)}} - \frac{f_{0}\circ A\circ{B}}{\|f_0\circ A\circ{B}\|_{L^2(\mathbb{R}^2)}}\right\|_{L^2(\mathbb{R}^2)}\\
&\quad- \left\|T_{\bX_{i}\circ B_{i}} - \frac{f_{1}\circ A_{i}\circ{B}_{i}}{\|f_1\circ A_i\circ{B}_{i}\|_{L^2(\mathbb{R}^2)}}\right\|_{L^2(\mathbb{R}^2)} - \left\|T_{\bX\circ B} - \frac{f_{0}\circ A\circ{B}}{\|f_0\circ A\circ{B}\|_{L^2(\mathbb{R}^2)}}\right\|_{L^2(\mathbb{R}^2)}\\
&>4K\max\{C_L^2C_{\mathcal{J}}^{-2}(C_{\mathcal{A}}+1)^{6},C_LC_{\mathcal{J}}^{-1}(C_{\mathcal{A}}+1)^3\}\frac{1}{d}- 2K\max\{C_L^2C_{\mathcal{J}}^{-2}(C_{\mathcal{A}}+1)^{6},C_LC_{\mathcal{J}}^{-1}(C_{\mathcal{A}}+1)^3\}\frac{1}{d}\\
&\geq2K\max\{C_L^2C_{\mathcal{J}}^{-2}(C_{\mathcal{A}}+1)^{6},C_LC_{\mathcal{J}}^{-1}(C_{\mathcal{A}}+1)^3\}\frac{1}{d},
\end{align*}
where the second-to-last inequality follows from \eqref{lower-gen-con} and Lemma \ref{general-error}.

Combining this with \eqref{eq.TX_ub_general}, we conclude that
\begin{align*}
\wh i \in \argmin_{i\in\{1,\dots,n\}} \ \min_{B_{i},B\in\mathcal{A}^{-1}_{d}} \, \big\|T_{\bX_{i}\circ{B_{i}}}-T_{\bX\circ B}\big\|_{L^2(\mathbb{R}^2)}
\end{align*}
holds for some $i$ with $k_i=0$ implying $\wh{k} =0.$ Since $k=0$, this shows the assertion $\wh k=k$.
\end{proof}

\begin{proof}[Proof of Lemma \ref{example-lemma}]
Since $\mathcal{A}$ is a group, it follows that for any $A\in\mathcal{A}$, the inverse $A^{-1}$ lies in $\mathcal{A}$, and for any $A_1,A_2\in\mathcal{A}$, the composition $A_1\circ A_2$ lies in $\mathcal{A}$. Therefore,
\begin{align*}
D(f,g)&=\frac{\inf_{a\in\mathbb{R},\{A_i\}_{i=1}^{4}\subseteq\mathcal{A}}\|af\circ A_1\circ A_2^{-1}-g\circ A_3\circ A_4^{-1}\|_{L^2(\mathbb{R}^2)}}{\|g\|_{L^2(\mathbb{R}^2)}}\\
&=\frac{\inf_{a\in\mathbb{R},A,A'\in\mathcal{A}}\|af\circ A-g\circ A'\|_{L^2(\mathbb{R}^2)}}{\|g\|_{L^2(\mathbb{R}^2)}}.
\end{align*}
For any $A,A'\in\mathcal{A}$, there exists an $\tilde A=A\circ(A')^{-1}\in\mathcal{A}$ such that for any $a\in\mathbb{R}$,
\begin{align}
\left\|a f_0\circ A-f_1\circ A'\right\|_{L^2(\mathbb{R}^2)}^2&=\int_{\mathbb{R}^2}\left(af_0\circ A(u,v)-f_1\circ A'(u,v)\right)^2dudv\nonumber\\
&\geq\frac{1}{2C^2_{\mathcal{A}}}\int_{\mathbb{R}^2}\left(af_0\circ \tilde A(x,y)-f_1(x,y)\right)^2dxdy\nonumber\\
&\geq\frac{\|f_1\|_{L^2((\supp(f_0\circ A))^c)}^{2}}{2C^2_{\mathcal{A}}},\label{d-con-lemma}
\end{align}
where the first inequality follows from the change of variables together with Assumption~\ref{ass1}-(i). Since $D=D(f_0,f_1)\vee D(f_1,f_0)\geq D(f_0,f_1)$, \eqref{d-con-lemma} further implies that $D>C(C_{L},C_{\mathcal{A}},C_{\mathcal{J}})/d$ whenever $$d>\sqrt{2}C(C_{L},C_{\mathcal{A}},C_{\mathcal{J}})C_{\mathcal{A}} \sup_{A\in \mathcal{A}}\frac{\|f_1\|_{L^2(\mathbb{R}^2)}}{\|f_1\|_{L^2((\supp(f_0\circ A))^c)}}.$$
\end{proof}

\begin{lemma}\label{affine-verify}
Let $\mathcal{A}$ be the class of affine transformations on $\mathbb{R}^2$, where each $A \in \mathcal{A}$ is defined as in \eqref{affinetran} with parameters $(b_1, \dots, b_4,\tau,\tau')$ satisfying $|b_1|, \dots, |b_4|\leq C_{\mathcal{A}},$ $|\tau|,|\tau'|\leq\ell_s,$ and $|b_1b_4-b_2b_3|\geq \beta$, for positive constants $\beta,\ell_s,C_{\mathcal{A}}$. If we further constrain the class so that only $p\leq 6$ of the six parameters ${b_1, \ldots, b_4, \tau, \tau'}$ vary while the rest remain constant, then there exists a $1/d$-covering $\mathcal{A}_d^{-1}$ of $\mathcal{A}^{-1}$ with cardinality $|\mathcal{A}_d^{-1}|\asymp d^{p}$.
\end{lemma}

\begin{proof}
Fix a deformation $A\in \mathcal{A}$ with parameters $b_{1},\ldots,b_{4},\tau,\tau'$ and define
\begin{align*}
{\mathbf{B}}=\begin{pmatrix}
b_1 & b_2\\
b_3 & b_4
\end{pmatrix}.
\end{align*}   
If any of the parameters $b_i$, $\tau$, or $\tau'$ is fixed, we take $\widetilde b_i =b_i$, $\widetilde\tau=\tau$ and $\widetilde\tau'=\tau'$. Otherwise, we consider the perturbed parameters $\widetilde{b}_1, \dots, \widetilde{b}_4, \widetilde\tau, \widetilde\tau'$ such that 
\begin{align*}
\max_{i=1,\ldots,4} |b_i - \widetilde{b}_i| \leq \frac{C_b}{d}, \quad |\tau - \widetilde\tau| \vee |\tau' - \widetilde\tau'| \leq \frac{C_s}{d},
\end{align*}
and define the perturbed matrices
\begin{align*}
\widetilde {\mathbf{B}}=\begin{pmatrix}
\widetilde{b}_1 & \widetilde{b}_2\\
\widetilde{b}_3 & \widetilde{b}_4
\end{pmatrix}
\quad \text{and} \quad
\widetilde {\mathbf{B}}^{-1}=\frac{1}{\det(\widetilde {\mathbf{B}})} \begin{pmatrix}
        \widetilde{b}_4 & -\widetilde{b}_2\\
        -\widetilde{b}_3 & \widetilde{b}_1 
    \end{pmatrix}.
\end{align*}
Denote $\widetilde{\bs{\tau}}=(\widetilde\tau,\widetilde\tau')^{\top}$. The inverse of the perturbed affine transformation is given by
\begin{equation*}
    \widetilde{A}^{-1}(u,v)=(\widetilde a_{1}^{-1}(u,v),\widetilde a_{2}^{-1}(u,v))= \widetilde {\mathbf{B}}^{-1} \big((u, v)^\top + \widetilde{\bs{\tau}}\big).
\end{equation*}
Recall that $D_{\mathcal{A}}=[-2C_{\mathcal{A}}-1,2C_{\mathcal{A}}+1]^2$. For any $(u,v)\in D_{\mathcal{A}}$,
\begin{align*}
 &|a_{1}^{-1}(u,v)-\widetilde{a}_{1}^{-1}(u, v)|\\
 =&\left|\frac{1}{\det(\mathbf{B})}\cro{b_{4}(u+\tau)-b_{2}(v+\tau')}-\frac{1}{\det(\widetilde{\mathbf{B}})}\cro{\widetilde b_{4}(u+\widetilde \tau)-\widetilde b_{2}(v+\widetilde \tau')}\right|\\
 \leq&\frac{1}{|\det(\mathbf{B})|}\left|b_{4}(u+\tau)-b_{2}(v+\tau')-\widetilde b_{4}(u+\widetilde\tau)+\widetilde b_{2}(v+\widetilde \tau')\right|+\left|\frac{1}{\det(\mathbf{B})}-\frac{1}{\det(\widetilde {\mathbf{B}})}\right|\cdot\left|\widetilde b_{4}(u+\widetilde\tau)-\widetilde b_{2}(v+\widetilde\tau')\right|\\
 \leq&\frac{2}{\beta}\cro{\left(2C_{\mathcal{A}}+\ell_{s}+1\right)\frac{C_{b}}{d}+(C_b+C_{\mathcal{A}})\frac{C_{s}}{d}}+2\left|\frac{1}{\det(\mathbf{B})}-\frac{1}{\det(\widetilde{\mathbf{B}})}\right|(C_{b}+C_{\mathcal{A}})(2C_{\mathcal{A}}+C_{s}+\ell_{s}+1).
\end{align*}
Since 
\begin{align*}
\big|\det(\widetilde{\mathbf{B}})-\det(\mathbf{B})\big|&=\Big|\left(\widetilde b_{1}\widetilde b_{4}-\widetilde b_{2}\widetilde b_{3}\right)-\left(b_{1}b_{4}-b_{2}b_{3}\right)\Big|\leq\frac{2C_b(2C_{\mathcal{A}}+C_b)}{d},
\end{align*}
and for sufficiently large $d$ such that $2C_b(2C_{\mathcal{A}}+C_b)/d \leq \beta/2$,
it follows that $$\big|\det(\widetilde{\mathbf{B}} )\big| \geq \big|\det(\mathbf{B} )\big| - \frac{2C_b(2C_{\mathcal{A}}+C_b)}{d} \geq \beta - \frac{\beta}{2}=\frac{\beta}{2}.$$ Therefore, we can bound
\begin{align*}
    \left|\frac{1}{\det(\mathbf{B})}-\frac{1}{\det(\widetilde{\mathbf{B}} )}\right|&\leq \frac{2}{\beta^2}\left|\det(\mathbf{B})-\det(\widetilde{\mathbf{B}} )\right|\leq \frac{4}{\beta^2}\left(2C_{\mathcal{A}}+C_{b}\right)\frac{C_{b}}{d}.
\end{align*}
This shows that for sufficiently large $d$ and any $(u,v)\in D_{\mathcal{A}}$,
\begin{equation}\label{inverse-1-bound-affine}
\big|a_{1}^{-1}(u,v)-\widetilde{a}_{1}^{-1}(u, v)\big|\leq\frac{C(\beta,\ell_{s},C_{\mathcal{A}},C_{b},C_{s})}{d},
\end{equation}
where $C(\beta,\ell_{s},C_{\mathcal{A}},C_{b},C_{s})$ is a constant depending only on $\beta,\ell_{s},C_{\mathcal{A}},C_{b},C_{s}$. For any $(u,v)\in D_{\mathcal{A}}$, one can similarly derive 
$$\big|a_{2}^{-1}(u,v)-\widetilde{a}_{2}^{-1}(u, v)\big|\leq\frac{C(\beta,\ell_{s},C_{\mathcal{A}},C_{b},C_{s})}{d},$$
which together with \eqref{inverse-1-bound-affine} gives
$$\|A^{-1}-\widetilde A^{-1}\|_{L^{\infty}(D_{\mathcal{A}})}\leq\frac{C(\beta,\ell_{s},C_{\mathcal{A}},C_{b},C_{s})}{d}.$$ With suitable chosen constants $C_{b},C_{s}$, condition \eqref{gen-conver} holds and $|\mathcal{A}_{d}^{-1}|\asymp d^p$.
\end{proof}

\begin{proof}[Proof of Lemma \ref{ex_scale}]
The first claim is a special case of Lemma \ref{lem.rot} when $\gamma=0$. The bound for the covering number follows from Lemma \ref{affine-verify}.  
\end{proof}

\begin{proof}[Proof of Lemma \ref{lem.rot}]
The inverse of the rotation matrix   
\begin{align*}
    \mathbf{D}_{\gamma} := \begin{pmatrix}
    \cos\gamma & -\sin\gamma\\
    \sin\gamma & \cos\gamma
    \end{pmatrix}
\end{align*}
is $\mathbf{D}_{-\gamma}.$ The conditions on the parameter now ensure that $[-(-\xi)_+,\xi_+]\times [-(-\xi')_+,\xi'_+]\supseteq\mathbf{D}_{-\gamma}([1/4,3/4]^2+(\tau,\tau')^\top).$ To see this, it is enough to check the four vertices of $[1/4,3/4]^2.$ 

Now we examine the partial differentiability condition in Assumption~\ref{ass1}. Consider any transformation $A=(a_1,a_2)\in\mathcal{A}$ with parameters $b_1,b_2,b_3,b_4,\tau,\tau'$. Observe that $a_1(u,v)=b_1 u+b_2 v-\tau$ and $a_2(u,v)=b_3 u+b_4 v-\tau'$ are continuously differentiable with $|\partial_u a_1(u,v)|\leq |b_1|,$ $|\partial_v a_1(u,v)|\leq |b_2|,$ $|\partial_u a_2(u,v)|\leq |b_3|,$ and $|\partial_v a_2(u,v)|\leq |b_4|.$ Since $|b_1|\ldots,|b_4|\leq C_{\mathcal{A}}$, Assumption \ref{ass1}-(i) is satisfied with $C_{\mathcal{A}}$. Moreover, for any $A\in\mathcal{A}$, $|\det(J_{A}(u,v))|=|\xi\xi'|\geq1/4$, for all $(u,v)\in\mathbb{R}^2$, under the given condition that $|\xi|,|\xi'|\geq1/2.$ This implies that $C_{\mathcal{J}}=1/2$ in Assumption~\ref{ass1}-(ii).

The bound for the covering number can be derived similarly as the one in Lemma \ref{affine-verify}, based on the perturbation of the five parameters $\gamma,\xi,\xi',\tau,\tau'$. This yields $|\mathcal{A}_{d}^{-1}|\asymp d^5$.
\end{proof}

\begin{proof}[Proof of Lemma~\ref{wave-verify}]
For $\lambda\not=0$, $A$ is invertible and $$A^{-1}(u,v)=(a_1^{-1}(u,v),a_2^{-1}(u,v))=(u-\alpha\sin\left(2\pi v/\lambda\right),v).$$ Moreover, if $|\alpha|\leq1/4$ and $\lambda\not=0$, then for any point $(u,v)\in[1/4,3/4] \times [1/4,3/4]\subseteq\cro{0,1}^2$,
$$u-\alpha\sin\left(2\pi v/\lambda\right)\leq u+|\alpha|\leq1\quad\mbox{and}\quad u-\alpha\sin\left(2\pi v/\lambda\right)\geq u-|\alpha|\geq0,$$ which implies the full visibility condition in Assumption~\ref{ass1}-(i). 

Consider a fixed $A\in\mathcal{A}$ associated with the parameters $\alpha$ and $\lambda$. Then, the partial derivatives of $a_{2}$ are bounded in the supremum norm by $1.$ Moreover, under the condition $|\alpha|\leq1/4$ and $|\lambda|\geq C_{\text{lower}}>0$, the function $a_1(\cdot,\cdot)$ is continuously partially differentiable. Furthermore, at any $(u,v)\in \mathbb{R}^2,$ we have
\begin{align*}
\left|\partial_u a_1(u,v)\right|\leq 1,\quad \left|\partial_v a_1(u,v)\right|=\left|\alpha\frac{2\pi}{\lambda}\cos\left(\frac{2\pi v}{\lambda}\right)\right|\leq\frac{\pi}{2C_{\text{lower}}}, 
\end{align*}
which implies that Assumption~\ref{ass1}-(i) holds with $C_{\mathcal{A}}=\max\{\pi/(2C_{\text{lower}}),1\}$. Under the given conditions, for any $A$, $|\det(J_A(u,v))|\geq1,$ for all $(u,v)\in\mathbb{R}^2,$ implying $C_{\mathcal{J}}=1$ in Assumption~\ref{ass1}-(ii).

For any $A=(a_1,a_2)\in\mathcal{A}$, let $\lambda_{*}$ and $\alpha_{*}$ be the true parameters, satisfying $|\lambda_{*}|\geq C_{\text{lower}}$ and $|\alpha_{*}|\leq1/4$. Taking $\widetilde\alpha$ and $\widetilde\lambda$ such that 
\begin{equation*}
\left|\widetilde\alpha-\alpha_{*}\right|\leq\frac{C_{\alpha}}{d}\quad\mbox{and}\quad|\widetilde\alpha|\leq|\alpha_{*}|,
\end{equation*}
\begin{equation*}
\left|\widetilde\lambda-\lambda_{*}\right|\leq\frac{C_{\lambda}}{d}\quad\mbox{and}\quad|\widetilde\lambda|\geq|\lambda_{*}|,
\end{equation*}
for some constants $C_{\alpha},C_{\lambda}>0$, we can derive for $A^{-1}(u,v)=(\widetilde a_{1}^{-1}(u,v),\widetilde a_{2}^{-1}(u,v))=(u-\widetilde \alpha\sin(2\pi v/\widetilde\lambda),v)$ with $(u,v)\in D_{\mathcal{A}}=[-2C_{\mathcal{A}}-1,2C_{\mathcal{A}}+1]^2$,
\begin{align*}
\left|\widetilde a_{1}^{-1}(u,v)-a_{1}^{-1}(u,v)\right|&=\left|u-\widetilde \alpha\sin\left(\frac{2\pi v}{\widetilde\lambda}\right)-u+\alpha_{*}\sin\left(\frac{2\pi v}{\lambda_{*}}\right)\right|\\
&\leq\left|\alpha_{*}\sin\left(\frac{2\pi v}{\lambda_{*}}\right)-\alpha_{*}\sin\left(\frac{2\pi v}{\widetilde\lambda}\right)\right|+\left|\alpha_{*}\sin\left(\frac{2\pi v}{\widetilde\lambda}\right)-\widetilde\alpha\sin\left(\frac{2\pi v}{\widetilde\lambda}\right)\right|\\
&\leq|\alpha_{*}|\left|\sin\left(\frac{2\pi v}{\lambda_{*}}\right)-\sin\left(\frac{2\pi v}{\widetilde\lambda}\right)\right|+|\alpha_{*}-\widetilde\alpha|\\
&\leq2\pi (2C_{\mathcal{A}}+1)|\alpha_{*}|\left|\frac{1}{\lambda_{*}}-\frac{1}{\widetilde\lambda}\right|+|\alpha_{*}-\widetilde\alpha|\\
&\leq\frac{C_{\mathcal{A}}C_{\lambda}(2C_{\mathcal{A}}+1)}{C_{\text{lower}}\cdot d}+\frac{C_{\alpha}}{d}\\
&=\frac{C(C_{\text{lower}},C_{\lambda},C_{\alpha})}{d}.
\end{align*}
This implies that $\mathcal{A}_{d}^{-1}$ can be constructed by discretizing the parameters $\lambda$ and $\alpha$, namely $|\mathcal{A}_{d}^{-1}|\asymp d^2$. 
\end{proof}

\begin{proof}[Proof of Lemma~\ref{composite-lemma}]
For any $A\in\mathcal{A}_{2}\circ\mathcal{A}_{1}$, according to the definition of $\mathcal{A}_{2}\circ\mathcal{A}_{1}$, there exist $A_{1}\in\mathcal{A}_1$ and $A_{2}\in\mathcal{A}_2$ such that $A=A_{2}\circ A_{1}$. The class $\mathcal{A}_{2}\circ\mathcal{A}_{1}$ contains the identity, as both $\mathcal{A}_{2}$ and $\mathcal{A}_{1}$ include the identity.

We now verify fully visibility, namely, that $[\beta_{\text{left}}, \beta_{\text{right}}] \times [\beta_{\text{down}},\beta_{\text{up}}]\subseteq A([0,1]^2)$ for any $A\in\mathcal{A}_{2}\circ\mathcal{A}_{1}$. Since $\mathcal{A}_2$ satisfies the full visibility condition, for any $A_2\in\mathcal{A}_2$, we have $[\beta_{\text{left}}, \beta_{\text{right}}] \times [\beta_{\text{down}},\beta_{\text{up}}]\subseteq A_2([0,1]^2)$. Given the condition $[0,1]^2\subseteq A_1([0,1]^2)$, it follows that $$[\beta_{\text{left}}, \beta_{\text{right}}] \times [\beta_{\text{down}},\beta_{\text{up}}]\subseteq A_2([0,1]^2)\subseteq A_2\left(A_1([0,1]^2)\right)=A([0,1]^2).$$

For any real numbers $u,v$, denote  $A_{1}(u,v)=(a_1(u,v),b_1(u,v))$ and $A_{2}(u,v)=(a_2(u,v),b_2(u,v))$. Consequently, by writing $A(u,v)=(a(u,v),b(u,v))$, we have $a(u,v)=a_2(a_1(u,v),b_1(u,v))$ and $b(u,v)=b_2(a_1(u,v),b_1(u,v))$. If both $\mathcal{A}_{1}$ and $\mathcal{A}_{2}$ satisfy Assumption~\ref{ass1}, we can differentiate the composite function at any $(u,v)\in\mathbb{R}^2$ by the chain rule and derive
\begin{align*}
\left|\partial_u a(u,v)\right|&=\left|\frac{\partial a_2}{\partial x} \Big|_{(x,y) = (a_1, b_1)} \cdot \partial_u a_1(u,v) 
+ \frac{\partial a_2}{\partial y} \Big|_{(x,y) = (a_1, b_1)} \cdot \partial_u b_1(u,v)\right|\leq2C_{\mathcal{A}_{1}}C_{\mathcal{A}_{2}}.
\end{align*}
Similarly, we can show 
$\left|\partial_v a(u,v)\right|\leq2C_{\mathcal{A}_{1}}C_{\mathcal{A}_{2}},$ $\left|\partial_u b(u,v)\right|\leq2C_{\mathcal{A}_{1}}C_{\mathcal{A}_{2}},$ and $\left|\partial_v b(u,v)\right|\leq2C_{\mathcal{A}_{1}}C_{\mathcal{A}_{2}}.$ Moreover, for all $(u,v)\in\mathbb{R}^2$,
$$\left|\det(J_A(u,v))\right| = \left|\det( J_{A_2}(A_1(u,v)))\right| \cdot \left|\det (J_{A_1}(u,v))\right|\geq C^2_{\mathcal{J}_1}C^2_{\mathcal{J}_2},$$
which completes the proof.
\end{proof}

\subsection{Proofs for classification via image alignment}\label{proofs-image-align}
We will frequently use the following notation. For any given function $f:\mathbb{R}^2\rightarrow\mathbb{R}$, denote 
$$\alpha_{f}^{-}:=\sup\big\{u:\forall\ t\leq u, v\in\mathbb{R},\ f(t,v)=0\big\}, \quad \alpha_{f}^{+}:=\inf\big\{u:\forall\ t\geq u, v\in\mathbb{R},\ f(t,v)=0\big\},$$
    $$\beta_{f}^{-}:=\sup\big\{v:\forall\ t\leq v, u\in\mathbb{R},\ f(u,t)=0\big\}, \quad 
    \beta_{f}^{+}:=\inf\big\{v:\forall\ t\geq v, u\in\mathbb{R},\ f(u,t)=0\big\}.
$$
The rectangular support of function $f$ is then given by $[\alpha_{f}^{-},\alpha_{f}^{+}]\times [\beta_{f}^{-},\beta_{f}^{+}].$
\begin{lemma}\label{lem.support_error}
Let $j_{\pm},\ell_{\pm}$ be as defined in \eqref{eq.j_pm_def} and \eqref{eq.l_pm_def}. If $f$ is a continuous function, then for any $\xi,\xi'>0$ and $\tau,\tau'\in\mathbb{R}$,
\begin{align*}
    \alpha^{-}_{f}<\xi\frac{j_-}{d}-\tau \leq \alpha^{-}_{f}+ \frac{\xi}{d}, \quad  \alpha^{+}_{f}\leq\xi\frac{j_+}{d}-\tau< \alpha^{+}_{f} + \frac{\xi}{d}
\end{align*}
and
\begin{align*}
    \beta^{-}_{f}<\xi'\frac{\ell_{-}}{d}-\tau'\leq \beta^-_{f} + \frac{\xi'}{d}, \quad   \beta^+_{f}\leq\xi'\frac{\ell_+}{d}-\tau' <\beta^+_{f}+\frac{\xi'}{d}.
\end{align*}
\end{lemma}
\begin{proof}
We only prove the inequalities $\alpha^{-}_{f}<\xi j_-/d-\tau\leq \alpha^{-}_{f}+\xi/d$. All the remaining inequalities will follow using the same arguments. 

Fix $\tau,\tau',\xi,\xi'$ and set $\alpha^-:=\alpha^-_{f(\xi\cdot-\tau,\xi'\cdot-\tau')}.$ First, we show that 
\begin{equation}
\alpha^{-}<\frac{j_-}{d} \leq \alpha^- + \frac{1}{d}.\label{discrete-dis}
\end{equation}
Based on the definitions of $j_{-}$ and $\alpha^{-}$, we observe that $j_{-}/d>\alpha^{-}$. We now assume that $j_{-}/d>\alpha^{-}+1/d$. Let $(\alpha^{-}, v(\alpha^{-}))$ be one of the points located on the boundary of the support of $f(\xi\cdot-\tau,\xi'\cdot-\tau')$. Due to the continuity of $f$ and the assumption that $(j_{-}-1)/d>\alpha^{-}$, there exists a $j_{0}\leq j_{-}$ and a small neighborhood $U_{\alpha^{-}}$ of the point $(\alpha^{-}, v(\alpha^{-}))$ satisfying $U_{\alpha^{-}}\subseteq[(j_{0}-2)/d,(j_{0}-1)/d)\times[(\ell-1)/d,\ell/d)$ for some $\ell$ such that 
\begin{align}
X_{j_{0}-1,\ell}\geq\eta\int_{U_{\alpha^{-}}}d^{2}f(\xi u-\tau,\xi'v-\tau') \, dudv>0.
\end{align}
This contradicts that by definition $j_{-}$ is the smallest integer $j$ satisfying $X_{j,\ell}>0$, proving \eqref{discrete-dis}.

The definition of $\alpha^{-}$, and $\xi>0$ yield
\begin{equation*}
\xi\alpha^{-}-\tau=\alpha^{-}_{f},
\end{equation*}
which together with \eqref{discrete-dis} completes the proof.
\end{proof}

Set
\begin{equation*}
\Delta_{f}:=\alpha^+_{f}-\alpha^-_{f}\quad\mbox{and}\quad\Delta'_{f}:=\beta^+_{f}-\beta^-_{f},
\end{equation*}
for the width and the height of the rectangular support of $f$.
\begin{prop}\label{rotate-rescale-L2} 
Let $f \colon \mathbb{R}^2 \to \mathbb{R}$ be a measurable function with $\operatorname{supp}(f) \subseteq [0,1]^2$. If for some $\Delta, \Delta' \neq 0$ and $\delta, \delta' \in \mathbb{R}$, the rescaled and translated function $f(\Delta \cdot + \delta, \Delta' \cdot + \delta')$ also satisfies $\operatorname{supp}(f(\Delta \cdot + \delta, \Delta' \cdot + \delta')) \subseteq [0,1]^2$, then for $p = 1, 2$,
\[
\|f(\Delta \cdot + \delta, \Delta' \cdot + \delta')\|_p^p = \frac{\|f\|_p^p}{|\Delta \Delta'|}.
\]
\end{prop}
\begin{proof}
This follows by a change of variables,
\begin{align*}
\|f(\Delta\cdot+\delta,\Delta'\cdot+\delta')\|_{p}^{p}&=\int_{\cro{0,1}^2}|f(\Delta u+\delta,\Delta' v+\delta')|^{p} \, dudv\\
&=\int_{\mathbb{R}^2}|f(\Delta u+\delta,\Delta' v+\delta')|^{p}  \, 
 dudv\\
&=\frac{1}{|\Delta\Delta'|}\int_{\mathbb{R}^2}|f(x,y)|^{p} \, dxdy\\
&=\frac{1}{|\Delta\Delta'|}\int_{\cro{0,1}^2}|f(x,y)|^{p} \, dxdy\\
&=\frac{\|f\|_{p}^{p}}{|\Delta\Delta'|}.
\end{align*}
\end{proof}

\begin{lemma}
\label{le1}
Consider a generic image of the form \eqref{eq.X_f_model}. Assume that the support of $f$ is contained in $\cro{1/4,3/4}^{2}$, and satisfies the Lipschitz property \eqref{eq.Lip_cond-gen} for some constant $C_{L}$. Let $T_{\bX}$ be as defined in \eqref{eq.T_def} and $h$ be the function $(t,t^{\prime}) \mapsto h(t,t^{\prime}) := f(\Delta_{f}t+\alpha^-_{f},\Delta'_{f}t'+\beta^-_{f}).$ Then, there exists a universal constant $K>0$, such that 
\begin{align*}
    \left\|T_{\bX}- \frac{\sqrt{\Delta_{f} \Delta^{\prime}_{f}}h}{\|f\|_2}\right\|_2 \leq K(C_L\vee C_L^2 ) (\xi\vee\xi^{\prime}\vee1)^2 \frac{1}{d}.
\end{align*}
\end{lemma}

\begin{proof}
In a first step of the proof, we show that
\begin{align}
\label{le1eq1}
    \Big|Z_{\bX}(t,t^{\prime}) - \eta h(t,t')\Big| \leq 10\eta C_{L}\|f\|_1 (\xi\vee\xi^{\prime})\frac{1}{d}, \quad \text{for all} \ t, t^{\prime} \in [0,1],
\end{align}
where $Z_{\bX}(t,t^{\prime})$ is as defined in \eqref{eq.Z_def}.

Fix $t,t^{\prime}\in [0,1]$ and recall that $j_{-}, j_{+}, \ell_{-}$, and $\ell_{+}$ are defined as in \eqref{eq.j_pm_def} and \eqref{eq.l_pm_def}. Define $j_*:=\lfloor j_- + t(j_+ - j_-)\rfloor$ and $\ell_*:=\lfloor \ell_- + t'(\ell_+ - \ell_-)\rfloor$, where the dependence of $j_*$ on $j_{-}, j_{+}$ and $\ell_*$ on $\ell_{-}, \ell_{+}$ has been suppressed. For any $t, t^{\prime} \in [0,1]$,
\begin{align}
\left|Z_{\bX}(t,t^{\prime}) - \eta h(t,t^{\prime})\right|
\leq&\left|X_{j_*,\ell_*} - \eta h(t, t^{\prime})\right|\nonumber\\
\leq&\eta\left|\int_{I_{j_*,\ell_*}}d^{2}f(\xi u-\tau,\xi'v-\tau') \, dudv-f(\Delta_{f}t+\alpha^-_{f},\Delta'_{f}t'+\beta^-_{f})\right|.\label{inequ-1}
\end{align}
Since $f$ satisfies the Lipschitz condition \eqref{eq.Lip_cond-gen}, we can further bound this by noting that
\begin{align}
&\left|f(\xi u-\tau,\xi'v-\tau')-f(\Delta_{f}t+\alpha^-_{f},\Delta'_{f}t'+\beta^-_{f})\right|\nonumber\\
\leq&C_{L}\|f\|_{1}\cro{\left|(\xi u-\tau)-\left(\Delta_{f} t+\alpha^{-}_{f}\right)\right|+\left|(\xi'v-\tau')-\left(\Delta'_{f} t'+\beta^{-}_{f}\right)\right|}
.\label{part-Ia-bound-1}
\end{align}
For any $u\in[(j_*-1)/d,j_*/d)=[(\lfloor j_- + t(j_+ - j_-)\rfloor-1)/d,\lfloor j_- + t(j_+ - j_-)\rfloor/d)$, we have $$\frac{j_- + t(j_+ - j_-)-2}{d}\leq\frac{\lfloor j_- + t(j_+ - j_-)\rfloor-1}{d}\leq u<\frac{\lfloor j_- + t(j_+ - j_-)\rfloor}{d}\leq\frac{j_- + t(j_+ - j_-)}{d},$$ hence, together with Lemma~\ref{lem.support_error}, we obtain
\begin{align}
\left|\left(\xi u-\tau\right)-\left(\Delta_{f} t+\alpha^{-}_{f}\right)\right|&\leq\left|\left(\xi\frac{j_- + t(j_+ - j_-)}{d}-\tau\right)-\left(\Delta_{f} t+\alpha^{-}_{f}\right)\right|+\frac{2\xi}{d}\nonumber\\
&\leq\left|\left(\xi\frac{j_{-}}{d}-\tau\right)-\alpha^{-}_{f}\right|+\left|\xi\frac{j_+ - j_-}{d}-\Delta_{f}\right|+\frac{2\xi}{d}\nonumber\\
&\leq2\left|\left(\xi\frac{j_{-}}{d}-\tau\right)-\alpha^{-}_{f}\right|+\left|\left(\xi\frac{j_{+}}{d}-\tau\right)-\alpha^{+}_{f}\right|+\frac{2\xi}{d}\nonumber\\
&\leq\frac{5\xi}{d}.\label{part-Ia-boundary-1}
\end{align}
Similarly, for any $v\in[(\ell_*-1)/d,\ell_*/d)$,
\begin{equation}\label{part-Ia-boundary-2}
\left|\left(\xi'v-\tau'\right)-\left(\Delta'_{f} t'+\beta^{-}_{f}\right)\right|\leq\frac{5\xi'}{d}.
\end{equation}
Plugging (\ref{part-Ia-boundary-1}) and (\ref{part-Ia-boundary-2}) into (\ref{part-Ia-bound-1}) yields that for any $(u,v)\in I_{j_*,\ell_*}$,
\begin{align*}
\left|f(\xi u-\tau,\xi'v-\tau')-f(\Delta_{f}t+\alpha^-_{f},\Delta'_{f}t'+\beta^-_{f})\right|
\leq10C_{L}\|f\|_{1}(\xi\vee\xi')\frac{1}{d}.
\end{align*}
Combined with \eqref{inequ-1} and the fact that $t,t'\in [0,1]$ was arbitrary, this implies \eqref{le1eq1}.

In the next step, we show that for some universal constant $C_{1}>0,$
\begin{align}
\label{le1eq2}
    \left|\|Z_{\bX}\|_2 - \frac{\|f\|_2 \eta}{\sqrt{\Delta_{f} \Delta^{\prime}_{f}}}\right| \leq C_{1}\eta(C_{L}\vee C_L^{2}) (\xi\vee\xi^{\prime}\vee1)^2\frac{\|f\|_1}{\sqrt{\Delta_{f} \Delta^{\prime}_{f}}} \frac{1}{d}.
\end{align}
Using that for real numbers $a,b\not=0,$ $a-b=(a^2-b^2)/(a+b),$ we can rewrite
\begin{align}
\label{le2eq1}
\left|\|Z_{\bX}\|_2 -  \frac{\|f\|_2 \eta}{\sqrt{\Delta_{f} \Delta^{\prime}_{f}}}\right|&= \left|\|Z_{\bX}\|_2^2- \frac{\|f\|_2^2 \eta^2}{\Delta_{f} \Delta^{\prime}_{f}}  \right| \frac{1}{\|Z_{\bX}\|_2+\frac{\|f\|_2 \eta}{\sqrt{\Delta_{f} \Delta^{\prime}_{f}}}}\nonumber\\ &\leq \left|\|Z_{\bX}\|_2^2 - \frac{\|f\|_2^2 \eta^2}{\Delta_{f} \Delta^{\prime}_{f}} \right| \frac{\sqrt{{\Delta_{f} \Delta^{\prime}_{f}} }}{\|f\|_2 \eta}.
\end{align}
Since $h(t,t^{\prime}) = f(\Delta_{f}t+\alpha^-_{f},\Delta'_{f}t'+\beta^-_{f})$ and the support of $h$ is contained in $[0,1]^{2},$ we have, according to Proposition~\ref{rotate-rescale-L2}, for $p=1,2$, $\|h\|_p^p=\|f\|_p^p/(\Delta_{f} \Delta'_{f}).$ Also, employing \eqref{le1eq1}, we bound the first term on the right-hand side of \eqref{le2eq1} by
\begin{align*}
    &\left|\|Z_{\bX}\|_2^2- \frac{\|f\|_2^2 \eta^2}{\Delta_{f} \Delta^{\prime}_{f}}\right|= \left|\int_0^1 \int_0^1 \left(Z_{\bX}(t,t') - \eta h(t, t^{\prime})+ \eta h(t,t^{\prime})\right)^2dtdt^{\prime}
    - \frac{\|f\|_2^2 \eta^2}{\Delta_{f} \Delta^{\prime}_{f}} \right|\\
    & = \left| \int_0^1 \int_0^1 \left(Z_{\bX}(t,t') - \eta h(t,t^{\prime})\right)^2 dtdt^{\prime}\right.
    + 2 \eta\int_0^1 \int_0^1 \left(Z_{\bX}(t,t') - \eta h(t,t^{\prime}) \right)h(t, t^{\prime}) dtdt^{\prime}
    \\
    & \left. \quad 
    +
    \int_0^1 \int_0^1 \eta^2 h^2(t,t^{\prime})dtdt^{\prime} - \frac{\|f\|_2^2 \eta^2}{\Delta_{f} \Delta^{\prime}_{f}} \right|\\
    & \leq \int_0^1 \int_0^1 \left|Z_{\bX}(t,t') - \eta h(t,t^{\prime})\right|^2 dtdt^{\prime} + 2 \eta \int_0^1 \int_0^1 \left|Z_{\bX}(t,t') - \eta h(t, t^{\prime}) \right|\left|h(t,t^{\prime})\right|dtdt^{\prime}\\
    & \leq \int_0^1 \int_0^1  100\eta^2C_{L}^2\|f\|_1^2 (\xi\vee\xi^{\prime})^2\frac{1}{d^2}  dtdt^{\prime} + 2 \int_0^1 \int_0^1  10\eta^2 C_{L}\|f\|_1 (\xi\vee\xi^{\prime})\frac{1}{d} \left|h(t,t^{\prime})\right|dtdt^{\prime}\\
    & = 100\eta^2C_{L}^2\|f\|_1^2 (\xi\vee\xi^{\prime})^2\frac{1}{d^2} + 20\eta^2 C_{L}\frac{\|f\|_1^2}{\Delta_{f} \Delta^{\prime}_{f}} (\xi\vee\xi^{\prime})\frac{1}{d}\\
    & \leq C_{1} \eta^2(C_{L}\vee C_L^{2})(\xi\vee\xi^{\prime}\vee1)^2\frac{\|f\|_1^2}{\Delta_{f}\Delta^{\prime}_{f}} \frac{1}{d},
\end{align*}
where $C_{1} > 0$ is a sufficiently large universal constant. By the Cauchy-Schwarz inequality, $\|f\|_1\leq \|f\|_2.$ Summarizing,  \eqref{le2eq1} is bounded by 
\begin{align*}
\left|\|Z_{\bX}\|_2 -  \frac{\|f\|_2 \eta}{\sqrt{\Delta_{f} \Delta^{\prime}_{f}}}\right| &\leq C_{1} \eta^2 (C_{L}\vee C_L^{2}) (\xi\vee\xi^{\prime}\vee1)^2 \frac{\|f\|_1^2}{\Delta_{f} \Delta^{\prime}_{f}} \frac{1}{d} \frac{\sqrt{\Delta_{f} \Delta^{\prime}_{f}}}{\|f\|_2 \eta}\\
&\leq C_{1} \eta(C_{L}\vee C_L^{2})(\xi\vee\xi^{\prime}\vee1)^2\frac{\|f\|_1}{\sqrt{\Delta_{f} \Delta^{\prime}_{f}}} \frac{1}{d},
\end{align*}
proving \eqref{le1eq2}.

We now finish the proof. Using $T_{\bX}=Z_{\bX}/\|Z_{\bX}\|_2$ and that $(a+b)^2 \leq 2 a^2 + 2b^2$ for arbitrary real numbers $a,b$, we bound
\begin{align*}
    \left\|T_{\bX} - \frac{\sqrt{\Delta_{f} \Delta^{\prime}_{f}} h}{\|f\|_2}\right\|_2^2 &\leq 
    2 \left\|T_{\bX} - \frac{\sqrt{\Delta_{f} \Delta^{\prime}_{f}} Z_{\bX}}{\|f\|_2 \eta}\right\|_2^2 + 2 \left\|\frac{\sqrt{\Delta_{f} \Delta^{\prime}_{f}} Z_{\bX}}{\|f\|_2 \eta} - \frac{\sqrt{\Delta_{f} \Delta^{\prime}_{f}} \eta h}{ \|f\|_2  \eta}\right\|_2^2\\
    &\leq 2\frac{\Delta_{f} \Delta^{\prime}_{f}}{\|f\|_2^2 \eta^2} \left|\frac{\|f\|_2 \eta}{\sqrt{\Delta_{f} \Delta^{\prime}_{f}}} - \|Z_{\bX}\|_2 \right|^2 + \frac{2 \Delta_{f} \Delta^{\prime}_{f}}{\|f\|_2^2 \eta^2} \left\|Z_{\bX} - \eta h\right\|_2^2.
\end{align*}
Applying \eqref{le1eq2} to the first and \eqref{le1eq1} to the second term and using again $\|f\|_1\leq \|f\|_2$, as well as $\Delta_{f},\Delta'_{f}\leq1,$ it follows
\begin{align*}
   \left\|T_{\bX} - \frac{\sqrt{\Delta_{f} \Delta^{\prime}_{f}} h}{\|f\|_2}\right\|_2^2 &\leq \frac{2 \Delta_{f} \Delta^{\prime}_{f}}{\|f\|_2^2 \eta^2}(C_{1})^2 \eta^2 (C_L^4 \vee C_L^2) (\xi\vee\xi^{\prime}\vee1)^4\frac{\|f\|_1^2}{\Delta_{f} \Delta^{\prime}_{f}} \frac{1}{d^2}\\
   & \quad + \frac{2 \Delta_{f} \Delta^{\prime}_{f}}{\|f\|_2^2 \eta^2} 100\eta^{2} C_{L}^2\|f\|_1^2 (\xi\vee\xi^{\prime})^2\frac{1}{d^{2}}\\
   & \leq K^2(C_L^4 \vee C_L^2) (\xi\vee\xi^{\prime}\vee1)^4\frac{1}{d^2},
\end{align*}
for a universal constant $K>0$.
\end{proof}

\begin{lemma}
\label{le33}
For any measurable functions $h,g: \mathbb{R}^2 \to [0,\infty)$,
\begin{align*}
    &\inf_{\eta, \xi, \xi', t,t',\tilde{t}, \tilde{t}' \in \mathbb{R}, \, 
    \tilde{\eta},\tilde{\xi}, \tilde{\xi}'\in\mathbb{R}\backslash\{0\}} 
\frac{\sqrt{|\tilde{\xi}\tilde{\xi}'|}}{|\tilde \eta|}
    \Big\|\eta g\big(\xi \cdot -t, \xi' \cdot - t'\big) - \tilde{\eta} h\big(\tilde{\xi} \cdot - \tilde{t}, \tilde{\xi}' \cdot - \tilde{t'}\big)\Big\|_{L^2(\mathbb{R}^2)}\\
    &\geq 
    \inf_{a,b,c,b',c' \in \mathbb{R}} \Big\|a g\big(b\cdot-c, b' \cdot -c'\big) -h\Big\|_{L^2(\mathbb{R}^2)}.
\end{align*}
\end{lemma}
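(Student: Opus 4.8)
The plan is to reduce every term appearing in the infimum on the left-hand side to a term of the form appearing in the infimum on the right-hand side, by an affine change of variables that ``absorbs'' the deformation $(\tilde\xi,\tilde\xi',\tilde t,\tilde t')$ sitting on $h$. Once this is done, each left-hand term is bounded below by $\inf_{a,b,b',c,c'\in\mathbb{R}}\|a\,g(b\cdot+c,b'\cdot+c')-h\|_{L^2(\mathbb{R}^2)}$, and since the left-hand parameters are arbitrary, taking the infimum preserves the bound.

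Concretely, I would fix real numbers $\eta,\tilde\eta,\xi,\xi',\tilde\xi,\tilde\xi',t,t',\tilde t,\tilde t'$ with $\tilde\eta,\tilde\xi,\tilde\xi'\neq0$ (the degenerate tuples are discussed below). Writing the $L^2$-norm as an integral, the square of the corresponding left-hand term is
\[
\frac{|\tilde\xi\tilde\xi'|}{\tilde\eta^2}\int_{\mathbb{R}^2}\Big(\eta\,g(\xi x+t,\xi'y+t')-\tilde\eta\,h(\tilde\xi x+\tilde t,\tilde\xi'y+\tilde t')\Big)^2\,dx\,dy.
\]
Substituting $u=\tilde\xi x+\tilde t$, $v=\tilde\xi'y+\tilde t'$, for which $dx\,dy=|\tilde\xi\tilde\xi'|^{-1}\,du\,dv$, the Jacobian cancels the factor $|\tilde\xi\tilde\xi'|$ coming from the squared prefactor, and dividing through by $\tilde\eta^2$ rewrites the display as
\[
\int_{\mathbb{R}^2}\Big(\tfrac{\eta}{\tilde\eta}\,g\big(\tfrac{\xi}{\tilde\xi}u+(t-\tfrac{\xi\tilde t}{\tilde\xi}),\,\tfrac{\xi'}{\tilde\xi'}v+(t'-\tfrac{\xi'\tilde t'}{\tilde\xi'})\big)-h(u,v)\Big)^2\,du\,dv
=\big\|a\,g(b\cdot+c,b'\cdot+c')-h\big\|_{L^2(\mathbb{R}^2)}^2,
\]
with $a=\eta/\tilde\eta$, $b=\xi/\tilde\xi$, $b'=\xi'/\tilde\xi'$, $c=t-\xi\tilde t/\tilde\xi$ and $c'=t'-\xi'\tilde t'/\tilde\xi'$. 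Hence the left-hand term equals $\|a\,g(b\cdot+c,b'\cdot+c')-h\|_{L^2(\mathbb{R}^2)}$, which is at least $\inf_{a,b,b',c,c'\in\mathbb{R}}\|a\,g(b\cdot+c,b'\cdot+c')-h\|_{L^2(\mathbb{R}^2)}$; taking the infimum over the left-hand parameters yields the claim.

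There is no substantial obstacle here: the argument is essentially forced by the choice of substitution, and the only care required is bookkeeping — checking that $\sqrt{|\tilde\xi\tilde\xi'|}$ exactly compensates the Jacobian (it does, since after squaring the prefactor contributes the first power of $|\tilde\xi\tilde\xi'|$ while the Jacobian contributes its reciprocal), and verifying that admissible affine parameters $a,b,b',c,c'$ are produced for every tuple. The only genuinely delicate point is to exclude the degenerate configurations $\tilde\eta=0$ or $\tilde\xi\tilde\xi'=0$, for which either the prefactor or, generically, the $L^2$-norm over $\mathbb{R}^2$ is infinite; these never occur in the applications of the lemma (Theorem \ref{thm.main_part1} and Theorem \ref{thm4}), where $\xi,\xi',\eta$ and their tilde-counterparts are bounded away from zero, so this point is cosmetic.
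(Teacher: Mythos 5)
Your proposal is correct and follows essentially the same route as the paper's proof: the substitution $u=\tilde\xi x+\tilde t$, $v=\tilde\xi' y+\tilde t'$ transfers the deformation from $h$ onto $g$, the Jacobian $|\tilde\xi\tilde\xi'|^{-1}$ is absorbed by the prefactor, and $\tilde\eta$ is factored out, identifying the integrand with $\|a\,g(b\cdot+c,b'\cdot+c')-h\|_{L^2(\mathbb{R}^2)}^2$ for suitable $a,b,b',c,c'$. Your explicit remark about the degenerate tuples ($\tilde\eta=0$ or $\tilde\xi\tilde\xi'=0$) is a small point the paper leaves implicit, but it does not change the argument.
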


\begin{proof}
For arbitrary $\eta, \xi, \xi', t,t',\tilde{t}, \tilde{t}' \in \mathbb{R},
\tilde{\eta},\tilde{\xi}, \tilde{\xi}'\in\mathbb{R}\backslash\{0\}$, substitution gives
\begin{align*}
&\int_{\mathbb{R}^2} \Big(\eta g\big(\xi u - t, \xi' v-t'\big)-\tilde{\eta} h\big(\tilde{\xi} u - \tilde{t}, \tilde{\xi}'v-\tilde{t}'\big)\Big)^2 \, dudv\\
=&\int_{\mathbb{R}^2} \left(\eta g\left(\frac{\xi}{\tilde{\xi}} (x+\tilde{t})-t, \frac{\xi'}{\tilde{\xi}'}(y+\tilde{t}')-t'\right) - \tilde{\eta} h(x,y)\right)^2 \frac{1}{|\tilde{\xi} \tilde{\xi}'|} \, dxdy\\
\geq&\tilde{\eta}^2 \int_{\mathbb{R}^2}\left(\frac{\eta}{\tilde{\eta}} g\left(\frac{\xi}{\tilde{\xi}} (x+\tilde{t})-t, \frac{\xi'}{\tilde{\xi}'}(y+\tilde{t}')-t'\right) -  h(x,y)\right)^2 \frac{1}{|\tilde{\xi} \tilde{\xi}'|} \, dxdy\\
\geq&\frac{\tilde{\eta}^2}{|\tilde{\xi}\tilde{\xi}'|} \inf_{a,b,c,b',c' \in \mathbb{R}} \Big\|a g\big(b\cdot-c, b' \cdot -c'\big) -h\Big\|_{L^2(\mathbb{R}^2)}^2.
\end{align*}
\end{proof}

\begin{proof}[Proof of Theorem \ref{thm.main_part1}]
Without loss of generality, for a test image $\mathbf{X}$, we assume its label $k$ is $0$. The analysis for $k = 1$ follows analogously due to the symmetry of $f_0$ and $f_1$.

Set $\Delta_{f_k}:=\alpha^{+}_{f_k} - \alpha^{-}_{f_k},$ $\Delta'_{f_k}:=\beta^{+}_{f_k} - \beta^{-}_{f_k},$ and
\begin{align*}
    h_k := f_k\big(\Delta_{f_k} \cdot+\alpha^{-}_{f_k}, \Delta'_{f_k} \cdot+\beta^{-}_{f_k}\big).
\end{align*}
Since $k=0,$ the entries of $\bX$ and $Z_{\bX}$ are described by the template function $f_0: \mathbb{R}^2 \to [0,\infty)$. Each transformed training image $Z_{\bX_i}$, with $i \in \{1, \dots, n\}$, corresponds to a template function $f_{k_i}:\mathbb{R}^2 \to [0,\infty)$. If $k_i=0,$ it follows from Lemma \ref{le1} and the triangle inequality that
\begin{align}
\left\|T_{\bX_i} - T_{\bX}\right\|_2 &= \left\|T_{\bX_i} - \frac{\sqrt{\Delta_{f_0} \Delta_{f_0}^{\prime}}}{\|f_0\|_2} h_{f_0} + \frac{\sqrt{\Delta_{f_0} \Delta_{f_0}^{\prime}}}{\|f_0\|_2} h_{f_0} - T_{\bX}\right\|_2\nonumber\\
    & \leq \left\|T_{\bX_i} - \frac{\sqrt{\Delta_{f_0} \Delta_{f_0}^{\prime}}}{\|f_0\|_2} h_{f_0}\right\|_2 + \left\|\frac{\sqrt{\Delta_{f_0} \Delta_{f_0}^{\prime}}}{\|f_0\|_2} h_{f_0} - T_{\bX}\right\|_2\nonumber\\
    & \leq K  (C_L\vee C_L^{2})  \Xi_n^2  \frac{1}{d} + K(C_L\vee C_L^{2})  \Xi_n^2 \frac{1}{d}\nonumber\\
    &= 2  K  (C_L\vee C_L^{2})  \Xi_n^2 \frac{1}{d}.\label{eq.TX_ub}
\end{align}
The support of the function $h_{f_{k}}$ is contained in $[0,1]^2.$ Recall that the separation quantity $D$ is defined in \eqref{eq.D_part2}. Applying Lemma \ref{le33} twice by assigning to $(h,\tilde \eta, \tilde \xi, \tilde \xi')$ the values $(f_0,\sqrt{\Delta_{f_0}\Delta'_{f_0}}/\|f_0\|_2, \Delta_{f_0},\Delta'_{f_0})$ and $(f_1,\sqrt{\Delta_{f_1}\Delta'_{f_1}}/\|f_1\|_2, \Delta_{f_1},\Delta'_{f_1})$ yields
\begin{align}
&\left\|\frac{\sqrt{\Delta_{f_1} \Delta_{f_1}^{\prime}}}{\|f_1\|_2} h_{f_1} - \frac{\sqrt{\Delta_{f_0}\Delta_{f_0}^{\prime}}}{\|f_0\|_2} h_{f_0}\right\|_2 \nonumber\\
    =& \left\|\frac{\sqrt{\Delta_{f_1} \Delta_{f_1}^{\prime}}}{\|f_1\|_2} h_{f_1} - \frac{\sqrt{\Delta_{f_0} \Delta_{f_0}^{\prime}}}{\|f_0\|_2} h_{f_0}\right\|_{L^2(\mathbb{R}^2)} \nonumber\\
    \geq& \frac{\inf_{a,b,b',c,c' \in \mathbb{R}} \|af_1(b\cdot  - c, b'\cdot-c') - f_0\|_{L^2(\mathbb{R}^2)}}{\|f_0\|_2} \vee \frac{\inf_{a,b,b',c,c' \in \mathbb{R}} \|af_0(b\cdot  - c, b'\cdot-c') - f_1\|_{L^2(\mathbb{R}^2)}}{\|f_1\|_2}\nonumber\\
    >& \frac{4 K (C_L\vee C_L^2)\Xi_n^2}{d},\label{eq.delta_variation_bd}
\end{align}
where we used the assumption $D>4 K (C_L\vee C_L^2)\Xi_n^2/d$ for the last step. For an $i$ with $k_i=1,$ we use the reverse triangle inequality
\begin{align*}
    |a^{\prime} - b^{\prime}| \geq |a-b| - |a^{\prime} - a| - |b^{\prime} - b|, \quad \text{for all} \ a,b,a^{\prime}, b^{\prime} \in \mathbb{R},
\end{align*}
inequality \eqref{eq.delta_variation_bd} and Lemma \ref{le1} to bound
\begin{align*}
    \left\|T_{\bX_i} - T_{\bX}\right\|_2 &=  \left\|T_{\bX_i} - \frac{\sqrt{\Delta_{f_1} \Delta_{f_1}^{\prime}}}{\|f_1\|_2} h_{f_1} + \frac{\sqrt{\Delta_{f_1} \Delta_{f_1}^{\prime}}}{\|f_1\|_2} h_{f_1} - \frac{\sqrt{\Delta_{f_0} \Delta_{f_0}^{\prime}}}{\|f_0\|_2} h_{f_0} +
    \frac{\sqrt{\Delta_{f_0} \Delta_{f_0}^{\prime}}}{\|f_0\|_2} h_{f_0} - T_{\bX}\right\|_2\\
    & \geq \left\|\frac{\sqrt{\Delta_{f_1} \Delta_{f_1}^{\prime}}}{\|f_1\|_2} h_{f_1} - \frac{\sqrt{\Delta_{f_0} \Delta_{f_0}^{\prime}}}{\|f_0\|_2} h_{f_0}\right\|_2 - \left\|T_{\bX_i} - \frac{\sqrt{\Delta_{f_1} \Delta_{f_1}^{\prime}}}{\|f_1\|_2} h_{f_1}\right\|_2 - \left\|T_{\bX} - \frac{\sqrt{\Delta_{f_0} \Delta_{f_0}^{\prime}}}{\|f_0\|_2} h_{f_0}\right\|_2\\
    &>  \frac{4 K (C_L\vee C_L^2)\Xi_n^2}{d}- \frac{K (C_L\vee C_L^2)\Xi_n^2}{d}
    - \frac{K (C_L\vee C_L^2)\Xi_n^2}{d} \\
    &=\frac{2 K (C_L\vee C_L^2)\Xi_n^2}{d}.
\end{align*}
Combining this with \eqref{eq.TX_ub}, we conclude that
\begin{align*}
    \wh{i} \in \argmin_{i\in\{1, \dots, n\}} \left\|T_{\bX_i}- T_{\bX}\right\|_2
\end{align*}
holds for some $i$ with $k_i=0$ implying $\wh{k} =0.$ Since $k=0$, this shows the assertion $\wh k=k$.
\end{proof}

\begin{proof}[Proof of Theorem \ref{thm27}]
We consider $f_0(x,y)=(1/4-|1/2-x|-|1/2-y|)_+$, whose support is contained in $\left[1/4,3/4\right]^2$, and its rectangular support exactly matches $\left[1/4,3/4\right]^2$. We now show that $f_0$ satisfies \eqref{eq.Lip_cond-gen} with Lipschitz constant $C_{f_{0}}=96.$ To verify this, observe that $|f_0(x,y)-f_0(x',y')|\leq (|x-x'|+|y-y'|).$ Thus, \eqref{eq.Lip_cond-gen} holds for any $C_{f_{0}}\geq 1/\|f_0\|_1.$ Using the definition of $f_0,$ we compute
\begin{align}
\|f_0\|_1=\int_{[0,1]^2} |f_0(x,y)| \, dxdy =\int_{[0,1]^2} f_0(x,y) \, dxdy =\frac{1}{96}.\label{eq.f0_L1_lb}
\end{align}
Hence the Lipschitz condition is satisfied with $C_{f_{0}}=96.$ Consider the template function $f_{0}$ has been deformed as $f_{0,\tau,\tau',\xi,\xi'}(\cdot,\cdot):=f_{0}(\xi\cdot-\tau,\xi'\cdot-\tau')$, where $\tau,\tau',\xi,\xi'$ satisfy Assumption~\ref{ass1-prime}. Then, the support of $f_{0,\tau,\tau',\xi,\xi'}(\cdot,\cdot)$ is contained within $[0,1]^2$. A generic image $\bX=(X_{j,\ell})_{j,\ell=1,\ldots,d}$ based on $f_{0,\tau,\tau',\xi,\xi'}$ is described as 
$$X_{j,\ell}=\eta\int_{I_{j,\ell}}d^2f_{0,\tau,\tau',\xi,\xi'}(x,y) \, dxdy.$$ 

Next, for any random deformation parameters $\tau,\tau'$ and $\xi,\xi'$ satisfying Assumption~\ref{ass1-prime}, we construct a local perturbation function $g$ on $f_{0,\tau,\tau',\xi,\xi'}$.  Note that the square $\cro{13/32,19/32}^{2}$ is contained in the support of $f_0$ and for all $(x,y)\in\cro{13/32,19/32}^{2}$, $f_0(x,y)\geq1/16$. Taking into account the random re-scaling and shifting, the square $$I_{c}=\cro{(13/32+\tau)/\xi,(19/32+\tau)/\xi}\times\cro{(13/32+\tau')/\xi',(19/32+\tau')/\xi'}$$ is contained in the support of $f_{0,\tau,\tau',\xi,\xi'}.$ We shall build the perturbation of $f_{0,\tau,\tau',\xi,\xi'}$ on $I_{c}$. More precisely, let
$$j_{*}^-:=\left\lceil\left(\frac{13}{32\xi}+\frac{\tau}{\xi}\right)d\right\rceil,\quad\quad j_{*}^+:=\left\lfloor\left(\frac{19}{32\xi}+\frac{\tau}{\xi}\right)d\right\rfloor,$$ and 
$$\ell_{*}^-:=\left\lceil\left(\frac{13}{32\xi'}+\frac{\tau'}{\xi'}\right)d\right\rceil,\quad\quad \ell_{*}^+:=\left\lfloor\left(\frac{19}{32\xi'}+\frac{\tau'}{\xi'}\right)d\right\rfloor,$$ which are the approximated grid location for $I_{c}$. Observe that $\cro{j_{*}^-/d,j_{*}^+/d}\times\cro{\ell_{*}^-/d,\ell_{*}^+/d}\subseteq I_{c}$. Moreover, provided $d\geq32(\xi\vee\xi')$, one can derive 
\begin{equation}\label{bound-jstar}
   j_{*}^+-j_{*}^- \geq\left(\frac{19}{32\xi}+\frac{\tau}{\xi}\right)d-\left(\frac{13}{32\xi}+\frac{\tau}{\xi}\right)d-2=\frac{3d}{16\xi}-2\geq\frac{d}{8\xi},
\end{equation}
and
\begin{equation}\label{bound-lstar}
 \ell_{*}^+-\ell_{*}^- \geq\left(\frac{19}{32\xi'}+\frac{\tau'}{\xi'}\right)d-\left(\frac{13}{32\xi'}+\frac{\tau'}{\xi'}\right)d-2=\frac{3d}{16\xi'}-2\geq\frac{d}{8\xi'}
\end{equation}
and thus, $\cro{j_{*}^-/d,j_{*}^+/d}\times\cro{\ell_{*}^-/d,\ell_{*}^+/d}$ is not empty. Set $\mathcal{I}:=\{j_{*}^-+1,\ldots,j_{*}^+\}$ and $\mathcal{I}':=\{\ell_{*}^-+1,\ldots,\ell_{*}^+\}$. For any $i\in\mathbb{N}$, let $a_{i}^{-}:=(i-3/4)/d$, $a_{i}^{+}:=(i-1/4)/d$. For any $j\in\mathcal{I}$, $\ell\in\mathcal{I}'$, define the following functions
$$S^{--}_{j\ell}(x,y):=\Big(\frac{1}{4d}-|x-a_j^-|-|y-a_\ell^-|\Big)_+,\quad S^{-+}_{j\ell}(x,y):=\Big(\frac{1}{4d}-|x-a_j^-|-|y-a_\ell^+|\Big)_+,$$ and
$$S^{+-}_{j\ell}(x,y):=\Big(\frac{1}{4d}-|x-a_j^+|-|y-a_\ell^-|\Big)_+,\quad S^{++}_{j\ell}(x,y):=\Big(\frac{1}{4d}-|x-a_j^+|-|y-a_\ell^+|\Big)_+.$$
Realizations of these functions are shown in Figure~\ref{fig-S-per}.
\begin{figure}[ht!]
 \centering
 \subfigure[$S_{j\ell}^{--}$]{\includegraphics[width=0.24\textwidth]{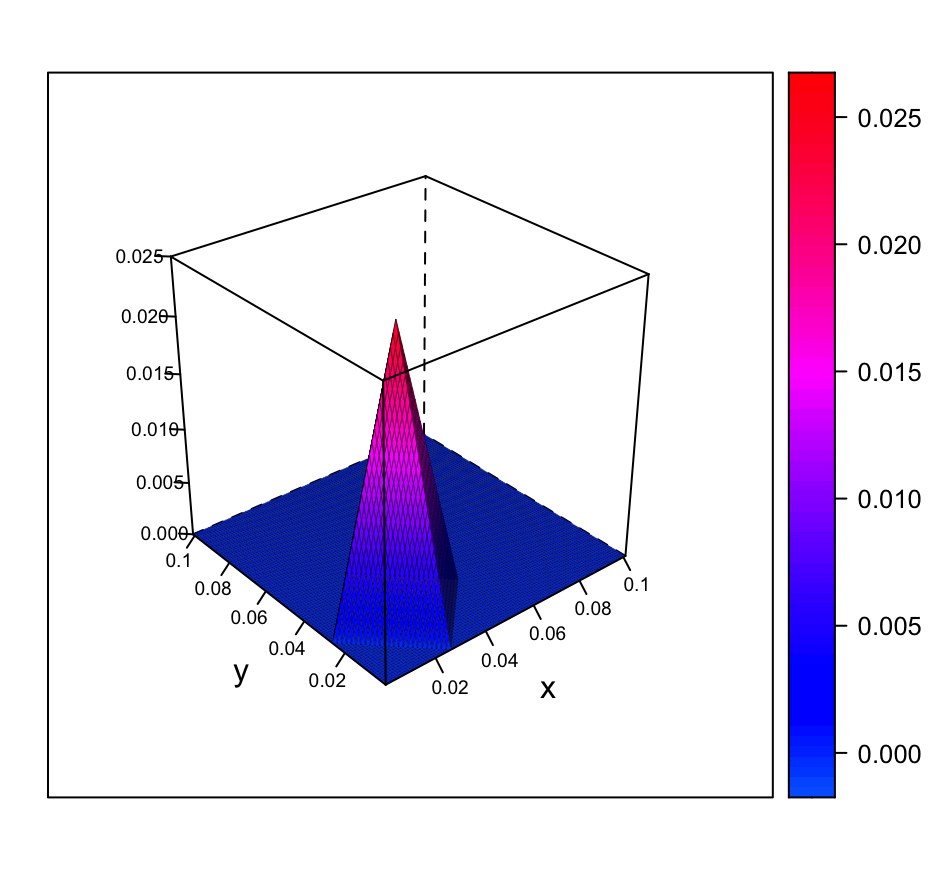}}
  \subfigure[$S_{j\ell}^{-+}$]{\includegraphics[width=0.24\textwidth]{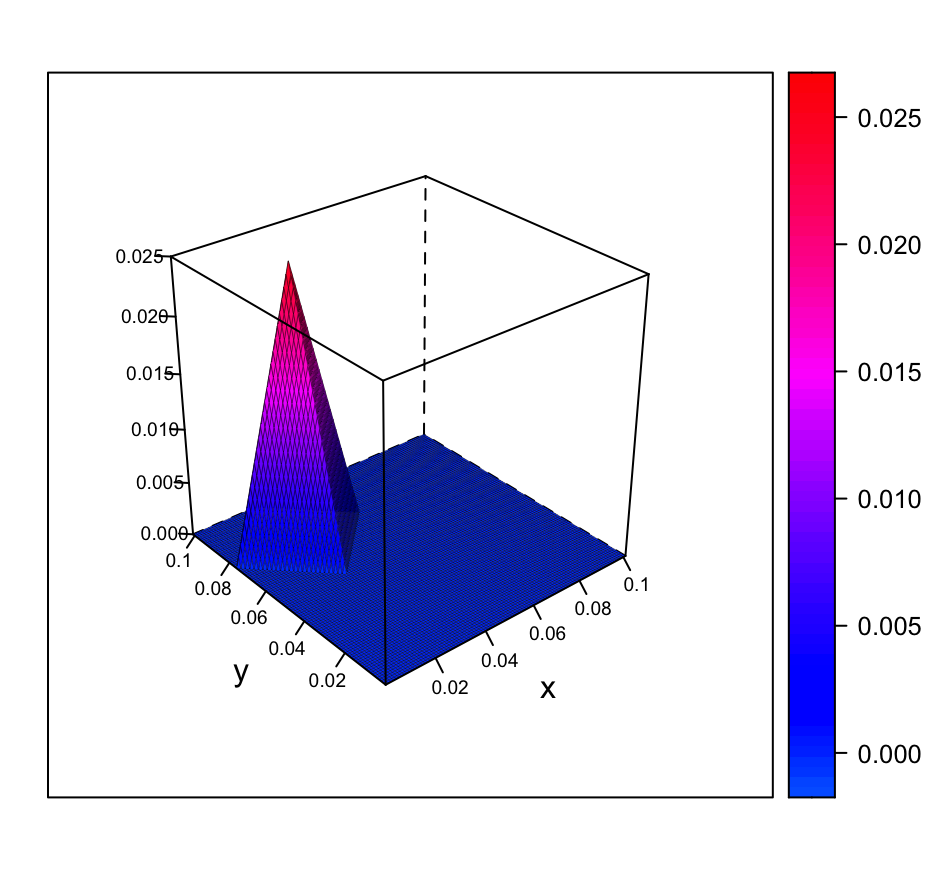}}
    \subfigure[$S_{j\ell}^{+-}$]{\includegraphics[width=0.24\textwidth]{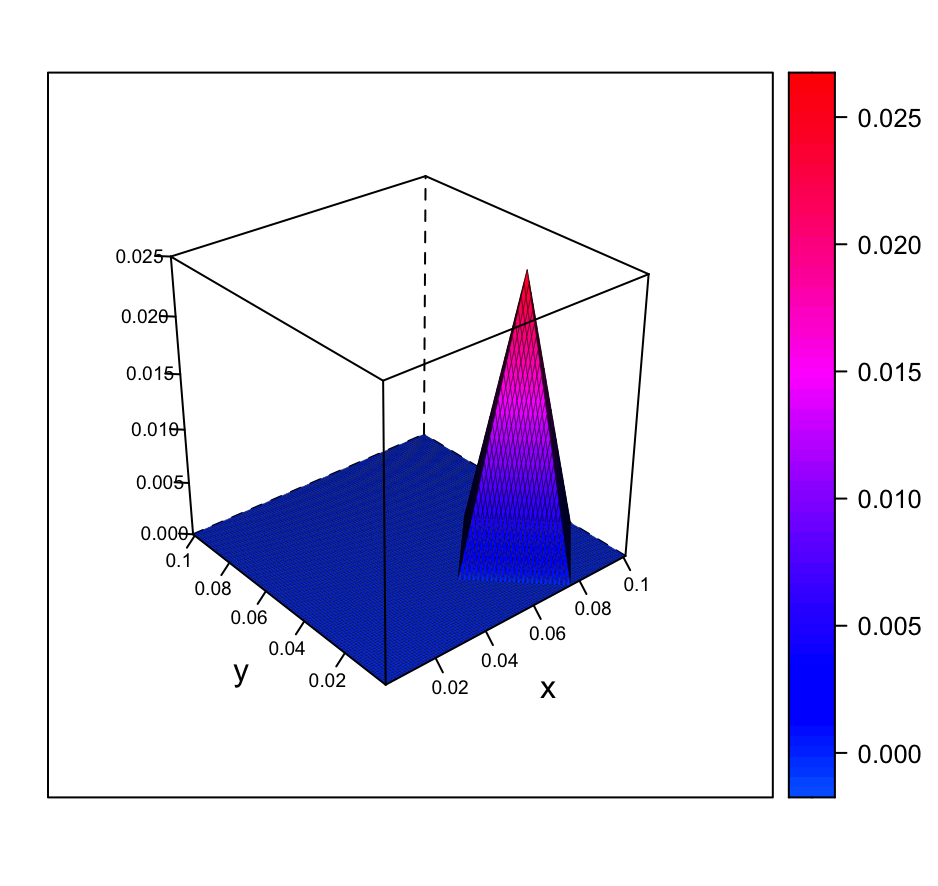}}
    \subfigure[$S_{j\ell}^{++}$]{\includegraphics[width=0.24\textwidth]{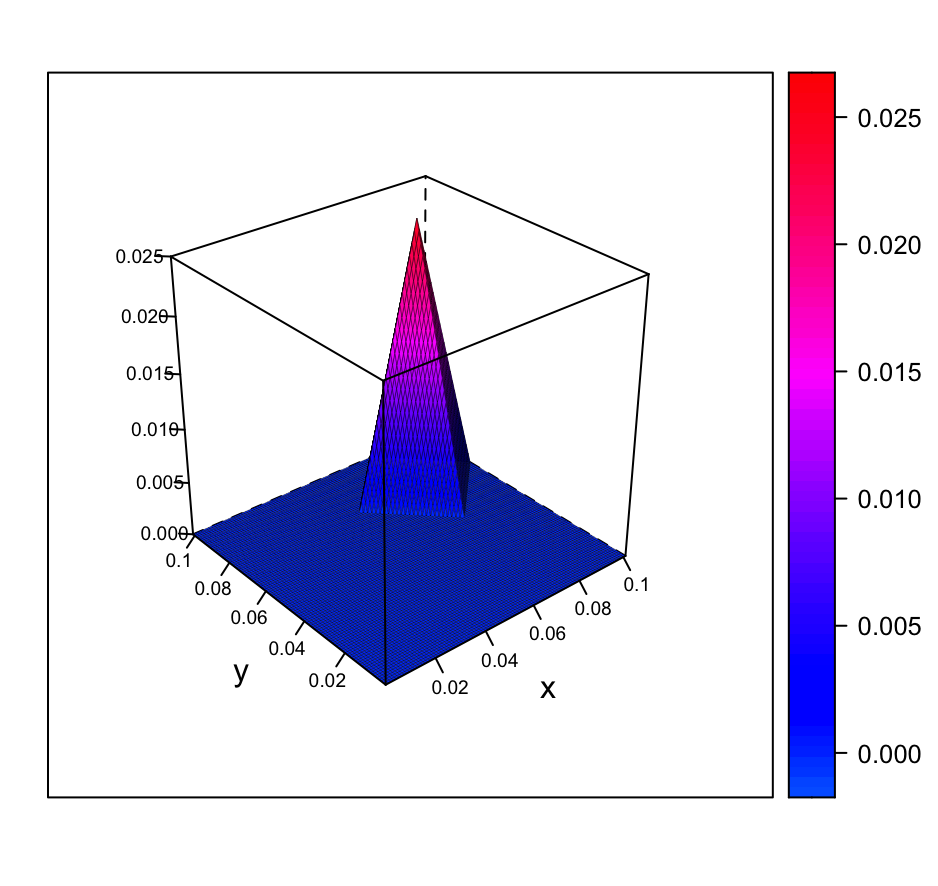}}
  \caption{Examples of the functions $S_{j\ell}^{--}$,$S_{j\ell}^{-+}$,$S_{j\ell}^{+-}$ and $S_{j\ell}^{++}$ on one pixel square when $d=10$. }
  \label{fig-S-per}
\end{figure}
The support of $S^{--}_{j\ell}$, $S^{-+}_{j\ell}$, $S^{+-}_{j\ell},$ and $S^{++}_{j\ell}$ is contained in $\cro{(j-1)/d,(j-1/2)/d}\times\cro{(\ell-1)/d,(\ell-1/2)/d}$, $\cro{(j-1)/d,(j-1/2)/d}\times\cro{(\ell-1/2)/d,\ell/d}$, $\cro{(j-1/2)/d,j/d}\times\cro{(\ell-1)/d,(\ell-1/2)/d}$, and $\cro{(j-1/2)/d,j/d}\times\cro{(\ell-1/2)/d,\ell/d}$ respectively. The supports of any two functions among $S^{--}_{j\ell}$, $S^{-+}_{j\ell}$, $S^{+-}_{j\ell}$ and $S^{++}_{j\ell}$ are disjoint. For any $j\in\mathcal{I}$, $\ell\in\mathcal{I}'$, set $$S_{j\ell}(x,y):=S^{--}_{j\ell}(x,y)-S^{-+}_{j\ell}(x,y)-S^{+-}_{j\ell}(x,y)+S^{++}_{j\ell}(x,y).$$ The support of the function $S_{j\ell}$ is contained in $[(j-1)/d,j/d]\times [(\ell-1)/d,\ell/d].$ For any $(j,\ell)\neq (j',\ell'),$ $S_{j,\ell}$ and $S_{j'\ell'}$ have disjoint support. An example of the function $S_{j\ell}$ is shown in Figure~\ref{fig-S}.
\begin{figure}[ht!]
 \centering
\includegraphics[width=0.4\textwidth]{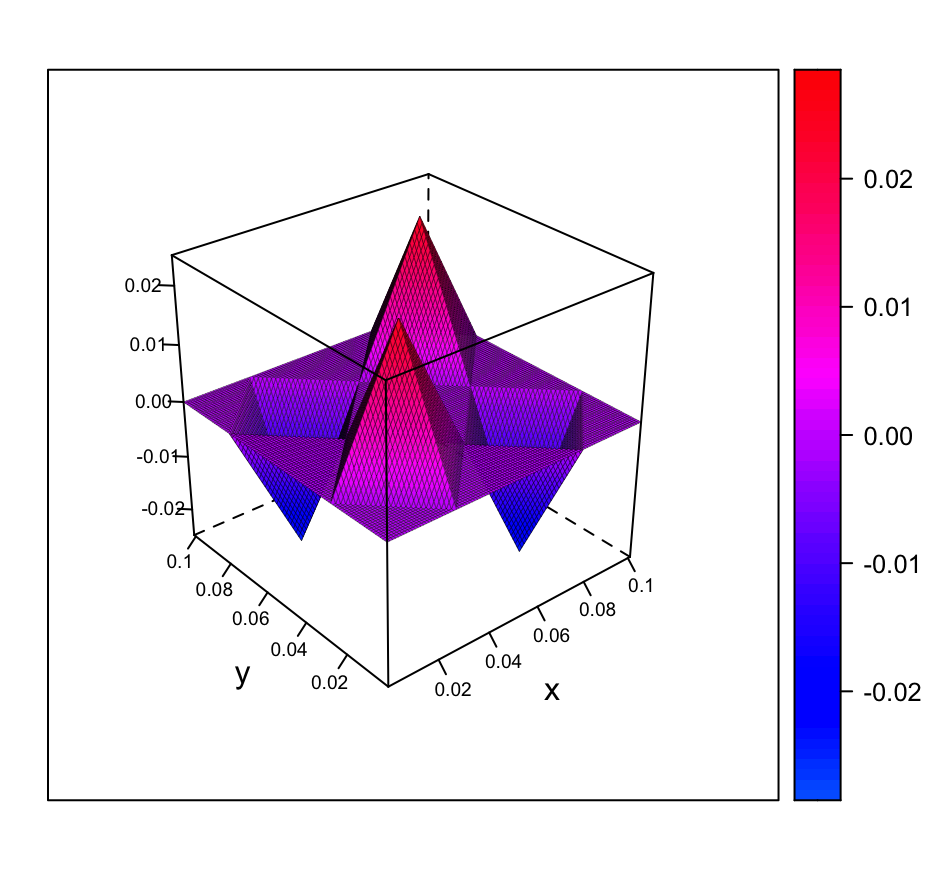}
  \caption{An example of the function $S_{j\ell}$ on one pixel square when $d=10$.}
  \label{fig-S}
\end{figure}

Consider the perturbation
\begin{align*}
g(x,y):=\sum_{j\in\mathcal{I},\ell\in\mathcal{I}'}S_{j\ell}(x,y).
\end{align*}
By construction, the support of $g$ is contained in $I_{c}$. Moreover, on each pixel $I_{j,\ell}\subseteq I_{c}$, the support of $S_{j\ell}$ has been divided into four regions according to the supports of $S^{--}_{j\ell}$, $S^{-+}_{j\ell}$, $S^{+-}_{j\ell}$ and $S^{++}_{j\ell}$ and for any $j\in\mathcal{I}$, $\ell\in\mathcal{I}'$,
\begin{align}
d^{2}\int_{I_{j,\ell}}g(x,y) \, dxdy=d^{2}\int_{I_{j,\ell}}S_{j\ell}(x,y)  \,  dxdy=0.\label{int-zero}
\end{align}
Now we consider the new function
\begin{align*}
f_{1,\tau,\tau',\xi,\xi'}(x,y):= f_{0,\tau,\tau',\xi,\xi'}(x,y)+ g(x,y).
\end{align*}
Recall that the function $f_{0,\tau,\tau',\xi,\xi'}$ takes positive values $\geq1/16$ on $I_{c}$, and the function $g$ is a small perturbation function defined on the interior of $I_{c}$. From Assumption~\ref{ass1-prime}, we know that $\xi,\xi'\geq1/2$. With $d\geq32(\xi\vee\xi')>4$, the function $f_{1,\tau,\tau',\xi,\xi'}$ takes non-negative values on $\cro{0,1}^{2}$, and therefore, so does $f_{1}:=f_{1,\tau,\tau',\xi,\xi'}(1/\xi(\cdot+\tau),1/\xi'(\cdot+\tau'))$. The rectangular support of $f_{1}$ is $\cro{1/4,3/4}^{2}$, as the perturbation $g$ does not does not modify the function values outside $I_c$. 

In the following, we check that $f_{1}$ satisfies the Lipschitz condition in \eqref{eq.Lip_cond-gen}. We first compute for $j\in\mathcal{I}$, $\ell\in\mathcal{I}'$,
\begin{align}
\int_{I_{j,\ell}}\left(S_{j\ell}(x,y)\right)^2 \, dxdy&=\int_{I_{j,\ell}}\left(S_{j\ell}^{--}(x,y)\right)^2+\left(S_{j\ell}^{-+}(x,y)\right)^2+\left(S_{j\ell}^{+-}(x,y)\right)^2+\left(S_{j\ell}^{++}(x,y)\right)^2\, dxdy\nonumber\\
&=4\int_{I_{j,\ell}} \left(S_{j\ell}^{--}(x,y)\right)^2 \, dxdy\nonumber\\
&=4\int_{(j-1)/d}^{(j-1/2)/d}\int_{(\ell-1)/d}^{(\ell-1/2)/d} \Big(\frac{1}{4d}-|x-a_j^-|-|y-a_\ell^-|\Big)_+^2 \ dxdy.\nonumber
\end{align}  
Substituting $u=2d(x-a_{j}^-)+1/2$ and $v=2d(y-a_{\ell}^-)+1/2$, we further obtain
\begin{align}
\int_{I_{j,\ell}}\left(S_{j\ell}(x,y)\right)^2 \, dxdy
&=\frac{4}{4d^{2}}\int_{0}^{1}\int_{0}^{1}\cro{\Big(\frac{1}{4d}-\frac{1}{2d}\left|u-\frac{1}{2}\right|-\frac{1}{2d}\left|v-\frac{1}{2}\right|\Big)_+}^2 \ dudv\nonumber\\
&=\frac{1}{4d^{4}}\int_{0}^{1}\int_{0}^{1}\cro{\Big(\frac{1}{2}-\left|u-\frac{1}{2}\right|-\left|v-\frac{1}{2}\right|\Big)_+}^2 \ dudv\nonumber\\
&= \frac{1}{192d^4}.\nonumber
\end{align}
Thus, for $d\geq32(\xi\vee\xi')$, using \eqref{bound-jstar} and \eqref{bound-lstar}, we have
\begin{align}
\|g\|_{L^2(\mathbb{R}^2)}^2=\|g\|_{2}^2=\sum_{j\in\mathcal{I},\ell\in\mathcal{I}'}\left(\int_{I_{j,\ell}}\left(S_{j\ell}(x,y)\right)^2dxdy\right)=\frac{(j_{*}^{+}-j_{*}^{-})(\ell_{*}^{+}-\ell_{*}^{-})}{192d^4}\geq\frac{1}{8^2\cdot 192\xi\xi'}\frac{1}{d^2}.\label{g-l2-bound}
\end{align}
Similarly, we deduce that 
\begin{align}
\|g\|_{1}&=\sum_{j\in\mathcal{I},\ell\in\mathcal{I}'}\left(\int_{I_{j,\ell}}\left|S_{j\ell}(x,y)\right|dxdy\right)\nonumber\\
&=\sum_{j\in\mathcal{I},\ell\in\mathcal{I}'}\cro{\frac{4}{4d^{2}}\int_{0}^{1}\int_{0}^{1}\Big(\frac{1}{4d}-\frac{1}{2d}\left|u-\frac{1}{2}\right|-\frac{1}{2d}\left|v-\frac{1}{2}\right|\Big)_+ \ dudv}\nonumber\\
&=\sum_{j\in\mathcal{I},\ell\in\mathcal{I}'}\cro{\frac{1}{2d^{3}}\int_{0}^{1}\int_{0}^{1}\Big(\frac{1}{2}-\left|u-\frac{1}{2}\right|-\left|v-\frac{1}{2}\right|\Big)_+ \ dudv}\nonumber\\
&=\frac{(j_{*}^{+}-j_{*}^{-})(\ell_{*}^{+}-\ell_{*}^{-})}{24d^3}\nonumber\\
&\leq\frac{3}{2048\xi\xi'}\frac{1}{d}.\label{g-l1-bound}
\end{align}
The last step follows from the definitions of $j_{*}^{+}, j_{*}^{-}, \ell_{*}^{+}, \ell_{*}^{-}$ which imply that $j_{*}^{+}-j_{*}^{-}\leq 3d/(16\xi)$ and $\ell_{*}^{+}-\ell_{*}^{-}\leq 3d/(16\xi').$
As a consequence of the triangle inequality, \eqref{eq.f0_L1_lb}, and \eqref{g-l1-bound}, we have
\begin{align*}
\|f_{1,\tau,\tau',\xi,\xi'}\|_{1}&\geq \|f_{0,\tau,\tau',\xi,\xi'}\|_{1}-\|g\|_{1}=\frac{\|f_{0}\|_{1}}{\xi\xi'}-\|g\|_{1}\geq\frac{1}{96\xi\xi'}-\frac{3}{2048\xi\xi'}\frac{1}{d}\geq\frac{1}{128\xi\xi'}.
\end{align*}
Moreover, for any $(x,y),(x',y')\in\mathbb{R}^2$, 
\begin{align*}
|f_{1,\tau,\tau',\xi,\xi'}(x,y)-f_{1,\tau,\tau',\xi,\xi'}(x',y')|&\leq\big|f_{0,\tau,\tau',\xi,\xi'}(x,y)-f_{0,\tau,\tau',\xi,\xi'}(x',y')\big|
    +\big|g(x,y)-g(x',y')\big|\\
    &=\big|f_{0}(\xi x-\tau,\xi'y-\tau')-f_{0}(\xi x'-\tau,\xi'y'-\tau')\big|
    +\big|g(x,y)-g(x',y')\big|\\
&\leq(\xi\vee\xi')\big(|x-x'|+|y-y'|\big)
    +2\big(|x-x'|+|y-y'|\big)\\
&=\cro{(\xi\vee \xi')+2}
    \big(|x-x'|+|y-y'|\big),
\end{align*}
which implies that for the constant $C_{L}:=128\xi\xi'\cro{(\xi\vee \xi')+2},$ $f_{1,\tau,\tau',\xi,\xi'}$ satisfies the Lipschitz condition in \eqref{eq.Lip_cond-gen} with $C_{L}$. Hence, $f_{1}$ satisfies the Lipschitz condition with constant $C_{f_{1}}:=C_{L}/(\xi\xi')=128\cro{(\xi\vee \xi')+2}$.

Observe that according to the definition of $f_0$, $$\|f_0\|_{L^2(\mathbb{R}^2)}=\|f_0\|_2\leq\sqrt{\left(\frac{1}{4}\right)^2\left(\frac{3}{4}-\frac{1}{4}\right)^2}=\frac{1}{8}.$$ Meanwhile, according to \eqref{g-l2-bound}, we have for any $d\geq32(\xi\vee\xi')$ and $\xi,\xi'\geq1/2$,
\begin{align*}
\|f_{1}-f_{0}\|_{L^2(\mathbb{R}^2)}=\xi\xi'\|f_{1,\tau,\tau',\xi,\xi'}-f_{0,\tau,\tau',\xi,\xi'}\|_{L^2(\mathbb{R}^2)}=\xi\xi'\|g\|_{L^2(\mathbb{R}^2)}\geq\frac{\sqrt{\xi\xi'}}{111 d}\geq\frac{1}{222 d},
\end{align*}
which implies
\begin{align*}
\frac{\|f_{0}-f_{1}\|_{L^2(\mathbb{R}^2)}}{\|f_0\|_{L^2(\mathbb{R}^2)}}\geq\frac{4}{111d}\geq\frac{1}{28d}.
\end{align*}
Due to \eqref{int-zero}, for any $j,\ell=1,\ldots,d$,
\begin{align*}
X_{j,\ell}&=\eta\int_{I_{j,\ell}}d^2f_{0,\tau,\tau',\xi,\xi'}(x,y) \, dxdy=\eta\int_{I_{j,\ell}}d^2f_{1,\tau,\tau',\xi,\xi'}(x,y) \, dxdy.
\end{align*}
This implies that both template functions $f_{0}$, $f_{1}$ generate the same image $\bX=(X_{j,\ell})_{j,\ell}$ under the same (but random) deformation parameters. It is impossible to infer the label from $\bX.$
\end{proof}

\section{Proofs for Section \ref{sec.CNN}}
\label{sec.proofs_CNNs}
Recall that $[ \mathbf{W}]$ denotes the quadratic support of the matrix $ \mathbf{W}.$

\begin{lemma}
\label{le3.1}
If $ \mathbf{W}=(W_{i,j})_{i,j=1,\ldots,d}$ and $ \mathbf{X}=(X_{i,j})_{i,j=1,\ldots,d}$ are matrices with non-negative entries, then,
\begin{align*}
    |\sigma([\mathbf{W}] \star \bX)|_{\infty}
    =\max_{r,s\in \mathbb{Z}} \ \sum_{i,j=1}^d W_{i+r,j+s} X_{i,j},
\end{align*}
where $W_{k,m}:=0$ whenever $k\wedge m\leq 0$ or $k\vee m >d.$
\end{lemma}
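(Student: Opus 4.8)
The plan is to reduce the statement to a piece of index bookkeeping. First I would note that since $\mathbf{W}$ and $\mathbf{X}$ have non-negative entries, every entry of $[\mathbf{W}]\star\mathbf{X}$ is a sum of non-negative numbers and hence non-negative; therefore $\sigma$ acts as the identity on the matrix $[\mathbf{W}]\star\mathbf{X}$ and $|\cdot|_\infty$ is simply its largest entry. So it suffices to prove
\begin{align*}
    \max_{i,j}\,\big([\mathbf{W}]\star\mathbf{X}\big)_{i,j}=\max_{r,s\in\mathbb{Z}}\,\sum_{i,j=1}^d W_{i+r,j+s}X_{i,j}=:\max_{r,s\in\mathbb{Z}} g(r,s),
\end{align*}
the maximum on the left running over all admissible patch positions. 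Here $g$ is finitely supported on $\mathbb{Z}^2$, so the right-hand maximum is attained; the degenerate case $\mathbf{W}=0$ makes both sides zero and can be set aside.

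Next I would unwind the three indexings. Let $m$ be the filter size, i.e.\ the side length of $[\mathbf{W}]$, and let $\{p,\dots,p+m-1\}\times\{q,\dots,q+m-1\}$ be the contiguous block of $\mathbf{W}$ carved out by the quadratic support, so that $[\mathbf{W}]_{a+1,b+1}=W_{p+a,q+b}$ for $a,b\in\{0,\dots,m-1\}$ and $W_{k,\ell}=0$ for all $(k,\ell)$ outside this block (in particular outside $\{1,\dots,d\}^2$, matching the convention in the statement). By construction the padded matrix satisfies $X'_{s,t}=X_{s-m,t-m}$ with $X$ extended by zero outside $\{1,\dots,d\}^2$. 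Plugging these into $\big([\mathbf{W}]\star\mathbf{X}\big)_{i,j}=\sum_{a,b=0}^{m-1}[\mathbf{W}]_{a+1,b+1}X'_{i+a,j+b}$, extending the (now zero-padded) sums over all of $\mathbb{Z}^2$, and substituting first $k=p+a,\ \ell=q+b$ and then $i'=i+k-p-m,\ j'=j+\ell-q-m$ should give the identity
\begin{align*}
    \big([\mathbf{W}]\star\mathbf{X}\big)_{i,j}=\sum_{i',j'=1}^d W_{i'+r,\,j'+s}\,X_{i',j'}=g(r,s),\qquad r:=p+m-i,\ \ s:=q+m-j.
\end{align*}

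Finally I would read off the equality from this identity. Each admissible $(i,j)$ produces the integer pair $(p+m-i,q+m-j)$, so $\max_{i,j}\big([\mathbf{W}]\star\mathbf{X}\big)_{i,j}\le\max_{r,s\in\mathbb{Z}}g(r,s)$ is immediate. For the reverse inequality I would fix $(r,s)\in\mathbb{Z}^2$: if $g(r,s)=0$ it is at most the left-hand maximum, which is non-negative; if $g(r,s)>0$ then some summand is nonzero, forcing $i+r\in\{p,\dots,p+m-1\}$ and $j+s\in\{q,\dots,q+m-1\}$ for some $i,j\in\{1,\dots,d\}$, and combining with $1\le i,j\le d$ confines $r,s$ to a range small enough that $i_0:=p+m-r$ and $j_0:=q+m-s$ are admissible patch positions, whence $g(r,s)=\big([\mathbf{W}]\star\mathbf{X}\big)_{i_0,j_0}$ by the identity. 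Either way $g(r,s)\le\max_{i,j}\big([\mathbf{W}]\star\mathbf{X}\big)_{i,j}$, and taking the supremum over $(r,s)$ finishes the argument. I expect the only genuine obstacle to be the change-of-variables bookkeeping in the middle step — keeping straight the shifts between the local coordinates of $[\mathbf{W}]$, the global coordinates of $\mathbf{W}$, and the padded coordinates of $\mathbf{X}'$ — together with the small but slightly fiddly range check in the last step that the integer shifts $(r,s)$ not arising from an admissible patch position contribute only the value $0$ to $g$.
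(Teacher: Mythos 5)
Your proposal is correct and follows essentially the same route as the paper's proof: use non-negativity to drop $\sigma$, then match the two maxima by a change of variables between the local coordinates of $[\mathbf{W}]$, the global coordinates of $\mathbf{W}$, and the zero-padded $\mathbf{X}'$. If anything, you are slightly more careful than the paper in checking that integer shifts $(r,s)$ not corresponding to an admissible patch position contribute only the value $0$, a point the paper's chain of equalities glosses over.
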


\begin{proof} Due to the fact that all entries of $ \mathbf{W}$ and $ \mathbf{X}$ are non-negative, it follows $\sigma([\mathbf{W}] \star  \mathbf{X})= [ \mathbf{W}] \star  \mathbf{X}.$ Assume that $[ \mathbf{W}]$ is of size $\ell$. As $|\cdot|_{\infty}$ extracts the largest value of $[ \mathbf{W}] \star  \mathbf{X}$ and each entry is the entrywise sum of the Hadamard product of $[ \mathbf{W}]$ and a $\ell \times \ell$ sub-matrix of $ \mathbf{X}'$, we rewrite
\begin{align*}
|[ \mathbf{W}] \star  \mathbf{X}|_{\infty} = \max_{u,v \in \mathbb{Z}} \, \sum_{i,j=1}^\ell [\mathbf{W}]_{i,j} \cdot X'_{i+u,j+v},
\end{align*}
where $X'_{k,m}=X_{k,m}$ for $k,m\in \{1, \dots, d\}$ and $X'_{k,m} := 0$ whenever $k \wedge m \leq 0$ or $k \vee m > d$. By definition of the quadratic support, there exist $R, S \in \{0, \dots, d-\ell\}$ such that $[\mathbf{W}]_{a,b} = W_{a+R, b+S},$ for all $a,b \in \{1, \dots, \ell\}$. Using that $[\mathbf{W}]_{i, j}:=0$ whenever $i \wedge j \leq 0$ or $i \vee j > \ell$, we rewrite
\begin{align*}
    \max_{r,s\in \mathbb{Z}} \, \sum_{i,j=1}^d W_{i+r,j+s} X_{i,j} &= \max_{r,s \in \mathbb{Z}} \,  \sum_{i,j \in \mathbb{Z}} [\mathbf{W}]_{i+r-R,j+s-S} X_{i,j}'\\
    & = \max_{r,s \in \mathbb{Z}} \,  \sum_{i',j' \in \mathbb{Z}} [\mathbf{W}]_{i',j'} X'_{i'-r+R, j'-s+S}\\
    & = \max_{u,v \in \mathbb{Z}} \,  \sum_{i,j \in \mathbb{Z}} [\mathbf{W}]_{i,j} X'_{i+u,j+v}\\
    & = \max_{u,v \in \mathbb{Z}} \, 
  \sum_{i,j=1}^{\ell} [\mathbf{W}]_{i,j} X'_{i+u,j+v}\\
    & = |\sigma([ \mathbf{W}] \star \bX)|_{\infty},
\end{align*}
proving the assertion.
\end{proof}

To prove Theorem \ref{th2-gen}, we need to establish some auxiliary results. It is convenient to first define the discrete $L^2$-inner product for non-negative functions $g,h:\mathbb{R}^{2}\to[0,\infty)$ by
\begin{align}
    \langle h,g\rangle_{2,d}:=\frac{1}{d^2} \sum_{j,\ell=1}^d \ol h_{j,\ell}\ol g_{j,\ell},
\label{eq.in_prod_2d_def2}
\end{align}
with $\overline h_{j,\ell}$ and $\overline g_{j,\ell}$ the pixel values defined in \eqref{ave-intensity}, for $j,\ell\in\{1,\ldots,d\}$. The corresponding norm is then defined as
\begin{align}
\|g\|_{2,d}:=\sqrt{\langle g,g\rangle_{2,d}}.
\label{eq.norm_2d_def}
\end{align}
The original $(j,\ell)$-th pixel cell is $I_{j,\ell} = \left[(j-1)/d,\, j/d\right) \times \left[(\ell-1)/d,\, \ell/d\right).$ For any $\alpha \in (0, 1)$, each combined block $\widetilde{I}_{j', \ell'}$ under the $(\alpha,d)$-block structure is given by
\[
\widetilde{I}_{j', \ell'} = \bigcup_{j \in \mathcal{K}(j'),\; \ell \in \mathcal{K}(\ell')} I_{j, \ell},
\]
with index set $\mathcal{K}(i)= \left\{ \lfloor d^{1-\alpha} + 1 \rfloor (i - 1) + 1,\; \ldots,\; \big( \lfloor d^{1-\alpha} + 1 \rfloor\, i \big) \wedge d \right\}$, for $i\in\{1,\ldots,d_{\alpha}\}$, where, $$d_{\alpha}=\left\lceil \frac{d}{\lfloor d^{1-\alpha}+1\rfloor}\right\rceil$$ denotes the number of subintervals along each axis. An illustration of the $(\alpha,d)$-block structure is shown in Figure~\ref{superpixel}. Under this construction, each side of $\widetilde{I}_{j', \ell'}$ has length at most $\lfloor d^{1 - \alpha} + 1 \rfloor / d$, and the total number of such combined blocks satisfies $(d_{\alpha})^2\leq\lceil d^{\alpha} \rceil^2$. 
\begin{figure}[htbp]
\centering
\includegraphics[width=0.6\textwidth]{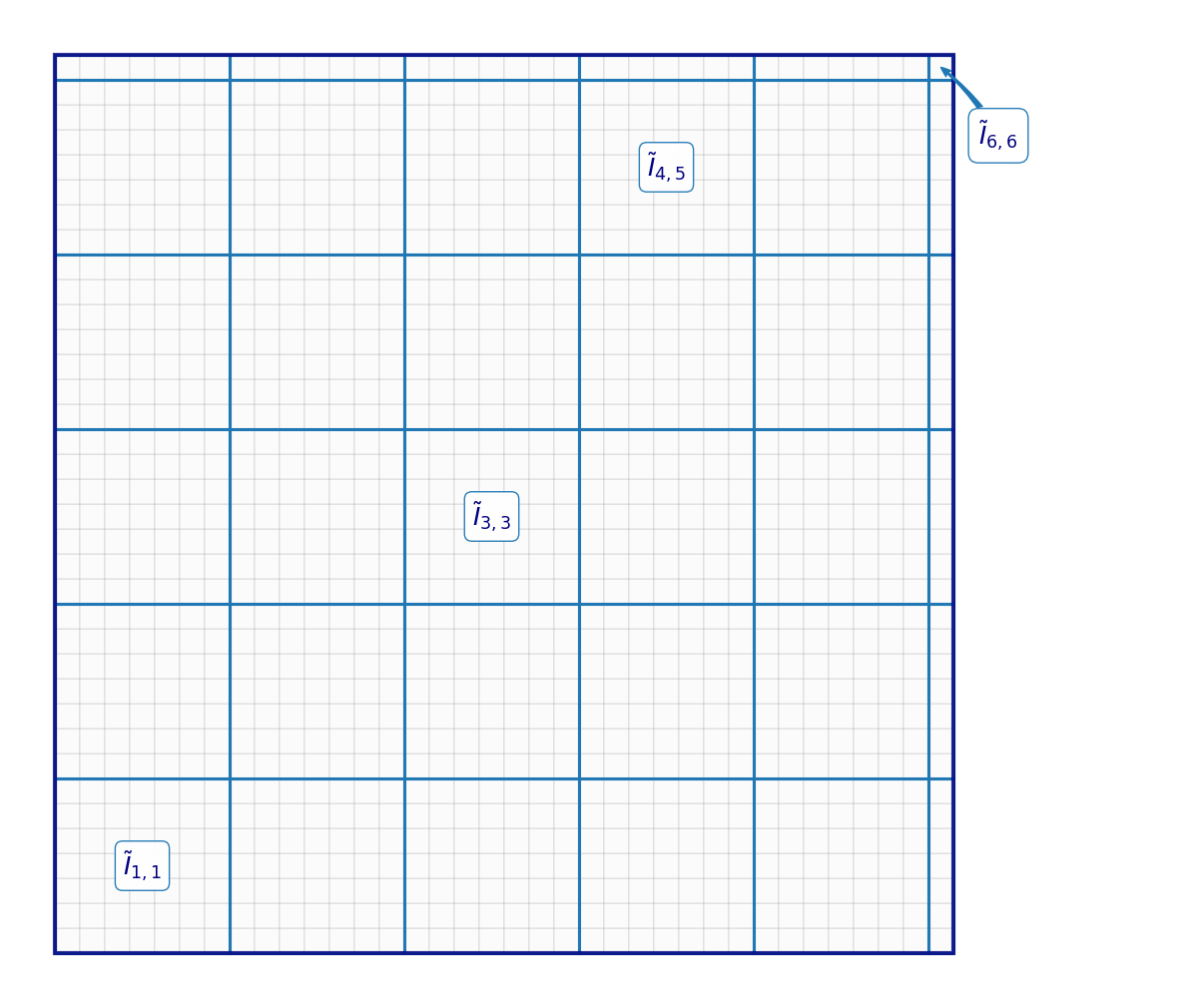}
\caption{Combined $(\alpha,d)$-blocks $\widetilde{I}_{j', \ell'}$ when $d=36$ and $\alpha=0.5$.}
\label{superpixel}
\end{figure}

Let $\lambda$ denote the Lebesgue measure on $\mathbb{R}^2$. Analogous to the definition of $\overline{h}_{j,\ell}$, for any continuous function $h:\mathbb{R}^{2} \to [0,\infty)$ and any $\alpha\in(0,1)$, we set
\begin{align}
\label{htilde}
\widetilde{h}^{\alpha}_{j',\ell'} := \frac{1}{\lambda\big(\widetilde{I}_{j',\ell'}\big)} \int_{\widetilde{I}_{j',\ell'}} h(u,v)\,dudv
\end{align}
for the average intensity of $h$ over the $(\alpha,d)$-combined block $\widetilde{I}_{j',\ell'}$. When $\alpha\in(0,1)$, we set
\begin{align}
h_{\alpha}(u,v) := \sum_{j',\ell'=1}^{d_{\alpha}} \widetilde{h}^{\alpha}_{j',\ell'} \, \1\big((u,v)\in \widetilde{I}_{j',\ell'}\big),\label{halpha-0-1}
\end{align}
which is constant on each $(\alpha,d)$-combined block obtained by merging pixel cells. It follows from the definition that the average intensity of $h_{\alpha}$ on the pixel $I_{j,\ell}$ is given by $$\overline{(h_{\alpha})}_{j,\ell}=d^2\int_{I_{j,\ell}}h_{\alpha}(u,v)dudv=\widetilde{h}^{\alpha}_{j',\ell'},$$ where $j', \ell' \in \{1, \ldots, d_{\alpha}\}$ denote the indices of the $(\alpha,d)$-combined block that contains $I_{j,\ell}$. When $\alpha=1$, we set 
\begin{equation}\label{halpha-1}
h_{\alpha}(u,v):=\sum_{j,\ell=1}^{d}\ol{h}_{j,\ell} \, \1\big((u,v)\in I_{j,\ell}\big).    
\end{equation}
\begin{lemma}\label{order-shift-bound}
Let $h: \mathbb{R}^2 \to [0,\infty)$ be a measurable function satisfying the Lipschitz condition \eqref{eq.Lip_cond-gen} with constant $C_L$. Define $h_\alpha$ as in \eqref{htilde}, \eqref{halpha-0-1}, and \eqref{halpha-1} for $\alpha \in (0,1]$. Then, for any integers $r, s$, and any $j, \ell \in \{1, \ldots, d\}$,
$$\left|\ol{\cro{h_{\alpha}\left(\cdot-\frac{r}{d},\cdot-\frac{s}{d}\right)}}_{j,\ell}-\ol{\cro{\left(h\left(\cdot-\frac{r}{d},\cdot-\frac{s}{d}\right)\right)_{\alpha}}}_{j,\ell}\right|\leq\frac{8C_L\|h\|_1}{d^{\alpha}}.$$
\end{lemma}
\begin{proof}
In the case $\alpha = 1$, no combining is applied, and thus
$h_{\alpha}\left(\cdot - r/d, \cdot - s/d\right)=\left(h\left(\cdot - r/d, \cdot - s/d\right)\right)_{\alpha}$. Therefore, it suffices to consider the case $\alpha \in (0,1)$.

By extending the definition of the $(\alpha,d)$-block structure to $\mathbb{R}^2$, we obtain 
\begin{align}
\ol{\cro{h_{\alpha}\left(\cdot-\frac{r}{d},\cdot-\frac{s}{d}\right)}}_{j,\ell}&=\ol{(h_{\alpha})}_{j-r,\ell-s}=\frac{1}{\lambda\big(\widetilde I_{i,i'}\big)}\int_{\widetilde I_{i,i'}}h(u,v)dudv,\label{a-shift}    
\end{align}
where $i=\lfloor(j-r)/\lfloor d^{1-\alpha}+1\rfloor\rfloor+1$ and $i'=\lfloor(\ell-s)/\lfloor d^{1-\alpha}+1\rfloor\rfloor+1$. For any integer $k\in\{1,\ldots,d_{\alpha}\}$ and any integer $r$, let $$\mathcal{K}(k)-r=\{m-r:\;\ m\in\mathcal{K}(k)\}=\{\lfloor d^{1 - \alpha} + 1 \rfloor(k-1)+1-r,\ldots,\lfloor d^{1 - \alpha} + 1 \rfloor k\wedge d-r\}$$ and define $\widetilde{\mathcal{I}}(k,k',r,s):=\cup_{m\in\mathcal{K}(k)-r,m'\in\mathcal{K}(k')-s}I_{m,m'}.$ Denoting $k=\lfloor j/\lfloor d^{1-\alpha}+1\rfloor\rfloor+1$ and $k'=\lfloor\ell/\lfloor d^{1-\alpha}+1\rfloor\rfloor+1$, it then follows that
\begin{align}
\ol{\cro{\left(h\left(\cdot-\frac{r}{d},\cdot-\frac{s}{d}\right)\right)_{\alpha}}}_{j,\ell}&=\frac{1}{\lambda\big(\widetilde I_{k,k'}\big)}\int_{\widetilde I_{k,k'}}h\left(u-\frac{r}{d},v-\frac{s}{d}\right)dudv\nonumber\\
&=\frac{1}{\lambda\big(\widetilde{\mathcal{I}}(k,k',r,s)\big)}\int_{\widetilde{\mathcal{I}}(k,k',r,s)}h\left(u,v\right)dudv.\label{shift-a}    
\end{align}
We next derive a bound for the distance between the blocks $\widetilde I_{i,i'}$ and $\widetilde{\mathcal{I}}(k,k',r,s)$ along both the
$x$- and $y$-axes. Observe that
\begin{align*}
\left|\left(\lfloor d^{1 - \alpha} + 1 \rfloor k-r\right)-\lfloor d^{1 - \alpha} + 1 \rfloor i\right|&=\left|\lfloor d^{1 - \alpha} + 1 \rfloor\left(\left\lfloor \frac{j}{\lfloor d^{1-\alpha}+1\rfloor}\right\rfloor-\left\lfloor\frac{j-r}{\lfloor d^{1-\alpha}+1\rfloor}\right\rfloor\right)-r\right|\\
&\leq\left|\lfloor d^{1 - \alpha} + 1 \rfloor\left( \frac{j}{\lfloor d^{1-\alpha}+1\rfloor}-\frac{j-r-1}{\lfloor d^{1-\alpha}+1\rfloor}\right)-r\right|\\
&\leq\lfloor d^{1 - \alpha} + 1 \rfloor,
\end{align*}
and similarly we can derive $\left|\left(\lfloor d^{1 - \alpha} + 1 \rfloor k'-s\right)-\lfloor d^{1 - \alpha} + 1 \rfloor i'\right|\leq\lfloor d^{1 - \alpha} + 1 \rfloor.$ This implies that for any $(u,v)\in\widetilde I_{i,i'}$ and any $(u',v')\in\widetilde{\mathcal{I}}(k,k',r,s)$, 
$|u-u'|\vee|v-v'|\leq2\lfloor d^{1 - \alpha} + 1 \rfloor/d$. Consequently, with $h$ satisfying the Lipschitz condition \eqref{eq.Lip_cond-gen}, we obtain that for any $(u,v)\in\widetilde I_{i,i'}$ and any $(u',v')\in\widetilde{\mathcal{I}}(k,k',r,s)$, 
\begin{align*}
|h(u,v)-h(u',v')|\leq C_L\|h\|_1(|u-u'|+|v-v'|)\leq4C_L\|h\|_1\frac{\lfloor d^{1 - \alpha} + 1 \rfloor}{d}\leq\frac{8C_L\|h\|_1}{d^{\alpha}}.
\end{align*}
Combining the above bound with \eqref{a-shift} and \eqref{shift-a} completes the proof.
\end{proof}

The next lemma provides a bound for the approximation error of Riemann sums.
\begin{lemma}
\label{le3.3Integrate-version}
For measurable functions $h, g: \mathbb{R}^2 \to [0,\infty)$ satisfying the Lipschitz condition \eqref{eq.Lip_cond-gen} with constant $C_{L},$ and $h_{\alpha}$ defined as in \eqref{htilde}, \eqref{halpha-0-1}, and  \eqref{halpha-1} for $\alpha\in(0,1],$ we have
\begin{itemize}
    \item[(i)]
     \begin{align*}
    &\left| \langle h_{\alpha},g\rangle_{2,d} - \int_{[0,1]^{2}}h(u,v)g(u,v) dudv\right|\leq\frac{8}{d^{\alpha}} \|g\|_1 \|h\|_1 \left(C_L+C_{L}^2\frac{1}{d}\right),
\end{align*}
\item [(ii)]
    \begin{align*}
    \left|\frac{1}{\|h_{\alpha}\|_{2,d}} - \frac{1}{\|h\|_2}\right| \leq \frac{8(C_L + 2C_L^2/d^{\alpha})}{d^{\alpha}\|h_{\alpha}\|_{2,d}},\quad\quad\left|\frac{1}{\|g\|_{2,d}} - \frac{1}{\|g\|_2}\right| \leq \frac{4(C_L + C_L^2/d)}{d\|g\|_{2,d}},
\end{align*}
\item [(iii)] \begin{align*}
    \left|\frac{1}{\|h_{\alpha}\|_{2,d}\|g\|_{2,d}} - \frac{1}{\|h\|_2\|g\|_2}\right| \leq\frac{4}{\|h_{\alpha}\|_{2,d} \|g\|_{2,d}}\left(\frac{3C_L}{ d^{\alpha}}+\frac{13C_{L}^{2}}{d^{2\alpha}}+\frac{24C_{L}^{3}}{d^{3\alpha}}+\frac{16C_{L}^{4}}{d^{4\alpha}}\right),
\end{align*}
\item[(iv)]if $\supp h\subseteq\cro{0,1}^{2}$, then, for any integers $r,s$,
\begin{align*}
\left|\frac{\langle h_{\alpha}(\cdot-r/d,\cdot-s/d),g\rangle_{2,d}}{\|h_{\alpha}\|_{2,d}\| g\|_{2,d}} - \frac{\int_{\cro{0,1}^{2}}h(u-r/d,v-s/d)g(u,v) \, dudv}{\|h\|_2\| g\|_2}\right|\leq\frac{28C_{L}}{d^{\alpha}}\Big(1+\frac{2C_{L}}{d^{\alpha}}\Big)^3. 
\end{align*}
\end{itemize}
\end{lemma} 
\begin{proof}
\textit{(i):} Fix $j,\ell\in\{1,\ldots,d\}$. By the Lipschitz property, we obtain for any $(u,v)\in I_{j,\ell}=[(j-1)/d, j/d) \times [(\ell-1)/d, \ell/d),$ that 
\[\left|\ol g_{j,\ell} - g(u,v)\right|\leq\frac{2C_{L}\|g\|_1}{d}\]
and for any $\alpha \in (0,1]$,
\begin{align}
\left|\ol{(h_{\alpha})}_{j,\ell} -h\left(u,v\right)\right|&\leq\frac{2\lfloor d^{1 - \alpha} + 1 \rfloor C_{L}\|h\|_1}{d}\leq\frac{4C_{L}\|h\|_1}{d^{\alpha}},
\end{align}
where we use $d^{\alpha}/2\leq d/\lfloor d^{1 - \alpha} + 1 \rfloor$ in the last step. Using this repeatedly, the triangle inequality gives 
\begin{align*}
&\left|\ol{(h_{\alpha})}_{j,\ell}\ol g_{j,\ell}-h\left(u,v\right)g(u,v)\right|\\
\leq&\left|\ol{(h_{\alpha})}_{j,\ell}\ol g_{j,\ell}-\ol{(h_{\alpha})}_{j,\ell}g(u,v)\right|+\left|\ol{(h_{\alpha})}_{j,\ell}g(u,v) - h\left(u,v\right)g(u,v)\right|\\
\leq&\left|\ol{(h_{\alpha})}_{j,\ell}\right|C_L\|g\|_1 \frac{2}{d} + | g(u,v)| C_{L} \|h\|_1 \frac{4}{d^{\alpha}}\\
\leq&\left|h\left(u,v\right)\right|C_L\|g\|_1 \frac{2}{d} + C^2_{L}\|g\|_1 \|h\|_1\frac{8}{d^{1+\alpha}}+ |g(u,v)| C_{L} \|h\|_1 \frac{4}{d^{\alpha}},
\end{align*}
which yields
\begin{align*}
&\int_{I_{j,\ell}}\left|\ol{(h_{\alpha})}_{j,\ell}\ol g_{j,\ell}-h\left(u,v\right)g(u,v)\right| dudv\\
\leq& C_{L}\|g\|_1 \frac{2}{d} \int_{I_{j,\ell}}h\left(u,v\right) \, dudv + \frac{8}{d^{3+\alpha}}C^2_{L} \|g\|_1 \|h\|_1 + C_{L} \|h\|_1 \frac{4}{d^{\alpha}} \int_{I_{j,\ell}}g(u,v) \, dudv.
\end{align*}
Rewriting
\begin{align*}
    \frac{1}{d^2} \sum_{j,\ell=1}^d\ol{(h_{\alpha})}_{j,\ell}\ol g_{j,\ell} = \sum_{j,\ell=1}^d \int_{I_{j,\ell}}\ol{(h_{\alpha})}_{j,\ell}\ol g_{j,\ell}dudv
\end{align*}
implies
\begin{align*}
&\left|\frac{1}{d^2} \sum_{j,\ell=1}^d\ol{(h_{\alpha})}_{j,\ell}\ol g_{j,\ell} - \int_{[0,1]^2}h\left(u,v\right)g(u,v) dudv\right|\\
\leq& C_{L}\|g\|_1 \frac{2}{d} \sum_{j,\ell=1}^d \int_{I_{j,\ell}} h\left(u,v\right) dudv + \frac{8}{d^{1+\alpha}}C^2_{L} \|g\|_1 \|h\|_1 + C_{L} \|h\|_1 \frac{4}{d^{\alpha}} \sum_{j,\ell=1}^d \int_{I_{j,\ell}}g(u,v)dudv\\
\leq& C_{L}\|g\|_1 \|h\|_1\frac{2}{d} + \frac{8}{d^{1+\alpha}} C^2_{L} \|g\|_1 \|h\|_1 + C_{L} \|h\|_1\|g\|_1 \frac{4}{d^{\alpha}}\\
\leq&\frac{8}{d^{\alpha}} \|g\|_1 \|h\|_1 \left(C_L+C_{L}^2\frac{1}{d}\right).
\end{align*}

\textit{(ii):} For positive real numbers $a,b,$ we have
\begin{align}
    \Big|\frac 1a-\frac 1b\Big|
    =\frac{|b-a|}{ab}=\frac{|a^2-b^2|}{(a+b)ab}\leq \frac{|a^2-b^2|}{a b^2}.\label{inequality-tool}
\end{align}
Now, set $a=\|h_{\alpha}\|_{2,d}$ and $b=\|h\|_2.$ Using an argument similar to the one used in the proof of (i), with $g = h_{\alpha}$, one can obtain
\begin{align*}
|a^2-b^2|=\left|\|h_{\alpha}\|^2_{2,d}-\|h\|_2^2\right|\leq \frac{8}{d^{\alpha}} \|h\|_1^2 \left(C_{L} + \frac{2C_{L}^2}{d^{\alpha}}\right).
\end{align*}
Since $\|h\|_1\leq \|h\|_2,$ the result follows.

Next, set $a=\|g\|_{2,d}$ and $b=\|g\|_2.$ Using an argument similar to the one used in the proof of (i), with $h_{\alpha}=g$, it follows that
\begin{align*}
|a^2-b^2|=\left|\|g\|^2_{2,d}-\|g\|_2^2\right|\leq \frac{4}{d} \|g\|_1^2 \left(C_{L} + \frac{C_{L}^2}{d}\right).
\end{align*}
Since $\|g\|_1\leq \|g\|_2,$ the result follows.

\textit{(iii):} Let $a,b,c,d$ be positive real numbers. Applying the triangle inequality repeatedly yields
\begin{align*}
    \Big|\frac{1}{ac}-\frac{1}{bd}\Big|
    &\leq \frac{1}{a}\Big|\frac{1}{c}-\frac{1}{d}\Big|
    +\frac 1d\Big|\frac 1a-\frac 1b\Big|
    \leq 
    \frac{1}{a}\Big|\frac{1}{c}-\frac{1}{d}\Big|
    +\frac 1c\Big|\frac 1a-\frac 1b\Big|+\Big|\frac 1d-\frac 1c\Big|\Big|\frac 1a-\frac 1b\Big|.
\end{align*}
With $a=\|h_{\alpha}\|_{2,d}, b=\|h\|_2, c=\|g\|_{2,d}, d=\|g\|_2,$ and using (ii), we have
\begin{align*}
&\left|\frac{1}{\|h_{\alpha}\|_{2,d}\|g\|_{2,d}} - \frac{1}{\|h\|_2\|g\|_2}\right|\\
&\leq\frac{4(C_{L}/d + C_{L}^2/d^2)+8(C_{L}/d^{\alpha} + 2C_{L}^2/d^{2\alpha})+32(C_{L}/d+C_{L}^2/d^2)(C_{L}/d^{\alpha}+2C_{L}^2/d^{2\alpha})}{\|h_{\alpha}\|_{2,d} \|g\|_{2,d}}\\
&\leq\frac{12C_{L}/d^{\alpha} + 20C_{L}^2/d^{2\alpha}+32(C_{L}^2/d^{2\alpha}+3C_{L}^3/d^{3\alpha}+2C_{L}^4/d^{4\alpha})}{\|h_{\alpha}\|_{2,d} \|g\|_{2,d}}\\
&\leq\frac{4}{\|h_{\alpha}\|_{2,d} \|g\|_{2,d}}\left(\frac{3C_L}{ d^{\alpha}}+\frac{13C_{L}^{2}}{d^{2\alpha}}+\frac{24C_{L}^{3}}{d^{3\alpha}}+\frac{16C_{L}^{4}}{d^{4\alpha}}\right).
\end{align*}

\textit{(iv):} Using Lemma~\ref{order-shift-bound}, we obtain that
\begin{align}
&\langle\left| h_{\alpha}(\cdot-r/d,\cdot-s/d)-\cro{h(\cdot-r/d,\cdot-s/d)}_{\alpha}\right|,g\rangle_{2,d}\nonumber\\
&=\frac{1}{d^2}\sum_{j,\ell=1}^{d}\ol{g}_{j,\ell} \ol{\left| h_{\alpha}(\cdot-r/d,\cdot-s/d)-\cro{h(\cdot-r/d,\cdot-s/d)}_{\alpha}\right|}_{j,\ell}\nonumber\\
&\leq\frac{8C_L\|h\|_1}{d^{\alpha}}\frac{1}{d^2}\sum_{j,\ell=1}^{d}\ol{g}_{j,\ell}\nonumber\\
&=\frac{8C_L\|h\|_1\|g\|_1}{d^{\alpha}}.\label{order-change-differ}
\end{align}
The Cauchy-Schwarz inequality and the fact that $\supp h\subseteq\cro{0,1}^{2}$ give that $$\langle h_{\alpha}(\cdot-r/d,\cdot-s/d),g\rangle_{2,d}\leq \|h_{\alpha}\|_{2,d}\|g\|_{2,d}.$$ Combining the triangle inequality with (i), (iii), \eqref{order-change-differ} and $\|g\|_1\leq \|g\|_2$ yields
\begin{align*}
&\left|\frac{\langle h_{\alpha}(\cdot-r/d,\cdot-s/d),g\rangle_{2,d}}{\| h_{\alpha}\|_{2,d}\|g\|_{2,d}} - \frac{\int_{\cro{0,1}^{2}}h(u-r/d,v-s/d)g(u,v) \, dudv}{\|h\|_2 \|g\|_2}\right|\\
&\leq\big| \langle h_{\alpha}(\cdot-r/d,\cdot-s/d),g\rangle_{2,d}\big|\left|\frac{1}{\|h_{\alpha}\|_{2,d}\|g\|_{2,d}} - \frac{1}{\|h\|_2\|g\|_2}\right| \\
&\quad+\frac{1}{\|h\|_2\|g\|_2} \left|\langle h_{\alpha}(\cdot-r/d,\cdot-s/d),g\rangle_{2,d} - \int_{\cro{0,1}^{2}} h(u-r/d,v-s/d)g(u,v) \, dudv\right|\\
&\leq\big| \langle h_{\alpha}(\cdot-r/d,\cdot-s/d),g\rangle_{2,d}\big|\left|\frac{1}{\|h_{\alpha}\|_{2,d}\|g\|_{2,d}} - \frac{1}{\|h\|_2\|g\|_2}\right| \\
&\quad+\frac{1}{\|h\|_2\|g\|_2} \left|\langle\cro{h(\cdot-r/d,\cdot-s/d)}_{\alpha},g\rangle_{2,d} - \int_{\cro{0,1}^{2}} h(u-r/d,v-s/d)g(u,v) \, dudv\right|\\ 
&\quad+\frac{1}{\|h\|_2\|g\|_2} \langle\left| h_{\alpha}(\cdot-r/d,\cdot-s/d)-\cro{h(\cdot-r/d,\cdot-s/d)}_{\alpha}\right|,g\rangle_{2,d}\\
&\leq\frac{28C_{L}}{d^{\alpha}}\Big(1+\frac{2C_{L}}{d^{\alpha}}\Big)^3,
\end{align*}
where the last inequality can be verified by expanding $(1+2C_{L}/d^{\alpha})^3$ into powers of $C_{L}/d^{\alpha}.$
\end{proof}

\begin{prop}\label{l1-l2-connect}
For any function $f$ that satisfies Assumption~\ref{ass-1} and any deformation class $\mathcal{A}$ that satisfies Assumptions~\ref{ass1}-(i), we have for all $A\in\mathcal{A}$,
\begin{equation*}
   \|f\circ A\|_{1}\leq2C_{\mathcal{A}}C_{L}\|f\|_{1}\quad\mbox{and}\quad \|f\circ A\|_{2}\leq2\sqrt{2}C_{\mathcal{A}}C_{L}\|f\|_{1}.
\end{equation*}
\end{prop}
\begin{proof}
Under Assumptions~\ref{ass-1} and \ref{ass1}-(i), the support of $f\circ A$ is contained in $\cro{0,1}^2$, which implies that $f\big(A(0,0)\big)=0$ due to the continuity of $f$ and $A$. Given that Assumptions~\ref{ass-1} and \ref{ass1}-(i) hold, applying Lemma~\ref{Lip-composite}, we obtain
\begin{align}
\|f\circ A\|_{1}&=\int_{\cro{0,1}^2}\big|f\big( A(u,v)\big)\big| \, dudv\nonumber\\
&=\int_{\cro{0,1}^2}\left|f\big(A(u,v)\big)-f\big(A(0,0)\big)\right| \, dudv\nonumber\\
&\leq\int_{\cro{0,1}^2}2C_{\mathcal{A}}C_{L}\|f\|_{1}(u+v) \, dudv\nonumber\\
&=2C_{\mathcal{A}}C_{L}\|f\|_{1}.\label{l1-bound-con}
\end{align}
Now, we consider the bound for $\|f\circ A\|_2$. Again, we use the property that under Assumptions~\ref{ass-1} and \ref{ass1}-(i),  $f\big(A(0,0)\big)=0$. Applying Lemma~\ref{Lip-composite} yields
\begin{align}
\|f\circ A\|_{2}^{2}&=\int_{\cro{0,1}^2}\cro{f\big(A(u,v)\big)}^2 \, dudv\nonumber\\
&=\int_{\cro{0,1}^2} \Big| f\big( A(u,v)\big)\left(f\big(A(u,v)\big)-f\big(A(0,0)\big)\right)\Big| \, dudv\nonumber\\
&\leq\int_{\cro{0,1}^2} \big|f\big(A(u,v)\big)\big| 2C_{\mathcal{A}}C_{L}\|f\|_{1}(u+v) \, dudv\nonumber\\
&\leq4C_{\mathcal{A}}C_{L}\|f\|_{1}\int_{\cro{0,1}^2}\big|f\big(A(u,v)\big)\big|\, dudv\nonumber\\
&=4C_{\mathcal{A}}C_{L}\|f\|_{1}\|f\circ A\|_{1}\nonumber\\
&\leq\left(2\sqrt{2}C_{\mathcal{A}}C_{L}\|f\|_{1}\right)^2,\nonumber
\end{align}
where the last inequality follows with \eqref{l1-bound-con}. Taking square roots on both sides, we conclude that
$\|f\circ A\|_{2}\leq2\sqrt{2}C_{\mathcal{A}}C_{L}\|f\|_{1}.$
\end{proof}

In the next result, we demonstrate that for any image generated through deformation of the template function $g$, with a suitably designed filter function based on $g$, the output provided by the CNN layers is always greater than some quantity of order $1-O(1/d^{\alpha})$. In contrast, for any filter derived from the other template function $f$, the CNN layer output is always smaller than some quantity depending on the separation quantity between $f$ and $g$. 

For any deformation $A$, let $\overline{ \mathbf{X}}_{g\circ A} := (\overline{X}_{g\circ A,j,\ell})_{j,\ell = 1, \dots, d}$ with
\begin{align*}
    \overline{X}_{g\circ A,j,\ell}:=\frac{\overline {g\circ A}_{j,\ell}}{d\|g\circ A\|_{2,d}}
\end{align*}
and $\overline {g\circ A}_{j,\ell}$ be the average intensity of $g\circ A$ on $I_{j,\ell}$, as defined in \eqref{ave-intensity}. For any $\alpha\in(0,1]$, $ \mathbf{w}_{(f\circ A)_{\alpha}}:=(w_{(f\circ A)_{\alpha},j,\ell})_{j,\ell=1, \dots, d}$, where 
\begin{align*}
    w_{(f\circ A)_{\alpha}, j,\ell}:= \frac{\overline {\big((f\circ A)_{\alpha}\big)}_{j,\ell}}{d\|(f\circ A)_{\alpha}\|_{2,d}}.
\end{align*}

\begin{prop}
\label{prop.cnns}
Let $f, g$ be two non-negative functions satisfying Assumption \ref{ass-1} with Lipschitz constant $C_{L}$ and suppose that Assumptions \ref{ass1}-(i) and \ref{ass-cover} hold for the deformation set $\mathcal{A}$. Let $D(f,g)$ be the separation quantity defined as in \eqref{eq.D_part-cnn}.
Then, there are constants $C_{1}(C_{L},C_{\mathcal{A}})$ and $C_{2}(C_{L},C_{\mathcal{A}})$ such that for any $\alpha\in(0,1]$ and $\mathcal{A}_{d_{\alpha}} \subseteq \mathcal{A}$ as defined in Assumption~\ref{ass-cover}, we have
\begin{itemize}
\item[(i)]
\begin{align}
\max_{A'\in\mathcal{A}_{d_{\alpha}}}\big|\sigma([ \mathbf{w}_{(g\circ A')_{\alpha}}] \star \overline{ \mathbf{X}}_{g\circ A})\big|_{\infty}
   \geq 1-\frac{C_{1}(C_{L},C_{\mathcal{A}})}{d^{\alpha}},\label{filter-lower}
\end{align}
\item[(ii)]
\begin{align}
&\max_{A'\in\mathcal{A}_{d_{\alpha}}} \, \big|\sigma([ \mathbf{w}_{(f\circ A')_{\alpha}}] \star \overline{ \mathbf{X}}_{g\circ A})\big|_{\infty}
   \leq 1-\frac{D^2(f,g)\vee D^2(g,f)}{16C_{\mathcal{A}}^2C_{L}^2} + \frac{C_{2}(C_{L},C_{\mathcal{A}})}{d^{\alpha}}.\label{filter-upper}
\end{align}
\end{itemize}
\end{prop}

\begin{proof}
We first prove (i). Under Assumption~\ref{ass-cover}, for any $A\in\mathcal{A}$, there exists a deformation $A'\in \mathcal{A}_{d_{\alpha}}\subseteq\mathcal{A}$ and indices $j_0,\ell_0\in\{1,\ldots,d\}$ such that $A'(\cdot+j_0/d,\cdot+\ell_0/d)$ satisfies Assumption~\ref{ass1}-(i), and $$\left\|A'\left(\cdot+\frac{j_0}{d},\cdot+\frac{\ell_0}{d}\right)-A\right\|_{\infty}\leq d^{-\alpha}.$$
According to Lemma~\ref{le3.1}, we deduce that
\begin{align}
\big|\sigma([ \mathbf{w}_{(g\circ A')_{\alpha}}] \star \overline{ \mathbf{X}}_{g\circ A})\big|_{\infty}&=\max_{r,r'\in \mathbb{Z}} \sum_{j,\ell=1}^d \1_{1\leq j+r\leq d,1\leq \ell+r'\leq d} \, \frac{\overline {\big((g\circ A')_{\alpha}\big)}_{j+r,\ell+r'}}{\|(g\circ A')_{\alpha}\|_{2,d}\cdot d} \, \frac{\overline {g\circ A}_{j,\ell}}{\|g\circ A\|_{2,d}\cdot d}\nonumber\\
&\geq\sum_{j,\ell=1}^d \1_{1\leq j+j_{0}\leq d,1\leq \ell+\ell_{0}\leq d} \, \frac{\overline {\big((g\circ A')_{\alpha}\big)}_{j+j_{0},\ell+\ell_{0}}}{\|(g\circ A')_{\alpha}\|_{2,d}\cdot d} \, \frac{\overline {g\circ A}_{j,\ell}}{\|g\circ A\|_{2,d}\cdot d}\nonumber\\
&=\frac{\left\langle (g\circ A')_{\alpha}\left(\cdot+\frac{j_{0}}{d},\cdot+\frac{\ell_{0}}{d}\right),g\circ A\right\rangle_{2,d}}{\|(g\circ A')_{\alpha}\|_{2,d}\|g\circ A\|_{2,d}},\label{suffi-filter}
\end{align}
where the second equality follows from the fact that the support of $g \circ A'$ is contained in $\cro{0,1}^2$, provided that $A'\in\mathcal{A}_{d_{\alpha}}\subseteq\mathcal{A}$. To derive \eqref{filter-lower}, it is sufficient to show
\begin{equation*}
\frac{\left\langle (g\circ A')_{\alpha}\left(\cdot+\frac{j_{0}}{d},\cdot+\frac{\ell_{0}}{d}\right),g\circ A\right\rangle_{2,d}}{\|(g\circ A')_{\alpha}\|_{2,d}\|g\circ A\|_{2,d}}\geq1-C_{1}(C_{L},C_{\mathcal{A}})\cdot d^{-\alpha}.
\end{equation*}
In the next step, we show that under the provided conditions, the functions $g \circ A$ and $g \circ A'$ satisfy \eqref{eq.Lip_cond-gen} with Lipschitz constant $4C^3_{\mathcal{A}}C_{L}$. Observe that for any $g$ satisfying Assumption~\ref{ass-1}, and any $A$ fulfilling Assumption~\ref{ass1}-(i), we can derive
\begin{align}
\|g\|_{1}=\int_{[0,1]^2}g(x,y)  \, dxdy=\int_{[0,1]^2}g\circ A(u,v)\left|\det(J_{A}(u,v))\right|  \,  dudv\leq 2C_{\mathcal{A}}^2\|g\circ A\|_{1}.\label{joc-cal}
\end{align}
Using \eqref{joc-cal}, applying Lemma~\ref{Lip-composite} yields for any real numbers $u,u',v,v'$,
\begin{align*}
    \left|g\circ A(u,v)-g\circ A(u',v')\right|&\leq2C_{\mathcal{A}}C_{L}\|g\|_{1}(|u-u'|+|v-v'|)\\
&\leq4C_{\mathcal{A}}^3C_{L}\|g\circ A\|_{1}(|u-u'|+|v-v'|),
\end{align*}
which validates the claim. Applying Lemma \ref{le3.3Integrate-version} (iv) with $h=g\circ A'$ and $g=g\circ A$ allows us to bound 
\begin{align}
&\frac{\left\langle (g\circ A')_{\alpha}\left(\cdot+\frac{j_{0}}{d},\cdot+\frac{\ell_{0}}{d}\right),g\circ A\right\rangle_{2,d}}{\|(g\circ A')_{\alpha}\|_{2,d}\|g\circ A\|_{2,d}}\nonumber\\
\geq&\frac{\int_{\cro{0,1}^{2}} g\circ A'\left(u+\frac{j_{0}}{d},v+\frac{\ell_{0}}{d}\right)g\circ A(u,v) \, dudv}{\|g\circ A'\|_{2}\|g\circ A\|_{2}}- \frac{112C_{L}C^3_{\mathcal{A}}}{d^{\alpha}}\left(1+\frac{8C_{L}C^3_{\mathcal{A}}}{d^{\alpha}}\right)^3.\label{int-bound-gen}
\end{align}
Now we derive a lower bound for the first summand in \eqref{int-bound-gen}. Under Assumptions \ref{ass-1} and \ref{ass-cover}, we have
\begin{equation}\label{inf-bound-tool}
    \left\|g\circ A'\left(\cdot+\frac{j_{0}}{d},\cdot+\frac{\ell_{0}}{d}\right)-g\circ A\right\|_{\infty}\leq2C_L\|g\|_{1}\cdot d^{-\alpha}.
\end{equation}
Let $h_{1},h_{2}:\cro{0,1}^2\rightarrow \mathbb{R}$ be two non-negative bounded functions. Using the triangle inequality, 
\begin{align}
    \| h_1 h_2\|_1 \geq \|h_1\|_2^2-\| h_1 (h_1-h_2)\|_1\geq \|h_1\|_2^2-\|h_1-h_2\|_\infty \|h_1\|_1.\label{eq.h1h2}
\end{align}
Applying this inequality with $h_{1}=g\circ A$ and $h_{2}=g\circ A'\left(\cdot+j_{0}/d,\cdot+\ell_{0}/d\right)$ yields
\begin{align}
\int_{\cro{0,1}^{2}}g\circ A'\left(u+\frac{j_{0}}{d},v+\frac{\ell_{0}}{d}\right)g\circ A(u,v) \, dudv&\geq\|g\circ A\|_{2}^{2}-\frac{2C_{L}\|g\|_1}{d^{\alpha}}\|g\circ A\|_{1}\nonumber\\
&\geq\|g\circ A\|_{2}^{2}-\frac{4C_{L}C_{\mathcal{A}}^2}{d^{\alpha}}\|g\circ A'\|_{1}\|g\circ A\|_{1},\label{summand-inequa-1}
\end{align}
where the first inequality follows from \eqref{inf-bound-tool}, and the second inequality uses \eqref{joc-cal}. By the Cauchy-Schwarz inequality, $\|g\circ A\|_{1}\leq\|g\circ A\|_{2}$, for all $A\in\mathcal{A}$, which, together with \eqref{summand-inequa-1}, implies that
\begin{align}
\frac{\int_{\cro{0,1}^{2}} g\circ A'\left(u+\frac{j_{0}}{d},v+\frac{\ell_{0}}{d}\right)g\circ A(u,v) \, dudv}{\|g\circ A'\|_{2}\|g\circ A\|_{2}}&\geq\frac{\|g\circ A\|_{2}^{2}}{\|g\circ A'\|_{2}\|g\circ A\|_{2}}-\frac{4C_{L}C_{\mathcal{A}}^2}{d^{\alpha}}\frac{\|g\circ A\|_{1}\|g\circ A'\|_{1}}{\|g\circ A'\|_{2}\|g\circ A\|_{2}}\nonumber\\
&\geq\frac{\|g\circ A\|_{2}}{\|g\circ A'\|_{2}}-\frac{4C_{L}C_{\mathcal{A}}^2}{d^{\alpha}}.\label{summand-inequa-1-precise}
\end{align}
Next, we proceed by bounding the term $\|g\circ A\|_{2}/\|g\circ A'\|_{2}$. Let $h_{1},h_{2}:\cro{0,1}^2\rightarrow \mathbb{R}$ be two non-negative bounded functions. Interchanging the role of $h_1$ and $h_2$ in \eqref{eq.h1h2} gives $\| h_1 h_2\|_1 \geq \|h_2\|_2^2-\|h_1-h_2\|_\infty \|h_2\|_1.$ Using triangle inequality, we also obtain $\| h_1\|_2^2=\| h_1^2\|_1\geq \|h_1h_2\|_1-\|h_1(h_2-h_1)\|_1\geq \|h_1h_2\|_1-\|h_1\|_1\|h_2-h_1\|_\infty.$ Combining the previous two inequalities gives $\| h_1\|_2^2\geq \|h_2\|_2^2-(\|h_1\|_1+\|h_2\|_1) \|h_1-h_2\|_\infty $ and dividing by $\|h_2\|_2^2$ yields $\| h_1\|_2^2/\|h_2\|_2^2 \geq 1-(\|h_1\|_1+\|h_2\|_1)\|h_1-h_2\|_\infty /\|h_2\|_2^2$. Applying this inequality with $h_{1}=g\circ A$ and $h_{2}=g\circ A'\left(\cdot+j_{0}/d,\cdot+\ell_{0}/d\right)$ as well as using \eqref{inf-bound-tool} yields
\begin{align}
    \frac{\|g\circ A\|_{2}^{2}}{\|g\circ A'\|_{2}^{2}}&=\frac{\|g\circ A\|_{2}^{2}}{\left\|g\circ A'\left(\cdot+\frac{j_0}{d},\cdot+\frac{\ell_0}{d}\right)\right\|_{2}^{2}}\nonumber\\
    &\geq1-\frac{\|g\circ A\|_{1}+\left\|g\circ A'\left(\cdot+\frac{j_0}{d},\cdot+\frac{\ell_0}{d}\right)\right\|_{1}}{\left\|g\circ A'\left(\cdot+\frac{j_0}{d},\cdot+\frac{\ell_0}{d}\right)\right\|_{2}^{2}}\cdot\frac{2C_L\|g\|_{1}}{d^{\alpha}}\nonumber\\
    &=1-\frac{\|g\circ A\|_{1}+\left\|g\circ A'\right\|_{1}}{\left\|g\circ A'\right\|_{2}^{2}}\cdot\frac{2C_L\|g\|_{1}}{d^{\alpha}}\nonumber\\
    &\geq1-\frac{\|g\circ A\|_{1}+\left\|g\circ A'\right\|_{1}}{\left\|g\circ A'\right\|_{2}^{2}}\cdot\frac{4C_LC^2_{\mathcal{A}}\|g\circ A'\|_{1}}{d^{\alpha}}\nonumber\\
    &\geq1-\frac{4C_LC^2_{\mathcal{A}}}{d^{\alpha}}\left(\frac{\|g\circ A\|_{1}}{\|g\circ A'\|_{1}}+1\right),\label{ratio-l2-square}
\end{align}
where the second equality follows from Assumptions~\ref{ass1}-(i) and \ref{ass-cover}, the second-to-last inequality comes from \eqref{joc-cal}, and the last inequality is obtained using $\|g\circ A'\|_{1}\leq\|g\circ A'\|_{2}$.
Moreover,
\begin{align}
\frac{\|g\circ A\|_{1}}{\|g\circ A'\|_{1}}&=1+\frac{\int_{\cro{0,1}^{2}}g\circ A(u,v)-g\circ A'\left(u+\frac{j_0}{d},v+\frac{\ell_0}{d}\right)dudv}{\|g\circ A'\|_{1}}\nonumber\\
&\leq1+\frac{\left\|g\circ A'\left(\cdot+\frac{j_{0}}{d},\cdot+\frac{\ell_{0}}{d}\right)-g\circ A\right\|_{\infty}}{\|g\circ A'\|_{1}}\nonumber\\
&\leq 1+\frac{2C_{L}\|g\|_{1}}{d^{\alpha}}\frac{1}{\|g\circ A'\|_{1}}\nonumber\\
&\leq1+\frac{4C_{L}C_{\mathcal{A}}^2}{d^{\alpha}},\label{ratio-l1}
\end{align}
where the second-to-last inequality comes from \eqref{inf-bound-tool}, and the last inequality is due to \eqref{joc-cal}.
By plugging \eqref{ratio-l1} into \eqref{ratio-l2-square}, we deduce that
\begin{align*}
\frac{\|g\circ A\|_{2}^{2}}{\|g\circ A'\|_{2}^{2}}\geq1-\frac{4C_{L}C_{\mathcal{A}}^2}{d^{\alpha}}\left(2+\frac{4C_{L}C_{\mathcal{A}}^2}{d^{\alpha}}\right)=1-\frac{8C_{L}C_{\mathcal{A}}^2}{d^{\alpha}}\left(1+\frac{2C_{L}C_{\mathcal{A}}^2}{d^{\alpha}}\right),
\end{align*}
which implies that
\begin{align}
\frac{\|g\circ A\|_{2}}{\|g\circ A'\|_{2}}\geq1-\frac{8C_{L}C_{\mathcal{A}}^2}{d^{\alpha}}\left(1+\frac{2C_{L}C_{\mathcal{A}}^2}{d^{\alpha}}\right).\label{ratio-l2}
\end{align}
Combining \eqref{int-bound-gen}, \eqref{summand-inequa-1-precise} and \eqref{ratio-l2}, we obtain
\begin{align}
&\frac{\left\langle (g\circ A')_{\alpha}, g\circ A\left(\frac{j_{0}}{d}+\cdot,\frac{\ell_{0}}{d}+\cdot\right)\right\rangle_{2,d}}{\|(g\circ A')_{\alpha}\|_{2,d}\|g\circ A\|_{2,d}}\nonumber\\
&\geq\frac{\|g\circ A\|_{2}}{\|g\circ A'\|_{2}}-\frac{4C_{L}C_{\mathcal{A}}^2}{d^{\alpha}}- \frac{112C_{L}C^3_{\mathcal{A}}}{d^{\alpha}}\left(1+\frac{8C_{L}C^3_{\mathcal{A}}}{d^{\alpha}}\right)^3\nonumber\\
&\geq1-\frac{16C^2_{L}C_{\mathcal{A}}^4}{d^{2{\alpha}}}
-\frac{12C_{L}C_{\mathcal{A}}^2}{d^{\alpha}}- \frac{112C_{L}C^3_{\mathcal{A}}}{d^{\alpha}}\left(1+\frac{8C_{L}C^3_{\mathcal{A}}}{d^{\alpha}}\right)^3\nonumber\\
&\geq1-\frac{C_1(C_{L},C_{\mathcal {A}})}{d^{\alpha}},
\end{align}
where $C_1(C_{L},C_{\mathcal {A}})$ is a universal constant depending only on $C_{L}$ and $C_{\mathcal{A}}$. This proves (i).

Next we proceed to prove (ii). With Lemma~\ref{le3.1} and the non-negativity of $f$, we rewrite 
\begin{align*}
    \big|\sigma([ \mathbf{w}_{(f\circ A')_{\alpha}}] \star \overline{ \mathbf{X}}_{g\circ A})\big|_{\infty} 
    &= \max_{r,r' \in \{-d, \dots, d\}} \sum_{j, \ell=1}^d\1_{1\leq j+r\leq d,1\leq \ell+r'\leq d}\frac{\overline {\big((f\circ A')_{\alpha}\big)}_{j+r,\ell+r'}}{d\|(f\circ A')_{\alpha}\|_{2,d}}\cdot\frac{\overline{g\circ A}_{j,\ell}}{d\|g\circ A\|_{2,d}}\\
    &\leq\max_{r,r' \in \{-d, \dots, d\}}
    \frac{\left\langle (f\circ A')_{\alpha}\left(\cdot +\frac rd,\cdot+\frac {r'}{d}\right),g\circ A\right\rangle_{2,d}}{\|(f\circ A')_{\alpha}\|_{2,d}\|g\circ A\|_{2,d}}.
\end{align*}
Under Assumptions~\ref{ass-1}, \ref{ass1}-(i) and \ref{ass-cover}, we can deduce similarly through Lemma~\ref{Lip-composite} that the functions $f \circ A'$ and $g \circ A$ satisfy \eqref{eq.Lip_cond-gen} with a Lipschitz constant $4C^3_{\mathcal{A}}C_{L}$. For any $\alpha\in(0,1]$ and any $A'\in\mathcal{A}_{d_{\alpha}}$, Lemma \ref{le3.3Integrate-version} (iv) allows us to bound
\begin{align}
    &\max_{r,r' \in \{-d, \dots, d\}}
    \frac{\left\langle (f\circ A')_{\alpha}\left(\cdot+\frac rd,\cdot+\frac{r'}{d}\right),g\circ A\right\rangle_{2,d}}{\|(f\circ A')_{\alpha}\|_{2,d}\|g\circ A\|_{2,d}}\nonumber\\
    \leq&\max_{r,r' \in \{-d, \dots, d\}}\frac{\int_{\cro{0,1}^{2}}f\circ A'\left(u+\frac rd,v+\frac{r'}{d}\right)g\circ A(u,v)dudv}{\|f\circ A'\|_2\|g\circ A\|_2}+\frac{112C_{L}C^3_{\mathcal{A}}}{d^{\alpha}}\left(1+\frac{8C_{L}C^3_{\mathcal{A}}}{d^{\alpha}}\right)^3.\label{hg-gen}
\end{align}
By Proposition~\ref{l1-l2-connect} and the fact that $\|g\|_{2}\geq\|g\|_{1}$, we have
\begin{align}
&\frac{\inf_{a,s,s' \in \mathbb{R}, A,A'\in\mathcal{A}} \|af\circ A'(\cdot+s,\cdot+s') -g\circ A\|_{L^2(\mathbb{R}^2)}^2}{\|g\|_2^2} \nonumber\\
\leq&8C_{\mathcal{A}}^2C_{L}^2\frac{\inf_{a,s,s' \in \mathbb{R}, A,A'\in\mathcal{A}} \|af\circ A'(\cdot+s,\cdot+s') -g\circ A\|_{L^2(\mathbb{R}^2)}^2}{\|g\circ A\|_2^2}\nonumber\\
\leq&8C_{\mathcal{A}}^2C_{L}^2\left\|\frac{1}{\|f\circ A'\|_{2}}f\circ A'\left(\cdot+\frac{r}{d},\cdot+\frac{r'}{d}\right)-\frac{1}{\|g\circ A\|_{2}}g\circ A\right\|_{L^2(\mathbb{R}^2)}^2\nonumber\\
=&8C_{\mathcal{A}}^2C_{L}^2\left(\int_{\mathbb{R}^2} \frac{\left(g\circ A(u,v)\right)^2}{\|g\circ A\|_2^2}+\frac{\left(f\circ A'\left(u+\frac{r}{d},v+\frac{r'}{d}\right)\right)^2}{\|f\circ A'\|_2^2}-\frac{2\cro{g\circ A(u,v)}\cro{f\circ A'\left(u+\frac{r}{d},v+\frac{r'}{d}\right)}}{\|g\circ A\|_2\|f\circ A'\|_2}dudv\right)\nonumber\\
\leq&16C_{\mathcal{A}}^2C_{L}^2\left(1-\frac{\int_{\cro{0,1}^{2}}\cro{f\circ A'\left(u+\frac rd,v+\frac{r'}{d}\right)}\cro{g\circ A(u,v)}dudv}{\|f\circ A'\|_2\|g\circ A\|_2}\right).\label{lower-connect-int}
\end{align}
The integral in the last step can be restricted to $[0,1]^2$ because by the assumptions, the support of the function $g\circ A$ is contained in $[0,1]^2.$ Rewriting the previous inequality gives  
\begin{align*}
\frac{\int_{\cro{0,1}^{2}}\cro{f\circ A'\left(u+\frac rd,v+\frac{r'}{d}\right)}\cro{g\circ A(u,v)}dudv}{\|f\circ A'\|_2\|g\circ A\|_2}&\leq 1-\frac{\inf_{a,s,s' \in \mathbb{R}, A,A'\in\mathcal{A}} \|af\circ A'(\cdot+s,\cdot+s') -g\circ A\|_{L^2(\mathbb{R}^2)}^2}{16C_{\mathcal{A}}^2C_{L}^2\|g\|_2^2}\\
&=1-\frac{D^2(f,g)}{16C_{\mathcal{A}}^2C_{L}^2},
\end{align*}
with $D(f,g)$ as in \eqref{eq.D_part-cnn}.
By interchanging the role of $g$ and $f$ in \eqref{lower-connect-int}, we finally get the upper bound $1-(D^2(f,g)\vee D^2(g,f))/(16C_{\mathcal{A}}^2C_{L}^2)$ in the previous inequality. Together with \eqref{hg-gen}, the asserted inequality in (ii) follows.
\end{proof}

The next lemma shows how one can compute the maximum of a $r$-dimensional vector with a fully connected neural network.

\begin{lemma}
\label{lemma_max}
There exist networks $\mathsf{Max}^r, \mathsf{Max}_r \in \mathcal{F}_{\mathsf{id}}(1+2\lceil \log_2 r\rceil, (r, 2r, \dots, 2r, 1)),$ such that 
\begin{align*}
    \mathsf{Max}^r( \mathbf{x}) = \max\{x_1, \dots, x_r\} \quad \text{and} \ \ 
    \mathsf{Max}_r( \mathbf{x}) = r\cdot\max\{x_1, \dots, x_r\},
    \quad \text{for all} \quad \mathbf{x} = (x_1, \dots, x_r) \in[0,\infty)^r.
\end{align*}
In both networks all network parameters are bounded in absolute value by $1.$
\end{lemma}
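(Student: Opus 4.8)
The plan is to build $\mathsf{Max}^r$ recursively from a gadget that computes the maximum of two numbers, $\max\{a,b\}$, and then to pair up the $r$ inputs $\lceil \log_2 r\rceil$ times. The basic identity is $\max\{a,b\} = \sigma(a-b) + \sigma(b) - \sigma(-b) = \sigma(a-b) + b$ for $b\in[0,1]$, but to keep all intermediate quantities in $[0,1]$ and all weights bounded by $1$, I would instead use the slightly more careful representation $\max\{a,b\} = \sigma\bigl(\sigma(a-b)+\sigma(b)\bigr) - \sigma\bigl(\sigma(a-b)+\sigma(b)-1\bigr)$ is not needed; more simply, for $a,b \in [0,1]$ one has $a - b \in [-1,1]$, so $\sigma(a-b)\in[0,1]$ and $\max\{a,b\} = b + \sigma(a-b)$. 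The only issue is that the ReLU cannot directly realize the $+b$ term while simultaneously destroying the negative part; so I would carry $b$ forward through one ReLU layer as $b = \sigma(b)$ (valid since $b\geq 0$) in parallel with computing $\sigma(a-b)$, then sum in the next linear step. This costs $2$ hidden layers and width $2$ per pair of inputs, with all weights in $\{-1,0,1\}$ and all shift vectors zero, hence the stated depth $1 + 2\lceil\log_2 r\rceil$ and width pattern $(r,2r,\dots,2r,1)$.

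First I would formalize the two-input gadget: a sub-network $\mathsf{M}\in\mathcal{F}_{\mathsf{id}}(2,(2,2,2,1))$ with $\mathsf{M}(a,b) = \max\{a,b\}$ for $(a,b)\in[0,1]^2$, given explicitly by the two weight matrices and verifying by cases $a\geq b$ and $a<b$ that the output equals $\max\{a,b\}\in[0,1]$ and that every intermediate neuron value lies in $[0,1]$. Next I would set up the recursion: write $r' = \lceil r/2\rceil$, apply $\mathsf{M}$ to the $\lfloor r/2\rfloor$ disjoint consecutive pairs (and pass the last coordinate through unchanged via two ReLU layers if $r$ is odd, which is legitimate since inputs are nonnegative), obtaining a vector in $[0,1]^{r'}$ whose maximum equals $\max\{x_1,\dots,x_r\}$. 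Iterating $\lceil\log_2 r\rceil$ times reduces to a single output. Padding the widths up to $2r$ at every layer is harmless since unused neurons can be set to zero by zero rows/columns in the weight matrices, so the width vector $(r,2r,\dots,2r,1)$ is met exactly.

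The main bookkeeping obstacle — not a conceptual one — is verifying that the \emph{composed} network indeed lands in the class $\mathcal{F}_{\mathsf{id}}(1+2\lceil\log_2 r\rceil,(r,2r,\dots,2r,1))$ with the depth counted correctly: each halving stage uses $2$ hidden layers, and the very first stage's input layer is the $r$-dimensional input, giving $2\lceil\log_2 r\rceil$ hidden layers from the stages, plus one extra hidden layer is available (the $1+$) which can be used as an initial identity layer $\bx\mapsto\sigma(\bx)=\bx$ to make the dimensions line up cleanly when $r$ is not a power of $2$ and some coordinates must wait an odd number of layers before being paired. I would check that with this convention all shift vectors are the zero vector (so trivially bounded by $1$) and all weight entries lie in $\{-1,0,1\}$, and that every neuron in every layer evaluates to a number in $[0,1]$ on inputs from $[0,1]^r$, which follows inductively from the two-input gadget analysis. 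This completes the construction.
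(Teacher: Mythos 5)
Your construction is correct and is essentially the paper's own proof: the same two-hidden-layer pairwise gadget based on $\max\{a,b\}=\sigma\big(\sigma(a-b)+\sigma(b)\big)$ (the paper writes it as $((y-z)_++z)_+$), the same binary-tree reduction over $\lceil\log_2 r\rceil$ stages, and the same use of the single extra initial hidden layer to pad/align the input, with all weights in $\{-1,0,1\}$ and zero shifts. The only cosmetic difference is that the paper pads the input to length $2^{\lceil\log_2 r\rceil}$ with zeros up front rather than passing odd leftover coordinates through identity layers.
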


\begin{proof}
Due to the identity $\max\{y,z\} = ((y-z)_++z)_+$ that holds for all $y,z\geq0,$ one can compute $\max\{y,z\}$ by a network $\mathsf{Max}(y,z)$ with two hidden layers and width vector $(2,2,1,1)$. This network construction involves \textit{five} non-zero weights, all bounded in absolute value by 1.

In a second step we now describe the construction of the $\mathsf{Max}^r$ network.
Let $q=\lceil \log_2 r \rceil$. In the first hidden layer the network computes the padded vector
\begin{align}
    (x_1, \dots, x_r) \mapsto (x_1, \dots, x_r, \underbrace{0, \dots, 0}_{2^q-r}).
    \label{eq.9631}
\end{align}
This requires $r$ non-zero network parameters, corresponding to the identity mapping for the first $r$ inputs; the remaining $2^q-r$ outputs are constant zeros and require no non-zero parameters.

Next we apply the network $\mathsf{Max}(y,z)$ from above to the pairs $(x_1, x_2),$ $(x_3, x_4), \dots, (0,0)$
in order to compute
\begin{align*}
    \big(\mathsf{Max}(x_1, x_2), \mathsf{Max}(x_3,x_4), \dots, \mathsf{Max}(0,0)\big) \in [0,\infty)^{2^{q-1}}.
\end{align*}
This reduces the length of the vector by a factor two. By consecutively pairing neighboring entries and applying the network $\mathsf{Max},$ the procedure is continued until there is only one entry left. Together with the layer \eqref{eq.9631}, the resulting network $\mathsf{Max}^r$ has $2q+1$ hidden layers. It can be realized by taking width vector $(r,2r,2r, \dots, 2r,1)$. We have $\mathsf{Max}(y,z)=\max\{y,z\}$ and thus also $\mathsf{Max}^r(x_1,\ldots,x_r)=\max\{x_1, \dots, x_r\},$ proving the assertion.

To obtain $\mathsf{Max}_r(\bx)=r \cdot \max\{x_1, \dots, x_r\}$, observe that
\[
r \cdot \max\{x_1, \dots, x_r\} = \sum_{j=1}^r \max\{x_1, \dots, x_r\}.
\]
Therefore, it suffices to follow the same construction as for $\mathsf{Max}^r$ up to the last hidden layer. In the final hidden layer, we replace the single computation of $\mathsf{Max}$ with $r$ parallel copies, and in the output layer, we sum their outputs.
\end{proof}

One can reduce the number of required layers to $1+\lceil \log_2 r\rceil$ on the cost of a more involved proof. 

\begin{proof}[Proof of Theorem \ref{th2-gen}]
In the first step of the proof, we explain the construction of the CNN. Under Assumption~\ref{ass-cover}, for any $\alpha \in (0,1]$, there exists a finite subset $\mathcal{A}_{d_{\alpha}}$ that forms a $d^{-\alpha}$-covering of $\mathcal{A}$. We define for every $A\in\mathcal{A}_{d_{\alpha}}$ and for any of the classes $k \in \{0,1\}$, a matrix $ \mathbf{w}_{(f_k\circ A)_{\alpha}}=(w_{(f_k\circ A)_{\alpha}, j, \ell})_{j,\ell}$ with entries 
\begin{align*}
     w_{(f_k\circ A)_{\alpha}, j, \ell}=  \frac{\overline{\big((f_{k}\circ A)_{\alpha}\big)}_{j,\ell}}{d\|(f_{k}\circ A)_{\alpha}\|_{2,d}}.
\end{align*}
The corresponding filter is then defined as the quadratic support $[ \mathbf{w}_{(f_k\circ A)_{\alpha}}]$ of the matrix $ \mathbf{w}_{(f_k\circ A)_{\alpha}}$. Since we have $|\mathcal{A}_{d_{\alpha}}|$ possible choices for the deformations and two different template functions $f_0$ and $f_1$, this results in at most $2|\mathcal{A}_{d_{\alpha}}|$ different filters. Since each filter corresponds to a feature map $\sigma([ \mathbf{w}_{(f_k\circ A)_{\alpha}}] \star \overline\bX),$ we also have at most $2|\mathcal{A}_{d_{\alpha}}|$ feature maps. Among those, half of them correspond to class zero and the other half to class one. 

Now, a global max-pooling layer is applied to the output of each filter map. As explained before, in our framework the max-pooling layer extracts the signal with the largest absolute value. Application of the max-pooling layer thus yields a network with outputs
\begin{align*}
 \mathbf{O}_{k,A}(\ol\bX)= \big| \sigma([ \mathbf{w}_{(f_k\circ A)_{\alpha}}] \star\overline\bX)\big|_{\infty}.
\end{align*}
In the last step of the network construction, we take several fully connected layers, that extract, on the one hand, the largest value of $ \mathbf{O}_{0,A}(\ol\bX)$ and, on the other hand, the largest value of $ \mathbf{O}_{1,A}(\ol\bX)$, where $A\in\mathcal{A}_{d_{\alpha}}$. Applying two networks $\mathsf{Max}^r$ from Lemma \ref{lemma_max} and with $r=|\mathcal{A}_{d_{\alpha}}|$ in parallel leads to a network with two outputs
\begin{align}
\label{z1z2}
    \Big(\max\Big\{ \mathbf{O}_{0,A}(\ol\bX): A\in\mathcal{A}_{d_{\alpha}}\Big\}, \max\Big\{ \mathbf{O}_{1,A}(\ol\bX): A\in\mathcal{A}_{d_{\alpha}}\Big\}\Big).
\end{align}
By Lemma \ref{lemma_max}, the two parallelized $\mathsf{Max}^r$ networks are in the network class $\mathcal{F}_{\mathsf{id}}(1+2\lceil \log_2 r\rceil, (2r, 4r, \dots, 4r, 2))$ with $r=|\mathcal{A}_{d_{\alpha}}|.$ 

In the last step, the softmax function $\Phi(x_1,x_2) = (e^{ x_1}/(e^{  x_1}+e^{ x_2}), e^{ x_2}/(e^{ x_1}+e^{ x_2}))$ is applied. This guarantees that the output of the network is a probability vector over the two classes $0$ and $1.$ The whole network construction is contained in the CNN class $\mathcal{G}(\alpha,|\mathcal{A}_{d_{\alpha}}|)$ that has been introduced in \eqref{CNN}.

We now derive a bound on the approximation error of this CNN. Denote by \(A_{*} \in \mathcal{A}\) the true deformation for the generic image \(\bX = (X_{j,\ell})_{j,\ell=1,\ldots,d}\), namely,
\[ X_{j,\ell} = d^{2}\eta \int_{I_{j,\ell}} f_{k}\big(A_{*}(u,v)\big) \, dudv. \]
Without loss of generality, we assume its label is $k=0$, so that $f_0$ is the corresponding template function. The case $k=1$ follows analogously.
By assumption, the conditions of Proposition \ref{prop.cnns} are satisfied and we conclude that there exist $A'\in\mathcal{A}_{d_{\alpha}}$ and a corresponding filter $ \mathbf{w}_{(f_{0}\circ A')_{\alpha}}$ such that 
\begin{align*}
\big|\sigma\left([ \mathbf{w}_{(f_0\circ A')_{\alpha}}] \star \overline \bX\right)\big|_{\infty}
    \geq 1-\frac{C_1(C_L,C_{\mathcal{A}})}{d^{\alpha}}.
\end{align*}
Proposition \ref{prop.cnns} (ii) further shows that all feature maps based on the template function $f_1$ are bounded by
\begin{align*}
\max_{A'\in\mathcal{A}_{d_{\alpha}}} \big|\sigma([ \mathbf{w}_{(f_1\circ A')_{\alpha}}] \star \overline\bX)\big|_{\infty} &\leq 1-\frac{D^2}{16C_{\mathcal{A}}^2C_{L}^2} + \frac{C_2(C_{L},C_{\mathcal{A}})}{d^{\alpha}},
\end{align*}
with $D$ as in \eqref{eq.D_part-cnn}. This, in turn, means that the two outputs $(z_0, z_1)$ of the network \eqref{z1z2} can be bounded by
\begin{align*}
z_0 \geq 1-\frac{C_1(C_{L},C_{\mathcal{A}})}{d^{\alpha}} \quad \text{and} \quad z_1 \leq 1-\frac{D^2}{16C_{\mathcal{A}}^2C_{L}^2} + \frac{C_2(C_{L},C_{\mathcal{A}})}{d^{\alpha}}.
\end{align*}
As the softmax function $\Phi$ is applied to the network output, there exists $\mathbf{p}=(p_1,p_2)\in\mathcal{G}(\alpha,|\mathcal{A}_{d_{\alpha}}|)$ such that
$$p_{1}(\overline\bX)=\frac{e^{ z_0}}{e^{ z_0} + e^{ z_1}},\quad p_{2}(\overline\bX)=\frac{e^{ z_1}}{e^{ z_0} + e^{ z_1}}.$$ Set $\kappa:=16C^2_{\mathcal{A}}C^2_{L}\cro{C_1(C_{L},C_{\mathcal{A}})+C_2(C_{L},C_{\mathcal{A}})+1}$. Provided $D^{2}\geq\kappa/d^{\alpha}$, we deduce that, $p_{1}(\overline\bX)>p_{2}(\overline\bX)$ hence $\1(p_{2}(\overline\bX)>1/2)=0$. This proves the assertion.
\end{proof}
\begin{proof}[Proof of Lemma \ref{lem.perfect_recovery-gen}]
For such values of $D$, Theorem 4.2 shows that there exists a function $ \mathbf{p}=(p_1,p_2)$ belonging to $\mathcal{G}(\alpha,|\mathcal{A}_{d_{\alpha}}|)$ such that $\1(p_2(\overline 
\bX)>1/2)=k(\bX)$. This shows that $k$ can be written as a deterministic function evaluated at $\bX.$ To see that $k(\bX)$ equals the conditional class probability $p(\bX),$ observe that 
\begin{align*}
    p(\bX)= \mathbf{P}(k=1| \mathbf{X})
= \mathbf{E}\big[\1(k=1)\big| \mathbf{X}\big]
    = \1(k(\bX)=1)=k(\bX).
\end{align*}
\end{proof}

To facilitate our later proofs, we first introduce the VC-classes (Vapnik-Chervonenkis-Classes) of sets as follows.
\begin{defi}
Let $\mathcal{A}$ be a class of subsets of $\mathcal{X}$ with $\mathcal{A} \neq \emptyset$. Then the VC-dimension (Vapnik-Chervonenkis-Dimension) $V_{\mathcal{A}}$ of $\mathcal{A}$ is 
\begin{align*}
V_{\mathcal{A}} := \sup\{m \in \mathbb{N}: S(\mathcal{A},m) = 2^m\},
\end{align*}
where $S(\mathcal{A},m) := \max_{\{\bx_1, \dots, \bx_m\} \subseteq \mathcal{X}} |\{A \cap \{\bx_1, \dots, \bx_m\}: A \in \mathcal{A}\}|$ denotes the $m$-th shatter coefficient.
\end{defi}
To extend the notion of VC-classes from sets to functions, we use the following definition.
\begin{defi}
Recall that an (open) subgraph of a function $f:\mathcal{X}\rightarrow\mathbb{R}$ in $\mathcal{F}$ is the subset of $\mathcal{X}\times\mathbb{R}$ given by
\begin{equation*}
\mathcal{S}_{f}=\left\{(x,t)\in\mathcal{X}\times\mathbb{R}:\;t<f(x)\right\}.
\end{equation*}
A collection $\mathcal{F}$ of measurable functions on $\mathcal{X}$ is called a VC-subgraph class, or just a VC-class, with dimension not larger than $V$ if, $\mathcal{F}^{+}:=\{\mathcal{S}_{f}:\;f\in\mathcal{F}\}$ is a VC-class of sets in $\mathcal{X}\times\mathbb{R}$ with dimension $V_{\mathcal{F}^{+}}$ not larger than $V$.
\end{defi}

The following result states a fundamental property of VC-subgraph classes.
\begin{lemma}
\label{B7}
Let $\mathcal{F}$ be a family of real-valued functions on $\mathcal{X}$, and let $g: \mathbb{R} \to \mathbb{R}$ be a fixed monotone function. Define the class $\mathcal{G}=\{g \circ f: f \in \mathcal{F}\}$. If $\mathcal{F}$ is a VC-class with VC-dimension $V_{\mathcal{F}^+}$, then $\mathcal{G}$ is also a VC-class, and its VC-dimension $V_{\mathcal{G}^+}$ satisfies
\begin{align*}
    V_{\mathcal{G}^+} \leq V_{\mathcal{F}^+}.
\end{align*}
\end{lemma}
\begin{proof}
See Lemma 2.6.18-(viii) in \cite{MR1385671}.
\end{proof}

The following oracle inequality decomposes the excess misclassification probability of the estimator into two terms, namely a term measuring the complexity of the function class and a term measuring the approximation power. The complexity is measured via the VC-dimension introduced above. This decomposition will serve as the foundation for the proof of Theorem~\ref{thm4}. 

\begin{lemma}[Corollary 5.3 in \cite{B05}]
\label{le5}
Assume that $(\bX_1,k_1),\ldots,(\bX_n,k_n)$ are i.i.d.\ copies of a random vector $(\bX,k)\in \mathcal{X}\times \{0,1\}.$ Let $\wh{g}_n$ be the classifier
\begin{align*}
    \wh{g}_n \in \argmin_{g \in \mathcal{C}}\frac{1}{n}\sum_{i=1}^{n}\1\left(g({\bX}_i)\not=k_{i}\right),
\end{align*}
based on a function class $\mathcal{C}\subseteq \{f: \mathcal{X} \to\{0,1\}\}$ with finite VC-dimension $V$. Then, there exists a universal constant $C_{1}$ such that, for any positive integer $n,$ with probability at least $1-\delta$,
\begin{align*}
    \PROB\{\wh{g}_n(\bX)\not=k|\mathcal{D}_{n}\}-\inf_{g\in\mathcal{C}}\PROB\{g(\bX)\not=k\}\leq C_{1}\left(\sqrt{\inf_{g\in\mathcal{C}}\PROB\{g(\bX)\not=k\}\frac{V\log n+\log{\frac{1}{\delta}}}{n}}+\frac{V\log n+\log{\frac{1}{\delta}}}{n}\right).
\end{align*}
\end{lemma}

If the $\Phi$ in the class $\mathcal{G}(\alpha,m),$ as defined in \eqref{CNN}, is replaced by the identity map $\id,$ we denote the resulting class by $\mathcal{G}_{\mathrm{id}}(\alpha,m).$ Let 
\begin{align}
    \mathcal{H}_{\rho}(\alpha,m) := \left\{\rho \circ (f_1 - f_2) : (f_1,f_2) \in \mathcal{G}_{\mathrm{id}}(\alpha,m)\right\}
    \label{eq.Hbeta_def}
\end{align}
with
\begin{align*}
\rho(z) = \frac{1}{1+e^{z}}.
\end{align*}
In fact, in the minimization problem \eqref{eq.p_LS}, only the second component of $(q_1, q_2)$ is considered, as $q_1$ is determined by $q_2$ via $q_1 = 1 - q_2$. According to definitions \eqref{eq.def_beta_SM} and \eqref{CNN}, $q_2$ is given by
\begin{align*}
    \frac{e^{z_1}}{e^{z_0}+e^{z_1}}=\frac{1}{1+e^{ (z_0-z_1)}}=\rho(z_0-z_1), 
\end{align*}
with $(z_0,z_1) \in \mathcal{G}_{\mathrm{id}}(\alpha,m)$. Hence, there is a one-to-one correspondence between $\mathcal{G}(\alpha,m)$ and $\mathcal{H}_{\rho}(\alpha,m).$ The following result establishes a VC-dimension bound for the function class $\mathcal{H}_{\rho}(\alpha,m)$. 

\begin{lemma}
\label{le6}
Let $\mathcal{H}_{\rho}(\alpha, m)$ be defined as in \eqref{eq.Hbeta_def} for any $\alpha \in (0,1]$ and $m \geq 2$. Then, there exists a universal constant $C_{2}>0$ such that the VC-dimension $V_{\mathcal{H}^+_{\rho}(\alpha,m)}$ of $\mathcal{H}_{\rho}(\alpha,m)$ satisfies
\begin{align*}
V_{\mathcal{H}^+_{\rho}(\alpha,m)} \leq C_{2} (m\lceil d^\alpha\rceil^2+m^2)\log^3(md).
\end{align*}
\end{lemma}

\begin{proof}
Define 
\begin{align*}
    \mathcal{H}'_{\rho}(\alpha,m) := \left\{\rho \circ (h \circ  \mathbf{g}): h \in \mathcal{F}_{\mathrm{id}}(1, (2,2,1)),  \mathbf{g} \in \mathcal{G}_{\mathrm{id}}(\alpha,m)\right\}. 
\end{align*}
The identity $f_1-f_2 = \sigma(f_1-f_2) - \sigma(f_2-f_1) \in \mathcal{F}_{\mathrm{id}}(1,(2,2,1))$ shows that the class $\mathcal{H}_{\rho}(\alpha,m)$ defined in \eqref{eq.Hbeta_def} is a subset of $\mathcal{H}'_{\rho}(\alpha,m).$ It follows that $V_{\mathcal{H}^+_{\rho}(\alpha,m)} \leq  V_{\mathcal{H}'^+_{\rho}(\alpha,m)}.$ Therefore, it is enough to derive a VC-dimension bound for $\mathcal{H}'_{\rho}(\alpha,m).$ Since $\rho$ is a fixed monotone function, Lemma \ref{B7} yields
\begin{align}
V_{\mathcal{H}'^+_{\rho}(\alpha,m)} \leq V_{(\mathcal{F}_{\mathrm{id}}(1,(2,2,1)) \circ \mathcal{G}_{\mathrm{id}}(\alpha,m))^+}.
\label{eq.757352}
\end{align}
With definition \eqref{CNN}, we can rewrite
\begin{align}
\mathcal{F}_{\mathrm{id}}\big(1,(2,2,1)\big) \circ \mathcal{G}_{\mathrm{id}}(\alpha,m) &= \left\{f \circ g: f \in \mathcal{F}_{\mathrm{id}}\big(3+2\lceil \log_2 m\rceil, (2m, 4m, \dots, 4m, 2, 2, 1)\big), g \in \mathcal{F}^C(\alpha,2m)\right\} \notag \\
&=: \mathcal{G}'_{\mathrm{id}}(\alpha,m).
    \label{eq.Gidprime}
\end{align}
In the following, we omit the dependence on $m$ and $\alpha$ in the function class $\mathcal{G}'_{\mathrm{id}} := \mathcal{G}'_{\mathrm{id}}(\alpha,m)$.
To bound $V_{\mathcal{G}'^+_{\mathrm{id}}}$, we apply Lemma 7 in \cite{KKW20}. In their notation, images are of size $d_1\times d_2$. The authors prove on p.26 in the supplement of \cite{KKW20} the bound 
\begin{align}
2(L_1+L_2+2)W\log_2\cro{\big(2et(L_1+L_2+2)k_{\max}d_1d_2\big)^4},\label{ref-vc}
\end{align}
where $L_1$ is the number of convolutional layers, $L_2$ is the number of hidden layers in the fully connected network, $t$ is the input dimension of the fully connected layers, and $k_{\max}$ denotes the maximum of $t$, the maximal width of the fully connected layers, and the maximal number of channels in the convolutional layers. For the considered architecture, this corresponds to $L_1 = 1$,  $k_{\max} = 4m$, $t = 2m$, and $d_1 = d_2 = d$. Moreover, $W$ is the number of weights in the networks in their proof. A careful inspection of the proof shows that, in the case of weight sharing within a single filter, if $\alpha < 1$ and the filter matrix has an $(\alpha, d)$-block structure, all entries of each of the submatrices in the block structure are the same. As there are at most $\lceil d^{\alpha} \rceil^2$ submatrices, each filter contains at most $\lceil d^{\alpha} \rceil^2$ distinct weight values. Treating these distinct weight values as variables, implementing convolution with each filter (i.e., the entry-wise sum of the Hadamard product) at a given position on a fixed image input yields a polynomial of degree at most $1$ in at most $\lceil d^{\alpha} \rceil^2$ variables, rather than $d^2$. Therefore, with a slight modification, one only needs to count the distinct weight values in each filter instead of the total number of weights.

Let $W_1$ denote the sum over the number of distinct weight values in each of the $2m$ filters and $W_2$ the number of weights in the fully connected layers. We derive from \eqref{ref-vc} that
\begin{align}
V_{\mathcal{G'_{\mathrm{id}}}^+}\leq 8(L+3)(W_1+W_2)\log_2\big(2e(2m)(L+3)(4m)d^2\big)\label{eq.4632784}
\end{align}
with $L=3+2\lceil \log_2 m\rceil$ representing the number of hidden layers in the fully connected part.

Next, we derive an upper bound for $W=W_1+W_2$. As shown above, each filter matrix with $(\alpha, d)$-block structure has at most $\lceil d^{\alpha} \rceil^2$ distinct weight values and there are $2m$ filters. Consequently, $W_1\leq 2m\lceil d^{\alpha} \rceil^2$. There are $3+2\lceil \log_2 m\rceil$ hidden layers in the fully connected part and the width vector is $(2m,4m,\ldots,4m,2,2,1).$ The $4+2\lceil \log_2 m\rceil$ weight matrices have all at most $16m^2$ parameters. Moreover, there are at most $4m(4+2\lceil \log_2 m\rceil)$ bias parameters. This implies
$$W_2\leq16m^2(4+2\lceil \log_2 m\rceil)+4m(4+2\lceil \log_2 m\rceil).$$
Putting the two bounds together, it follows that
\begin{align}
    W &\leq 2m\lceil d^{\alpha}\rceil^2+16m^2(4+2\lceil \log_2 m\rceil)+4m(4+2\lceil \log_2 m\rceil)\nonumber\\
    &\leq2m\lceil d^{\alpha}\rceil^2+40m^2(2+\lceil \log_2 m\rceil)\nonumber\\
    &\leq2m\lceil d^{\alpha}\rceil^2+40m^2(3+\log_2 m)\nonumber\\
    &\leq2m\lceil d^{\alpha}\rceil^2+160m^2\log_2 m,\label{total-par-cnn}
\end{align}
using $m\geq2$ for the last step. Since also $L+3\leq 8+2\log_2 m\leq 10\log_2 m,$ \eqref{eq.4632784} can then be bounded as 
\begin{align*}
V_{\mathcal{G'_{\mathrm{id}}}^+} &\leq 80(\log_2 m)(2m\lceil d^{\alpha}\rceil^2+160m^2\log_2 m)\log_2(160em^2d^2(\log_2m))\\
&\leq C_2(m\lceil d^{\alpha}\rceil^2+m^2)\log^3(md),
\end{align*}
where $C_{2}>0$ is a universal constant. Together with \eqref{eq.757352} and \eqref{eq.Gidprime}, the result follows.
\end{proof}

\begin{lemma}
\label{indicator-function-vc}
Let $\mathcal{H}$ be a VC-class of real-valued measurable functions on $\mathcal{X}$ with VC-dimension $V_{\mathcal{H}^{+}}$. Then, the function class $$\mathcal{C}=\left\{x\mapsto \1\left(f(x)>\frac{1}{2}\right):\;f\in\mathcal{H}\right\}$$ is also a VC-class on $\mathcal{X}$ with VC-dimension at most $V_{\mathcal{H}^{+}}$.
\end{lemma}
\begin{proof}
According to Proposition 2.1 of \cite{Baraud2016}, the class $\mathcal{H}$ is weakly VC-major with dimension no larger than $V_{\mathcal{H}^{+}}$. This means that the collection of subsets
$$\mathcal{A}_{\mathcal{H}}=\left\{\left\{x\in\mathcal{X}\quad\mbox{such that}\ \  f(x)>\frac{1}{2}\right\}:\;f\in\mathcal{H}\right\},$$ is a VC-class of subsets of $\mathcal{X}$ with a dimension not larger than $V_{\mathcal{H}^{+}}$. Due to $\mathcal{C} = \left\{ \1(A),\ A \in\mathcal{A}_{\mathcal{H}} \right\}$, the conclusion follows from the property of VC-classes of functions (see, for instance, page 275 of \cite{Vaart_1998}).
\end{proof}

\begin{proof}[Proof of Theorem \ref{thm4}]
The proof is based on the application of Lemma~\ref{le5}. For any $\alpha\in(0,1]$, define $\mathcal{C}(\alpha,m):=\{f:\;f(\bx)=\1(g(\bx)>1/2),\;g\in\mathcal{H}_{\rho}(\alpha,m)\}$, where $\mathcal{H}_{\rho}(\alpha,m)$ is defined as in \eqref{eq.Hbeta_def}. Minimizing the empirical misclassification error in \eqref{eq.p_LS} can thus be reformulated as
$$\wh g\in\argmin_{f \in \mathcal{C}(\alpha,m)} \, \frac{1}{n}\sum_{i=1}^{n}\1\left(f({\ol \bX}_i)\not=k_{i}\right).$$

Let $m:=|\mathcal{A}_{d_{\alpha}}|$. Lemma \ref{le6} applied to $m=|\mathcal{A}_{d_{\alpha}}|$ yields that for any $|\mathcal{A}_{d_{\alpha}}|\geq 2$, there exists a universal constant $C_2>0$ such that
\begin{align*}
V_{\mathcal{H}_{\rho}^+(\alpha,m)} \leq C_{2}\big(\lceil d^{\alpha}\rceil^2 |\mathcal{A}_{d_{\alpha}}|+|\mathcal{A}_{d_{\alpha}}|^2\big)\log^3(d|\mathcal{A}_{d_{\alpha}}|),
\end{align*}
which, together with Lemma~\ref{indicator-function-vc}, implies that the VC-dimension $V$ of $\mathcal{C}(\alpha,|\mathcal{A}_{d_{\alpha}}|)$ is bounded by
\begin{align}
 V\leq C_{2}(\lceil d^{\alpha}\rceil^2 |\mathcal{A}_{d_{\alpha}}|+|\mathcal{A}_{d_{\alpha}}|^2)\log^3(d|\mathcal{A}_{d_{\alpha}}|).\label{eq.fhskj}
\end{align}

For $D^2\geq\kappa/d^{\alpha}$, Theorem \ref{th2-gen} implies existence of a network $\mathbf{p} = (p_1,p_2) \in \mathcal{G}(\alpha,|\mathcal{A}_{d_{\alpha}}|)$, such that the corresponding classifier $\1(p_2(\ol\bX)>1/2)=k,$ almost surely. Thus,
\begin{align}
    \inf_{ \mathbf{q}=(q_1,q_2) \in \mathcal{G}(\alpha,|\mathcal{A}_{d_{\alpha}}|)} \PROB\big(\1(q_2(\ol\bX)>1/2)\not=k\big) =\inf_{ g\in \mathcal{C}(\alpha,|\mathcal{A}_{d_{\alpha}}|)} \PROB\big(g(\ol\bX)\not=k\big)=0.\label{appro-zero}
\end{align}
Applying Lemma \ref{le5} and using \eqref{appro-zero}, we obtain that for any $|\mathcal{A}_{d_{\alpha}}|\geq 2$, with probability at least $1-\delta$,
\begin{align*}
\PROB\big(\wh{k}(\bX)\not=k\big| \mathcal{D}_n\big)=\PROB\Big(\1\big(\wh{p}_2(\ol\bX)>1/2\big)\not=k\big| \mathcal{D}_n\Big)\leq C_{1}V\frac{ \log n+\log{\frac{1}{\delta}}}{n}.
\end{align*}
For a random variable $Z$, writing $Z_{+}=\max\{Z,0\}$ yields $ \mathbf{E}[Z]\leq \mathbf{E}[Z_{+}]=\int_0^{+\infty} \PROB(Z_{+}> t) \, dt=\int_0^{+\infty} \PROB(Z> t) \, dt.$ Thus integration with respect to $\delta$ gives \begin{align*}
\frac{n}{C_{1}V} \mathbf{E}_{\mathcal{D}_{n}}\cro{\PROB\big(\wh{k}(\bX)\not=k\big| \mathcal{D}_n\big)}-\log n&\leq \int_{0}^{+\infty}\PROB\left(\frac{n}{C_{1}V}\PROB\big(\wh{k}(\bX)\not=k\big| \mathcal{D}_n\big)-\log n>t\right)dt\\
&=\int_{0}^{+\infty}\PROB\left(\PROB\big(\wh{k}(\bX)\not=k\big| \mathcal{D}_n\big)>C_{1}V\frac{\log n+t}{n}\right)dt\\
&\leq\int_{0}^{+\infty}e^{-t}dt\\
&=1,
\end{align*}
which together with \eqref{eq.fhskj} implies that for $C=C_{1}C_{2}>0$,
$$ \PROB\big(\wh{k}(\bX)\not=k\big) = \mathbf{E}_{\mathcal{D}_{n}}\cro{\PROB\big(\wh{k}(\bX)\not=k\big| \mathcal{D}_n\big)}\leq\frac{C (\lceil d^{\alpha}\rceil^2 |\mathcal{A}_{d_{\alpha}}|+|\mathcal{A}_{d_{\alpha}}|^2)\log^3(d|\mathcal{A}_{d_{\alpha}}|) (\log n+1)}{n},$$
with $\PROB$ the distribution over all randomness in the data and the new sample $\bX.$ The assertion then follows from $\lceil d^{\alpha}\rceil^2\leq4d^{2\alpha}$.
\end{proof}

We next proceed to prove Theorem~\ref{CE-bound}. Here, it is more convenient to work with labels in $\{-1,1\}$ instead of $\{0,1\}$. Accordingly, for any labeled image $(\bX, k)$ with $k \in \{0,1\}$, we define $$\ol{k} := 2k - 1 \in \{-1,1\}$$ to denote its corresponding $\{-1,1\}$ label. Recall that we use $\overline{\bX}_i$ to denote the normalized image $\bX_i$, as defined in \eqref{eq.97bc}. Similarly, $\overline{\bX}$ denotes the normalized version of $\bX$. The next lemma establishes two auxiliary inequalities that will be used in the proof of Theorem~\ref{CE-bound}.

\begin{lemma}\label{bound-E2-l1}
Let $\tilde{f}$ be a measurable function such that for any random pair $\big(\ol{\bX},\ol{k}\big) \in \mathcal{X} \times \{-1, 1\}$ and some positive constant $K$, we have $\tilde{f}(\ol{\bX}) = K \ol{k}$, almost surely. For $\varphi(x) = \log(1 + e^{-x})$ and any measurable function $f$ with sup-norm bounded by $K$,
\begin{equation}\label{2-to-1-conn}
\mathbf{E}\cro{\Big(\varphi\big(\ol{k}f(\ol\bX)\big)-\varphi\big(\ol{k}\tilde f(\ol\bX)\big)\Big)^2}\leq \log(1+e^K)\left(\mathbf{E}\cro{\varphi\big(\ol{k}f(\ol\bX)\big)}-\mathbf{E}\cro{\varphi\big(\ol{k}\tilde f(\ol\bX)\big)}\right),    
\end{equation}
and
\begin{equation}\label{1-to-l1-conn}
\mathbf{E}\left[\varphi\big(\ol{k} f(\ol{\bX})\big)\right] - \mathbf{E}\left[\varphi\big(\ol{k} \tilde{f}(\ol{\bX})\big)\right] \geq \frac{1}{1 + e^{K}} \mathbf{E}\left[\big|f(\ol{\bX}) - \tilde{f}(\ol{\bX})\big|\right].
\end{equation}
\end{lemma}
\begin{proof}
Fix a point $\ol{\bX} = \bx$ such that $\tilde{f}(\bx) = K\ol{k}$ with $\bar{k} \in \{-1,1\}$.
By the definition of $\tilde{f}$, it follows that $\ol{k} \tilde{f}(\bx) = K$. Since $\varphi(x)$ is monotone decreasing and $\ol{k} f(\bx) \leq K$ by definition, we have
$$\varphi(\ol{k} f(\bx)) \geq \varphi(K) = \varphi(\ol{k} \tilde{f}(\bx))\geq0,$$ which implies
\begin{align*}
\big(\varphi(\ol{k} f(\bx)) - \varphi(\ol{k} \tilde{f}(\bx))\big)^2
&=\big(\varphi(\ol{k} f(\bx)) - \varphi(\ol{k} \tilde{f}(\bx))\big)\big(\varphi(\ol{k} f(\bx)) - \varphi(\ol{k} \tilde{f}(\bx))\big)\\
&\leq\varphi(\ol{k} f(\bx))\big(\varphi(\ol{k} f(\bx)) - \varphi(\ol{k} \tilde{f}(\bx))\big)\\
&\leq\log(1 + e^K)\big(\varphi(\ol{k} f(\bx)) - \varphi(\ol{k} \tilde{f}(\bx))\big),
\end{align*}
where the last inequality follows from the fact that $\ol{k} f(\bx) \geq -K$. Taking expectation with respect to $\ol{\bX}$ yields \eqref{2-to-1-conn}.

To show \eqref{1-to-l1-conn}, we again fix a point $\ol\bX=\bx$ such that $\tilde f(\bx)=K\ol{k}$. It follows from Taylor expansion that
\begin{align*}
\varphi(\ol{k}f(\bx))-\varphi(\ol{k}\tilde f(\bx))&=\frac{e^{-K}}{1+e^{-K}}\cro{\tilde f(\bx)\ol{k}-f(\bx)\ol{k}}+\frac{e^{-\ol{k}\gamma}}{2(1+e^{-\ol{k}\gamma})^2}\cro{\tilde f(\bx)\ol{k}-f(\bx)\ol{k}}^2,
\end{align*}
where $\gamma$ takes values between $f(\bx)$ and $\tilde f(\bx)$. Thus, we have
\begin{align*}
\varphi(\ol{k}f(\bx))-\varphi(\ol{k}\tilde f(\bx))&\geq\frac{e^{-K}}{1 + e^{-K}}\cro{\tilde f(\bx)\ol{k} - f(\bx) \ol{k}}=\frac{e^{-K}}{1 + e^{-K}}\big|f(\bx)- \tilde f(\bx)\big|.
\end{align*}
Taking expectation with respect to $\ol{\bX}$ completes the proof.
\end{proof}

The following lemma bounds the metric entropy (i.e., the logarithm of the covering number) with respect to the sup-norm for a function class needed in the proof of Theorem~\ref{CE-bound}.
\begin{lemma}\label{entropy-sup}
Given any $\alpha\in(0,1]$ and $m\geq2$, define the function class $$\mathcal{H}(\alpha,m):= \left\{\gamma_2-\gamma_1:\; (\gamma_1,\gamma_2) \in \mathcal{G}_{\mathrm{id}}(\alpha,m)\right\}$$ on the domain $[0,1]^{d\times d}$,  where $\mathcal{G}_{\mathrm{id}}(\alpha,m)$ denotes the class obtained by replacing $\Phi$ with the identity map $\mathrm{id}$ in \eqref{CNN}. For any $\delta>0,$ denoting $L=1+2\lceil\log_2m\rceil$, we have
$$\log\mathcal{N}(\delta,\mathcal{H}(\alpha,m),\|\cdot\|_{\infty})\leq\big(2m\lceil d^{\alpha}\rceil^2 + 160m^2 \log_2 m\big)\log\left(\frac{4d^2(4m+1)^{L+1}(L+1)}{\delta}\right).$$ 
\end{lemma}
\begin{proof}
Let $g_1, g_2 \in \mathcal{F}^{C}(\alpha,2m)$, where $\mathcal{F}^{C}(\alpha,2m)$ is defined as in \eqref{cnn-layer-def}, be two networks from the convolutional layer with filter sets $\{\mathbf{W}_{g_1,s}\}_{s=1}^{2m}$ and $\{\mathbf{W}_{g_2,s}\}_{s=1}^{2m}$, respectively. Suppose that the parameters of $g_1$ and $g_2$ differ by at most $\varepsilon$ in each corresponding entry of their filter matrices. By the definition, the filter parameters lie in $[-1, 1]$, and the input image $\bx$ is a $d \times d$ matrix with entries in $[0, 1]$. Then, for any $i, j$, implementing the convolution, i.e., the entry-wise sum of the Hadamard product, we have for all $s \in \{1, \ldots, 2m\}$,
\begin{align*}
\big|([\mathbf{W}_{g_1,s}] \star \bx)_{i,j}-([\mathbf{W}_{g_2,s}] \star \bx)_{i,j}\big|\leq d^2\varepsilon,
\end{align*}
which implies that after applying the activation function $\sigma$ pointwise and performing global max-pooling, the output satisfies
\begin{align*}
\big|\mathbf{O}_{g_1,s}(\bx)-\mathbf{O}_{g_2,s}(\bx)\big|&=\big||\sigma([\mathbf{W}_{g_1,s}] \star \bx)|_{\infty}-|\sigma([\mathbf{W}_{g_2,s}] \star \bx)|_{\infty}\big|\\
&\leq\max_{i,j}\big|([\mathbf{W}_{g_1,s}] \star \bx)_{i,j}-([\mathbf{W}_{g_2,s}] \star \bx)_{i,j}\big|\\
&\leq d^2\varepsilon.
\end{align*}
Let $h_1, h_2 \in \mathcal{F}_{\mathrm{id}}\big(1 + 2\lceil \log_2 m \rceil, (2m, 4m, \ldots, 4m, 2)\big)$ be two fully connected networks, as defined in \eqref{fully-layers-def}, whose parameters differ by at most $\varepsilon$ at each corresponding entry of the weight matrices and bias vectors. Given an input vector $\by \in \mathbb{R}^{2m}$, we denote by $(h(\by))_i$, the $i$-th output ($i = 1, 2$) of a network $h \in \mathcal{F}_{\mathrm{id}}\big(L, \mathbf{p}\big)$ with $L = 1 + 2\lceil \log_2 m \rceil$ hidden layers and width vector $\bp = (p_0, p_1, \ldots, p_L, p_{L+1}) = (2m, 4m, \ldots, 4m, 2)$. Since, by definition, all network parameters of $h$ are bounded in absolute value by $1$, it follows from the proof of Lemma~5 in~\cite{Sch17} that $h$ is Lipschitz in the sense that
\begin{equation}\label{cover-sum-1}
|h(\bx) - h(\by)|_{\infty} \leq \left( \prod_{\ell=0}^{L} p_{\ell} \right) |\bx - \by|_{\infty} = 2^{2L+1} m^{L+1} |\bx - \by|_{\infty}.
\end{equation}
Moreover, observe that for any $\bx$ with entries in $[0, 1]$, any $g \in \mathcal{F}^{C}(\alpha,2m)$, we have $0\leq |g(\bx)|_{\infty}\leq d^2$. With a similar argument as in the proof of Lemma~5 in~\cite{Sch17}, we obtain that
\begin{equation}\label{cover-sum-2}
\big|(h_1(g_2(\bx)))_i-(h_2(g_2(\bx)))_i\big|\leq d^2(L+1)\left(\prod_{\ell=0}^{L}(p_{\ell}+1)\right)\varepsilon.
\end{equation}
Applying \eqref{cover-sum-1} and \eqref{cover-sum-2}, we can further bound, for $i = 1, 2$,
\begin{align}
\big|(h_1(g_1(\bx)))_i-(h_2(g_2(\bx)))_i\big|&\leq\big|(h_1(g_1(\bx)))_i-(h_1(g_2(\bx)))_i+(h_1(g_2(\bx)))_i-(h_2(g_2(\bx)))_i\big| \nonumber\\
&\leq\big|(h_1(g_1(\bx)))_i-(h_1(g_2(\bx)))_i\big|+\big|(h_1(g_2(\bx)))_i-(h_2(g_2(\bx)))_i\big| \nonumber\\
&\leq2^{2L+1}m^{L+1}|g_1(\bx)-g_2(\bx)|_{\infty}+d^2(L+1)\left(\prod_{\ell=0}^{L}(p_{\ell}+1)\right)\varepsilon\nonumber\\
&\leq2^{2L+1}m^{L+1}d^2\varepsilon+(2m+1)(4m+1)^{L}(L+1)d^2\varepsilon,\nonumber\\
&\leq(4m+1)^{L+1}(L+1)d^2\varepsilon.\label{cover-difference}
\end{align}
Define $f_1(\bx):=(h_1(g_1(\bx)))_2-h_1(g_1(\bx)))_1$ and $f_2(\bx):=(h_2(g_2(\bx)))_2-h_2(g_2(\bx)))_1$. It then follows from \eqref{cover-difference} that
\begin{align}
|f_1(\bx)-f_2(\bx)|&=\big|\cro{(h_1(g_1(\bx)))_2-h_1(g_1(\bx)))_1}-\cro{(h_2(g_2(\bx)))_2-h_2(g_2(\bx)))_1}\big|\nonumber\\
&\leq\big|(h_1(g_1(\bx)))_1-(h_2(g_2(\bx)))_1\big|+\big|(h_1(g_1(\bx)))_2-(h_2(g_2(\bx)))_2\big|\nonumber\\
&\leq2(4m+1)^{L+1}(L+1)d^2\varepsilon.\label{final-diff-cover}
\end{align}
Similar to the derivation in~\eqref{total-par-cnn}, the total number of parameters in the considered CNN can be bounded by $2m\lceil d^{\alpha}\rceil^2 + 160m^2 \log_2 m$, assuming $m \geq 2$. Since all parameters are bounded in absolute value by one, \eqref{final-diff-cover} implies that we can discretize them using a grid of size $\varepsilon = \delta/[2d^2(4m + 1)^{L + 1} (L + 1)]$ to obtain a $\delta$-covering set of $\mathcal{H}(\alpha,m)$. This implies that the covering number satisfies
$$\mathcal{N}(\delta,\mathcal{H}(\alpha,m),\|\cdot\|_{\infty})\leq\left(\frac{4d^2(4m+1)^{L+1}(L+1)}{\delta}\right)^{2m\lceil d^{\alpha}\rceil^2 + 160m^2 \log_2 m}.$$
Taking the logarithm on both sides completes the proof.
\end{proof}

\begin{lemma}[Theorem 3 of \cite{ShenWong}]\label{ShenWong}
Let $\mathcal{F}$ be a class of functions bounded above by $F$. Assume that $\mathbf{E}[f(Z)]=0$, for any $f\in\mathcal{F}$ and a constant $v>0$ exists such that $\sup_{f\in\mathcal{F}}\Var(f(Z))\leq v$. For $M>0$ and $\zeta\in(0,1)$, suppose the $L^2$-bracketing entropy of class $\mathcal{F}$ satisfies
\begin{equation}\label{c1}
H^{B}_2(\sqrt{v},\mathcal{F})\leq\frac{\zeta n M^2}{8(4v+MF/3)},
\end{equation}
and
\begin{equation}\label{c2}
M\leq\frac{\zeta v}{4F},\quad \sqrt{v}\leq F,
\end{equation}
and, if $\zeta M/8<\sqrt{v}$,
\begin{equation}\label{c3}
\int_{\zeta M/32}^{\sqrt{v}}\sqrt{H^{B}_2(u,\mathcal{F})}du\leq\frac{\sqrt{n}M\zeta^{3/2}}{2^{10}}.   
\end{equation}
Then
$$\PROB^{*}\left(\sup_{f\in\mathcal{F}}\frac{1}{n}\sum_{i=1}^n\cro{f(Z_i)-\mathbf{E}f(Z_i)}\geq M\right)\leq3\exp\cro{-(1-\zeta)\frac{nM^2}{2(4v+MF/3)}},$$ where $\PROB^*$ denotes the outer probability measure.
\end{lemma}

\begin{lemma}\label{large-devi}
Let $(\ol{\bX}_1, \ol{k}_1),\ldots,(\ol{\bX}_n, \ol{k}_n)$ be $n$ i.i.d.\ copies of the random pair $(\ol\bX,\ol{k})\in\mathcal{X}\times\{-1,1\}$ and suppose $\tilde{g}(\ol{\bX}) = K \ol{k}$ almost surely, for some constant $K > 0$. Let $\mathcal{G}_{K}$ denote a class of measurable real-valued functions defined on $\mathcal{X}$ whose sup-norm is bounded by $K$, and define
\[
\widehat{g}_{n}:=\argmin_{g \in \mathcal{G}_{K}}\frac{1}{n}\sum_{i=1}^n\log\Big(1+\exp\big(-\ol{k}_ig(\ol\bX_i)\big)\Big).
\] If $\tilde g\in\mathcal{G}_{K}$ and there exists a sequence $\{\varepsilon_n\}_{n\in\mathbb{N}}$ such that 
\begin{equation}\label{entropy-bound}
\log\mathcal{N}\Big(\frac{\varepsilon_n^2}{160},\mathcal{G}_K,\|\cdot\|_{\infty}\Big)\leq \frac{n\varepsilon_n^2}{2^{14}\times6250\log(1+e^K)},
\end{equation} 
then, denoting the population risk by $\mathcal{R}(f) := \mathbf{E}[\log(1 + \exp(-\ol{k} f(\ol{\bX})))]$, we have
$$\PROB\big(\mathcal{R}(\widehat g_n)-\mathcal{R}(\tilde{g}) \geq \varepsilon_n^2\big)\leq \frac{3\exp(-\delta_n)}{1-\exp(-\delta_n)},\quad \text{with} \ \ \delta_n=\frac{n\varepsilon_n^2}{2034\log(1+e^K)}.$$ 
\end{lemma}

\begin{proof} Following \cite{park2009, KOK19}, we use chaining. For any $g \in \mathcal{G}_K$, consider the empirical process
$$
Z_n(g) := \frac{1}{n} \sum_{i=1}^n \left[ \varphi\big(\ol{k}_i \tilde{g}(\ol{\bX}_i)\big) - \varphi\big(\ol{k}_i g(\ol{\bX}_i)\big) - \mathbf{E}\left[ \varphi\big(\ol{k} \tilde{g}(\ol{\bX})\big) - \varphi\big(\ol{k} g(\ol{\bX})\big) \right] \right],
$$
where $\varphi(x):=\log(1+e^{-x}).$ For any $g\in\mathcal{G}_K,$ denote $$\mathcal{E}\big(g, \tilde{g} \big):=\mathcal{R}(g)-\mathcal{R}(\tilde{g})=\mathbf{E}[\varphi(\ol{k}g(\ol\bX))]-\mathbf{E}[\varphi(\ol{k}\tilde{g}(\ol\bX))].$$ Since $\widehat{g}_n$ minimizes the empirical risk over the class $\mathcal{G}_K$ and $\tilde{g}\in\mathcal{G}_{K}$, we have
\begin{equation}\label{ce-eq-1}
\PROB\left( \mathcal{E}\big(\widehat{g}_n, \tilde{g} \big) \geq \varepsilon_n^2 \right)
\leq 
\PROB^*\left( \sup_{g \in \mathcal{G}_K :\; \mathcal{E}(g, \tilde{g}) \geq \varepsilon_n^2} 
\frac{1}{n} \sum_{i=1}^n \left[ \varphi(\ol{k}_i \tilde{g}(\ol\bX_i)) - \varphi(\ol{k}_i g(\ol\bX_i)) \right] \geq 0 \right).
\end{equation}

To bound the right-hand side of \eqref{ce-eq-1}, we partition the function class $\{g \in \mathcal{G}_K : \mathcal{E}(g, \tilde{g}) \geq \varepsilon_n^2\}$ into a finite union of subclasses. More precisely, define for $j=1,2,\ldots$ 
$$\mathcal{G}_{j,K}:=\left\{g\in\mathcal{G}_K:\;2^{j-1}\varepsilon_n^2\leq\mathcal{E}(g,\tilde{g})<2^j\varepsilon_n^2\right\}.$$ Since $|\varphi(x_1)-\varphi(x_2)|\leq|x_1-x_2|$, for all $x_1,x_2\in\mathbb{R}$, it follows that for any $g \in \mathcal{G}_K$,
$$
\mathcal{E}(g, \tilde{g}) 
\leq \mathbf{E}\left[ \left| \varphi(\ol{k} g(\ol\bX)) - \varphi(\ol{k} \tilde{g}(\ol\bX)) \right| \right]
\leq \mathbf{E}\left[ \left|\ol{k}g(\ol\bX) - \ol{k}\tilde{g}(\ol\bX) \right| \right]
\leq 2K.
$$
Thus, for those $j$ such that $2^{j-1} \varepsilon_n^2 > 2K$, the set $\mathcal{G}_{j,K}$ is empty. With
\[
j_n^* := \max\left\{ j \in \mathbb{N} :\; 2^{j-2} \varepsilon_n^2 \leq K \right\},
\]
we have
\[
\left\{ g \in \mathcal{G}_K :\; \mathcal{E}(g, \tilde{g}) \geq \varepsilon_n^2 \right\}
\subseteq \bigcup_{j = 1}^{j_n^*} \mathcal{G}_{j,K}.
\]
Writing $T_{n,j}=2^{j-1}\varepsilon_n^2$, it follows from \eqref{ce-eq-1} that
\begin{equation}\label{ce-eq-2}
\PROB\left(\mathcal{E}\big(\widehat g_n,\tilde{g}\big)\geq\varepsilon_n^2\right)\leq\sum_{j=1}^{j_n^*}\PROB^*\left(\sup_{g\in\mathcal{G}_{j,K}}Z_n(g)\geq T_{n,j}\right).
\end{equation}
We next bound 
$\PROB^*\left( \sup_{g \in \mathcal{G}_{j,K}} Z_n(g) \geq T_{n,j} \right).$ For each $j = 1, \ldots, j_n^*$, we apply Lemma~\ref{ShenWong} to the class $$\mathcal{F}_j:=\left\{(\bx,\ol{k})\mapsto\cro{\varphi\big(\ol{k}\tilde{g}(\bx)\big)-\varphi\big(\ol{k}g(\bx)\big)}-\cro{\mathcal{R}(\tilde{g})-\mathcal{R}(g)}:\;g\in\mathcal{G}_{j,K}\right\},$$ with $\zeta=4/5$, $F=10\log(1+e^K)$, $M=T_{n,j}$, and $v=50\log(1+e^K)T_{n,j}.$ With these chosen values, all conditions of Lemma~\ref{ShenWong} are satisfied. We verify them below.

Applying Lemma~\ref{bound-E2-l1}, we can derive that for any $j,$
\begin{align*}
\sup_{f\in\mathcal{F}_j}\Var(f(\ol\bX,\ol{k}))&\leq\sup_{g\in\mathcal{G}_{j,K}}\mathbf{E}\cro{\big(\varphi(\ol{k}g(\ol\bX))-\varphi(\ol{k}\tilde{g}(\ol\bX))\big)^2}\\
&\leq\log(1+e^K)\sup_{g\in\mathcal{G}_{j,K}}\mathcal{E}(g,\tilde{g})\\&<\log(1+e^K)2^j\varepsilon_n^2\\
&=2\log(1+e^K)T_{n,j}.\end{align*}
For any $f \in \mathcal{F}_j$, the sup-norm is bounded by $4 \log(1 + e^{K})<F$, and one can verify that the conditions
$$
M \leq \frac{\zeta v}{4F} \quad \text{and} \quad \sqrt{v} \leq F
$$
are both satisfied. For any $\delta>0$, Lemma~2.1 of \cite{van2000empirical} shows that
$$H^B_2(\delta,\mathcal{F}_j)\leq \log\mathcal{N}(\delta/2,\mathcal{F}_j,\|\cdot\|_{\infty}).$$ Moreover, observe that for any $\|g_1-g_2\|_{\infty}\leq\delta/4$,
\begin{align*}
\left|\cro{\varphi\big(\ol{k}g_1(\bx)\big)-\varphi\big(\ol{k}g_2(\bx)\big)}-\cro{\mathcal{R}(g_1)-\mathcal{R}(g_2)}\right|\leq2\|g_1-g_2\|_{\infty}\leq\frac{\delta}2,    
\end{align*}
which implies
\begin{align*}
H^B_2(\delta,\mathcal{F}_j)\leq \log\mathcal{N}\Big(\frac{\delta}4,\mathcal{G}_{j,K},\|\cdot\|_{\infty}\Big)\leq \log\mathcal{N}\Big(\frac{\delta}4,\mathcal{G}_K,\|\cdot\|_{\infty}\Big), 
\end{align*}
where the last inequality follows from the inclusion $\mathcal{G}_{j,K}\subseteq\mathcal{G}_K$. It then follows that
\begin{align*}
T_{n,j}^{-1}\int_{\zeta T_{n,j}/32}^{\sqrt{50\log(1+e^K)T_{n,j}}}\sqrt{H^B_2(u,\mathcal{F}_j)}du&\leq  T_{n,j}^{-1}\int_{T_{n,j}/40}^{\sqrt{50\log(1+e^K)T_{n,j}}}\sqrt{\log\mathcal{N}\left(\frac{u}{4},\mathcal{G}_K,\|\cdot\|_{\infty}\right)}du\\ 
&\leq\sqrt{\frac{50\log(1+e^K)}{T_{n,j}}\log\mathcal{N}\left(\frac{T_{n,j}}{160},\mathcal{G}_K,\|\cdot\|_{\infty}\right)}\\
&\leq\sqrt{\frac{50\log(1+e^K)}{T_{n,j}}\frac{nT_{n,j}}{2^{14}\times6250\log(1+e^K)}}\\
&\leq\frac{\sqrt{n}\zeta^{3/2}}{2^{10}},
\end{align*}
thereby establishing condition \eqref{c3} in Lemma~\ref{ShenWong}. The above result further implies that, on one hand,
\begin{align*}
H^{B}_2(\sqrt{v},\mathcal{F}_j)&\leq\left(\frac{T_{n,j}}{\sqrt{v}-T_{n,j}/40}T_{n,j}^{-1}\int_{ T_{n,j}/40}^{\sqrt{v}}\sqrt{H^B_2(u,\mathcal{F}_j)}du\right)^2\\
&\leq\left(\frac{T_{n,j}}{\sqrt{v}-T_{n,j}/40}\cdot\frac{\sqrt{n}(4/5)^{3/2}}{2^{10}}\right)^2\\
&\leq\frac{nT_{n,j}^2}{5\times2^{18}v},
\end{align*}
where we use the fact that $8\sqrt{v}\geq T_{n,j}$ in the last inequality. On the other hand, we have
\begin{align*}
\frac{\zeta nT_{n,j}^2}{8(4v+T_{n,j}F/3)}=\frac{nT_{n,j}^2}{10(4v+T_{n,j}F/3)}=\frac{nT_{n,j}^2}{10(4+1/15)v},    
\end{align*}
which confirms that condition \eqref{c1} in Lemma~\ref{ShenWong} also holds.

Applying Lemma~\ref{ShenWong} to each $\mathcal{F}_j$, we finally obtain
\begin{align}
\PROB\left(\mathcal{E}\big(\widehat g_n,\tilde{g}\big)\geq\varepsilon_n^2\right)
\leq\sum_{j=1}^{j_n^*}3\exp\left(-\frac{nT_{n,j}^2}{10(4v+T_{n,j}F/3)}\right)
=\sum_{j=1}^{j_n^*}3\exp\cro{-\frac{n2^{j}\varepsilon_n^2}{1000(4+1/15)\log(1+e^K)}}.\label{bound-prob}
\end{align}
Setting $$\delta_n=\frac{n\varepsilon_n^2}{2034\log(1+e^K)},$$ we can derive from \eqref{bound-prob} that
\begin{align*}
\PROB\left(\mathcal{E}\big(\widehat g_n,\tilde{g}\big)\geq\varepsilon_n^2\right)\leq3  \sum_{j=1}^{\infty}\cro{\exp\left(-\frac{\delta_n}{2}\right)}^{2^j}\leq3  \sum_{j=1}^{\infty}\big(\exp(-\delta_n)\big)^{j}\leq\frac{3\exp(-\delta_n)}{1-\exp(-\delta_n)},
\end{align*}
which completes the proof.
\end{proof}

\begin{proof}[Proof of Theorem~\ref{CE-bound}]
Denote the empirical cross-entropy loss for function $g$ taking values in $(0,1)$ as
$$
\mathcal{R}_n^{\text{CE}}(g) = -\frac{1}{n} \sum_{i=1}^{n} \left[ k_i \log\big(g(\ol{\bX}_i)\big) + (1 - k_i) \log\big(1 - g(\ol{\bX}_i)\big) \right].
$$
Recall that $\ol{k}_i := 2k_i - 1$. Denote the empirical logistic loss for any real-valued function $f$ as
$$
\mathcal{R}_n^{\varphi}(f) = \frac{1}{n} \sum_{i=1}^n \varphi\big( \ol{k}_i f(\ol{\bX}_i) \big),
$$
where $\varphi(x)=\log(1 + e^{-x})$. It is known that minimizing $\mathcal{R}_n^{\text{CE}}(\cdot)$ over a function class $\mathcal{G}$ is equivalent to minimizing $\mathcal{R}_n^{\varphi}(\cdot)$ over the transformed class $\mathcal{F} = \left\{ \log\left(g/(1 - g)\right) :\; g\in \mathcal{G} \right\}$. 

For any $\alpha\in(0,1]$, recall that $\mathcal{\widetilde G}(\alpha,|\mathcal{A}_{d_{\alpha}}|)$ is defined in \eqref{g-beta-B}. Define
\begin{align*}
\mathcal{\widetilde G}^{\varphi}(\alpha,|\mathcal{A}_{d_{\alpha}}|):&=\left\{\left(\log\left(\frac{g_1}{1 - g_1}\right), \log\left(\frac{g_2}{1 - g_2}\right)\right) :\; (g_1,g_2)\in \mathcal{\widetilde G}(\alpha,|\mathcal{A}_{d_{\alpha}}|)\right\}.
\end{align*}
Since $g_1=1-g_2$, we have that for any $\bp=(p_1,p_2)\in\mathcal{\widetilde G}^{\varphi}(\alpha,|\mathcal{A}_{d_{\alpha}}|)$, $p_1=-p_2$ and $$p_2\in\mathcal{\widetilde H}(\alpha,|\mathcal{A}_{d_{\alpha}}|):=\left\{\big((f_2-f_1)\vee-1\big)\wedge 1,(f_1,f_2)\in\mathcal{G}_{\id}(\alpha,|\mathcal{A}_{d_{\alpha}}|)\right\},$$ where $\mathcal{G}_{\id}(\alpha,|\mathcal{A}_{d_{\alpha}}|)$ denotes the class obtained by replacing $\Phi$ with the identity map $\mathrm{id}$ in \eqref{CNN}. It is enough to only focus on the class $\mathcal{\widetilde H}(\alpha,|\mathcal{A}_{d_{\alpha}}|)$. Denote $$\wh p_2^{\varphi}:=\argmin_{g\in\mathcal{\widetilde H}(\alpha,|\mathcal{A}_{d_{\alpha}}|)}\mathcal{R}_n^{\varphi}(g).$$ It then follows that
$\widehat{p}_2^{\varphi} = \log\left( \widehat{p}_2^{\text{CE}}/(1 - \widehat{p}_2^{\text{CE}}) \right)$ and
\begin{equation}\label{excess-eta}
\PROB\left(\,\1(\widehat{p}_2^{\text{CE}}(\ol\bX)>1/2)\not=k\right)=\PROB\left(\,\sgn\big(\ol{k}\cdot\widehat{p}_2^{\varphi}(\ol\bX)\big)<0\right),    
\end{equation}
where $\ol{k}=2k-1.$ Let $\eta(\bx):=\mathbf{P}(k=1|\bX=\bx)$ and $k^*(\bx) = \1(\eta(\bx) > 1/2)$ be the Bayes classifier. Following from Lemma~\ref{lem.perfect_recovery-gen}, we know that under the given conditions, $\PROB(k^*(\bX)\not=k)=0$. Define ${p}_2^{\varphi}:=\argmin_{g\in\mathcal{\widetilde H}(\alpha,|\mathcal{A}_{d_{\alpha}}|)}\mathbf{E}[\varphi\big( \ol{k}g(\ol{\bX}) \big)]$ the population level minimizer. We claim that 
\begin{equation}\label{popula-risk-mini}
\PROB\big(\,\sgn\big(\ol{k}\cdot{p}_2^{\varphi}(\ol\bX)\big)<0\big)=0.
\end{equation}
In fact, following the proof of Theorem~\ref{th2-gen} and using Lemma~\ref{lemma_max}, under the given conditions, if $k=0$, there exists a network $(z_0,z_1)\in\mathcal{G}_{\id}(\alpha,|\mathcal{A}_{d_{\alpha}}|)$ such that
$$z_0 \geq |\mathcal{A}_{d_{\alpha}}|\left(1-\frac{C_1(C_{L},C_{\mathcal{A}})}{d^{\alpha}}\right)\quad \text{and} \quad z_1 \leq |\mathcal{A}_{d_{\alpha}}|\left(1-\frac{D^2}{16C_{\mathcal{A}}^2C_{L}^2} + \frac{C_2(C_{L},C_{\mathcal{A}})}{d^{\alpha}}\right).$$ Since $D^2\geq\kappa/(d^\alpha),$ where $\kappa=16C_{\mathcal{A}}^2C_L^2[C_1(C_L,C_{\mathcal{A}})+C_2(C_L,C_{\mathcal{A}})+1]$, and $|\mathcal{A}_{d_{\alpha}}|\geq d^{\alpha}$, the corresponding output is $\widetilde p_2^{\varphi}(\ol\bX)=((z_1-z_0)\vee-1)\wedge1=-1,$ almost surely. Moreover, if $k=1$, we can similarly obtain that
$\widetilde{p}_2^{\varphi}(\ol\bX)=1,$ almost surely. This implies that the population risk minimizer satisfies $p_2^{\varphi}(\ol\bX)=\ol{k}$ almost surely. Hence, ${p}_2^{\varphi}$ also achieves zero misclassification error.

Combining \eqref{popula-risk-mini} with \eqref{excess-eta}, we obtain that
\begin{align}
\PROB\left(\,\1(\widehat{p}_2^{\text{CE}}(\ol\bX)>1/2)\not=k\right)&=\PROB\left(\,\1(\widehat{p}_2^{\text{CE}}(\ol\bX)>1/2)\not=k\right)-\PROB\left(\,k^*(\bX)\not=k\right)\nonumber\\
&=\PROB\left(\,\sgn\big(\ol{k}\cdot\widehat{p}_2^{\varphi}(\ol\bX)\big)<0\right)-\PROB\left(\,\sgn\big(\ol{k}\cdot{p}_2^{\varphi}(\ol\bX)\big)<0\right). \label{tran-mis-error}
\end{align}
Thus, it suffices to bound the right-hand side of~\eqref{tran-mis-error}. Observe that, by the definition of $p_2^{\varphi}$, for any fixed $p_2\in\mathcal{\widetilde H}(\alpha,|\mathcal{A}_{d_{\alpha}}|)$,
\begin{align}
&\PROB\left(\,\sgn\big(\ol{k}\cdot p_2(\ol\bX)\big)<0\right)-\PROB\left(\,\sgn\big(\ol{k}\cdot{p}_2^{\varphi}(\ol\bX)\big)<0\right)\nonumber\\
&=\PROB\left(\,|p_2(\ol\bX)-p_2^{\varphi}(\ol\bX)|\geq 1\right)\nonumber\\
&\leq\mathbf{E}\left(|p_2(\ol\bX)-p_2^{\varphi}(\ol\bX)|\right)\nonumber\\
&\leq(1+e)\cro{\mathbf{E}\left(\varphi(\ol{k}p_2(\ol\bX))\right) - \mathbf{E}\left(\varphi(\ol{k} p_2^{\varphi}(\ol{\bX}))\right)},\label{diff-var-excess}
\end{align}
where the first inequality is a consequence of Markov's inequality, and the second follows from Lemma~\ref{bound-E2-l1}. With \eqref{diff-var-excess}, we can deduce that
\begin{align}
&\PROB\left(\,\sgn\big(\ol{k}\cdot\wh p_2^{\varphi}(\ol\bX)\big)<0\right)-\PROB\left(\,\sgn\big(\ol{k}\cdot{p}_2^{\varphi}(\ol\bX)\big)<0\right)\nonumber\\
&=\mathbf{E}_{\mathcal{D}_n}\cro{\PROB\left(\,\sgn\big(\ol{k}\cdot\wh p_2^{\varphi}(\ol\bX)\big)<0\big|\mathcal{D}_n\right)-\PROB\left(\,\sgn\big(\ol{k}\cdot{p}_2^{\varphi}(\ol\bX)\big)<0\right)} \nonumber\\
&\leq(1+e)\mathbf{E}_{\mathcal{D}_n}\cro{\mathcal{R}(\wh p_2^{\varphi})- \mathcal{R}(p_2^{\varphi})},\label{relate-to-logistic}
\end{align}
where $\mathcal{R}(f)=\mathbf{E}\left(\varphi(\ol{k}f(\ol\bX))\right)$. 

Next, we bound $\mathbf{E}_{\mathcal{D}_n}\cro{\mathcal{R}(\wh p_2^{\varphi})- \mathcal{R}(p_2^{\varphi})}$ using Lemma~\ref{large-devi} with $\mathcal{G}_K=\mathcal{\widetilde H}(\alpha,|\mathcal{A}_{d_{\alpha}}|)$, $K=1$ and $\varepsilon_n^2=2034V\log(1+e)\log^{1+\gamma}n/n$, where $V=2|\mathcal{A}_{d_{\alpha}}|\cro{\lceil d^{\alpha}\rceil^2 + 80|\mathcal{A}_{d_{\alpha}}|\log_2 (|\mathcal{A}_{d_{\alpha}}|)}.$ Observe that for any real-valued functions $g_1,g_2$ and any $\delta>0$ such that $\|g_1-g_2\|_{\infty}\leq\delta$, $$\left\|\big((g_1\vee-1)\wedge1\big)-\big((g_2\vee-1)\wedge1\big)\right\|_{\infty}\leq\|g_1-g_2\|_{\infty}\leq\delta.$$ Combining this with Lemma~\ref{entropy-sup}, we obtain that
\begin{align*}
\log\mathcal{N}\Big(\frac{\varepsilon_n^2}{160},\mathcal{\widetilde H}(\alpha,|\mathcal{A}_{d_{\alpha}}|),\|\cdot\|_{\infty}\Big)&\leq\log\mathcal{N}\Big(\frac{\varepsilon_n^2}{160},\mathcal{H}(\alpha,|\mathcal{A}_{d_{\alpha}}|),\|\cdot\|_{\infty}\Big)\\
&\leq V\log\left(\frac{d^2(4|\mathcal{A}_{d_{\alpha}}|+1)^{L+1}(L+1)}{4V\log^{1+\gamma}n/n}\right),
\end{align*}
where $L=1+2\lceil\log_2(|\mathcal{A}_{d_{\alpha}}|)\rceil.$ For any $n$ that is sufficiently large compared to $d$ and $|\mathcal{A}_{d_{\alpha}}|$, condition \eqref{entropy-bound} is fulfilled and we obtain that
\begin{equation*}
\PROB\big(\mathcal{R}(\widehat p_2^{\varphi})-\mathcal{R}(p_2^{\varphi}) \geq \varepsilon_n^2\big)\leq 6\exp(-V\log^{1+\gamma}n).
\end{equation*}
Since $0\leq\mathcal{R}(\wh p_2^{\varphi})-\mathcal{R}(p_2^{\varphi})\leq1$, taking the expectation over the training data $\mathcal{D}_n = \{({\bX}_i, {k}_i)\}_{i=1}^{n},$ with $n\geq3$ yields that
\begin{align*}
\mathbf{E}_{\mathcal{D}_n}\cro{\mathcal{R}(\wh p_2^{\varphi})-\mathcal{R}(p_2^{\varphi})}&\leq \varepsilon_n^2+  \PROB\big(\mathcal{R}(\widehat p_2^{\varphi})-\mathcal{R}(p_2^{\varphi}) \geq \varepsilon_n^2\big)\\
&\leq C'|\mathcal{A}_{d_{\alpha}}|(\lceil d^{\alpha}\rceil^2+|\mathcal{A}_{d_{\alpha}}|)\log(|\mathcal{A}_{d_{\alpha}}|)\frac{\log^{1+\gamma}n}{n},
\end{align*}
with $C'>0$ a universal constant. Therefore, together with \eqref{tran-mis-error} and \eqref{relate-to-logistic}, we finally obtain
\begin{equation*}
\PROB\left(\,\1(\widehat{p}_2^{\text{CE}}(\ol\bX)>1/2)\not=k\right)=\PROB\left(\,\sgn\big(\ol{k}\cdot\widehat{p}_2^{\varphi}(\ol\bX)\big)<0\right)\leq C|\mathcal{A}_{d_{\alpha}}|(\lceil d^{\alpha}\rceil^2+|\mathcal{A}_{d_{\alpha}}|)\log(|\mathcal{A}_{d_{\alpha}}|)\frac{\log^{1+\gamma}n}{n},
\end{equation*}
where $C>0$ is a universal constant. The conclusion then follows from $\lceil d^{\alpha}\rceil^2\leq4d^{2\alpha}$.
\end{proof}

\section{Deformed Image Examples from Simulation}\label{sim-add}
\begin{figure}[htpb]
\centering
{\includegraphics[width=0.8\textwidth]{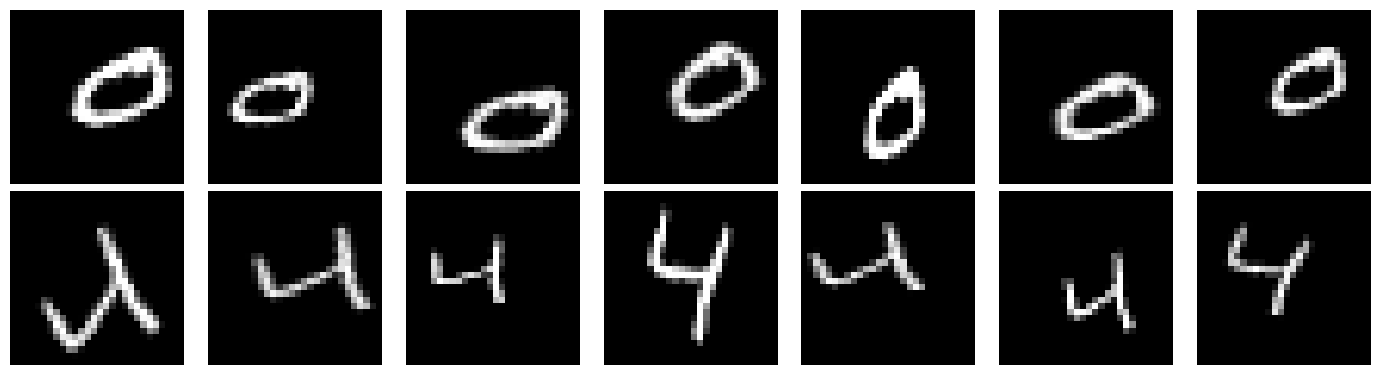}}
{\includegraphics[width=0.8\textwidth]{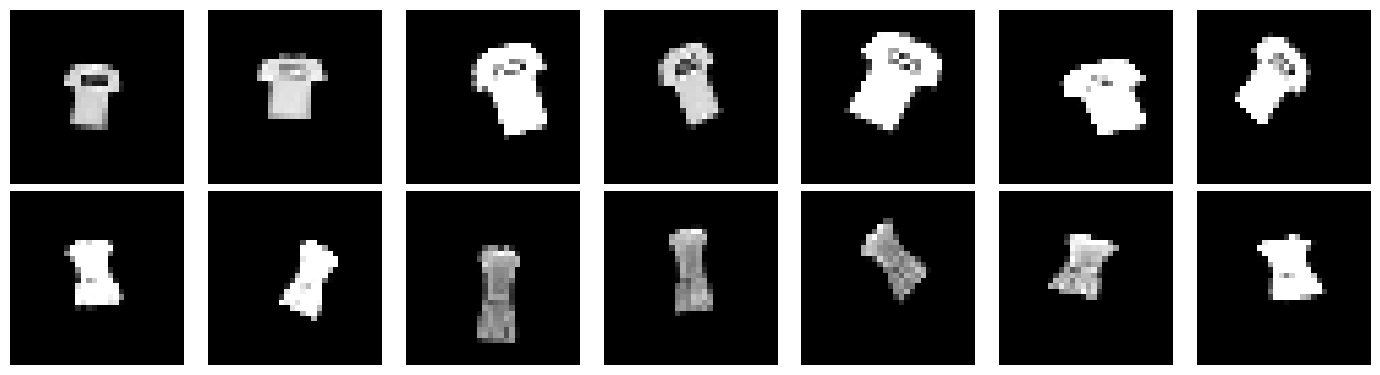}}
{\includegraphics[width=0.8\textwidth]{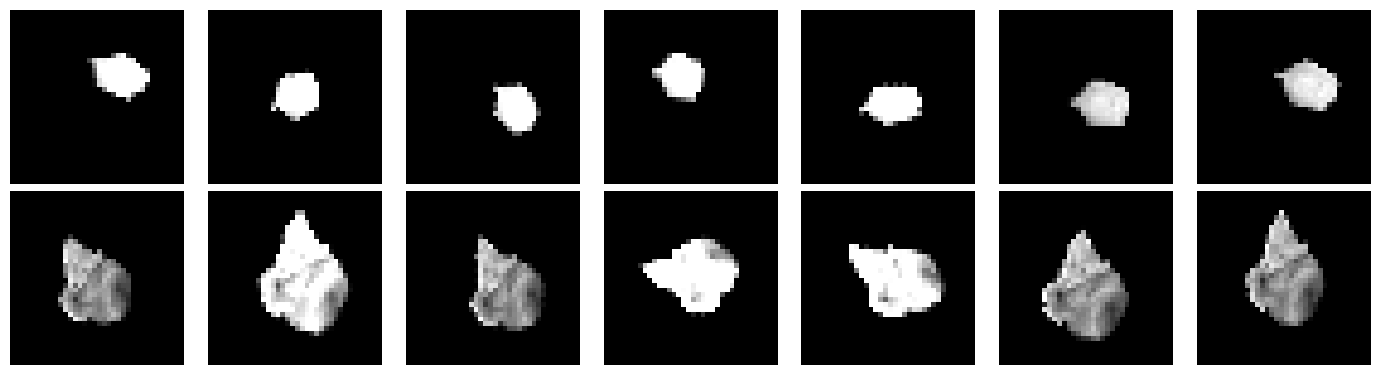}}
\caption{For template images taken from MNIST (rows 1–2), FashionMNIST (rows 3–4), and CIFAR-100 (rows 5–6), randomly generated deformations (including scaling, shifting, rotation, and brightness adjustments) are displayed.}
\end{figure}

\clearpage
\bibliographystyle{siam}
\bibliography{Literatur.bib}
\end{document}